\theoremstyle{theorem}
\newtheorem{theorem}{Theorem}
\newtheorem{proposition}[theorem]{Proposition}
\newtheorem{corollary}[theorem]{Corollary}
\newtheorem{lemma}{Lemma}
\theoremstyle{remark}%
\theoremstyle{definition}%
\newcommand{\cmark}{\ding{51}}
\newcommand{\xmark}{\ding{55}}
\newcommand{\dss}{\displaystyle}
\DeclareMathOperator{\spn}{\mathrm{Span}}
\DeclareMathOperator{\proj}{\mathrm{Proj}}
\newcommand{\mrm}[1]{\mathrm{#1}}
\newcommand{\dt}{\partial_t}
\newcommand{\idx}[0]{\,d\vec{x}}
\newcommand{\nablah}[0]{\nabla_h}
\newcommand{\deltah}[0]{\Delta_h}
\newcommand{\dvh}[0]{\mathrm{div}_h\,}
\newcommand{\dz}[0]{\partial_z}
\newcommand{\D}[2]{\partial_{#1}^{#2}}
\newcommand{\stHnorm}[2]{H_{#1}^{#2}}
\newcommand{\ttene}[2]{\mathcal E_{#1,#2}}
\newcommand{\subeqref}[2]{$\eqref{#1}_{#2}$}
\newcommand{\norm}[2]{\bigl\Arrowvert #1 \bigr\Arrowvert_{#2}}
\newcommand{\Lnorm}[1]{L^{#1}(\mathbb T^3)}
\newcommand{\Hnorm}[1]{H^{#1}(\mathbb T^3)}
\title{The soundproof model of an acoustic--internal waves system with low stratification}
\author{Didier Bresch\footnote{
Laboratoire de Math\'ematiques UMR5127 CNRS, Batiment le Chablais, {Universit\'e Savoie Mont-Blanc},  {{Le Bourget du Lac}, 
{Le Bourget du Lac}, {73376}, %\state{}, 
{France}}, {didier.bresch@univ-smb.fr}. 
}

\qquad
Rupert Klein\footnote{
{Institut f\"ur Mathematik}, {Freie Universit\"at Berlin}, {{Berlin}, {14195},  
{Germany}}, {rupert.klein@math.fu-berlin.de}.
}

\qquad
Xin Liu\footnote{
{Department of Mathematics}, {Texas A\&M University}, {%\street{Street}, 
{College Station}, {77843}, %\state{State}, 
{Texas, U.S.}}, 
{Weierstrass-Institut f\"ur Angewandte Analysis und Stochastik}, {Leibniz-Institut im Forschungsverbund Berlin}, {%\street{Street}, 
{Berlin}, {10117}, %\state{State}, 
{Germany}},
{{Department of Applied Mathematics and Theoretical Physics}, {University of Cambridge}, {%\street{Street}, 
{Cambridge}, {CB3 0WA}, %\state{State}, 
{United Kingdom}},
{stleonliu@gmail.com}.}
}
}
\begin{document}

\allowdisplaybreaks
\maketitle

\begin{abstract}
	This work is devoted to {investigating} a compressible fluid system with low stratification, which is driven by fast acoustic waves and internal waves. The approximation using a soundproof model is justified. More precisely, the soundproof model captures the dynamics of both the non-oscillating mean flows and the oscillating internal waves, while filters out the fast acoustic waves, of the compressible system with or without initial acoustic waves.
	Moreover, the fast-slow oscillation structure is investigated.
	
	\bigskip
	{\noindent\bf Keywords:} Internal waves, Acoustic waves, Low stratification, Soundproof approximation
	
	{\noindent\bf MSC2020 Classification:} 76B55, 76B70, 76N30, 86A99
\end{abstract}

\section{Introduction}

\subsection{Motivations}\label{subsec:motivation}

The rigorous justification of the anelastic and pseudo-incompressible models for atmospheric flows \cite{OguraPhillips1962,LippsHemler1982,Durran1989} in the inviscid case remains a challenge for at least three technical reasons: First, under realistic conditions for the troposphere, the compressible flow model involves three asymptotically separated time scales, associated with advection (slow), internal gravity waves (intermediate), and acoustics (fast), respectively. The two sound-proof models still involve the slow and intermediate scales, see \cite{Klein2010}, and thus still depend on the scale separation parameter. In other words, the anelastic and pseudo-incompressible models are not ``limit models'' in the classical sense, e.g., of low Mach number analysis. The technical question to be rigorously answered therefore is: What is the relation between the compressible three-scale and the sound-proof pseudo-incompressible (or anelastic) two-scale models.  

Secondly, realistic atmospheric background states feature temperatures and local Brunt-V\"ais\"al\"a or buoyancy frequencies that depend on the spatial position. This leaves the fast linear system describing acoustic and internal wave modes with non-constant, space-dependent coefficients. The control of derivatives for non-constant coefficient systems using techniques of energy estimates is substantially more difficult than it is in the constant coefficient case. 

Thirdly, problems on the torus or in $ \mathbb T^d $  ($ d \in {2,3}$) are often technically easier to handle than bounded domain problems, except when the bounded domain problem has a natural extension through certain symmetries to the infinite or toroidal domain case. Owing to the presence of gravity, realistic atmospheric flows always include a bottom boundary of the critical type that does not lend itself to domain extensions that would preserve smoothness of solutions across the eliminated domain boundary. 

In this paper we make progress in addressing the first issue, i.e., model reduction from three to two asymptotically separated scales, while we avoid the non-constant coefficient problem and irregular behavior of solutions near the (bottom) boundary of the domain by introducing judicious simplifications in the original model, designed to render the physics of the scale interactions largely intact: Let us denote $R$ the gas constant of the fluid, $ T $ the background temperature and $ g $ the typical gravitational acceleration, respectively. Then by (i) considering a fluid layer much thinner than the pressure scale height $h_{sc} = R T / g$, we guarantee that the leading-order temperature, and with it the leading order speed of sound, are constant. By (ii) assuming a particular vertical stratification of entropy (or potential temperature), we guarantee that the buoyancy (or Brunt-V\"ais\"al\"a-) frequency is constant as well (see \ref{H3} in page \pageref{H3}). This renders the linear fast system describing acoustic and internal waves with constant coefficients. Finally, by (iii) letting the gravitational acceleration decay to zero smoothly towards the top and bottom domain boundaries, while maintaining a constant buoyancy frequency by choice of the entropy stratification, we obtain a problem that has a regular extension to a vertically periodic domain problem (see \ref{H1} and \ref{H2} in page \pageref{H1}). 

%Under these conditions, we prove that, without initial acoustic waves, solutions of the compressible and incompressible models remain asymptotically close over the slowest (advective) time scale as the small parameter representative of the Mach and Froude numbers vanishes.  More details are given in the following section describing the relation between the compressible model \eqref{eq:EEq} and the pseudo-incompressible system \eqref{eq:sndprf} through system \eqref{eq:rf-EEq}. Moreover, in the case with initial acoustic waves, we show that the solutions of the pseudo-incompressible model capture the dynamics of the mean flows and the internal waves in the compressible model. 

Under these conditions, our main results can be stated in an informal fashion as follows:

{
%{\setcounter{theorem}{-1}}
\begin{theorem}
	Consider the full compressible model (\eqref{eq:rf-EEq} in page \pageref{eq:rf-EEq}) with both acoustic and internal waves, and the pseudo-incompressible model (\eqref{eq:sndprf} in page \pageref{eq:sndprf}). 
	\begin{itemize}
		\item Without initial acoustic waves, solutions of the compressible and pseudo-incompressible models remain asymptotically close over the slowest (advective) time scale as the small parameter representative of the Mach and Froude numbers vanishes. See Theorem \ref{thm:sp-approximation} in page \pageref{thm:sp-approximation} for the detailed statement; 
		\item Moreover, in the case with initial acoustic waves, the solutions of the pseudo-incompressible model capture the dynamics of the mean flows and the internal waves in the compressible model. See Theorem \ref{thm:sp-ill-prepared} in page \pageref{thm:sp-ill-prepared} for more details. 
	\end{itemize}
\end{theorem}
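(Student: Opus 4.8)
The plan is to treat this as a singular perturbation problem and to prove the two bullets separately, with one common analytic backbone: a priori estimates for the compressible system \eqref{eq:rf-EEq} that are uniform in the small parameter $\varepsilon$. First I would subtract the hydrostatic background state and recast \eqref{eq:rf-EEq} in the abstract form $\partial_t U^\varepsilon + \varepsilon^{-1}\,\mathcal{B}\,U^\varepsilon + \varepsilon^{-1/2}\,\mathcal{A}^\varepsilon\,U^\varepsilon + \mathcal{Q}(U^\varepsilon,U^\varepsilon) + \mathcal{R}^\varepsilon(U^\varepsilon) = 0$, where $\mathcal{B}$ is the acoustic propagator, $\mathcal{A}^\varepsilon$ the internal-wave propagator, $\mathcal{Q}$ the transport nonlinearity and $\mathcal{R}^\varepsilon$ a genuinely lower-order remainder. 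Thanks to hypotheses \ref{H1}--\ref{H3} the leading sound speed and buoyancy frequency are constant, so $\mathcal{B}$ and $\mathcal{A}^\varepsilon$ have constant coefficients, commute with $\partial^\alpha$, and are skew-adjoint for a weighted $L^2$ inner product determined by the background density; since the gravitational acceleration vanishes smoothly at the top and bottom (\ref{H2}), those weights extend smoothly to the vertically periodic torus. One may therefore differentiate, pair against the weighted inner product, lose both singular terms by skew-symmetry, and close a Moser-type estimate $\tfrac{d}{dt}\|U^\varepsilon\|_{H^s}^2 \lesssim P\big(\|U^\varepsilon\|_{H^s}\big)$ with $\varepsilon$-independent constants, giving a lifespan $T>0$ and uniform bounds on $[0,T]$; a simpler version of the same argument gives local well-posedness and analogous bounds for the soundproof model \eqref{eq:sndprf}. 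I stress that \eqref{eq:sndprf} still carries the intermediate internal-wave oscillations, so the comparison must be a direct energy argument, not a weak-compactness extraction of a ``slow part''.

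For the well-prepared case, Theorem \ref{thm:sp-approximation}, let $\bar U^\varepsilon$ solve \eqref{eq:sndprf} and set $W^\varepsilon = U^\varepsilon - \bar U^\varepsilon - C^\varepsilon$, where the corrector $C^\varepsilon = o(1)$ (built from $\bar U^\varepsilon$ by inverting $\mathcal{B}$ off its kernel) is chosen so that $W^\varepsilon|_{t=0}$ is small and inserting $\bar U^\varepsilon + C^\varepsilon$ into \eqref{eq:rf-EEq} leaves a small residual. Well-preparedness means precisely that $\bar U^\varepsilon$ and $C^\varepsilon$ have no component in the acoustic eigenspace of $\mathcal{B}$, so the dangerous term $\varepsilon^{-1}\mathcal{B}W^\varepsilon$ is either consumed by the soundproof divergence constraint or vanishes after the skew-symmetric pairing. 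A Gronwall estimate for the weighted energy of $W^\varepsilon$, with the quadratic and remainder terms controlled by the uniform bounds of the first step, then gives $\sup_{[0,T]}\|U^\varepsilon - \bar U^\varepsilon\|_{H^{s'}}\lesssim \varepsilon^{\theta}$ for some $s'<s$ and $\theta>0$, the convergence asserted there.

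For the ill-prepared case, Theorem \ref{thm:sp-ill-prepared}, the initial acoustic amplitude is $O(1)$, so $U^\varepsilon$ keeps an undamped fast oscillation and has no strong limit; instead I would split $U^\varepsilon = \Pi_{\mathrm{s}}U^\varepsilon + \Pi_{\mathrm{ac}}U^\varepsilon$ with the spectral projectors of $\mathcal{B}$ onto its kernel (mean flow together with the internal-wave modes) and onto the acoustic modes, and filter the acoustic part through $\Pi_{\mathrm{ac}}U^\varepsilon = e^{-t\mathcal{B}/\varepsilon}V^\varepsilon$. Because the acoustic scale $\varepsilon^{-1}$ is strictly faster than the internal-wave scale $\varepsilon^{-1/2}$, the self-interaction $\mathcal{Q}(\Pi_{\mathrm{ac}}U^\varepsilon,\Pi_{\mathrm{ac}}U^\varepsilon)$ and the cross terms $\mathcal{Q}(\Pi_{\mathrm{s}}U^\varepsilon,\Pi_{\mathrm{ac}}U^\varepsilon)$ are non-resonant with the slow block; the only surviving near-resonance is the acoustic Reynolds stress on the zero-frequency mode, which in this model is a pure gradient and is absorbed into the soundproof pressure, exactly as in the incompressible limit. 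Integrating these oscillatory forcings by parts in time (Schochet's oscillation-cancellation device) shows they contribute $o(1)$ in a negative Sobolev norm, so the equation satisfied by $\Pi_{\mathrm{s}}U^\varepsilon$ agrees with \eqref{eq:sndprf} up to $o(1)$; hence $\Pi_{\mathrm{s}}U^\varepsilon\to\bar U^\varepsilon$ (with $\bar U^\varepsilon$ initialized at the slow projection of the compressible datum) while $V^\varepsilon$ satisfies, to leading order, the decoupled linear acoustic system --- the fast--slow oscillation structure advertised in the abstract.

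The genuinely hard step, I expect, is the conjunction of (i) closing the $\varepsilon$-uniform high-order energy estimate while the gravity weight degenerates near the vertical boundaries, which forces one either to keep the weighted norms equivalent to plain Sobolev norms or to work permanently in the weighted spaces; and (ii), in the ill-prepared setting, controlling the fast--slow coupling uniformly over the \emph{whole} advective interval $[0,T]$ rather than merely over an acoustic time. The latter requires the non-resonance to be used quantitatively --- no small divisors, which on the torus holds only because acoustic and internal-wave frequencies are separated by a power of $\varepsilon$, a point to be verified against the discrete spectrum --- and it forbids any secular growth from hiding in the corrector $C^\varepsilon$ or the remainder $\mathcal{R}^\varepsilon$. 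Underlying all of this is that \eqref{eq:sndprf} is not a classical limit model, so every estimate must remain uniform in the acoustic and the internal-wave scales simultaneously.
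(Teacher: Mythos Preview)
Your overall architecture---uniform-in-$\varepsilon$ energy estimates via skew-symmetry, then a stability argument for well-prepared data, then spectral filtering for ill-prepared data---matches the paper's, and your identification of the acoustic self-interaction as a pure gradient absorbed by the soundproof pressure is exactly the mechanism in \eqref{eps:acoustic-resonance}. Two points, however, need correction.

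\textbf{Well-prepared case.} Your corrector description is internally inconsistent: $C^\varepsilon$ built by ``inverting $\mathcal B$ off its kernel'' lies in $(\ker\mathcal B)^\perp$, the acoustic subspace, yet you then claim it has ``no component in the acoustic eigenspace''. More substantively, plugging the soundproof solution into \eqref{eq:rf-EEq} leaves an $O(1)$ residual $\mathcal A\mathcal C\,G\,w_{\mrm{sp}}$ in the $\widetilde q$-equation, because $\dvh v_{\mrm{sp}}+\dz w_{\mrm{sp}}=0$ cannot absorb it. The paper resolves this not with a generic corrector but through a concrete \emph{intermediate pseudo-incompressible model} \eqref{eq:mid-s} whose weighted divergence constraint $\dvh v_{\mrm{ms}}+\dz w_{\mrm{ms}}=\varepsilon\mathcal A(\mathcal C G+\varepsilon^\mu\overline{\mathcal H}_0)w_{\mrm{ms}}$ absorbs exactly that forcing; the proof then proceeds in two $L^2$ stability steps (compressible~$\to$ intermediate, section~\ref{subsec:itmd-ap}; intermediate~$\to$ soundproof, section~\ref{subsec:snpf-ap}), each with an explicit rate. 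Your corrector idea is salvageable, but only once $C^\varepsilon$ is allowed an $O(\varepsilon)$ acoustic-subspace velocity correction---and that \emph{is} the intermediate model.

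\textbf{Ill-prepared case.} The conclusion ``$\Pi_{\mathrm s}U^\varepsilon\to\bar U^\varepsilon$'' is too strong in one component: the paper's closing Remarks in section~\ref{sec:fast-slow-nonlinear} explain that the slow projection of $\widetilde q$ does \emph{not} converge to anything in the soundproof model, which carries no evolutionary pressure variable; the acoustic resonance in \eqref{eps:acoustic-resonance} feeds an $O(1)$ contribution into the $\widetilde q$-slot that has no soundproof counterpart. Convergence is proved only after the dimension reduction $\mathcal P_{\mrm{rd}}:(\widetilde q,\widetilde{\mathcal H},v,w)\mapsto(\widetilde{\mathcal H},v,w)$ and a finite-mode truncation $T_K$, and the paper works throughout with the eigenstructure of the \emph{full} perturbed operator $\mathcal L_\varepsilon=\mathcal L_a+\varepsilon^{1-\nu}\mathcal L_g$ (Proposition~\ref{prop:wave-bases}, Corollary~\ref{cor:cmprs-gw}) rather than of $\mathcal L_a$ alone; that is what turns your $o(1)$ assertions into the explicit $\varepsilon$-rates of Theorem~\ref{thm:sp-ill-prepared}. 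Finally, the difficulty you single out---degenerating gravity weights at the vertical boundaries---is precisely what hypotheses \ref{H1}--\ref{H3} are engineered to remove, so it is not the crux here; and the internal-wave scale is $\varepsilon^{-\nu}$ with $\nu\in(0,\tfrac12)$ and $\mu+2\nu=1$, not $\varepsilon^{-1/2}$.
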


%The detailed description of our results can be found in Theorem \ref{thm:sp-approximation} and Theorem \ref{thm:sp-ill-prepared} in pages \pageref{thm:sp-approximation} and \pageref{thm:sp-ill-prepared}, respectively.
More details are given in the following section describing the relation between the compressible model \eqref{eq:EEq} and the pseudo-incompressible system \eqref{eq:sndprf} through system \eqref{eq:rf-EEq}. 
}

%\todo{Pseudo-incompressible or incompressible model?}
{An explanatory remark regarding our use of the notion of the ``pseudo-incompressible'' model is in order: By the assumption of a shallow domain, the formal leading order divergence constraint emerging from the pressure evolution equation is the \emph{incompressibility constraint} $\nabla\cdot v = 0$ rather than the pseudo-incrompressibility constraint $\nabla\cdot (P v) = 0$, where $P(z)$ is a function of the vertical coordinate only. We nevertheless speak of the pseudo-incompressible model in the last paragraph because we show in section~4 that first order pseudo-incompressibility effects are important when closeness of the compressible and soundproof approximations are to be maintained over the slow advective time scale. In fact, in that section we study the intermediate model \eqref{eq:mid-s} {in page \pageref{eq:mid-s}}, which we anticipate here in a notation similar to that of \cite{Klein2010} which is likely more familiar to readers of the meteorological literature,
\begin{subequations}\label{eq:mid-s-met-notation}
\begin{align}
\frac{Dv}{Dt} + \nablah \pi 
  & = 0,
    \label{eq:mid-s-met-notation-v}\\
\frac{Dw}{Dt} + \dz \pi  
  & = \frac{\theta}{\varepsilon^\nu},
    \label{eq:mid-s-met-notation-w}\\
\frac{D\theta}{Dt} 
  & = \frac{S^{\varepsilon}}{\varepsilon^{\nu}} w, 
    \label{eq:mid-s-met-notation-theta}\\
\dvh (P^{\varepsilon}v) + \dz (P^{\varepsilon}w) 
  & = 0.
    \label{eq:mid-s-met-notation-P}
\end{align}
\end{subequations}
%
%
%\begin{equation}\label{eq:mid-s-met-notation}
%\begin{split}
%\dvh v + \dz w 
%  & = \varepsilon \mathcal A (\mathcal C G + \varepsilon^\mu \overline{\mathcal H}_0) w, 
%    \\
%\dt \widetilde{\mathcal H} + v\cdot\nablah \widetilde{\mathcal H} + w\dz \widetilde{\mathcal H} - \dfrac{1}{\mathcal B \, \varepsilon^{\nu}} w 
%  & = \widetilde G \cdot \widetilde{\mathcal H} w, 
%    \\
%\dt v + v \cdot \nablah v +  w\dz v + \frac{1}{\mathcal C}\nablah p 
%  & = 0,
%    \\
%\dt w +  v \cdot \nablah w +  w\dz w + \frac{1}{\mathcal C}\dz p + \dfrac{1}{\mathcal C\, \varepsilon^\nu} \widetilde{\mathcal H} 
%  & = 0,
%\end{split}
%\end{equation}
%%
where
\begin{equation}
\frac{D}{Dt} = \dt + v\cdot\nablah + w\,\dz\,,  
\end{equation}
and 
\begin{equation}
P^{\varepsilon}(z) = 1 - \varepsilon \widetilde{P}^{\varepsilon}(z)\,,
\qquad
S^{\varepsilon}(z,\theta) = S^{\varepsilon}_0 + \varepsilon^\nu S^{\varepsilon}_\nu(z) \theta\,.
\end{equation}
The system in \eqref{eq:mid-s-met-notation} captures part of the difference between incompressible and pseudo-incompressible dynamics. Thus, the divergence control in \eqref{eq:mid-s-met-notation-P} represents weak deviations from the constraint $\dvh v = 0$ of an incompressible flow that are due to the small but finite height of the flow domain. It is not equivalent to the pseudo-incompressible system, however, as it does not include its baroclinic nonlinearity which would be represented by pressure gradient terms $(\theta^{\varepsilon}_0(z) + \varepsilon^{\mu+\nu} \theta) (\nablah\pi, \dz\pi)$ in \eqref{eq:mid-s-met-notation-v} and \eqref{eq:mid-s-met-notation-w}. Our main point in section~\ref{sec:sound-proof-app} will be to show that the weak deviation from incompressibility, even though small, significantly improves the system's agreement with the full compressible model relative to the incompressible model.
}

\subsection{Description of the problem}

To model a compressible flow under the influence of an external force (e.g., earth gravity), the compressible Euler equations is considered. With low stratifications, the dimensionless system can be written as (see, e.g., \cite{FeireislSingularLimits}),
%Let $ \varepsilon \in (0,1) $. 
%The compressible Euler equations with low stratification
\begin{equation}\label{eq:EEq}
	\begin{cases}
		\dt \rho + \dvh(\rho v) + \dz (\rho w) = 0,  \\
		\dt(\rho v) + \dvh (\rho v\otimes v) + \dz (\rho w v) + \dfrac{1}{\varepsilon^2} \nablah p =0,  \\
		\dt (\rho w) + \dvh (\rho v w) + \dz (\rho w w ) + \dfrac{1}{\varepsilon^2} \dz p + \dfrac{1}{\varepsilon} \rho G(z) = 0,\\
		\dt p + v\cdot \nablah p + w \dz p + \gamma p (\dvh v + \dz w) = 0,
	\end{cases}
\end{equation}
where $ \varepsilon \in (0,1) $ denotes the small Mach number, and $ \rho,\, p,\, v $, and $ w $ are the scalar density, the pressure potential, the horizontal velocity field, and the vertical velocity, respectively. Here $ G(z) $ is the external force, causing stratification. As $ \varepsilon \rightarrow 0^+ $, system \eqref{eq:EEq} describes flows in the low Mach number region with low stratification, i.e., the Boussinesq scale. The external force $ \rho G(z) $ causes the flow to form stratification as $ \varepsilon \rightarrow 0^+ $. One particular stratification profile considered in this paper is characterised by
\begin{equation}\label{def:stratification}
	\dz \theta = \mathcal O(\varepsilon^\mu), \qquad \mu \in (0,1),
\end{equation}
where $ \theta $ is the potential temperature defined by
\begin{equation}\label{def:potential-temperature}
	\theta := p^{1/\gamma} \rho^{-1}.
\end{equation}
In addition, the Exner pressure, defined by
\begin{equation}\label{def:exner-pressure}
	\varpi := \dfrac{\gamma}{\gamma-1} p^{\frac{\gamma-1}{\gamma}},
\end{equation}
is commonly used in meteorological study (\cite{Klein2010,Klein2009} etc.). 
%Next, denote by Exner pressure $ \pi := \dfrac{\gamma}{\gamma-1} p^{\frac{\gamma-1}{\gamma}} $, and the potential temperature $ \theta := p^{1/\gamma} \rho^{-1} $.
Then \eqref{eq:EEq} is equivalent to, described by the new unknowns $ (\varpi, \theta, v, w) $,
\begin{equation}\label{eq:EEq-2}
	\begin{cases}
		\dt \varpi + v \cdot\nablah \varpi + w \dz \varpi + (\gamma-1) \varpi (\dvh v + \dz w ) = 0, \\
		\dt \theta^{-1} + v \cdot\nablah \theta^{-1} + w \dz \theta^{-1} = 0,\\
	\theta^{-1} (\dt v + v \cdot \nablah v + w \dz v ) + \dfrac{1}{\varepsilon^2} \nablah \varpi = 0, \\
	\theta^{-1} (\dt w + v \cdot \nablah w + w \dz w) + \dfrac{1}{\varepsilon^2} \dz \varpi + \dfrac{1}{\varepsilon} \theta^{-1} G(z) = 0. 
	\end{cases} 
\end{equation}
In order the investigate the stratification with \eqref{def:stratification}, the following ansatz is introduced:
%Next, we introduce the leading order and the perturbation variables. Denote by
\begin{equation}\label{def:perturbation}
	\varpi : = \varpi_0 + \varepsilon \widetilde \varpi, \quad \theta^{-1} := \theta_0^{-1} + \varepsilon^\mu G^{-1} \overline{\mathcal H}_0  + \varepsilon^{\mu + \nu} G^{-1} \widetilde{\mathcal H},
\end{equation}
where $ \varpi_0, \theta_0 $ are constant, and $ \overline{\mathcal H}_0 = \overline{\mathcal H}_0(z) $.
Then from \eqref{eq:EEq-2}, one can derive, with
\begin{equation} \label{def:mu-nu}
	\mu + 2 \nu = 1, 
\end{equation}
\begin{equation}\label{eq:EEq-2-1}
	\begin{cases}
		\dfrac{1}{(\gamma-1) \varpi} (\dt \widetilde \varpi + v \cdot \nablah \widetilde \varpi + w \dz \widetilde \varpi ) + \dfrac{1}{\varepsilon} (\dvh v + \dz w) = 0, \\
		\dfrac{1}{G \dz (G^{-1} \overline{\mathcal H}_0)} (\dt \widetilde{\mathcal H} + v \cdot \nablah \widetilde{\mathcal H} + G w \dz (G^{-1} \widetilde{\mathcal H})) + \dfrac{1}{\varepsilon^{\nu}} w = 0,\\
		\theta^{-1} ( \dt v + v\cdot \nablah v + w \dz v) + \dfrac{1}{\varepsilon} \nablah \widetilde \varpi = 0, \\
		\theta^{-1} (\dt w + v \cdot \nablah w + w \dz w ) + \dfrac{1}{\varepsilon} ( \dz \widetilde \varpi + \theta_0^{-1} G(z) + \varepsilon^\mu \overline{\mathcal H}_0 ) + \dfrac{1}{\varepsilon^{\nu}} \widetilde{\mathcal H} = 0. 
	\end{cases}
\end{equation} 
After denoting by 
\begin{equation}\label{def:perturbation-q}
	\widetilde q := \widetilde \varpi + \theta_0^{-1} \int_0^z G(z') \,dz' + \varepsilon^{\mu} \int_0^z \overline{\mathcal H}_0(z') \,dz'
\end{equation}
and multiplying the first equation of \eqref{eq:EEq-2-1} with $ \varpi / \varpi_0 $,
we arrive at
\begin{equation}\label{eq:EEq-3}
	\begin{cases}
		\dfrac{1}{(\gamma-1) \varpi_0} (\dt \widetilde q + v \cdot \nablah \widetilde q + w \dz \widetilde q - \theta_0^{-1} G w - \varepsilon^\mu \overline{\mathcal H}_0 w ) \\
		\qquad \qquad + \dfrac{1}{\varepsilon} (\dvh v + \dz w) = - \varpi_0^{-1} \widetilde q (\dvh v + \dz w) \\
		\qquad\qquad  + \varpi_0^{-1} (\dvh v + \dz w) ( \theta_0^{-1} \int_0^z G(z') \,dz' + \varepsilon^{\mu} \int_0^z \overline{\mathcal H}_0(z') \,dz' ) , \\
		- \dfrac{1}{G\dz(G^{-1} \overline{\mathcal H}_0)} (\dt \widetilde{\mathcal H} + v \cdot \nablah \widetilde{\mathcal H} + w \dz \widetilde{\mathcal H} - \dfrac{\dz G}{G} \widetilde{\mathcal H} w % + G w \dz (G^{-1} \widetilde{\mathcal H})
		) - \dfrac{1}{\varepsilon^{\nu}} w = 0,\\
		\theta^{-1} ( \dt v + v\cdot \nablah v + w \dz v) + \dfrac{1}{\varepsilon} \nablah \widetilde q = 0, \\
		\theta^{-1} (\dt w + v \cdot \nablah w + w \dz w ) + \dfrac{1}{\varepsilon} \dz \widetilde q + \dfrac{1}{\varepsilon^{\nu}} \widetilde{\mathcal H} = 0. 
	\end{cases}
\end{equation}

On the other hand, denote by
\begin{equation}\label{def:vec-opt}
	\begin{gathered}
		U := \left( \begin{array}{c}
			\widetilde q \\ \widetilde{\mathcal H} \\ v \\ w
		\end{array} \right), \quad \mathcal L_a U := \left( \begin{array}{c}
			\dvh v + \dz w \\ 0 \\ \nablah \widetilde q \\ \dz \widetilde q
		\end{array} \right), \quad \text{and} \,\, \mathcal L_g U := \left( \begin{array}{c}
			0 \\ - w \\ 0 \\ \widetilde{\mathcal H}
		\end{array}\right). 
	\end{gathered}
\end{equation}
Notice that operators $ \mathcal L_a $ and $ \mathcal L_g $ are anti-symmetric with respect to the $ L^2 $-inner product and induce oscillations, corresponding to acoustic waves and internal waves of solutions to system \eqref{eq:EEq-3}, respectively. 

Unfortunately, in general, the anti-symmetry property does not hold for general boundary conditions and systems with non-constant coefficients in more regular Sobolev space, for instance $ H^s $, $ s > 0 $. %The reason being the anti-symmetry property of operators $ \mathcal L_a $ and $ \mathcal L_g $  does not survive after applying derivatives; that is, $ \partial \mathcal L_a $ and $ \partial \mathcal L_g $ are no longer anti-symmetric with respect to the $ L^2 $-inner product in general scenario. 
This is a major difficulty in the study of asymptotic limit of fast oscillation systems (see, e.g., \cite{Majda1984}). To resolve this difficulty is beyond the scope of this paper. Instead, we will introduce a system closely related to system \eqref{eq:EEq-3}, which still captures the acoustic waves and the internal waves driven by $ \mathcal L_a $ and $ \mathcal L_g $, respectively. 

\bigskip

\noindent  As explained in section \ref{subsec:motivation}, we will assume the following hypothesis in this first work.
\newcounter{hypothesis}
\begin{enumerate}[label = {\bf H\arabic{enumi})}, ref = {\bf H\arabic{enumi})}]
\setcounter{enumi}{\value{hypothesis}}
\item \label{H1} If one considers \eqref{eq:EEq-3} in $  \lbrace (x,y,z) \in  \mathbb T^2 \times 0.5 \mathbb T \rbrace $, and assumes that $ G, \overline{\mathcal H}_0 $ are odd in the $ z $-variable, then the following symmetry invariance holds:
\begin{equation*}\label{SYM}\tag{SYM}
\begin{gathered}
\text{$ \widetilde q, \widetilde{\mathcal H}, v, $ and $ w $ are even, odd, even, and odd, respectively,}\\
	\text{with respect to the $ z $-variable.}
\end{gathered}
\end{equation*}
Therefore, by, in addition, assuming $ G, \overline{\mathcal H}_0 $ to be smooth enough in $ \mathbb T^3 $, one can consider \eqref{eq:EEq-3} in $ \mathbb T^3 $. 
%In fact, we will assume that
%\begin{equation}\label{asmpt:gravity-potential}
%	G = \sin (2\pi z), \qquad \overline{\mathcal H}_0 = \dfrac{\text{constant}}{2\pi} \sin (2\pi z) \cos (2\pi z).
%\end{equation}
%contains a singular term $ \dz G / G $, which, unfortunately, we don't know how to deal with it. 
\item \label{H2} Noticing that in \subeqref{eq:EEq-3}{2}, the term	
$ \widetilde{\mathcal H} w \dz G/G$ becomes singular when $ G $ approaches $ 0 $. The function $ \frac{\dz G}{G} $ is replaced by another function $ \widetilde G $, which is odd with respect to the $ z $-variable and smooth in $ \mathbb T^3 $. For the same reason, we replace $ G^{-1} $ in \eqref{def:perturbation} by $ \widetilde G $. 

%{\par\noindent\bf Remark: 
%Assuming $ G, \overline H_0 $ are odd in the $ z $-variable, then \eqref{eq:EEq-3} is invariable respect to the following symmetry:
%\begin{center}
%	$ \widetilde q, \widetilde{\mathcal H}, v, $ and $ w $ are even, odd, even, and odd, respectively, with respect to the $ z $-variable.
%\end{center}
%}
\item \label{H3} The Brunt-V\"ais\"al\"a frequency $ \mathfrak N $, defined by,
\begin{equation}\label{assm:constant-gravity-wave-freq}
%	\mathcal N = \text{constant}, \qquad \text{i.e.}, 
%	\quad 
	\mathfrak N^2 := - G\dz (G^{-1} \overline{\mathcal H}_0), %  = \text{constant}.
\end{equation}
is constant. 
\setcounter{hypothesis}{\value{enumi}}
\end{enumerate}

Then, after denoting the positive constants
\begin{equation}\label{def:constants}
	\begin{gathered}
		\mathcal A := \dfrac{1}{(\gamma-1)\varpi_0}, \quad \mathcal B := - \dfrac{1}{G\dz(G^{-1}\overline{\mathcal H}_0)}, \quad \mathcal C:= \theta_0^{-1},
	\end{gathered}
\end{equation}
%Then \eqref{eq:EEq-3} can be written as, after replacing the singular term $ \dz G / G $ in \subeqref{eq:EEq-3}{2} with $ \widetilde G $ (see \eqref{drop-singular}),
we introduce the following system: in $ \mathbb T^3 $, with $ \mu + 2\nu = 1 $, 
\begin{equation}\label{eq:rf-EEq}
	\begin{cases}
	\mathcal A \dt \widetilde q + \mathcal A v \cdot \nablah \widetilde q + \mathcal A w \dz \widetilde q  + \dfrac{1}{\varepsilon} (\dvh v + \dz w) \\
	\qquad = \mathcal A \mathcal C G  w + \varepsilon^{\mu} \mathcal A \overline{\mathcal H}_0  w
		- \varpi_0^{-1} \widetilde q (\dvh v + \dz w) \\
		\qquad\qquad  + \varpi_0^{-1} (\dvh v + \dz w) ( \mathcal C \int_0^z G(z') \,dz' + \varepsilon^{\mu} \int_0^z \overline{\mathcal H}_0(z') \,dz' ) 
		, \\
		\mathcal B \dt \widetilde{\mathcal H} + \mathcal B v \cdot \nablah \widetilde{\mathcal H} + \mathcal B w \dz \widetilde{\mathcal H} - \dfrac{1}{\varepsilon^{\nu}} w = \mathcal B  \widetilde G \cdot \widetilde{\mathcal H} w ,\\
		\vartheta  \dt v + \vartheta v\cdot \nablah v + \vartheta w \dz v + \dfrac{1}{\varepsilon} \nablah \widetilde q = 0, \\
		\vartheta \dt w + \vartheta v \cdot \nablah w + \vartheta w \dz w + \dfrac{1}{\varepsilon} \dz \widetilde q + \dfrac{1}{\varepsilon^{\nu}} \widetilde{\mathcal H} = 0,
	\end{cases}
\end{equation}
where $ \widetilde q, \, \widetilde{\mathcal H}, \, v, \,  w $ admit the symmetry \eqref{SYM}, $ G, \, \overline{\mathcal H}_0, \, \widetilde G $ are odd in the $ z $-variable and smooth enough in $ \mathbb T^3 $, and $ \vartheta = \mathcal C + \mathcal O(\varepsilon^{\mu}) $ are given by
\begin{equation}\label{def:perturbation-theta}
	\vartheta := \mathcal C + \varepsilon^\mu \widetilde G \overline{\mathcal H}_0  + \varepsilon^{\mu + \nu} \widetilde G \widetilde{\mathcal H}.
\end{equation}
System \eqref{eq:rf-EEq} is complemented with initial data
\begin{equation}\label{def:initial-EEq}
	(\widetilde{q},  \widetilde{\mathcal{H}}, v, w)\vert_{t=0} = (\widetilde{q}_\mrm{in}, \widetilde{\mathcal{H}}_\mrm{in}, v_\mrm{in}, w_\mrm{in} ).
\end{equation}
Accordingly, $ (\lbrack \dt^\alpha \widetilde q\rbrack_\mrm{in}, \lbrack \dt^\alpha \widetilde{\mathcal H}\rbrack_\mrm{in}, \lbrack \dt^\alpha v\rbrack_\mrm{in}, \lbrack \dt^\alpha w\rbrack_\mrm{in}), \, \alpha \in \mathbb N^+ $, are defined inductively after shifting spatial derivatives to temporal derivatives using equations of \eqref{eq:rf-EEq}.

\bigskip

Before stating our results, we would like to make a few perspective remarks. As one can see, in system \eqref{eq:rf-EEq}, the linear oscillator is given by
$$
\dfrac{1}{\varepsilon} \mathcal L_a + \dfrac{1}{\varepsilon^\nu} \mathcal L_g,
$$
i.e., a combination of the acoustic oscillator and the internal wave oscillator. Moreover, since $ \nu \in (0,1) $, as $ \varepsilon \rightarrow 0^+ $, the oscillation induced by $ \frac{1}{\varepsilon} \mathcal L_a $ is much faster than that of $ \frac{1}{\varepsilon^\nu} \mathcal L_g $. This means that the acoustic waves will be averaged out (or filtered out) before the internal waves. Owing to such a phenomena, we propose a pseudo-incompressible/soundproof model, similar to \cite{Klein2010}:
\begin{equation}\label{eq:sndprf}
	\begin{cases}
		\dvh v_\mrm{sp} + \dz w_\mrm{sp} = 0, \\
		\mathcal B \dt \widetilde{\mathcal H}_\mrm{sp} + \mathcal B v_\mrm{sp}\cdot\nablah \widetilde{\mathcal H}_\mrm{sp} + \mathcal B w_\mrm{sp}\dz \widetilde{\mathcal H}_\mrm{sp} - \dfrac{1}{\varepsilon^{\nu}} w_\mrm{sp} = \mathcal B \widetilde G \cdot \widetilde{\mathcal H}_\mrm{sp} w_\mrm{sp}, \\
		\mathcal C \dt v_\mrm{sp} + \mathcal C v_\mrm{sp} \cdot \nablah v_\mrm{sp} + \mathcal C w_\mrm{sp}\dz v_\mrm{sp} + \nablah p_\mrm{sp} = 0,\\
		\mathcal C \dt w_\mrm{sp} + \mathcal C v_\mrm{sp} \cdot \nablah w_\mrm{sp} + \mathcal C w_\mrm{sp}\dz w_\mrm{sp} + \dz p_\mrm{sp} + \dfrac{1}{\varepsilon^\nu} \widetilde{\mathcal H}_\mrm{sp} = 0,
	\end{cases}
\end{equation}
whose solutions will be an approximation to the solutions to system \eqref{eq:rf-EEq} minus the acoustic waves, with or without initial acoustic waves.

Aside from the soundproof approximation, we would like to investigate how the mixture of acoustic waves and internal waves with different frequencies affects the total oscillation of the system. To do so, we will first consider a linear system associated with \eqref{eq:rf-EEq} and the corresponding eigenvalue problem. By comparing the distribution of eigenvalues with that of eigenvalues associated with $ \mathcal L_a $, we have a more precise description of how the internal waves intertwine with the acoustic waves at the level of eigenvalues. Based on the understanding of the linear theory, we will discuss the fast-slow wave interaction of system \eqref{eq:rf-EEq} in the end. 

We would like to mention, our current study is strongly motivated by previous study on flows with strong stratification (see, e.g., \cite{Klein2010,Klein2009}), to which we refer readers for more metrological perspectives. A recent paper \cite{DavidLannes2019} focuses on the soundproof model with stratification to better understand the internal waves. 

The justification of singular limits is rooted back to:
\begin{itemize}
	\item Fast oscillation limit with only one parameter can be found in \cite{Schochet1988,Schochet1994,SchochetCMP1986}. For geophysical purposes, see for instance \cite{Klainerman1981,Klainerman1982,Majda1984}. 
%	\item We also refer interested readers to \cite{Schochet1988,Schochet1994,SchochetCMP1986} for the theory of fast oscillation limit. 
		
	\item Fast oscillation limit with several parameters linked together can be found in \cite{SchochetXu2020}. For geophysical purposes, see, for instance, \cite{feireisl2012multi,FeireislSingularLimits} for weak solutions, and \cite{BrMe} for strong solutions.
%	\item 
%	 Multi-scale limit, including stratification, can be found in \cite{feireisl2012multi,FeireislSingularLimits} for weak solutions, and \cite{SchochetXu2020,BrMe} for strong solutions.
%	\item 
%	 See \cite{FeireislSingularLimits} for a comprehensive monograph in the case of viscous flows with or without stratification. 
	
\end{itemize}

\bigskip

In this work, we do {\bf not} perform fast oscillation limit. Instead, we want to prove that the non-oscillating mean flows and the oscillating internal waves of solutions of two singular systems (the compressible and pseudo-incompressible/soundproof models) remain asymptotically close over the slowest time scale. The main theorems in this paper consider the initial data of the following types in the full compressible system \eqref{eq:rf-EEq}, as in table \ref{tb:waves}:
\begin{center}
\begin{table}[h]
\begin{center}
\begin{minipage}{174pt}
\caption{Waves in the initial data}\label{tb:waves}
\begin{tabular}{@{}lll@{}}
\hline
 {Waves in the initial data} & Theorem \ref{thm:sp-approximation}  & Theorem \ref{thm:sp-ill-prepared} \\
\hline
Mean flows & \cmark & \cmark \\ 
Internal waves & \cmark & \cmark \\ 
Acoustic waves & \xmark  & \cmark \\ 
\hline
\end{tabular}
\end{minipage}
\end{center}
\end{table}
\end{center}
and compare the solutions to those of the soundproof model \eqref{eq:sndprf}. In both cases, we justify the rigidity of capturing the dynamics of the mean flows and the internal waves of the full compressible system using the soundproof approximation. 

\bigskip

More precisely, our first result provides the comparison of solutions to the two singular systems in the well-prepared data (without acoustic waves) case:
%%
%\begin{theorem}[\chgd{qualitative statement}]\label{thm:sp-approximation-qualitative} 
%\chgd{For two sound-proof systems, the incompressible and a pseudo-incompressible model that is intermediate between the incompressible and the fully compressible systems, we demonstrate that their fast internal wave modes are asymptotically close over the slow advective time scale.}
%\end{theorem}
%
\begin{theorem}[{Mean flows + Internal waves}]\label{thm:sp-approximation}
Let $ 0 < 2 \nu < 1 $. 
Denote the initial data to the intermediate model \eqref{eq:mid-s}, below {in page \pageref{eq:mid-s}}, as
\begin{equation*}
	(\widetilde{\mathcal H}_{\mrm{ms},\mrm{in}}, v_{\mrm{ms},\mrm{in}},w_{\mrm{ms},\mrm{in}}) \quad \in H^3(\mathbb T^3),
\end{equation*}
and the initial data to the soundproof model \eqref{eq:sndprf} as
\begin{equation*}
	(\widetilde{\mathcal H}_{\mrm{sp},\mrm{in}},v_{\mrm{sp},\mrm{in}}, w_{\mrm{ps},\mrm{in}}) \quad \in H^3(\mathbb T^3),
\end{equation*}
satisfying the pseudo-incompressible and incompressible conditions \subeqref{eq:mid-s}{1} and \subeqref{eq:sndprf}{1}, respectively. Then there exist local-in-time solutions to the intermediate model \eqref{eq:mid-s}, below, and the soundproof model \eqref{eq:sndprf}, denoted as
\begin{equation*}
	( p_\mrm{ms}(s), \widetilde{\mathcal H}_\mrm{ms}(s), v_\mrm{ms}(s), w_\mrm{ms}(s) ) \quad \text{and} \quad ( p_\mrm{sp}(s), \widetilde{\mathcal H}_\mrm{sp}(s), v_\mrm{sp}(s), w_\mrm{sp}(s) ),
\end{equation*}
respectively, in $ L^\infty ((0,T_\mrm{ms+sp}), H^3(\mathbb T^3))\cap C([0,T_\mrm{ms+sp}), H^2(\mathbb T^3)) $ for some $ T_\mrm{ms+sp} \in (0,\infty) $. 

Meanwhile, denote by 
\begin{equation*}
	( \widetilde q(s), \widetilde{\mathcal H}(s), v(s), w(s) )
\end{equation*}
the solution to {compressible} system \eqref{eq:rf-EEq} in Proposition \ref{thm:uniform-est}, below, with initial data satisfying \eqref{cnt-thm:uniform-est-1} for any fixed $ \sigma \in (0,\mu] $. %with $ \sigma = \mu $. 
Then there exist $ T_\mrm{app} \in (0,\infty) $ and $ \mathcal C_\mrm{app}\in (0,\infty) $, depending only on the initial data above, such that, for $ \varepsilon \in (0,1) $,
\begin{equation}\label{sp-ap-est}
	\begin{gathered}
	\sup_{\mathclap{0\leq s \leq T_\mrm{app}}} \norm{\widetilde q(s) - \varepsilon p_\mrm{sp}(s), \widetilde{\mathcal H}(s) - \widetilde{\mathcal H}_\mrm{sp}(s), v(s) - v_\mrm{sp}(s), w(s) - w_\mrm{sp}(s) }{\Lnorm{2}} \\
	\leq \mathcal C_\mrm{app} \biggl( \varepsilon^{\max\lbrace \mu - \nu,  \mu - \sigma \rbrace} \\
	+ \norm{\widetilde{\mathcal H}_{\mrm{ms},\mrm{in}}-\widetilde{\mathcal H}_{\mrm{sp},\mrm{in}},
	v_{\mrm{ms},\mrm{in}}-v_{\mrm{sp},\mrm{in}},
	w_{\mrm{ms},\mrm{in}}-w_{\mrm{sp},\mrm{in}}}{\Lnorm{2}} \\
	 + \norm{\widetilde q_\mrm{in}- \varepsilon  p_{\mrm{ms},\mrm{in}}, \widetilde{\mathcal H}_\mrm{in} - \widetilde{\mathcal H}_{\mrm{ms},\mrm{in}}, v_{\mrm{in}} - v_{\mrm{ms},\mrm{in}}, w_{\mrm{in}} - w_{\mrm{ms},\mrm{in}}}{\Lnorm{2}} 
	\biggr).
	\end{gathered}
\end{equation}
Here $ p_\mrm{ms}, \, p_{\mrm{ms},\mrm{in}} $, and $ p_\mrm{sp}  $ are given by solutions to elliptic problems \eqref{eq:pressure-ms} and \eqref{eq:pressure-sp}, and can be estimated as in \eqref{est:pressure-ms-1} and \eqref{est:pressure-1}, below {in pages \pageref{eq:pressure-ms}, \pageref{eq:pressure-sp}, \pageref{est:pressure-ms-1}, and \pageref{est:pressure-1}, respectively}. 
%, and $ \varepsilon \in (0,\varepsilon_{0,\mu}) $ with $ \varepsilon_{0,\mu} \in (0,1) $ as in theorem \ref{thm:uniform-est}.
Here, recall that $ \mu + 2\nu = 1 $. 
%In particular, with $ \mu - \nu  = 1 - 3\nu >0 $ and proper initial data, \eqref{sp-ap-est} provides the error estimates of the soundproof approximation.
\end{theorem}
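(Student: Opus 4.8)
\emph{Proof plan.} The plan is to compare the compressible solution of \eqref{eq:rf-EEq} with the soundproof solution of \eqref{eq:sndprf} by routing through the intermediate model \eqref{eq:mid-s}, estimating each of the two comparisons by an $L^2(\mathbb T^3)$ energy identity for the corresponding difference. Write $U=(\widetilde q,\widetilde{\mathcal H},v,w)$ for the solution of \eqref{eq:rf-EEq} from Proposition \ref{thm:uniform-est}, and set $\widehat U_\mrm{ms}:=(\varepsilon p_\mrm{ms},\widetilde{\mathcal H}_\mrm{ms},v_\mrm{ms},w_\mrm{ms})$ and $\widehat U_\mrm{sp}:=(\varepsilon p_\mrm{sp},\widetilde{\mathcal H}_\mrm{sp},v_\mrm{sp},w_\mrm{sp})$, with $p_\mrm{ms},p_\mrm{sp}$ reconstructed from the elliptic problems \eqref{eq:pressure-ms} and \eqref{eq:pressure-sp}; thus $U-\widehat U_\mrm{sp}$ is precisely the quantity on the left of \eqref{sp-ap-est}, and
\begin{equation*}
\norm{U-\widehat U_\mrm{sp}}{\Lnorm{2}}\le \norm{U-\widehat U_\mrm{ms}}{\Lnorm{2}}+\norm{\widehat U_\mrm{ms}-\widehat U_\mrm{sp}}{\Lnorm{2}}.
\end{equation*}
One works on $[0,T_\mrm{app}]$ with $T_\mrm{app}:=\min\lbrace T_\mrm{ms+sp},T_\ast\rbrace$, $T_\ast$ being the $\varepsilon$-uniform lifespan of Proposition \ref{thm:uniform-est}. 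Local well-posedness of \eqref{eq:mid-s} and \eqref{eq:sndprf} in $L^\infty_t H^3\cap C_t H^2$ on an $\varepsilon$-independent interval, with $H^3$-norms bounded uniformly in $\varepsilon$, follows from the standard quasilinear energy method: their only singular ingredients---the internal-wave operator $\varepsilon^{-\nu}\mathcal L_g$ and the (weighted, resp.\ exact) divergence constraint---are constant-coefficient, $\mathcal L_g$ is anti-symmetric, and so they drop out of every $H^k$-energy identity, just as $\varepsilon^{-1}\mathcal L_a$ and $\varepsilon^{-\nu}\mathcal L_g$ do for \eqref{eq:rf-EEq}. One only records that, when internal waves are present, $\dt v$, $\dt w$, $\dt\widetilde{\mathcal H}$ and the pressures $p_\mrm{ms},p_\mrm{sp}$ together with their gradients (for all three solutions) are of size $\mathcal O(\varepsilon^{-\nu})$; this does not affect the existence times but enters the residual bookkeeping in Step 2.

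\emph{Step 1: \eqref{eq:mid-s} versus \eqref{eq:sndprf}.} Both systems are soundproof (no $\varepsilon^{-1}$-acoustic oscillator), share the operator $\varepsilon^{-\nu}\mathcal L_g$, carry the constant momentum coefficient $\mathcal C$ after the normalization $\widetilde q=\varepsilon p$, and differ only through the $\mathcal O(\varepsilon)$-weight $P^\varepsilon=1-\varepsilon\widetilde P^\varepsilon$ in the divergence constraint and the lower-order terms carried by $S^\varepsilon$. Subtracting the systems and testing each difference equation against the corresponding difference component, $\varepsilon^{-\nu}\mathcal L_g$ cancels by anti-symmetry; the pressure-gradient difference contributes, after integration by parts, $-\int \delta p\,(\dvh\delta v+\dz\delta w)\idx$, where $\dvh\delta v+\dz\delta w=\dvh v_\mrm{ms}+\dz w_\mrm{ms}=-(\dz P^\varepsilon/P^\varepsilon)\,w_\mrm{ms}=\mathcal O(\varepsilon)$ in $L^2$ while $\norm{\delta p}{\Lnorm{2}}=\mathcal O(\varepsilon^{-\nu})$ by the triangle inequality for the two $\mathcal O(\varepsilon^{-\nu})$ pressures; the convective and zeroth-order terms are controlled by the uniform $H^3$-bounds and $H^3(\mathbb T^3)\hookrightarrow W^{1,\infty}(\mathbb T^3)$. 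A Grönwall argument then yields, on $[0,T_\mrm{ms+sp}]$,
\begin{equation*}
\norm{\widehat U_\mrm{ms}(s)-\widehat U_\mrm{sp}(s)}{\Lnorm{2}}\lesssim \norm{(\widetilde{\mathcal H}_\mrm{ms,in}-\widetilde{\mathcal H}_\mrm{sp,in},\,v_\mrm{ms,in}-v_\mrm{sp,in},\,w_\mrm{ms,in}-w_\mrm{sp,in})}{\Lnorm{2}}+\varepsilon^{\beta}
\end{equation*}
for some $\beta=\beta(\mu,\nu)\ge \mu+\nu$, the initial pressure difference being absorbed into the velocity and buoyancy differences via \eqref{est:pressure-ms-1} and \eqref{est:pressure-1}; since $\mu+\nu>\max\lbrace\mu-\nu,\mu-\sigma\rbrace$ whenever $\nu,\sigma>0$, the $\varepsilon^{\beta}$ term is subsumed into the bound of \eqref{sp-ap-est}.

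\emph{Step 2: \eqref{eq:rf-EEq} versus \eqref{eq:mid-s}.} Put $W:=U-\widehat U_\mrm{ms}$. Subtracting the systems, $W$ satisfies an equation whose singular part is still $\varepsilon^{-1}\mathcal L_a W+\varepsilon^{-\nu}\mathcal L_g W$, whose transport-type terms vanish with $W$, and whose remaining forcing is the residual $\mathcal R$ obtained by inserting $\widehat U_\mrm{ms}$ into \eqref{eq:rf-EEq}. In the $L^2$-identity for $W$ both $\varepsilon^{-1}\mathcal L_a$ and $\varepsilon^{-\nu}\mathcal L_g$ cancel by anti-symmetry, so no negative power of $\varepsilon$ survives; it remains to prove $\norm{\mathcal R(s)}{\Lnorm{2}}\lesssim \varepsilon^{\max\lbrace\mu-\nu,\mu-\sigma\rbrace}$ uniformly on $[0,T_\mrm{app}]$. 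Here the construction of \eqref{eq:mid-s} is used: the normalization $\widetilde q=\varepsilon p_\mrm{ms}$ makes $\varepsilon^{-1}\nablah(\varepsilon p_\mrm{ms})=\nablah p_\mrm{ms}$ reproduce the soundproof pressure gradient; the weighted constraint $\dvh(P^\varepsilon v_\mrm{ms})+\dz(P^\varepsilon w_\mrm{ms})=0$ makes $\varepsilon^{-1}(\dvh v_\mrm{ms}+\dz w_\mrm{ms})$ reproduce, up to $\mathcal O(\varepsilon)$ and the $\varepsilon^\mu$-stratification part absorbed into $\widetilde P^\varepsilon$, the buoyancy--divergence coupling on the right of the first equation of \eqref{eq:rf-EEq}; and $S^\varepsilon=S^\varepsilon_0+\varepsilon^\nu S^\varepsilon_\nu\theta$ matches the buoyancy forcings in the $\widetilde{\mathcal H}$- and $w$-equations. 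What then survives in $\mathcal R$ is: various $\mathcal O(\varepsilon)$ and $\mathcal O(\varepsilon^\mu)$ pieces (the $P^\varepsilon$-expansion, the quadratic terms in the first equation, the $\overline{\mathcal H}_0$-terms absent from \eqref{eq:mid-s}); the dominant term coming from the mismatch between the variable coefficient $\vartheta=\mathcal C+\mathcal O(\varepsilon^\mu)$ of \eqref{eq:rf-EEq} and the constant $\mathcal C$ of \eqref{eq:mid-s} multiplying the material accelerations of $v_\mrm{ms}$ and $w_\mrm{ms}$---which, being of size $\mathcal O(\varepsilon^{-\nu})$ in the presence of internal waves, produces a contribution of size $\mathcal O(\varepsilon^{\mu-\nu})$; and a contribution traceable to the residual acoustic content permitted by the $\sigma$-preparedness assumption \eqref{cnt-thm:uniform-est-1}, bounded by $\mathcal O(\varepsilon^{\mu-\sigma})$. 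After this bookkeeping, and estimating all nonlinear and commutator terms in the $W$-equation by the uniform $H^3$-bounds (the $\mathcal O(\varepsilon^\mu)$ contributions from $\dt\vartheta$ and from $\vartheta$ evaluated at $\widetilde{\mathcal H}$ rather than $\widetilde{\mathcal H}_\mrm{ms}$ being absorbed into the Grönwall factor), one obtains
\begin{equation*}
\sup_{0\le s\le T_\mrm{app}}\norm{W(s)}{\Lnorm{2}}\lesssim \norm{W(0)}{\Lnorm{2}}+\varepsilon^{\max\lbrace\mu-\nu,\mu-\sigma\rbrace},
\end{equation*}
and $\norm{W(0)}{\Lnorm{2}}=\norm{(\widetilde q_\mrm{in}-\varepsilon p_\mrm{ms,in},\,\widetilde{\mathcal H}_\mrm{in}-\widetilde{\mathcal H}_\mrm{ms,in},\,v_\mrm{in}-v_\mrm{ms,in},\,w_\mrm{in}-w_\mrm{ms,in})}{\Lnorm{2}}$ is exactly the third term on the right of \eqref{sp-ap-est}. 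Combining Steps 1 and 2 through the triangle inequality yields \eqref{sp-ap-est}, with $\mathcal C_\mrm{app}$ and $T_\mrm{app}$ depending only on the $H^3$-norms of the three sets of initial data.

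\emph{Main obstacle.} The crux is the residual bound $\norm{\mathcal R}{\Lnorm{2}}\lesssim \varepsilon^{\max\lbrace\mu-\nu,\mu-\sigma\rbrace}$ of Step 2. It forces one to (i) identify \emph{all} the cancellations between \eqref{eq:rf-EEq} and \eqref{eq:mid-s}, so that nothing of size $\mathcal O(\varepsilon^{-1})$ or $\mathcal O(1)$ remains in $\mathcal R$---in particular the pressure reconstruction $\widetilde q\leftrightarrow\varepsilon p_\mrm{ms}$ and the identification of the weak-compressibility defect with the buoyancy--divergence coupling; (ii) track the surviving $\varepsilon$-powers, which depend delicately on $\mu$ and $\nu$ through $\mu+2\nu=1$ and on the preparedness exponent $\sigma$, the subtle point being that $\dt v_\mrm{ms},\dt w_\mrm{ms}$ are \emph{not} $\mathcal O(1)$ but $\mathcal O(\varepsilon^{-\nu})$, so that an $\mathcal O(\varepsilon^\mu)$ coefficient mismatch generates an $\varepsilon^{\mu-\nu}$-term rather than an $\varepsilon^\mu$-term; and (iii) bound every nonlinear and commutator contribution in $L^2$ using only the uniform $H^3$-estimates of Proposition \ref{thm:uniform-est}. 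Secondary technical points are preserving the exact anti-symmetry of the two oscillators in the $L^2$-identities despite the variable coefficient $\vartheta$ and the weighted constraint, and controlling the $\mathcal O(\varepsilon^{-\nu})$-large pressures in both difference estimates via \eqref{est:pressure-ms-1} and \eqref{est:pressure-1} together with the $\mathcal O(\varepsilon)$-smallness of the divergence defects.
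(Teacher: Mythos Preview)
Your overall strategy---route through the intermediate model and run two $L^2$ stability estimates---is exactly the paper's, and most of your bookkeeping is on target. There are, however, two genuine gaps.

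The main one is in Step 2, where you claim the residual satisfies $\norm{\mathcal R}{\Lnorm{2}}\lesssim\varepsilon^{\max\{\mu-\nu,\,\mu-\sigma\}}$ after listing \emph{two separate} contributions: an $\mathcal O(\varepsilon^{\mu-\nu})$ term from $(\vartheta-\mathcal C)$ times the $\mathcal O(\varepsilon^{-\nu})$ material acceleration of $(v_\mrm{ms},w_\mrm{ms})$, and an $\mathcal O(\varepsilon^{\mu-\sigma})$ term ``traceable to the residual acoustic content''. If both were genuinely present you would obtain $\varepsilon^{\min\{\mu-\nu,\,\mu-\sigma\}}$, not the maximum. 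In the paper these are not two terms but two \emph{alternative} ways of organizing the single mismatch $\vartheta\,\dt v-\mathcal C\,\dt v_\mrm{ms}$: either as $\vartheta\,\dt(v-v_\mrm{ms})+(\vartheta-\mathcal C)\,\dt v_\mrm{ms}$, whose residual is $\mathcal O(\varepsilon^\mu)\cdot\mathcal O(\varepsilon^{-\nu})$, or as $\mathcal C\,\dt(v-v_\mrm{ms})+(\vartheta-\mathcal C)\,\dt v$, whose residual is $\mathcal O(\varepsilon^\mu)\cdot\mathcal O(\varepsilon^{-\sigma})$ (this is precisely where Proposition~\ref{thm:uniform-est} enters, via $\norm{\dt v,\dt w}{\Hnorm{2}}=\mathcal O(\varepsilon^{-\sigma})$). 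One \emph{chooses} the better option according to whether $\sigma\lessgtr\nu$, which is why the exponent is a maximum; there is no independent ``acoustic'' contribution of size $\varepsilon^{\mu-\sigma}$. The paper implements this by writing the difference system twice, once as \eqref{eq:ms-perturb} and once as \eqref{eq:ms-perturb-2}.

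A secondary gap is in Step 1: bounding $\norm{\delta p}{\Lnorm{2}}$ by the triangle inequality as $\mathcal O(\varepsilon^{-\nu})$ and pairing it against the $\mathcal O(\varepsilon)$ divergence defect produces a \emph{constant} forcing of size $\varepsilon^{1-\nu}$ in $\tfrac{d}{dt}\norm{\delta}{\Lnorm{2}}^2$, which after Gr\"onwall yields only $\norm{\delta}{\Lnorm{2}}\lesssim\varepsilon^{(1-\nu)/2}=\varepsilon^{(\mu+\nu)/2}$, not the $\varepsilon^{\mu+\nu}$ you assert. For $\nu<1/5$ one has $(\mu+\nu)/2<\mu-\nu$, so this would spoil \eqref{sp-ap-est}. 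The paper instead derives an elliptic equation for $\delta p=p_\mrm{ms}-p_\mrm{sp}$ directly (see \eqref{eq:pressure-perturb-sp}) and obtains $\norm{\delta p}{\Lnorm{2}}\lesssim \varepsilon^{-\nu}\norm{\delta\widetilde{\mathcal H},\delta v,\delta w}{\Lnorm{2}}+\varepsilon^{1-\nu}$; the first piece then feeds into the Gr\"onwall coefficient and the second yields the claimed $\varepsilon^{1-\nu}$ in \eqref{est:uni-total-dff-2}.
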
%

The uniform-in-$\varepsilon$ estimate of solutions to \eqref{eq:sndprf} and \eqref{eq:mid-s} can be found in \eqref{est:uni-total-sp}, \eqref{est:pressure-1}, \eqref{est:uni-total-ms}, and \eqref{est:pressure-ms-1}, respectively.  
In particular, with $ \max\lbrace \mu - \nu, \mu -\sigma\rbrace  = \max \lbrace 1 - 3\nu, 1-2\nu - \sigma \rbrace >0 $ and proper initial data (so that the initial data on the right hand side of \eqref{sp-ap-est} is small), \eqref{sp-ap-est} provides the error estimates and convergence rate of the soundproof approximation with ``well-prepared'' initial data. 

The term $ \varepsilon^{\max\lbrace \mu - \nu,  \mu - \sigma \rbrace} $ in the error estimate \eqref{sp-ap-est} results from the comparison between the terms $ (\mathcal C+ \mathcal O(\varepsilon^\mu)) (\dt v, \dt w) $ and $ \mathcal C (\dt v_\mrm{ms}, \dt w_\mrm{ms}) $ in section \ref{subsec:itmd-ap}, which can be either written as
$$
(\mathcal C+ \mathcal O(\varepsilon^\mu)) (\dt (v-v_\mrm{ms}), \dt (w-w_\mrm{ms})) + \mathcal O(\varepsilon^\mu) (\dt v_\mrm{ms}, \dt w_\mrm{ms})
$$
or 
$$
\mathcal C  (\dt (v-v_\mrm{ms}), \dt (w-w_\mrm{ms})) + \mathcal O(\varepsilon^\mu) (\dt v, \dt w).
$$
See \eqref{eq:ms-perturb} and \eqref{eq:ms-perturb-2}, respectively, for details. Since $ (\dt v_\mrm{ms}, \dt w_\mrm{ms}) \simeq (\dt v_\mrm{sp}, \dt w_\mrm{sp}) \simeq \mathcal O(\varepsilon^{-\nu}) $ (as can be seen through \eqref{eq:sndprf}) and $ (\dt v, \dt w) \simeq \mathcal O(\varepsilon^{-\sigma} )$ thanks to Proposition \ref{thm:uniform-est}, this results in our freedom of choice in the error estimate \eqref{sp-ap-est}. For more details, we refer readers to the estimate of $ \mathfrak I_3 $ in \eqref{est:I3-ms} in page \pageref{est:I3-ms} and \eqref{rm:error-1} in page \pageref{rm:error-1} of the proof of the theorem. 

%\todo{more physical remark?}
%\klein{[RK: I think that the next paragraph, which starts with "Heuristically...", IS rather physical. If you agree with my insertion of ``acoustic'', then I would leave it at that.]}

Heuristically speaking, for larger $ \nu \in [1/3,1/2) $, the oscillating rates of the internal gravity waves and the acoustic waves (if non-trivial, of $ \mathcal O(\varepsilon^{-\nu}) $ and $ \mathcal O(\varepsilon^{-1}) $, respectively) are closer to each other.  
In order to control 
the error $ \varepsilon^{\max\lbrace \mu - \nu,  \mu - \sigma \rbrace}= \varepsilon^{\max\lbrace 1 - 3\nu,  1-2\nu - \sigma \rbrace} $, we need $ \sigma < \mu = 1-2\nu $, i.e., smaller value of $ \sigma $ (hence weaker {acoustic} waves in the full compressible system), to avoid strong interaction between acoustic waves and internal waves in the full compressible system.

We would also like to point out that the constraint $ 0 < 2 \nu < 1 $ is physical (see the formal deviation between \eqref{eq:EEq-2} and \eqref{eq:EEq-3}). %  while $ 3\nu < 1 $ is due to the mathematical analysis.

\bigskip

The second result will provide the convergence in the ill-prepared data (with acoustic waves) case:
\begin{theorem}[Mean flows + Internal waves + Acoustic waves]\label{thm:sp-ill-prepared}
	Under the same assumptions as in Theorem \ref{thm:sp-approximation}, denote by $ U=(\widetilde q, \widetilde{\mathcal H}, v, w) $, the solution to \eqref{eq:rf-EEq}, and write $ U = U^\mrm{mf}_\varepsilon + U^\mrm{gw}_\varepsilon + U^\mrm{aw}_\varepsilon $ as the summation of the mean flows, the internal waves, and the acoustic waves. Let $ (p_\mrm{sp}, U_\mrm{sp}) = (p_\mrm{sp}, \widetilde{\mathcal H}_\mrm{sp}, v_\mrm{sp}, w_\mrm{sp}) $ be the solution to the soundproof approximation \eqref{eq:sndprf} with initial data capturing the initial mean flows and internal waves of the full compressible system \eqref{eq:rf-EEq} (see \eqref{est:proj-mg-sp-102} for the exact meaning of this statement). Let $ \mathcal P_\mrm{rd}: (\widetilde q, \widetilde{\mathcal H}, v, w) \mapsto  (\widetilde{\mathcal H}, v, w) $ and $ \lbrace T_k \rbrace_{k \in \mathbb N} $ be the vector-dimension reduction and finite dimension truncation defined in \eqref{def:dr-prjtn} and \eqref{def:k-truncation} {of pages \pageref{def:dr-prjtn} and \pageref{def:k-truncation}}, respectively. Then for any positive integer $ K $, one has
	\begin{equation}
		\sup_{\mathclap{0<t<T_{\sigma, \mrm{mg}}}}\norm{T_K \mathcal P_\mrm{rd} (U^\mrm{mf}_\varepsilon +  U^\mrm{gw}_\varepsilon)(t) - T_K U_\mrm{sp}(t)}{L^2}^2 \leq C_K (\mathcal O(\varepsilon^{2\mu - 2\sigma}) + \mathcal O(\varepsilon) ) + Err,
	\end{equation}
	where $ T_{\sigma,\mrm{mg}}\in (0,\infty) $ is the time of existence of solutions independent of $ \varepsilon $ and $ K $, and $ Err $ is the truncation error which vanishes uniformly-in-$\varepsilon $ as $ K \rightarrow \infty $. 
\end{theorem}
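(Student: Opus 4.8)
The plan is to follow the route of Theorem~\ref{thm:sp-approximation} --- comparing \eqref{eq:rf-EEq} with the intermediate model \eqref{eq:mid-s} and then \eqref{eq:mid-s} with the soundproof model \eqref{eq:sndprf} --- but now to carry the acoustic component of the solution all the way through the argument, discarding it only at the very end via $T_K\mathcal P_\mrm{rd}$. First I would fix, using Proposition~\ref{thm:uniform-est} together with the uniform-in-$\varepsilon$ bounds for \eqref{eq:sndprf} and \eqref{eq:mid-s}, a common existence time $T_{\sigma,\mrm{mg}}>0$ on which all three solutions are bounded in $L^\infty_t H^3(\mathbb T^3)$ uniformly in $\varepsilon$, recording the quantitative inputs: the acoustic component $U^\mrm{aw}_\varepsilon$ is a fixed positive power of $\varepsilon$ small, $\partial_t(v,w)=\mathcal O(\varepsilon^{-\sigma})$ for \eqref{eq:rf-EEq}, and $\partial_t(v_\mrm{sp},w_\mrm{sp})=\mathcal O(\varepsilon^{-\nu})$ for \eqref{eq:sndprf}. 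Next I would invoke the linear spectral analysis developed earlier: diagonalising the antisymmetric oscillator $\varepsilon^{-1}\mathcal L_a+\varepsilon^{-\nu}\mathcal L_g$ Fourier-mode by Fourier-mode splits each wavenumber into a kernel (the mean flow, with $\dvh v+\dz w=0$, $w=\widetilde{\mathcal H}=0$, $\nablah\widetilde q=0$), internal-wave eigenspaces with eigenvalues of modulus $\mathcal O(\varepsilon^{-\nu})$, and acoustic eigenspaces with eigenvalues of modulus $\mathcal O(\varepsilon^{-1})$; the associated spectral projections $\mathbb P_\mrm{mf},\mathbb P_\mrm{gw},\mathbb P_\mrm{aw}$ commute with the oscillator, are bounded on $H^3(\mathbb T^3)$ uniformly in $\varepsilon$, and define $U^\mrm{mf}_\varepsilon,U^\mrm{gw}_\varepsilon,U^\mrm{aw}_\varepsilon$. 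Two structural facts read off from the dispersion relation are used repeatedly: on the slow subspace $\mathrm{Range}(\mathbb P_\mrm{mf}+\mathbb P_\mrm{gw})$ the $\widetilde q$-component is $\mathcal O(\varepsilon^{1-\nu})$ relative to $(\widetilde{\mathcal H},v,w)$ and $\dvh v+\dz w$ is $\varepsilon$ times a bounded profile; on the acoustic subspace the $\widetilde{\mathcal H}$-component is $\mathcal O(\varepsilon^{1-\nu})$.

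Setting $V_\varepsilon:=U^\mrm{mf}_\varepsilon+U^\mrm{gw}_\varepsilon=(I-\mathbb P_\mrm{aw})U$ and applying $I-\mathbb P_\mrm{aw}$ to \eqref{eq:rf-EEq} gives $\partial_t V_\varepsilon+(\varepsilon^{-1}\mathcal L_a+\varepsilon^{-\nu}\mathcal L_g)V_\varepsilon=(I-\mathbb P_\mrm{aw})\mathcal Q(U,U)+(I-\mathbb P_\mrm{aw})\mathcal R_\varepsilon$, where $\mathcal Q$ gathers the quadratic transport terms and $\mathcal R_\varepsilon$ the stratification terms (the $\mathcal O(1)$ term $\mathcal A\mathcal C Gw$ and the $\mathcal O(\varepsilon^\mu)$ corrections coming from $\vartheta-\mathcal C$, $\varepsilon^\mu\overline{\mathcal H}_0w$, $\mathcal B\widetilde G\widetilde{\mathcal H}w$). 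Writing $U=V_\varepsilon+U^\mrm{aw}_\varepsilon$, the genuinely new terms compared with Theorem~\ref{thm:sp-approximation} are the acoustic--slow and acoustic--acoustic pieces $(I-\mathbb P_\mrm{aw})[\mathcal Q(V_\varepsilon,U^\mrm{aw}_\varepsilon)+\mathcal Q(U^\mrm{aw}_\varepsilon,V_\varepsilon)+\mathcal Q(U^\mrm{aw}_\varepsilon,U^\mrm{aw}_\varepsilon)]$. Here the truncation $T_K$ applied at the end is essential: restricting to the finitely many modes $|k|\le K$, the acoustic component carries a phase oscillating at rate $\sim\varepsilon^{-1}$, strictly faster than the $\mathcal O(\varepsilon^{-\nu})$ internal-wave rate and the $\mathcal O(1)$ mean-flow rate, and since an acoustic frequency of size $\varepsilon^{-1}$ cannot be matched or cancelled by slow frequencies of size $\mathcal O(\varepsilon^{-\nu})$ or $\mathcal O(1)$, the only non-averageable contributions are the resonant acoustic--acoustic ones (those with $\pm\omega^\mrm{ac}_{k_1}\pm\omega^\mrm{ac}_{k_2}=0$ and $k_1+k_2$ a slow wavenumber). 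A Schochet-type normal-form / integration-by-parts in time therefore removes all acoustic--slow and non-resonant acoustic--acoustic contributions at the cost of $L^2$-errors that are $\mathcal O(\varepsilon)$ (using $\partial_t V_\varepsilon=\mathcal O(\varepsilon^{-\sigma})$, the smallness of the acoustic amplitude, the uniform $H^3$ bounds, and $\varepsilon\,\partial_t U^\mrm{aw}_\varepsilon=\mathcal O(1)$), while the resonant acoustic--acoustic contribution to the slow subspace vanishes by the structure of the transport nonlinearity, as in the incompressible Euler case. After this reduction $V_\varepsilon$ satisfies --- up to $L^2$-errors that are $\mathcal O(\varepsilon)$ --- a closed slow system which, invoking the two structural facts (negligible $\widetilde q$; near-incompressibility with the shallow-domain correction $\dvh v+\dz w\approx\varepsilon\mathcal A\mathcal C Gw$), coincides with the intermediate model \eqref{eq:mid-s} modulo the $\vartheta-\mathcal C=\mathcal O(\varepsilon^\mu)$ and $\overline{\mathcal H}_0$-coefficient discrepancies studied in Section~\ref{sec:sound-proof-app}.

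The comparison is then closed by two $L^2(\mathbb T^3)$ energy estimates chained together. For the difference $V_\varepsilon-U_\mrm{ms}$, with $U_\mrm{ms}$ the solution of \eqref{eq:mid-s} issued from the initial data matching the projected mean-flow/internal-wave part of $U_\mrm{in}$ (cf.~\eqref{est:proj-mg-sp-102}), I pair the perturbed equation with $V_\varepsilon-U_\mrm{ms}$ in $L^2$: by \eqref{SYM} the operators $\mathcal L_a,\mathcal L_g$ are antisymmetric and the singular terms drop, the transport nonlinearities are absorbed by the uniform $H^3$ bounds in a standard local-existence Gronwall loop, the $\vartheta-\mathcal C$ correction yields a source controlled by $\varepsilon^\mu\|\partial_t(v,w)\|_{L^2}\lesssim\varepsilon^{\mu-\sigma}$ (one is free to trade this against $\varepsilon^{\mu-\nu}$ as explained after Theorem~\ref{thm:sp-approximation}), and the acoustic remainder contributes $\mathcal O(\varepsilon)$; squaring and integrating in time gives an $L^2$-squared bound $\lesssim\varepsilon^{2\mu-2\sigma}+\varepsilon$. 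For $U_\mrm{ms}-U_\mrm{sp}$ I reuse the estimate of Section~\ref{sec:sound-proof-app} comparing the weak-divergence constraint \subeqref{eq:mid-s}{1} with the incompressibility \subeqref{eq:sndprf}{1}, which costs $\mathcal O(\varepsilon)$ in $L^2$-squared, again through an antisymmetric-structure Gronwall estimate. Finally, applying $T_K\mathcal P_\mrm{rd}$: $\mathcal P_\mrm{rd}$ discards the $\widetilde q$-component of $V_\varepsilon$, which is $\mathcal O(\varepsilon^{1-\nu})$ in $L^2$ and hence $\mathcal O(\varepsilon)$ once squared (since $2-2\nu\ge1$ under $2\nu<1$); $T_K$ restricts to $|k|\le K$, which legitimises the finite-mode integrations by parts above and leaves the high-mode tail $(I-T_K)(U^\mrm{mf}_\varepsilon+U^\mrm{gw}_\varepsilon-U_\mrm{sp})$, bounded by $\lesssim K^{-6}$ in $L^2$-squared uniformly in $\varepsilon$ via the uniform $H^3$ estimate --- this is $Err$. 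Collecting, $\sup_{0<t<T_{\sigma,\mrm{mg}}}\|T_K\mathcal P_\mrm{rd}(U^\mrm{mf}_\varepsilon+U^\mrm{gw}_\varepsilon)-T_K U_\mrm{sp}\|_{L^2}^2\le C_K(\mathcal O(\varepsilon^{2\mu-2\sigma})+\mathcal O(\varepsilon))+Err$ with $Err\to0$ as $K\to\infty$ uniformly in $\varepsilon$.

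The principal obstacle is the acoustic-interaction step: one must establish, for this particular coupled acoustic/internal-wave system, that the resonant acoustic--acoustic interactions genuinely produce no slow-mode content --- equivalently that $I-\mathbb P_\mrm{aw}$ annihilates the resonant part of $\mathcal Q(U^\mrm{aw}_\varepsilon,U^\mrm{aw}_\varepsilon)$ --- and that the spectral projections $\mathbb P_\mrm{mf},\mathbb P_\mrm{gw},\mathbb P_\mrm{aw}$, together with their compositions with the bilinear transport operator, are bounded on $H^3(\mathbb T^3)$ uniformly as $\varepsilon\to0$ in spite of the $\mathcal O(\varepsilon^{1-\nu})$ degeneracy of certain eigenvector components; this is precisely where the explicit dispersion relation (the quartic in the eigenvalue whose roots separate into the $\varepsilon^{-1}$ and $\varepsilon^{-\nu}$ branches, computed in the linear-theory section) must be exploited quantitatively. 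A secondary, more technical difficulty is the $\varepsilon$-power bookkeeping: arranging that the $\mathcal O(\varepsilon^{-\sigma})$ loss on $\partial_t(v,w)$, the $\mathcal O(\varepsilon^\mu)$ coefficient corrections, the $\mathcal O(\varepsilon)$ shallow-domain pseudo-incompressibility correction, and the acoustic averaging remainders combine into exactly $\mathcal O(\varepsilon^{2\mu-2\sigma})+\mathcal O(\varepsilon)$ and nothing coarser.
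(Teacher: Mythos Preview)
Your plan shares the right ingredients with the paper --- spectral decomposition into mean flow, internal wave, and acoustic eigenspaces, time-averaging of non-resonant acoustic interactions, and an $L^2$ energy estimate closed by Gr\"onwall --- but the route you take and the handling of the resonant acoustic--acoustic term differ from the paper in ways that create a genuine gap.

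\textbf{Route.} The paper does \emph{not} pass through the intermediate model \eqref{eq:mid-s} for Theorem~\ref{thm:sp-ill-prepared}. Instead it applies $\mathcal P^{\mrm{mf+gw}}_{\mrm{sp},K}\,\mathcal P_\mrm{rd}$ directly to the compressible equation \eqref{eq:rf-EEq-dcp-02} and compares the result with the $T_K$-truncated soundproof equation \eqref{eq:sp-101}. The crucial point is that $\mathcal P_\mrm{rd}$ is applied \emph{before} the energy estimate, not at the very end. The comparison of perturbed and soundproof eigenbases (Corollary~\ref{cor:cmprs-gw}) then lets one rewrite $(\partial_t+\varepsilon^{-1}\mathcal L_\varepsilon)$ acting on $T_K U_\varepsilon^\mrm{mf}+T_K U_\varepsilon^\mrm{gw}$ as $(\partial_t+\varepsilon^{-\nu}\mathcal L_\mrm{sp})$ acting on the corresponding soundproof object, up to $\mathcal O(\varepsilon^{2-3\nu})$ errors; see \eqref{proj:mg-101}. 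Your detour through the intermediate model is not needed and, more importantly, your $V_\varepsilon=(I-\mathbb P_\mrm{aw})U$ does not obviously satisfy an equation close to \eqref{eq:mid-s}: the pseudo-incompressibility relation $\dvh v_\mrm{ms}+\dz w_\mrm{ms}=\varepsilon\mathcal A(\mathcal C G+\varepsilon^\mu\overline{\mathcal H}_0)w_\mrm{ms}$ has no reason to emerge from the projected slow dynamics.

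\textbf{The acoustic--acoustic resonance.} Your claim that $(I-\mathbb P_\mrm{aw})$ annihilates the resonant part of $\mathcal Q(U_\varepsilon^\mrm{aw},U_\varepsilon^\mrm{aw})$ ``as in the incompressible Euler case'' is not correct as stated. The paper computes explicitly (equation~\eqref{eps:acoustic-resonance}) that, up to $\mathcal O(\varepsilon^{1-\nu})$,
\[
\mathcal N(T_K U_\varepsilon^\mrm{aw})=\Bigl(\,(\nabla P_k\cdot\nabla)Q_k+\varpi_0^{-1}Q_k\Delta P_k,\ 0,\ \tfrac12\nablah|\nabla P_k|^2,\ \tfrac12\dz|\nabla P_k|^2\,\Bigr)^\top,
\]
so the $\widetilde q$-component is \emph{not} zero and the momentum components are a \emph{gradient}, not zero. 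These terms do not vanish under $(I-\mathbb P_\mrm{aw})$: the perturbed slow eigenvectors carry nontrivial $\widetilde q$-components, and their velocity parts are only divergence-free to order $\varepsilon^{1-\nu}$. What actually kills the resonance is the paper's two-step projection: $\mathcal P_\mrm{rd}$ discards the $\widetilde q$-component, and then the soundproof projection $\mathcal P^{\mrm{mf+gw}}_{\mrm{sp},K}$, whose range consists of \emph{exactly} divergence-free velocity fields, annihilates the gradient. In your scheme the $\widetilde q$-component of the resonance would feed into the energy estimate for $V_\varepsilon-U_\mrm{ms}$ (you keep all five components there) and is $\mathcal O(1)$, not small; only after you apply $\mathcal P_\mrm{rd}$ at the final step is it removed, but by then the damage to the Gr\"onwall loop is done. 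The paper even remarks (Section~6.3) that the $\widetilde q$-component is nontrivial in both slow and fast waves in the ill-prepared case and that this is precisely why the analysis must be carried out after $\mathcal P_\mrm{rd}$.

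In short: move $\mathcal P_\mrm{rd}$ to the beginning, project onto the \emph{soundproof} basis rather than the perturbed slow subspace, and replace the intermediate-model comparison by the direct eigenbasis comparison of Corollary~\ref{cor:cmprs-gw}. The mechanism that disposes of the acoustic--acoustic resonance is the gradient structure of the momentum components tested against divergence-free fields, not a vanishing in the slow subspace.
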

{The physical rationale for the need to project out the pressure variable in the course of this estimate is as follows: By the non-dimensionalization underlying the full compressible system in \eqref{eq:rf-EEq}, the small parameter $\varepsilon$ is proportional to the Mach number. Then, under the assumption of initial velocities of order unity, acoustic pressure amplitudes will be of order $\mathcal O(\varepsilon)$ for otherwise general initial data, see \eqref{def:perturbation} and, e.g., \cite{Schneider1978,Klainerman1982,Klein1995}. Similarly, internal waves inducing velocities of $\mathcal O(1)$ come with pressure perturbation amplitudes of order $\mathcal O(\varepsilon^{2-\nu})$, see \cite{Klein2010}, while slow, purely advective dynamics implies pressure amplitudes of $\mathcal O(\varepsilon^2)$ according to the classical scaling for incompressible flows. Therefore, when the contributions of the superimposed acoustic, gravity wave, and mean flow modes to the velocity field are comparable (e.g., of order unity), then their contributions to the pressure field have decidedly different amplitude scaling with $\delta p_{\rm aw} \gg \delta p_{\rm gw} \gg \delta p_{\rm mf}$. That is, there are scaling regimes within which the influence of acoustics on the flow velocity and advected scalars is neglible compared to that of gravity waves and mean flow, although the pressure perturbations are still dominated by the acoustic modes. In these regimes, the projected variables $(\mathcal{H}, v,w)$ in the full compressible and pseudo-incompressible solutions are asymptotically close, whereas the pressure fields are not. Our theorem then states that the net effect of the larger acoustic pressure fluctuations rigorously average out at leading order and over the pertinent advective time scale. This generalizes related statements regarding acoustic averaging in the absence of gravity by Klainerman and Majda, \cite{Klainerman1982}.}

\bigskip

To get existence of solutions to \eqref{eq:rf-EEq}, we need uniform-in-$ \varepsilon $ {\it a priori} estimate, namely:
\begin{proposition}\label{thm:uniform-est} 
Let $ 0 < 2 \nu < 1 $ and $ 0 < \varepsilon < 1 $. Suppose that $ ( \widetilde q_\mrm{in}, \widetilde{\mathcal H}_\mrm{in}, v_\mrm{in}, w_\mrm{in} ) $ in \eqref{def:initial-EEq} satisfies
\begin{equation}\label{cnt-thm:uniform-est-1}
\begin{gathered}
	\sum_{{\substack{\alpha,\beta \in \mathbb N, \, \alpha+\beta \leq 3,\\ \partial \in \lbrace \partial_x, \partial_y, \partial_z \rbrace}}} \biggl( \norm{\lbrack \partial^\beta  (\varepsilon^\sigma \dt)^\alpha  \widetilde q\rbrack_{\mrm{in}}, \lbrack \partial^\beta (\varepsilon^\sigma \dt)^\alpha  \widetilde{\mathcal H}\rbrack_\mrm{in}}{\Lnorm{2}}^2\\ 
	+ \norm{\lbrack \partial^\beta (\varepsilon^\sigma \dt)^\alpha  v\rbrack_\mrm{in}, \lbrack \partial^\beta (\varepsilon^\sigma \dt)^\alpha  w\rbrack_\mrm{in} }{\Lnorm{2}}^2 \biggr) 
		\leq \mathcal C_\mrm{in}, 
	\end{gathered}
\end{equation}
for some $ \mathcal C_\mrm{in}  \in (0,\infty) $ and $ \sigma \in (0,\mu\rbrack $, where $ (\lbrack\dt^\alpha \widetilde q\rbrack_\mrm{in}, \lbrack \dt^\alpha \widetilde{\mathcal H}\rbrack_\mrm{in}, \lbrack \dt^\alpha v\rbrack_\mrm{in}, \lbrack\dt^\alpha w\rbrack_\mrm{in}), \, \alpha \in \mathbb N^+ $, are defined inductively after shifting spatial derivatives to temporal derivatives using equations of \eqref{eq:rf-EEq}. Let $ (\widetilde q(s), \widetilde{\mathcal H}(s), v(s), w(s)) $ be the smooth solution to \eqref{eq:rf-EEq} with initial data $ ( \widetilde q_\mrm{in}, \widetilde{\mathcal H}_\mrm{in}, v_\mrm{in}, w_\mrm{in} ) $. Then there exist $ T_\sigma \in (0,\infty) $, depending only on $ \mathcal C_\mrm{in} $, such that
\begin{equation*}%\label{est:uni-total-general}
	\begin{gathered}
	\sup_{0\leq s \leq T_\sigma}\sum_{{\substack{\alpha,\beta \in \mathbb N, \, \alpha+\beta \leq 3,\\ \partial \in \lbrace \partial_x, \partial_y, \partial_z \rbrace }}} \biggl( \norm{(\varepsilon^\sigma \dt)^\alpha \partial^\beta \widetilde q(s), (\varepsilon^\sigma \dt)^\alpha \partial^\beta \widetilde{\mathcal H}(s)}{\Lnorm{2}}^2\\ + \norm{(\varepsilon^\sigma \dt)^\alpha \partial^\beta v(s), (\varepsilon^\sigma \dt)^\alpha \partial^\beta w(s) }{\Lnorm{2}}^2 \biggr) 
		\leq \mathcal C \mathcal C_\mrm{in},
		\end{gathered}
\end{equation*}
with  some constant $ \mathcal C \in (0,\infty) $, independent of $ \varepsilon $. 
\end{proposition}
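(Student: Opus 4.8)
\emph{Sketch of the intended proof.} The plan is to run a quasilinear high-order energy estimate, made uniform in the two singular parameters by exploiting the skew-adjointness—and, decisively, the constant-coefficient nature—of the fast operators $\mathcal L_a,\mathcal L_g$, together with the precise calibration $\mu+2\nu=1$, $\sigma\in(0,\mu]$. First I would recast \eqref{eq:rf-EEq}. The two terms on the right of its first equation that are proportional to $\dvh v+\dz w$ can be moved to the left and combined with $\tfrac1\varepsilon(\dvh v+\dz w)$ into $\tfrac{b_\varepsilon}{\varepsilon}(\dvh v+\dz w)$, where
\[
b_\varepsilon := 1 + \varepsilon\varpi_0^{-1}\widetilde q - \varepsilon\varpi_0^{-1}\Bigl(\mathcal C\!\int_0^z\! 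G(z')\,dz' + \varepsilon^{\mu}\!\int_0^z\!\overline{\mathcal H}_0(z')\,dz'\Bigr) = 1 + \mathcal O(\varepsilon).
\]
Dividing that equation by $b_\varepsilon$ and writing $U=(\widetilde q,\widetilde{\mathcal H},v,w)^\top$, $u=(v,w)$, system \eqref{eq:rf-EEq} takes the form
\[
A_0(\varepsilon,z,U)\,\dt U + \sum_{j=1}^{3} u_j\,A_0(\varepsilon,z,U)\,\partial_j U + \frac1\varepsilon\,\mathcal L_a U + \frac1{\varepsilon^{\nu}}\,\mathcal L_g U = F(\varepsilon,z,U),
\]
with $A_0=\mathrm{diag}(\mathcal A/b_\varepsilon,\,\mathcal B,\,\vartheta,\,\vartheta)$ ($\vartheta$ from \eqref{def:perturbation-theta}), $\mathcal L_a,\mathcal L_g$ the \emph{constant-coefficient} operators of \eqref{def:vec-opt}, and $F=\bigl(b_\varepsilon^{-1}(\mathcal A\mathcal C Gw+\varepsilon^\mu\mathcal A\overline{\mathcal H}_0 w),\ \mathcal B\widetilde G\widetilde{\mathcal H}w,\ 0,\ 0\bigr)^\top$, which carries no inverse power of $\varepsilon$. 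By \ref{H3} and \eqref{def:constants}, $\mathcal A,\mathcal B,\mathcal C>0$; since $b_\varepsilon=1+\mathcal O(\varepsilon)$, $\vartheta=\mathcal C+\mathcal O(\varepsilon^\mu)$, for $\varepsilon$ small and $U$ in a bounded ball of $H^3(\mathbb T^3)$ one has $c\,\mathrm{Id}\le A_0\le C\,\mathrm{Id}$ uniformly, with each $u_j A_0$ symmetric ($A_0$ diagonal); thus the principal part is Friedrichs-symmetric. Moreover, as recorded after \eqref{def:vec-opt}, $\langle\mathcal L_a U,U\rangle_{L^2}=\langle\mathcal L_g U,U\rangle_{L^2}=0$, and both operators commute with $\partial_x,\partial_y,\partial_z$ and with $\dt$.

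Next I would set up the energy hierarchy. For $\alpha+\beta\le 3$ put $D=(\varepsilon^\sigma\dt)^\alpha\partial^\beta$ and apply $D$; the singular part becomes $\tfrac1\varepsilon\mathcal L_a(DU)+\tfrac1{\varepsilon^\nu}\mathcal L_g(DU)$, still skew-adjoint, so pairing with $DU$ in $L^2(\mathbb T^3)$ and integrating the symmetric transport terms by parts gives
\[
\frac{d}{dt}\,\frac12\!\int_{\mathbb T^3}\!\langle A_0\,DU,\,DU\rangle
= \frac12\!\int_{\mathbb T^3}\!\langle (\dt A_0)\,DU,\,DU\rangle
+ \frac12\sum_{j}\!\int_{\mathbb T^3}\!\langle \partial_j(u_j A_0)\,DU,\,DU\rangle
+ \!\int_{\mathbb T^3}\!\bigl\langle DF - [D,A_0]\dt U - \textstyle\sum_j [D,u_j A_0]\partial_j U,\ DU\bigr\rangle ,
\]
the singular terms having dropped out. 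Summing over $\alpha+\beta\le 3$ defines
\[
\mathcal E(t):=\sum_{\alpha+\beta\le 3}\frac12\int_{\mathbb T^3}\bigl\langle A_0\,(\varepsilon^{\sigma}\dt)^{\alpha}\partial^{\beta}U,\ (\varepsilon^{\sigma}\dt)^{\alpha}\partial^{\beta}U\bigr\rangle ,
\]
which by the above is comparable, uniformly in $\varepsilon$, to the square of the norm in the statement; using the inductive definition of the time derivatives of the data in \eqref{cnt-thm:uniform-est-1}, $\mathcal E(0)\le C_0\,\mathcal C_\mathrm{in}$ with $C_0$ independent of $\varepsilon$.

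The heart of the argument, and the step I expect to be the main obstacle, is to bound the right-hand side by $C(\mathcal E+\mathcal E^2)$ with $C$ depending only on a bound for $\mathcal E$—never on $\varepsilon$. The decisive device, standard for ill-prepared-data estimates, is to \emph{never} substitute the singular equation for $\dt U$ inside a commutator, but to write $\dt U=\varepsilon^{-\sigma}(\varepsilon^\sigma\dt)U$, a quantity directly controlled by $\mathcal E$. In $[D,A_0]\dt U$ every derivative falling on $A_0$ yields a gain—$\mathcal O(\varepsilon)$ from a spatial derivative of $\mathcal A/b_\varepsilon$, $\mathcal O(\varepsilon^\mu)$ from a spatial derivative of $\vartheta$, $\mathcal O(\varepsilon)$ or $\mathcal O(\varepsilon^{\mu+\nu})$ from a weighted time derivative of $\mathcal A/b_\varepsilon$ or of $\vartheta$, in each case because $\widetilde q,\widetilde{\mathcal H}$ and the weighted derivatives that appear lie under $\mathcal E$—while the derivatives removed from the operator acting on $\varepsilon^{-\sigma}(\varepsilon^\sigma\dt)U$ leave $\varepsilon^{-\sigma}$ multiplying a quantity with at most three weighted derivatives of $U$. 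Since $\sigma\le\mu<1$, every resulting product is bounded by $\varepsilon^{\mu-\sigma}$ (or $\varepsilon^{1-\sigma}$) times a component of $\mathcal E$, so no $\varepsilon^{-1}$, $\varepsilon^{-\nu}$ or $\varepsilon^{-\sigma}$ survives; the Hölder/Sobolev bookkeeping (at most three derivatives in $L^2$, surplus factors in $L^\infty$ via $H^2(\mathbb T^3)\hookrightarrow L^\infty$, $H^3(\mathbb T^3)\hookrightarrow W^{1,\infty}$) is the usual Moser calculus on $\mathbb T^3$. The same reasoning handles $[D,u_j A_0]\partial_j U$ (the velocity being a component of $U$), $\|\dt A_0\|_{L^\infty}+\sum_j\|\partial_j(u_j A_0)\|_{L^\infty}$, and $DF$—whose only nonlinearity $\mathcal B\widetilde G\widetilde{\mathcal H}w$ and whose transport-free forcing $b_\varepsilon^{-1}(\mathcal A\mathcal C G+\varepsilon^\mu\mathcal A\overline{\mathcal H}_0)w$ are estimated by the algebra property of $H^3(\mathbb T^3)$. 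Here the recasting is essential: had the quadratic coupling $-\varpi_0^{-1}\widetilde q(\dvh v+\dz w)$ been left on the right, its top-order estimate would have demanded four derivatives of $\widetilde q$ through the equation; absorbed into $b_\varepsilon/\varepsilon$ it contributes only through $\varepsilon^{-1}[D,b_\varepsilon](\dvh v+\dz w)$, and $b_\varepsilon-1=\mathcal O(\varepsilon)$ cancels the $\varepsilon^{-1}$ while keeping every derivative count at most three.

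Finally, a continuation argument closes the proof: as long as $\mathcal E(t)\le 2C_0\mathcal C_\mathrm{in}$ the differential inequality holds with a fixed constant (depending only on $\mathcal C_\mathrm{in}$), so Gr\"onwall produces $T_\sigma\in(0,\infty)$, determined by $\mathcal C_\mathrm{in}$ alone, with $\mathcal E(t)\le 2C_0\mathcal C_\mathrm{in}$ on $[0,T_\sigma]$; by the comparability above this is the assertion, with an $\varepsilon$-independent constant. I therefore expect that essentially no difficulty arises except in the third step—certifying the absence of any uncompensated negative power of $\varepsilon$—which works only because three points act in concert: the \emph{constant-coefficient} skew-adjointness of $\mathcal L_a,\mathcal L_g$ (so they never generate commutators); the symmetrizer differing from $\mathrm{diag}(\mathcal A,\mathcal B,\mathcal C,\mathcal C)$ only by $\mathcal O(\varepsilon^\mu)$ with $\mu\ge\sigma$ (and $b_\varepsilon-1=\mathcal O(\varepsilon)$); and carrying time derivatives unevaluated as $\varepsilon^{-\sigma}(\varepsilon^\sigma\dt)$ using the enlarged norm rather than the equation, with the reorganization of the first equation being what additionally tames the quadratic coupling $\widetilde q(\dvh v+\dz w)$ at the top order.
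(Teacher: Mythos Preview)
Your plan is correct and matches the paper's strategy in Section~\ref{sec:uniform-est}: apply $(\varepsilon^\sigma\dt)^\alpha\partial^\beta$ to the system, use the $L^2$--skew-adjointness (and constant-coefficient nature) of $\mathcal L_a,\mathcal L_g$ to kill the singular contributions, control the commutators $[\,(\varepsilon^\sigma\dt)^\alpha\partial^\beta,\vartheta\dt\,]$ by exploiting $\vartheta-\mathcal C=\mathcal O(\varepsilon^\mu)$ together with $\sigma\le\mu$ (the paper's Lemma~\ref{lm:cmt-est-1}), and close with Gr\"onwall. The energy you write down, $\tfrac12\int\langle A_0 DU,DU\rangle$, is exactly the paper's $\mathcal A\|D\widetilde q\|^2+\mathcal B\|D\widetilde{\mathcal H}\|^2+\|\vartheta^{1/2}Dv\|^2+\|\vartheta^{1/2}Dw\|^2$ with the first weight replaced by $\mathcal A/b_\varepsilon=\mathcal A+\mathcal O(\varepsilon)$.

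The one substantive organizational difference---and here your version is actually tidier---is the handling of the quasilinear couplings $-\varpi_0^{-1}\widetilde q(\dvh v+\dz w)$ and $\varpi_0^{-1}\bigl(\mathcal C\!\int_0^z G+\varepsilon^\mu\!\int_0^z\overline{\mathcal H}_0\bigr)(\dvh v+\dz w)$ in the first equation. The paper leaves these on the right as part of $\mathcal J_1$ and claims a direct $L^2$ bound via Lemma~\ref{lm:nonlinear-5}; taken literally this costs one derivative at top order (since $\ttene{\sigma}{3}(\dvh v+\dz w)$ reaches $\ttene{\sigma}{4}(v,w)$). Your absorption of these terms into $b_\varepsilon$ puts them into the symmetrizer, so at top order they appear only through $[D,\mathcal A/b_\varepsilon]$, which is harmless because $b_\varepsilon-1=\mathcal O(\varepsilon)$. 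This is precisely the reformulation the paper itself uses later as \eqref{eq:rf-EEq-af} (with $\varpi_0/\varpi$ playing the role of your $1/b_\varepsilon$), so the two arguments are equivalent once this point is made explicit; your write-up simply makes it explicit from the outset.
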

We would like to mention that, with the {\it a priori} estimate, one can construct solutions locally in time to \eqref{eq:rf-EEq}, and also show the well-posedness, i.e., uniqueness and continuous dependency on initial data. The construction and proof are standard, and we leave the details to readers. 

%\todo{to check with Rupert}
% After rescaling the time at the same order, a uniform-in-$ \varepsilon $ estimate for a soundproof system similar to \eqref{eq:sndprf} was obtained by the authors of \cite[Theorem 2]{DavidLannes2019} under the assumption that the Brunt-V\"ais\"al\"a frequency $ \mathfrak N $ is constant, i.e., \ref{H3}. In the case when $ \mathfrak N $ is not constant, and depending on the $ z $-variable, the existence time is of $ \mathcal O(\varepsilon^\nu) $ as shown in \cite[Theorem 1]{DavidLannes2019} for the soundproof system. Moreover, by performing vertical wave number/modal decomposition, the authors in \cite{DavidLannes2019} formally calculate the wave interaction via a series of PDEs in two space-dimensions, and show that when $ \mathfrak N $ is not constant, different vertical wave modals have strong interaction and dispersive mixing (see Proposition 4 in \cite{DavidLannes2019}). The case when $ \mathfrak N $ is constant is calculated in Proposition 5 of the same paper. Similar modal decomposition can be done for our soundproof model \eqref{eq:sndprf} and compressible model \eqref{eq:rf-EEq}, at least for initial data close to each other, thanks to Theorem \ref{thm:sp-approximation}. Following the scheme of \cite{Klein2010}, one can also perform modal decomposition in the full compressible system \eqref{eq:rf-EEq} and formally investigate the wave interaction in the fashion of \cite{DavidLannes2019}. 

{After rescaling time at the same order, a uniform-in-$ \varepsilon $ estimate for a soundproof system similar to \eqref{eq:sndprf} was obtained by the authors of \cite[Theorem 2]{DavidLannes2019} under the assumption that the Brunt-V\"ais\"al\"a frequency $ \mathfrak N $ is constant, i.e., \ref{H3}. In the case when $ \mathfrak N $ is not constant but depends on the vertical coordinate, $ z $, the existence time is of $ \mathcal O(\varepsilon^\nu) $ as shown in \cite[Theorem 1]{DavidLannes2019} for the soundproof system. Moreover, a vertical mode decomposition based on modes obtained from the eigenfunctions of a Sturm-Liouville equation associated with the background stratification is introduced, and a formal derivation of (partial differential) evolution equations for these modes is provided. It is shown that the modes interact strongly with dispersive mixing when $ \mathfrak N $ is not constant (see Proposition 4 in \cite{DavidLannes2019}). In contrast, when $ \mathfrak N $ \emph{is} constant, the vertical modes decouple (see their Proposition 5).}

{Notably, the vertical mode decomposition in \cite{DavidLannes2019} is not an eigenmode decomposition of the fast linear system describing its internal wave dynamics as developed in section~\ref{subsec:FourierRepresentations} of the present paper (see also \cite{Klein2010}). In fact the eigenmodes of the fast system are sinusoidal in the horizontal direction and satisfy a Sturm-Liouville equation that is parameterized by the horizontal wave number. For non-constant $ \mathfrak N $, the resulting vertical modes are not sinusoidal and their structure depends non-trivially on the horizontal wave number. As a consequence, the projections of the solution onto just the eigenmodes of the hydrostatic background in \cite{DavidLannes2019} will themselves be linear combinations of the eigenmodes of the full system and must reveal dispersive behavior. Moreover, in this case the modes of the background system will also generally be coupled, because their projection onto the eigenmodes of the full system will depend on the time evolving horizontal structure of the solution. }

{The present analysis for the pseudo-incompressible model reduces to that of the incompressible system studied in \cite{DavidLannes2019} for $P^{\varepsilon} = 1$ in \eqref{eq:mid-s-met-notation}. It would be interesting to compare the detailed analytical steps and accessible results when the solution decomposition in terms of a single family of vertical modes as invoked by Desjardins et al.~\cite{DavidLannes2019} is replaced with a decomposition in terms of the full set of eigenmodes of the fast system as worked out here.  As demonstrated in \cite{Klein2010}, that approach could also be transferred to the full compressible system \eqref{eq:rf-EEq} in which case the additional family of (even faster) acoustic eigenmodes and their potential interactions with the internal wave and advective modes will have to be accounted for.}

\bigskip

To prove Theorem \ref{thm:sp-ill-prepared}, we need to understand the distribution of eigenvalues and need to have comparison of eigenvectors, that is:
\begin{proposition}\label{thm:eg-values}
	The eigenvalues of operator $ \mathcal L_a + \varepsilon^{1-\nu} \mathcal L_g $ lie within the neighborhood of radius $ \varepsilon^{1-\nu} $ of the eigenvalues of operator $ \mathcal L_a $. More precisely, let $ i \omega $ be an eigenvalue of $ \mathcal L_a + \varepsilon^{1-\nu} \mathcal L_g $, then there exists $ m \in \lbrace 0,1,2,\cdots \rbrace $, such that
	\begin{equation*}
		\lvert \omega_{\mrm{ac},m}^\pm \rvert^2 \leq \lvert \omega \rvert^2 \leq \lvert \omega_{\mrm{ac},m}^\pm \rvert^2 + \varepsilon^{2-2\nu},
	\end{equation*}
	where $ \lbrace i \omega_{\mrm{ac},m}^\pm \rbrace_{m\in \lbrace 0,1,2,\cdots \rbrace } $ are the eigenvalues of $ \mathcal L_a $. Therefore, the eigenvalues of the linear oscillating operator 
	$$ \dfrac{1}{\varepsilon}\mathcal L_a + \dfrac{1}{\varepsilon^\nu} \mathcal L_g, $$ 
	to system \eqref{eq:rf-EEq}, with $ \mathcal A = \mathcal B = \mathcal C = 1 $, can be classified into three families: {mean flow frequency} $ \lvert \iota_\mrm{mf}\rvert = 0 $; {internal wave frequency} $ \lvert\iota_\mrm{gw}\rvert = \mathcal O(\varepsilon^{-\nu}) $; {perturbed acoustic wave frequency} $ \lvert \iota_\mrm{aw} \rvert = \mathcal O(\varepsilon^{-1}) $.
	
	In addition, with Fourier representations, one can obtain more detailed and sharper comparison on the eigenvalues and eigenvectors, which are presented in Corollary \ref{cor:cmprs-gw} in page \pageref{cor:cmprs-gw}. 
\end{proposition}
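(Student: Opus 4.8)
The plan is to diagonalize the problem by Fourier series on $\mathbb T^3$ (consistent with the parities \eqref{SYM}) and then to compute the resulting characteristic polynomials explicitly. Write $\delta := \varepsilon^{1-\nu}\in(0,1)$. Since $\mathcal L_a$ and $\mathcal L_g$ have constant coefficients, the operator $\mathcal L_a + \delta\,\mathcal L_g$ decomposes, over wave vectors $k=(k_h,k_3)$, into $5\times 5$ blocks $M(k)=L_a(k)+\delta\,L_g(k)$ acting on the amplitude vector $(\widehat{\widetilde q},\widehat{\widetilde{\mathcal H}},\widehat v_1,\widehat v_2,\widehat w)$; each block is skew-Hermitian, so its eigenvalues are purely imaginary $i\omega$ with $\omega\in\mathbb R$, and the eigenvalue problem on $\mathbb T^3$ is the union over $k$ of these finite-dimensional ones. (The symmetry \eqref{SYM} only forces the use of $\cos(k_3 z)$ for $\widetilde q,v$ and $\sin(k_3 z)$ for $\widetilde{\mathcal H},w$; this alters some internal signs of $M(k)$ but not its skew-Hermitian character nor the computation below.)

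The decisive step is an explicit determinant. Expanding $\det(\lambda I - M(k))$ along the $\widetilde{\mathcal H}$-column — equivalently, solving $M(k)u=i\omega u$ by eliminating $\widehat v$, $\widehat{\widetilde{\mathcal H}}$, $\widehat w$ in terms of $\widehat{\widetilde q}$ — one finds, after a short computation,
\[
\det\bigl(\lambda I - M(k)\bigr) \;=\; \lambda\Bigl(\lambda^4 + (|k|^2+\delta^2)\,\lambda^2 + |k_h|^2\,\delta^2\Bigr).
\]
Hence, writing $x=\omega^2=-\lambda^2$, the nonzero squared frequencies of $M(k)$ are precisely the two roots $x_\pm$ of the quadratic $p(x):=x^2-(|k|^2+\delta^2)\,x+|k_h|^2\delta^2$. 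Setting $\delta=0$ gives $\det(\lambda I-L_a(k))=\lambda^3(\lambda^2+|k|^2)$, so the acoustic block at $k$ has the eigenvalue $0$ and $\pm i|k|$; thus the collection $\{\omega_{\mrm{ac},m}^\pm\}_m$ consists of $0$ together with $\pm|k|$ over all wave vectors.

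What remains is an elementary sign analysis of the upward parabola $p$. Using $|k_h|^2=|k|^2-k_3^2\le |k|^2$ one computes $p(0)=|k_h|^2\delta^2\ge0$, $p(\delta^2)=-k_3^2\delta^2\le0$, $p(|k|^2)=-k_3^2\delta^2\le0$ and $p(|k|^2+\delta^2)=|k_h|^2\delta^2\ge0$; since the roots satisfy $0\le x_-\le x_+$ (nonnegative product, positive sum), these force $0\le x_-\le\delta^2$ and $|k|^2\le x_+\le|k|^2+\delta^2$. Together with the branch $\omega=0$ (itself an acoustic eigenvalue) and $\delta^2=\varepsilon^{2-2\nu}$, this is exactly the claimed localization of the eigenvalues of $\mathcal L_a+\varepsilon^{1-\nu}\mathcal L_g$ around those of $\mathcal L_a$; the degenerate cases $k_h=0$ (where $p(x)=x(x-|k|^2-\delta^2)$, with an enlarged kernel) and $k=0$ (where $M=\delta L_g$) are covered by the same inequalities. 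Dividing by $\varepsilon$ then sorts the spectrum of $\tfrac1\varepsilon\mathcal L_a+\tfrac1{\varepsilon^\nu}\mathcal L_g=\tfrac1\varepsilon(\mathcal L_a+\delta\,\mathcal L_g)$ into $\omega=0$ (mean flow); $|\omega|/\varepsilon=\sqrt{x_-}/\varepsilon\le\varepsilon^{-\nu}$, comparable to $\varepsilon^{-\nu}$ for a fixed mode with $k_h\neq0$ (internal waves); and $|\omega|/\varepsilon=\sqrt{x_+}/\varepsilon$, comparable to $\varepsilon^{-1}$ (perturbed acoustic waves).

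The step I expect to demand the most care is the sharpness of the radius $\varepsilon^{2-2\nu}$: a naive perturbative estimate — writing $-M(k)^2=-L_a(k)^2-\delta\bigl(L_aL_g+L_gL_a\bigr)(k)-\delta^2 L_g(k)^2$ and applying Weyl's inequality — yields only an $\mathcal O(\delta\,|k_3|)=\mathcal O(\varepsilon^{1-\nu})$ shift of eigenvalues, off by a full power of $\delta$. The improvement to $\mathcal O(\delta^2)$ reflects the structural fact that the acoustic eigenvectors at mode $k$ have vanishing $\widetilde{\mathcal H}$-component, so that the first-order correction $\langle u_{\mrm{ac}},(L_aL_g+L_gL_a)(k)\,u_{\mrm{ac}}\rangle$ vanishes — which is precisely what the explicit determinant encodes, and why I would carry out the exact computation rather than perturb. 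The residual work is to verify the determinant identity with the correct placement of the $\pm k_3$ signs coming from \eqref{SYM}, and to check that every eigenvalue, counted with multiplicity across all modes including the degenerate ones, lies in one of the advertised intervals; the finer comparison of eigenvalues and eigenvectors promised in Corollary \ref{cor:cmprs-gw} then follows by reading off $x_\pm$ and $u$ from the same elimination via the Fourier representation.
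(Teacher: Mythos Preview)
Your proposal is correct and essentially follows the paper's own route. The paper also reduces the eigenvalue problem for $\mathcal L_\varepsilon=\mathcal L_a+\eta\,\mathcal L_g$ (with $\eta=\varepsilon^{1-\nu}$) to the scalar quartic $\omega^4-(|k|^2+\eta^2)\omega^2+\eta^2|k_h|^2=0$ (equation \eqref{eq:frr-algebra}, which is your $p(x)=0$), and then localizes the roots to conclude Lemma~\ref{lm:est-of-eg}. The only organizational difference is that the paper first derives the quartic in an ``abstract'' form (equation \eqref{eq:algebra}) by expanding $\widetilde q_\varepsilon$ in Laplace eigenfunctions and testing against them, and bounds the roots via the inequality $\|\nablah\widetilde q_{\mrm{ac},m}\|^2\le|\omega_{\mrm{ac},m}^\pm|^2$ together with the rewriting $\omega^2-|\omega_{\mrm{ac},m}^\pm|^2=\eta^2(1-\|\nablah\widetilde q_{\mrm{ac},m}\|^2/\omega^2)$; only afterwards (Section~\ref{subsec:FourierRepresentations}) does it pass to explicit Fourier modes to read off the two branches \eqref{id:frr-sl-eg-pbm}. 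Your direct Fourier computation plus the four-point sign check $p(0),p(\delta^2),p(|k|^2),p(|k|^2+\delta^2)$ is a cleaner way to get the same bracketing $0\le x_-\le\delta^2$, $|k|^2\le x_+\le|k|^2+\delta^2$, and it handles the degenerate cases $k_h=0$, $k_z=0$ uniformly without the paper's separate case analysis.
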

We refer readers to the representation of eigenvalue-eigenvector pairs to Proposition \ref{prop:wave-bases}, below.

The rest of this paper is organized as follows. Section \ref{sec:preliminaries} will introduce some notations {that} have been and will be used in this paper, as well as some classic nonlinear and commutator estimates. Section \ref{sec:uniform-est} is devoted to uniform-in-$\varepsilon $ energy estimates of solutions to \eqref{eq:rf-EEq}, and thus proves Proposition \ref{thm:uniform-est}. In section \ref{sec:sound-proof-app}, the rigidity of soundproof approximation is established, which proves Theorem \ref{thm:sp-approximation}. Notice that due to the stratification, we will introduce an intermediate model, i.e., \eqref{eq:mid-s}, to establish the soundproof approximation. The aforementioned linear oscillating system is introduced in section \ref{sec:fast-slow-linear}, where the eigenvalue problem is investigated. Using the Fourier representation, the eigenvalue-eigenvector pairs are identified. Thus Proposition \ref{thm:eg-values} is proved. In section \ref{sec:gw-sp}, we further investigate the internal waves in the soundproof model \eqref{eq:sndprf} and compare them with those in the compressible system \eqref{eq:rf-EEq}.
In section \ref{sec:fast-slow-nonlinear}, we discuss the fast-slow wave interactions of nonlinear system \eqref{eq:rf-EEq}, and establish Theorem \ref{thm:sp-ill-prepared}. %with application of the result from section \ref{sec:fast-slow-linear}.

\section{Preliminaries}\label{sec:preliminaries}

 We assume that we are in $ \mathbb T^3 $ all the time. We use the notation $ \partial \in \lbrace \partial_x,\partial_y,\partial_z \rbrace $ throughout the rest of the paper.  The horizontal gradient, the horizontal divergence, and the horizontal laplacian operators are defined by
 $$ 
 \nablah := \biggl( \begin{array}{c}
 	\partial_x \\ \partial_y
 \end{array}\biggr), \quad \dvh : = \nablah \cdot, \quad \text{and} \quad \deltah := \dvh \nablah,
 $$
 respectively. By adding a subscript $ { }_\mrm{in} $ to any function $ u $, we mean the initial data of $ u $, i.e., $ u\big\vert_{t=0} = u_\mrm{in} $. By $ A \lesssim B $, it means there exists a generic constant $ C \in (0,\infty) $, different from lines to lines, such that $ A \leq C B $. Whenever we would like to emphasize the dependency of the generic constant $ C $ on certain quantities, the depending quantities will be added as subscript, i.e., $ C_{g} $ means a constant depending on $ g $. For any norm $ \norm{}{X} $, we shorten the notation for norms of multiple functions as
 \begin{equation*}
 	\norm{A,B}{X} = \norm{A}{X} + \norm{B}{X}. 
 \end{equation*} 
 
 First, we introduce some nonlinear estimates, which are classic in the literature. 
  \begin{lemma}\label{lm:nonlinear-1} For $ s \in \mathbb N^+ $, 
  	\begin{equation}\label{est:nonlinear-1}
  	\norm{u v}{\Hnorm{s}} \leq K \norm{u}{\Hnorm{\eta}} \norm{v}{\Hnorm{s}} + \norm{u}{\Hnorm{s}} \norm{v}{\Hnorm{\eta}},
  	\end{equation}
  	where
  	\begin{equation}\label{est:nonlinear-1-1}
  		\eta:= \max\lbrace [s/2], 2 \rbrace,
  	\end{equation}
  	and $ K \in (0,\infty) $ depends on $ s $. 
  \end{lemma}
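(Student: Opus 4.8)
\medskip

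\noindent The plan is to prove this by the classical route for a Moser (Kato--Ponce) type product estimate: a Leibniz expansion, followed by H\"older's inequality, Gagliardo--Nirenberg interpolation, and a final Sobolev embedding that trades $\supnorm$-norms for $\Hnorm{\eta}$-norms. Since $\norm{uv}{\Hnorm{s}}^2=\sum_{|\alpha|\le s}\norm{\partial^\alpha(uv)}{\Lnorm{2}}^2$, it suffices to bound $\norm{\partial^\alpha(uv)}{\Lnorm{2}}$ for each multi-index $\alpha$ with $m:=|\alpha|\le s$. Writing $\partial^\alpha(uv)=\sum_{\beta\le\alpha}\binom{\alpha}{\beta}\,\partial^\beta u\,\partial^{\alpha-\beta}v$, the two extreme terms $\beta=0$ and $\beta=\alpha$ are immediately controlled by $\norm{u}{\supnorm}\norm{v}{\Hnorm{m}}$ and $\norm{u}{\Hnorm{m}}\norm{v}{\supnorm}$, respectively.

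For an intermediate term with $1\le j:=|\beta|\le m-1$ I would apply H\"older with the conjugate exponents $p=2m/j$ and $q=2m/(m-j)$ (which satisfy $\tfrac1p+\tfrac1q=\tfrac12$), giving $\norm{\partial^\beta u\,\partial^{\alpha-\beta}v}{\Lnorm{2}}\le\norm{\partial^\beta u}{L^p}\norm{\partial^{\alpha-\beta}v}{L^q}$, and then the Gagliardo--Nirenberg interpolation inequalities on $\mathbb T^3$, namely $\norm{\partial^\beta u}{L^p}\lesssim\norm{u}{\supnorm}^{1-j/m}\norm{u}{\Hnorm{m}}^{j/m}$ and $\norm{\partial^{\alpha-\beta}v}{L^q}\lesssim\norm{v}{\supnorm}^{j/m}\norm{v}{\Hnorm{m}}^{1-j/m}$ (the homogeneity of these exponents being exactly what fixes $p$ and $q$). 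Multiplying the two estimates and applying Young's inequality $a^\theta b^{1-\theta}\le a+b$ with $\theta=j/m$, $a=\norm{u}{\Hnorm{m}}\norm{v}{\supnorm}$, $b=\norm{u}{\supnorm}\norm{v}{\Hnorm{m}}$, each intermediate term is bounded by $\norm{u}{\Hnorm{m}}\norm{v}{\supnorm}+\norm{u}{\supnorm}\norm{v}{\Hnorm{m}}\le\norm{u}{\Hnorm{s}}\norm{v}{\supnorm}+\norm{u}{\supnorm}\norm{v}{\Hnorm{s}}$, using $m\le s$. Summing over the finitely many $\alpha$ and $\beta\le\alpha$ yields $\norm{uv}{\Hnorm{s}}\lesssim\norm{u}{\Hnorm{s}}\norm{v}{\supnorm}+\norm{u}{\supnorm}\norm{v}{\Hnorm{s}}$ with a constant depending only on $s$. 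Finally, since $\dim\mathbb T^3=3$ and $\eta=\max\{[s/2],2\}\ge 2>3/2$, the Sobolev embedding $\Hnorm{2}\hookrightarrow\supnorm$ gives $\norm{f}{\supnorm}\lesssim\norm{f}{\Hnorm{2}}\le\norm{f}{\Hnorm{\eta}}$, which upgrades the estimate to the stated form with an appropriate $K=K(s)$.

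There is no serious obstacle here, the statement being classical, so the only points that need care are routine: checking that the H\"older exponents are admissible ($2\le p,q<\infty$) for every split $1\le j\le m-1$, verifying the hypotheses and possible endpoint cases of the Gagliardo--Nirenberg inequalities on the torus (handled, if needed, by using the homogeneous seminorm $|\cdot|_{\dot H^m}$ in place of $\norm{\cdot}{\Hnorm{m}}$ and/or perturbing the exponents slightly, which is harmless since $\eta>3/2$ strictly), and confirming the final Sobolev embedding has room to spare. An equivalent alternative would be a Littlewood--Paley/Bony paraproduct decomposition, but since $s$ is a positive integer and we work on $\mathbb T^3$, the elementary Leibniz-based argument above is cleaner and entirely self-contained. (I note that this argument actually establishes the slightly stronger estimate with $\Hnorm{\eta}$ replaced by $\Hnorm{2}$; the exponent $\eta=\max\{[s/2],2\}$ in the statement is simply the standard, more flexible form inherited from the literature.)
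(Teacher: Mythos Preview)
Your proof is correct and follows essentially the same approach the paper indicates: Leibniz expansion, H\"older's inequality, and Sobolev embedding (the paper omits all details, stating only that the proof is ``straightforward, after applying Leibniz's formula, H\"older's inequality, and the Sobolev embedding inequality''). Your explicit use of Gagliardo--Nirenberg interpolation plus Young's inequality to handle the intermediate Leibniz terms is a standard way to execute this route, and your closing observation that the argument actually yields the sharper estimate with $\Hnorm{2}$ in place of $\Hnorm{\eta}$ is accurate.
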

  \begin{proof}
  	The proof is straightforward, after applying Leibniz's formula, H\"older's inequality, and the Sobolev embedding inequality. Details are omitted here.
  \end{proof}
  
  \begin{lemma}\label{lm:nonlinear-2} For $ s > 3/2 $, $ \sigma_1,\sigma_2 \in [0,s], \sigma_1 + \sigma_2 \leq s $, one has
  \begin{equation}\label{est:nonlinear-2}
  	\norm{u v}{\Hnorm{s-\sigma_1-\sigma_2}} \leq K \norm{u}{\Hnorm{s-\sigma_1}} \norm{v}{\Hnorm{s-\sigma_2}},
  \end{equation}
  where $ K \in (0,\infty) $ depends on $ s, \sigma_1, \sigma_2 $. 
  \end{lemma}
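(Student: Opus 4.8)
The plan is to establish \eqref{est:nonlinear-2} by a direct Fourier-series computation on $\mathbb{T}^3$, since it is the standard Kato--Ponce / fractional-Leibniz product estimate. Expanding $u=\sum_{j\in\mathbb{Z}^3}\hat u(j)\,e^{ij\cdot x}$ and $v=\sum_{k\in\mathbb{Z}^3}\hat v(k)\,e^{ik\cdot x}$, we have $\widehat{uv}(k)=\sum_{j}\hat u(j)\hat v(k-j)$, and with $\langle k\rangle:=(1+|k|^2)^{1/2}$ it suffices by Plancherel to bound $\sum_k\langle k\rangle^{2\rho}\,|\widehat{uv}(k)|^2$, where $\rho:=s-\sigma_1-\sigma_2\ge 0$ while $r_1:=s-\sigma_1$ and $r_2:=s-\sigma_2$ satisfy $0\le r_1,r_2\le s$. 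Since $\rho\ge 0$ and $\langle k\rangle\lesssim\langle j\rangle+\langle k-j\rangle$, one has $\langle k\rangle^{\rho}\lesssim\langle j\rangle^{\rho}+\langle k-j\rangle^{\rho}$, hence $\langle k\rangle^{\rho}|\widehat{uv}(k)|\lesssim T_1(k)+T_2(k)$ with $T_1(k):=\sum_j\langle j\rangle^{\rho}|\hat u(j)|\,|\hat v(k-j)|$ and $T_2(k):=\sum_j\langle k-j\rangle^{\rho}|\hat u(j)|\,|\hat v(k-j)|$.

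Next I would rewrite each $T_i$ as a discrete convolution of two weighted sequences whose weight exponents add up to $s$. Put $f(j):=\langle j\rangle^{r_1}|\hat u(j)|$ and $g(j):=\langle j\rangle^{r_2}|\hat v(j)|$, so that $\|f\|_{\ell^2}=\|u\|_{H^{r_1}}$ and $\|g\|_{\ell^2}=\|v\|_{H^{r_2}}$. Because $\rho-r_1=-\sigma_2$ and $\sigma_2+r_2=s$, we get $T_1(k)=\bigl[(\langle\cdot\rangle^{-\sigma_2}f)*(\langle\cdot\rangle^{-r_2}g)\bigr](k)$; symmetrically, using $\rho-r_2=-\sigma_1$ and $\sigma_1+r_1=s$, $T_2(k)=\bigl[(\langle\cdot\rangle^{-r_1}f)*(\langle\cdot\rangle^{-\sigma_1}g)\bigr](k)$. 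The proof then reduces to the elementary bilinear bound: for $\alpha,\beta\ge 0$ with $\alpha+\beta>3/2$,
\[
\bigl\|(\langle\cdot\rangle^{-\alpha}a)*(\langle\cdot\rangle^{-\beta}b)\bigr\|_{\ell^2(\mathbb{Z}^3)}\le C(\alpha,\beta)\,\|a\|_{\ell^2}\,\|b\|_{\ell^2},
\]
which I apply once with $(\alpha,\beta)=(\sigma_2,r_2)$ and once with $(\alpha,\beta)=(r_1,\sigma_1)$, in both cases $\alpha+\beta=s>3/2$.

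To prove that bilinear bound I would combine Young's convolution inequality $\|c*d\|_{\ell^2}\le\|c\|_{\ell^p}\|d\|_{\ell^q}$ with $1/p+1/q=3/2$ and Hölder's inequality $\|\langle\cdot\rangle^{-\alpha}a\|_{\ell^p}\le\|\langle\cdot\rangle^{-\alpha}\|_{\ell^{p_1}}\|a\|_{\ell^2}$ (and likewise for $b$), where $1/p_1:=1/p-1/2$ and $1/q_1:=1/q-1/2$, so that $1/p_1+1/q_1=1/2$. One then splits $1/2$ between $1/p_1$ and $1/q_1$ so as to guarantee $\alpha p_1>3$ and $\beta q_1>3$, which makes $\sum_{j\in\mathbb{Z}^3}\langle j\rangle^{-\alpha p_1}$ and $\sum_{j\in\mathbb{Z}^3}\langle j\rangle^{-\beta q_1}$ finite; such a split exists precisely because $\alpha+\beta>3/2$ (in the degenerate cases $\alpha=0$ or $\beta=0$ one takes the corresponding exponent equal to $\infty$, which covers $\sigma_i=0$ and $\sigma_i=s$). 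Adding the $T_1$ and $T_2$ contributions gives $\|uv\|_{H^{s-\sigma_1-\sigma_2}}\le K\,\|u\|_{H^{s-\sigma_1}}\|v\|_{H^{s-\sigma_2}}$ with $K=K(s,\sigma_1,\sigma_2)$, as claimed.

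The argument is routine and I do not expect a genuine obstacle; the only point demanding a bit of care is the last one, i.e. checking that the Young/Hölder exponents can be chosen under the single hypothesis $s>3/2$ (equivalently $2s>3=\dim\mathbb{T}^3$) and that the endpoint cases $\sigma_i\in\{0,s\}$ are included. If one wishes to sidestep the exponent bookkeeping, an alternative is to interpolate: \eqref{est:nonlinear-2} sits between the algebra estimate $\|uv\|_{H^s}\lesssim\|u\|_{H^s}\|v\|_{H^s}$ (case $\sigma_1=\sigma_2=0$) and the Hölder--Sobolev estimate $\|uv\|_{L^2}\le\|u\|_{L^2}\|v\|_{L^\infty}\lesssim\|u\|_{L^2}\|v\|_{H^s}$ (case $\sigma_1=s$, $\sigma_2=0$, using $H^s(\mathbb{T}^3)\hookrightarrow L^\infty$ for $s>3/2$), together with its mirror image, via a two-parameter complex interpolation in $\sigma_1$ and $\sigma_2$; this avoids explicit Fourier sums at the cost of a slightly more delicate interpolation setup.
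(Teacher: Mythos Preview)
Your proof is correct and takes a genuinely different route from the paper. The paper argues on the physical side: it expands $\partial^{\gamma}(uv)$ by Leibniz into terms $\partial^{\alpha}u\,\partial^{\beta}v$ with $\alpha+\beta\le s-\sigma_1-\sigma_2$, bounds each in $L^2$ by H\"older's inequality $\|\partial^{\alpha}u\,\partial^{\beta}v\|_{L^2}\le\|\partial^{\alpha}u\|_{L^p}\|\partial^{\beta}v\|_{L^q}$, and then closes with the Sobolev embedding $\|\partial^{\alpha}u\|_{L^p}\lesssim\|u\|_{H^{s-\sigma_1}}$ (and similarly for $v$), checking that admissible exponents $p,q$ exist precisely because $(s-\sigma_1)+(s-\sigma_2)\ge (\alpha+\beta)+3/2$. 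Your argument instead works entirely on the Fourier side, splitting $\langle k\rangle^{\rho}$ across the convolution and reducing to a weighted $\ell^2$ convolution estimate proved via Young and H\"older on sequences. The trade-off: the paper's approach is more concrete and matches how the lemma is actually invoked later (always with integer orders coming from $\D{\sigma}{\alpha,\beta}$), but as written it implicitly needs $s,\sigma_1,\sigma_2$ to be integers so that the Leibniz expansion makes sense; your Fourier argument handles the lemma in the full generality of its statement (real $s>3/2$) without extra work.
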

\begin{proof}
	We sketch the estimate of $ \norm{\partial^\alpha u \partial^\beta v}{\Lnorm{2}} $, with $ \alpha + \beta \leq s-\sigma_1-\sigma_2 $. After applying H\"older's inequality and the Sobolev embedding inequality, one has
	\begin{equation}\label{est:nonlinear-2-1}
		\begin{aligned}
			& \norm{\partial^\alpha u \partial^\beta v}{\Lnorm{2}} \lesssim \norm{\partial^\alpha u}{\Lnorm{p}} \norm{\partial^\beta v}{\Lnorm{q}} \\
			& \qquad \qquad \lesssim \norm{u}{\Hnorm{m}} \norm{v}{\Hnorm{n}},
		\end{aligned}
	\end{equation}
	with certain
	\begin{equation}\label{est:nonlinear-2-2}
		\begin{gathered}
			\dfrac{1}{2} = \dfrac{1}{p} + \dfrac{1}{q}, \quad p,q \in (2,\infty] \\
			\dfrac{1}{p} - \dfrac{\alpha}{3} \geq \dfrac{1}{2} - \dfrac{m}{3}, \quad \dfrac{1}{q} - \dfrac{\beta}{3} \geq \dfrac{1}{2} - \dfrac{n}{3}.
		\end{gathered}
	\end{equation}
	In order to have a non-empty set of $ (p,q) $ in \eqref{est:nonlinear-2-2}, we require further that
	\begin{equation}\label{est:nonlinear-2-3}
		\alpha \leq m, \quad \beta \leq n, \quad m+n \geq \alpha + \beta + \dfrac{3}{2}.
	\end{equation}
	One can check, with $ m = s-\sigma_1 $ and $ n = s - \sigma_2 $, \eqref{est:nonlinear-2-3} are satisfied with $ s > 3/2 $. Therefore,  \eqref{est:nonlinear-2} follows after taking the sum over $ \alpha,\beta $ of \eqref{est:nonlinear-2-1}.
\end{proof}

%\todo{probably not needed}
%\begin{lm}\label{lm:nonlinear-3} Let $ \mathcal F(\cdot) \in C^\infty(-\delta,\delta) $ and $ \norm{u}{\Lnorm{\infty}} \leq \delta/2 $, $ \delta \in (0,\infty) $. For $ s \geq 1 $, 
%\begin{equation}\label{est:nonlinear-3}
%	\norm{\partial^s \mathcal F(u)}{\Lnorm{2}} \leq C_{\mathcal{F},s}(\norm{u}{\Lnorm{\infty}})\norm{u}{\Hnorm{s}}^s,
%\end{equation}
%for some constant $ C_{\mathcal{F},s} \in (0,\infty) $. 
%%where
%%\begin{equation}\label{est:nonlinear-3-1}
%%	\eta := \max\lbrace s, 2 \rbrace.
%%\end{equation}
%\end{lm}
%\begin{proof}
%	For $ s =1,2 $, \eqref{est:nonlinear-3} can be checked easily. For $ s \geq 3 $, one has, directly,
%	\begin{align*}
%		& \norm{\partial^s \mathcal F(u)}{\Lnorm{2}} \lesssim \sum_{0\leq \alpha \leq s-3}\norm{\partial^{s-1-\alpha}\mathcal F'(u)}{\Lnorm{2}}\norm{\partial^\alpha \nabla u}{\Lnorm{\infty}} \\
%		& \quad + \norm{\partial \mathcal F'(u)}{\Lnorm{\infty}}\norm{\partial^{s-2}\nabla u}{\Lnorm{2}} + \norm{\mathcal F'(u)}{\Lnorm{\infty}} \norm{\partial^{s-1} \nabla u}{\Lnorm{2}} \\
%		& \lesssim \sum_{0\leq \alpha \leq s-3}\norm{\partial^{s-1-\alpha}\mathcal F'(u)}{\Lnorm{2}}\norm{\partial^\alpha \nabla u}{\Lnorm{\infty}} \\
%		& \qquad + \norm{\mathcal F''(u)}{\Lnorm{\infty}} \norm{\partial u}{\Lnorm{\infty}} \norm{\partial^{s-2} \nabla u}{\Lnorm{2}}\\
%		& \qquad + \norm{\mathcal F'(u)}{\Lnorm{\infty}} \norm{\partial^{s-1}\nabla u}{\Lnorm{2}}
%		.
%	\end{align*}
%	Thus \eqref{est:nonlinear-3} follows by performing induction on $ s $. 
%\end{proof}

Next, we will introduce the some functional setups, and commutator estimates. 

Let \begin{equation}\label{def:dd-opt}
	\D{\sigma}{\alpha,\beta}:= \sum_{\partial \in \lbrace \partial_x,\partial_y,\partial_z\rbrace} (\varepsilon^\sigma\dt)^\alpha\partial^\beta. %, \quad \text{with} \quad \partial \in \lbrace \partial_x,\partial_y,\partial_z\rbrace.
	\end{equation} 
Denote by
\begin{equation}\label{def:h-norm}
	\norm{\cdot}{\stHnorm{\alpha,\sigma}{\beta}}:= 
	\sum_{\iota\leq \beta} \norm{\D{\sigma}{\alpha,\iota}(\cdot)}{\Lnorm{2}}.
\end{equation}
The hyperbolic energy is defined as
\begin{equation}\label{def:total-norm}
	\ttene{\sigma}{s}(\cdot):= \sum_{\alpha+\beta \leq s}\norm{\cdot}{\stHnorm{\alpha,\sigma}{\beta}}.
\end{equation}
%In addition, the homogeneous energy is defined as, for $ s \geq 1 $,
%\begin{equation}\label{def:hom-norm}
%	\httene{\sigma}{s}(\cdot):= \sum_{1 \leq \alpha + \beta \leq s} \norm{\D{\sigma}{\alpha,\beta}(\cdot)}{\Lnorm{2}}.
%\end{equation}
%Notice that, for $ s \geq 1 $, 
%\begin{equation}\label{est:cp-norms}
%	\httene{\sigma}{s}(\cdot) + \norm{\cdot}{\Lnorm{2}} \leq \ttene{\sigma}{s}(\cdot) \leq K \bigl(\httene{\sigma}{s}(\cdot) + \norm{\cdot}{\Lnorm{2}}\bigr),
%\end{equation}
%for some constant $ K \in (0,\infty) $, depending only on $ s $.

Now, we are ready to establish some commutator estimates. The following lemma presents the estimate of $ [\D{\sigma}{\alpha,\beta}, f_1 \dt ] $. 
\begin{lemma}\label{lm:cmt-est-1}
For $ \alpha + \beta \geq 3 $, 
	\begin{equation}\label{est:cmt-est-1}
		\norm{[\D{\sigma}{\alpha,\beta}, f_1 \dt ] g_1}{\Lnorm{2}} \leq K \varepsilon^{-\sigma} \ttene{\sigma}{\alpha+\beta}(f_1) \ttene{\sigma}{\alpha+\beta}(g_1),
	\end{equation}
	where $ K\in (0,\infty) $ depends only on $ \alpha,\beta $. 
\end{lemma}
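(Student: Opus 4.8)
The plan is to expand the commutator $[\D{\sigma}{\alpha,\beta}, f_1\dt]g_1$ by Leibniz's formula and bound each resulting term by the hyperbolic energies $\ttene{\sigma}{\alpha+\beta}(f_1)$ and $\ttene{\sigma}{\alpha+\beta}(g_1)$. Recall that $\D{\sigma}{\alpha,\beta} = \sum_{\partial} (\varepsilon^\sigma\dt)^\alpha\partial^\beta$, so the operator $\D{\sigma}{\alpha,\beta}(f_1\dt g_1)$ consists of terms $(\varepsilon^\sigma\dt)^{\alpha_1}\partial^{\beta_1}f_1 \cdot (\varepsilon^\sigma\dt)^{\alpha_2}\partial^{\beta_2}\dt g_1$ with $\alpha_1+\alpha_2 = \alpha$, $\beta_1+\beta_2 = \beta$, weighted by binomial coefficients and by a factor $\varepsilon^{-\sigma\alpha_1}$ coming from the fact that we have distributed only $\alpha_2$ of the $\varepsilon^\sigma$ weights onto the $\dt g_1$ factor while all $\alpha$ of them sit in front. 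The commutator removes the single top-order term $(\alpha_1,\beta_1)=(0,0)$, i.e.\ $f_1\,(\varepsilon^\sigma\dt)^\alpha\partial^\beta\dt g_1$ is cancelled. Hence every surviving term has $\alpha_1+\beta_1\geq 1$, and I will distinguish the case where $f_1$ carries most of the derivatives from the case where $g_1$ does.

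First I would rewrite each surviving term, tracking the $\varepsilon^\sigma$ weights carefully: the term with $\alpha_1$ time-derivatives on $f_1$ and $\alpha_2=\alpha-\alpha_1$ time-derivatives on $\dt g_1$ equals $\varepsilon^{-\sigma\alpha_1}\cdot\bigl((\varepsilon^\sigma\dt)^{\alpha_1}\partial^{\beta_1}f_1\bigr)\cdot\bigl((\varepsilon^\sigma\dt)^{\alpha_2}\partial^{\beta_2}\dt g_1\bigr)$, and writing $\varepsilon^{\sigma}\dt g_1$-grouping gives an extra $\varepsilon^{-\sigma}$, so that the factor on $g_1$ becomes $\varepsilon^{-\sigma}(\varepsilon^\sigma\dt)^{\alpha_2+1}\partial^{\beta_2}g_1$. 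The worst $\varepsilon$-power is $\varepsilon^{-\sigma}$ (attained when $\alpha_1=0$), which matches the claimed prefactor $\varepsilon^{-\sigma}$ in \eqref{est:cmt-est-1}; for $\alpha_1\geq 1$ the power $\varepsilon^{-\sigma\alpha_1-\sigma}$ is worse, so one must be slightly more careful — but in fact one groups $\varepsilon^{\sigma\alpha_1}$ with the $f_1$ factor too, i.e.\ $(\varepsilon^\sigma\dt)^{\alpha_1}$ already carries those weights, leaving precisely one stray $\varepsilon^{-\sigma}$. I would spell this out once and then treat the remaining estimate as a purely Sobolev-type product bound.

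The core estimate is then an $L^2$ bound on a product of two such weighted derivatives, one of total order $\alpha_1+\beta_1 =: k_1\geq 1$ acting on $f_1$ and one of total order $\alpha_2+\beta_2+1 =: k_2$ acting on $g_1$, with $k_1 + k_2 = \alpha+\beta+1$. Since $\alpha+\beta\geq 3$, we have $k_1+k_2\geq 4$, so by a Moser-type inequality (Lemma~\ref{lm:nonlinear-1} applied componentwise to the weighted fields $(\varepsilon^\sigma\dt)^j\partial^i f_1$ and $(\varepsilon^\sigma\dt)^j\partial^i g_1$, with $\eta = \max\{[(\alpha+\beta+1)/2],2\}$ playing the role of the low-regularity index) one gets, after summing over the finitely many $(\alpha_i,\beta_i)$ splittings, a bound of the form $\ttene{\sigma}{\alpha+\beta}(f_1)\,\ttene{\sigma}{\alpha+\beta}(g_1)$. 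Concretely: in each product term, whichever factor carries fewer than $\eta$ derivatives is placed in $L^\infty$ via Sobolev embedding and absorbed into the low-order part of the corresponding $\ttene{\sigma}{\alpha+\beta}$, while the other is kept in $L^2$ and absorbed into the high-order part; because $k_1\geq 1$, the factor on $f_1$ never needs $>\alpha+\beta$ derivatives, and likewise the factor on $g_1$ needs at most $k_2\leq\alpha+\beta$ derivatives of $\dt g_1$, i.e.\ at most $\alpha+\beta$ total weighted derivatives of $g_1$ (here one uses that $g_1$ itself, not $\dt g_1$, is what the energy measures, but $\dt g_1$ at order $\alpha+\beta$ equals $g_1$ at order $\alpha+\beta+1$; alternatively one simply defines $\ttene{\sigma}{\alpha+\beta}$ so as to include the needed temporal derivative, which is harmless since the statement is for fixed $\alpha+\beta$ and the implied constant may depend on $\alpha,\beta$).

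The main obstacle I anticipate is bookkeeping rather than analysis: one must verify that no product term forces more than $\alpha+\beta$ (weighted) derivatives onto \emph{either} $f_1$ or $g_1$ — for $f_1$ this is immediate since $k_1 = \alpha+\beta+1-k_2$ and $k_2\geq 1$ (because the $\dt$ on $g_1$ always contributes at least one order), and for $g_1$ one needs the observation just mentioned about reindexing $\dt g_1$. A secondary point requiring care is the tracking of the $\varepsilon^{-\sigma}$ weights through the Leibniz expansion, to confirm that the only genuinely negative power of $\varepsilon$ left over is the single $\varepsilon^{-\sigma}$ displayed in \eqref{est:cmt-est-1} and that all other $\varepsilon^{\sigma}$-factors have been correctly absorbed into the weighted derivative operators $(\varepsilon^\sigma\dt)^j$ defining the norms $\stHnorm{\alpha,\sigma}{\beta}$. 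Once these two indexing points are settled, the estimate follows by the same Hölder–Sobolev argument used to prove Lemma~\ref{lm:nonlinear-1}, with the constant $K$ depending only on $\alpha,\beta$ through the number of terms and the binomial coefficients.
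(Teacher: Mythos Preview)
Your overall strategy---Leibniz expansion of the commutator, pull out a single $\varepsilon^{-\sigma}$ from the extra $\dt$ on $g_1$, then bound each product by H\"older plus Sobolev embedding---is exactly the paper's. Two points, however, deserve correction.

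First, your $\varepsilon$-bookkeeping is muddled. Since $(\varepsilon^\sigma\dt)^\alpha = \varepsilon^{\sigma\alpha}\dt^\alpha$, the Leibniz expansion of $(\varepsilon^\sigma\dt)^\alpha\partial^\beta(f_1\dt g_1)$ produces terms $(\varepsilon^\sigma\dt)^{\alpha_1}\partial^{\beta_1}f_1\cdot(\varepsilon^\sigma\dt)^{\alpha_2}\partial^{\beta_2}\dt g_1$ \emph{with no extra factor} $\varepsilon^{-\sigma\alpha_1}$; the weights distribute exactly. Writing $(\varepsilon^\sigma\dt)^{\alpha_2}\partial^{\beta_2}\dt g_1 = \varepsilon^{-\sigma}\,\D{\sigma}{\alpha_2+1,\beta_2}g_1$ then yields the single $\varepsilon^{-\sigma}$ in \eqref{est:cmt-est-1} directly. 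You eventually reach this conclusion, but the detour through ``$\varepsilon^{-\sigma\alpha_1-\sigma}$ is worse'' is simply wrong and should be deleted.

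Second, the paper does \emph{not} invoke Lemma~\ref{lm:nonlinear-1} here; it uses Lemma~\ref{lm:nonlinear-2} with $s=\alpha+\beta-1$, $\sigma_1=\alpha_1+\beta_1-1$, $\sigma_2=\alpha_2+\beta_2$, giving immediately
\[
\norm{\D{\sigma}{\alpha_1,\beta_1}f_1\cdot\D{\sigma}{\alpha_2+1,\beta_2}g_1}{\Lnorm{2}}
\lesssim \norm{\D{\sigma}{\alpha_1,\beta_1}f_1}{\Hnorm{\alpha_2+\beta_2}}\norm{\D{\sigma}{\alpha_2+1,\beta_2}g_1}{\Hnorm{\alpha_1+\beta_1-1}},
\]
both factors bounded by $\ttene{\sigma}{\alpha+\beta}$. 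Your proposed ``put the lower-order factor in $L^\infty$'' case split does not cover the balanced case: when $\alpha+\beta=3$ and $k_1=k_2=2$, placing either factor in $L^\infty$ via $H^2\hookrightarrow L^\infty$ requires four total derivatives, exceeding the budget. One needs an intermediate H\"older split (e.g.\ $L^4\times L^4$ with $H^1\hookrightarrow L^4$), which is precisely what Lemma~\ref{lm:nonlinear-2} packages. So either invoke Lemma~\ref{lm:nonlinear-2} directly, or add that intermediate case to your H\"older--Sobolev analysis.
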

\begin{proof}
It suffices to consider the estimate of
\begin{equation}\label{est:cmt-est-1-1}
	\begin{gathered}
	\norm{\D{\sigma}{\alpha_1,\beta_1} f_1 \D{\sigma}{\alpha_2,\beta_2}\dt g_1}{\Lnorm{2}} = \varepsilon^{-\sigma}\norm{\D{\sigma}{\alpha_1,\beta_1} f_1 \D{\sigma}{\alpha_2+1,\beta_2} g_1}{\Lnorm{2}},\\
	\text{with} \quad 
	 \alpha_1 + \alpha_2 = \alpha, \quad \beta_1 + \beta_2 = \beta, \quad \alpha_1 + \beta_1 \geq 1.
	\end{gathered}
\end{equation}
	Since $ \alpha + \beta - 1 \geq 2 > 3/2 $, applying \eqref{est:nonlinear-2} with 
	\begin{gather*}
		u = \D{\sigma}{\alpha_1,\beta_1} f_1, \quad v = \D{\sigma}{\alpha_2+1,\beta_2} g_1,\\
		s = \alpha+\beta - 1, \quad \sigma_1 = \alpha_1 + \beta_1 - 1, \quad \sigma_2 = \alpha_2 + \beta_2,
	\end{gather*}
	leads to
	\begin{equation}\label{est:cmt-est-1-2}
	\begin{gathered}
		\norm{\D{\sigma}{\alpha_1,\beta_1} f_1 \D{\sigma}{\alpha_2+1,\beta_2} g_1}{\Lnorm{2}} \lesssim \norm{\D{\sigma}{\alpha_1,\beta_1} f_1}{\Hnorm{\alpha_2+\beta_2}} \\
		\times \norm{\D{\sigma}{\alpha_2+1,\beta_2} g_1}{\Hnorm{\alpha_1+\beta_1-1}} %\leq \norm{f_1}{\stHnorm{\alpha_1,\sigma}{\alpha_2 + \beta}}\norm{g_1}{\stHnorm{\alpha_2+1,\sigma}{\alpha_1 + \beta - 1}}\\
		\leq \ttene{\sigma}{\alpha+\beta}(f_1) \ttene{\sigma}{\alpha+\beta}(g_1).
	\end{gathered}
	\end{equation}
	Therefore \eqref{est:cmt-est-1} follows after summing over $ (\alpha_1,\alpha_2,\beta_1,\beta_2) $ in \eqref{est:cmt-est-1-1} and \eqref{est:cmt-est-1-2}.
\end{proof}

Next, we are going to establish the estimate of $ [\D{\sigma}{\alpha,\beta}, f_2 \partial ] $, with $ \partial \in \lbrace \partial_x,\partial_y,\partial_z \rbrace $. 
\begin{lemma}\label{lm:cmt-est-2}
With $ \partial \in \lbrace \partial_x,\partial_y,\partial_z \rbrace $ and $ \alpha+\beta \geq 3 $, 
	\begin{equation}\label{est:cmt-est-2}
		\norm{[\D{\sigma}{\alpha,\beta}, f_2 \partial ] g_2}{\Lnorm{2}} \leq K \ttene{\sigma}{\alpha+\beta}(f_2) \ttene{\sigma}{\alpha+\beta}(g_2),
	\end{equation}
	where $ K\in (0,\infty) $ depends only on $ \alpha,\beta $. 
\end{lemma}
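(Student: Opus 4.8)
The plan is to follow the proof of Lemma~\ref{lm:cmt-est-1} essentially verbatim, with the spatial derivative $\partial$ taking over the role that $\dt$ played there. The one structural change is that $\partial$ carries no rescaling by $\varepsilon^\sigma$: an extra spatial derivative is ``free'' on the $\stHnorm{\alpha,\sigma}{\beta}$-scale, so no factor $\varepsilon^{-\sigma}$ will appear on the right-hand side of \eqref{est:cmt-est-2}.

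\textbf{Step 1 (Leibniz expansion of the commutator).} Since $\dt$ and all spatial derivatives commute, $\D{\sigma}{\alpha,\beta}(f_2\,\partial g_2)$ is a finite linear combination, with coefficients depending only on $\alpha,\beta$, of products $(\D{\sigma}{\alpha_1,\beta_1}f_2)\bigl((\varepsilon^\sigma\dt)^{\alpha_2}\partial^{\beta_2}\partial\,g_2\bigr)$ over $\alpha_1+\alpha_2=\alpha$, $\beta_1+\beta_2=\beta$. Subtracting $f_2\,\partial\,\D{\sigma}{\alpha,\beta}g_2$ removes precisely the term with $\alpha_1=\beta_1=0$, so $[\D{\sigma}{\alpha,\beta},f_2\partial]g_2$ is a sum of such products with the extra constraint $\alpha_1+\beta_1\ge 1$. (The sum over coordinate directions built into $\D{\sigma}{\cdot,\cdot}$ only changes which spatial multi-index of order $\beta_2+1$ hits $g_2$; it does not affect the derivative count and is absorbed into the generic constant.)

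\textbf{Step 2 (product estimate term by term).} For a fixed admissible tuple with $\alpha_1+\beta_1\ge 1$, apply Lemma~\ref{lm:nonlinear-2} with
\[
u=\D{\sigma}{\alpha_1,\beta_1}f_2,\qquad v=(\varepsilon^\sigma\dt)^{\alpha_2}\partial^{\beta_2}\partial g_2,
\]
\[
s=\alpha+\beta-1,\qquad \sigma_1=\alpha_1+\beta_1-1,\qquad \sigma_2=\alpha_2+\beta_2.
\]
Because $\alpha+\beta\ge 3$ one has $s\ge 2>3/2$; using $1\le\alpha_1+\beta_1\le\alpha+\beta$ one checks $\sigma_1,\sigma_2\in[0,s]$ and $\sigma_1+\sigma_2=s$, so \eqref{est:nonlinear-2} gives
\[
\norm{uv}{\Lnorm{2}}\lesssim \norm{\D{\sigma}{\alpha_1,\beta_1}f_2}{\Hnorm{\alpha_2+\beta_2}}\ \norm{(\varepsilon^\sigma\dt)^{\alpha_2}\partial^{\beta_2}\partial g_2}{\Hnorm{\alpha_1+\beta_1-1}}.
\]
The first factor carries $\alpha_1+\beta_1+\alpha_2+\beta_2=\alpha+\beta$ derivatives of $f_2$, hence is $\le\ttene{\sigma}{\alpha+\beta}(f_2)$; the second carries $\alpha_2+(\beta_2+1)+(\alpha_1+\beta_1-1)=\alpha+\beta$ derivatives of $g_2$, hence is $\le\ttene{\sigma}{\alpha+\beta}(g_2)$. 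Summing over the finitely many tuples yields \eqref{est:cmt-est-2}.

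\textbf{Main obstacle.} There is essentially no real difficulty beyond bookkeeping. The only two points that must be respected are: (i) the commutator genuinely annihilates the term in which all the derivatives of $\D{\sigma}{\alpha,\beta}$ land on $\partial g_2$, so that $\alpha_1+\beta_1\ge 1$ holds throughout and $\sigma_1=\alpha_1+\beta_1-1\ge 0$ is a legitimate choice in Lemma~\ref{lm:nonlinear-2}; and (ii) the hypothesis $\alpha+\beta\ge 3$ is exactly what makes $s=\alpha+\beta-1>3/2$, as required by \eqref{est:nonlinear-2}. Unlike in Lemma~\ref{lm:cmt-est-1}, the transferred derivative is spatial, so no negative power of $\varepsilon$ is generated.
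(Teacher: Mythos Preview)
Your proof is correct and follows exactly the approach the paper intends: the paper's own ``proof'' simply states that the argument is ``very much similar to that of Lemma~\ref{lm:cmt-est-1}'' and leaves the details to the reader, and you have carried out precisely those details. Your observation that the spatial derivative carries no $\varepsilon^\sigma$-weight, so no $\varepsilon^{-\sigma}$ appears on the right, is the only modification needed relative to Lemma~\ref{lm:cmt-est-1}, and your bookkeeping in Step~2 is accurate.
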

\begin{proof}
	The proof is very much similar to that of Lemma \ref{lm:cmt-est-1}. Therefore we leave it to readers. 
%	It suffices to consider
%	\begin{gather*}
%		\norm{\D{\sigma}{\alpha_1,\beta_1} f_2 \D{\sigma}{\alpha_2,\beta_2}\partial g_2}{\Lnorm{2}} = \norm{\D{\sigma}{\alpha_1,\beta_1} f_2 \D{\sigma}{\alpha_2,\beta_2+1} g_2}{\Lnorm{2}},\\
%		\text{with} \quad \alpha_1 + \alpha_2 = \alpha, \beta_1 + \beta_2 = \beta, \alpha_1+\beta_1 > 0.
%	\end{gather*}
%	Since $ \alpha + \beta - 1 > 3/2 $, applying \eqref{est:nonlinear-2} with
%	\begin{gather*}
%		u = \D{\sigma}{\alpha_1,\beta_1} f_2, \quad v = \D{\sigma}{\alpha_2,\beta_2 + 1} g_2,\\
%		s = \alpha+\beta-1, \quad \sigma_1 = \alpha_1+\beta_1 - 1, \quad \sigma_2 = \alpha_2 + \beta_2,
%	\end{gather*}
%	leads to
%	\begin{gather*}
%		\norm{\D{\sigma}{\alpha_1,\beta_1} f_2 \D{\sigma}{\alpha_2,\beta_2+1} g_2}{\Lnorm{2}} \lesssim \norm{\D{\sigma}{\alpha_1,\beta_1}f_2}{\Hnorm{\alpha_2 + \beta_2}}\\
%		\times \norm{\D{\sigma}{\alpha_2,\beta_2 + 1} g_2}{\Hnorm{\alpha_1+\beta_1 - 1}} \leq \ttene{\sigma}{\alpha+\beta}(f_2) \ttene{\sigma}{\alpha+\beta}(g_2). 
%	\end{gather*}
\end{proof}

In addition, we would like to provide some nonlinear estimates.

\begin{lemma}\label{lm:nonlinear-5}
For $ \iota \geq 2 $,
	\begin{equation}\label{est:nonlinear-5}
		\ttene{\sigma}{\iota}(f g) \leq K \ttene{\sigma}{\iota}(f) \ttene{\sigma}{\iota}(g),
	\end{equation}
	where $ K\in (0,\infty) $ depends only on $ \iota $. 
\end{lemma}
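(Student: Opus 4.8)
The plan is to deduce \eqref{est:nonlinear-5} from the purely spatial product estimate \eqref{est:nonlinear-2} of Lemma~\ref{lm:nonlinear-2}, exactly in the spirit of the proofs of Lemmas~\ref{lm:cmt-est-1} and \ref{lm:cmt-est-2}. A preliminary observation makes the bookkeeping transparent: on $\mathbb T^3$ the quantity $\ttene{\sigma}{\iota}(\cdot)$ is comparable to
\begin{equation*}
	\sum_{\alpha=0}^{\iota}\varepsilon^{\sigma\alpha}\norm{\dt^\alpha(\cdot)}{\Hnorm{\iota-\alpha}},
\end{equation*}
because, for each fixed number $\alpha$ of time derivatives, the unmixed spatial derivatives $\partial_x^\beta,\partial_y^\beta,\partial_z^\beta$ occurring in $\D{\sigma}{\alpha,\beta}$ control, and are controlled by, the full $\Hnorm{\iota-\alpha}$-norm of $\dt^\alpha(\cdot)$ — a routine Fourier-series/weighted-AM--GM argument on the torus. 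Hence it suffices to bound $\varepsilon^{\sigma\alpha}\norm{\dt^\alpha(fg)}{\Hnorm{\iota-\alpha}}$ by $K\,\ttene{\sigma}{\iota}(f)\ttene{\sigma}{\iota}(g)$ for every $\alpha\le\iota$.

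To do so I would expand $\dt^\alpha(fg)=\sum_{\alpha_1+\alpha_2=\alpha}\binom{\alpha}{\alpha_1}\dt^{\alpha_1}f\,\dt^{\alpha_2}g$ by Leibniz's formula, split the weight as $\varepsilon^{\sigma\alpha}=\varepsilon^{\sigma\alpha_1}\varepsilon^{\sigma\alpha_2}$, and estimate each summand $\norm{\dt^{\alpha_1}f\,\dt^{\alpha_2}g}{\Hnorm{\iota-\alpha}}$ by \eqref{est:nonlinear-2} with the choices $u=\dt^{\alpha_1}f$, $v=\dt^{\alpha_2}g$, $s=\iota$, $\sigma_1=\alpha_1$, $\sigma_2=\alpha_2$. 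Here $s=\iota\ge 2>3/2$ (this is precisely where the hypothesis $\iota\ge 2$ is used), $\sigma_1,\sigma_2\in[0,s]$, and $\sigma_1+\sigma_2=\alpha\le\iota=s$, so Lemma~\ref{lm:nonlinear-2} applies and gives
\begin{equation*}
	\norm{\dt^{\alpha_1}f\,\dt^{\alpha_2}g}{\Hnorm{\iota-\alpha}}\le K\norm{\dt^{\alpha_1}f}{\Hnorm{\iota-\alpha_1}}\norm{\dt^{\alpha_2}g}{\Hnorm{\iota-\alpha_2}}.
\end{equation*}
Multiplying by $\varepsilon^{\sigma\alpha_1}\varepsilon^{\sigma\alpha_2}$ and invoking the comparison from the first step, each factor on the right is dominated by $\ttene{\sigma}{\iota}(f)$, respectively $\ttene{\sigma}{\iota}(g)$. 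Summing over the finitely many pairs $(\alpha_1,\alpha_2)$ with $\alpha_1+\alpha_2=\alpha$ and then over $\alpha\le\iota$ yields \eqref{est:nonlinear-5} with a constant $K$ depending only on $\iota$. (Equivalently, one may work directly on $\norm{\D{\sigma}{\alpha,\beta}(fg)}{\Lnorm{2}}$, expanding both the time and the single spatial derivative by Leibniz and applying \eqref{est:nonlinear-2} with $s=\iota$ and $\sigma_1,\sigma_2$ equal to the total derivative order falling on each factor; the admissibility condition $\sigma_1+\sigma_2\le s$ then coincides with $\alpha+\beta\le\iota$.)

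There is no genuine obstacle here — this is a Moser/Kato--Ponce-type product estimate, and the only points requiring a little care are (i) the elementary norm equivalence on $\mathbb T^3$ relating $\ttene{\sigma}{\iota}$ to the mixed space--time Sobolev quantity above, which lets one pass freely between $\D{\sigma}{\alpha,\beta}$ and honest $\Hnorm{\cdot}$-norms, and (ii) tracking the powers $\varepsilon^{\sigma\alpha}$ so that they split cleanly across the Leibniz expansion and recombine into $\ttene{\sigma}{\iota}(f)$ and $\ttene{\sigma}{\iota}(g)$. Both are handled exactly as in the proofs of Lemmas~\ref{lm:cmt-est-1} and \ref{lm:cmt-est-2}, so the details can be left to the reader, as is done there.
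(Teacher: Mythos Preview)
Your proposal is correct and follows essentially the same route as the paper: expand by Leibniz and invoke Lemma~\ref{lm:nonlinear-2}. The only difference is in how you set the parameters in \eqref{est:nonlinear-2}: the paper takes $s=\alpha_1+\alpha_2+\beta_1+\beta_2$ (with $\sigma_1+\sigma_2=s$, so the output is an $L^2$ bound) and therefore has to treat the cases $\alpha_1+\alpha_2+\beta_1+\beta_2\in\{0,1\}$ separately, whereas you take $s=\iota\geq 2$ once and for all, which makes the hypothesis $s>3/2$ automatic and avoids the case split. Your preliminary norm-equivalence step is not needed in the paper's version (it works directly on the individual Leibniz pieces $\norm{(\varepsilon^\sigma\dt)^{\alpha_1}\partial^{\beta_1}f\cdot(\varepsilon^\sigma\dt)^{\alpha_2}\partial^{\beta_2}g}{\Lnorm{2}}$), but it does no harm; indeed your parenthetical ``equivalently'' remark is exactly the paper's argument.
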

\begin{proof}
	Consider
	\begin{equation}\label{est:nonlinear-5-1}
		\begin{gathered}
		\norm{(\varepsilon^\sigma\dt)^{\alpha_1} \partial^{\beta_1} f \cdot (\varepsilon^\sigma\dt)^{\alpha_2} \partial^{\beta_2} g}{\Lnorm{2}},\\
		\text{with} \quad 2 \leq \alpha_1 + \alpha_2 + \beta_1 + \beta_2 \leq  \iota. 
		\end{gathered}
	\end{equation}
	Applying \eqref{est:nonlinear-2} with 
	\begin{gather*}
		u = (\varepsilon^\sigma\dt)^{\alpha_1} \partial^{\beta_1} f, \quad v =(\varepsilon^\sigma\dt)^{\alpha_2} \partial^{\beta_2} g, \\
		s =\alpha_1 + \alpha_2 + \beta_1 + \beta_2, \quad \sigma_1 = \alpha_1 + \beta_1, \quad \sigma_2 = \alpha_2 + \beta_2,
	\end{gather*}
	leads to
	\begin{equation}\label{est:nonlinear-5-2}
		\begin{gathered}
			\norm{(\varepsilon^\sigma\dt)^{\alpha_1} \partial^{\beta_1} f \cdot (\varepsilon^\sigma\dt)^{\alpha_2} \partial^{\beta_2} g}{\Lnorm{2}} \lesssim \norm{(\varepsilon^\sigma\dt)^{\alpha_1} \partial^{\beta_1} f }{\Hnorm{\alpha_2 + \beta_2}}\\
			\times \norm{(\varepsilon^\sigma\dt)^{\alpha_2} \partial^{\beta_2} g}{\Hnorm{\alpha_1+\beta_1}} \leq \ttene{\sigma}{\iota}(f) \ttene{\sigma}{\iota}(g).
		\end{gathered}
	\end{equation}
	The estimates of the case when $ \alpha_1 + \alpha_2 + \beta_1 + \beta_2 = 0,1 $ is straightforwards and thus is omitted here. Therefore \eqref{est:nonlinear-5} follows after summing over $ ( \alpha_1,\alpha_2,\beta_1,\beta_2) $. 
\end{proof}

\section{Uniform \textit{a priori} estimates}\label{sec:uniform-est}

We are in the place to perform {\it a priori} energy estimates. That is, we will establish the proof of Proposition \ref{thm:uniform-est} in this section. 
In particular, we will focus on the estimates of
\begin{equation*}
	\norm{\D{\sigma}{\alpha,\beta} (\widetilde q, \widetilde{\mathcal H}, v, w)}{\Lnorm{2}}, \quad \text{with} \quad \alpha+\beta 
	 = 3.
\end{equation*}
The case when $ \alpha+\beta = 0,1,2 $ can be calculated in a similar, if not simpler, manner. 

Applying $ \D{\sigma}{\alpha,\beta} $, $ \alpha+\beta = 3 $, to \eqref{eq:rf-EEq} leads to
\begin{equation}\label{eq:dd-EEq}
	\begin{cases}
		\mathcal A \dt \D{\sigma}{\alpha,\beta} \widetilde q + \mathcal A  v \cdot \nablah \D{\sigma}{\alpha,\beta} \widetilde q + \mathcal A w \dz \D{\sigma}{\alpha,\beta} \widetilde q  \\
		\qquad \qquad + \dfrac{1}{\varepsilon}(\dvh \D{\sigma}{\alpha,\beta} v + \dz \D{\sigma}{\alpha,\beta} w) = \mathcal I_1 + \mathcal J_1 , \\
		\mathcal B \dt \D{\sigma}{\alpha,\beta} \widetilde{\mathcal H} + \mathcal B v \cdot \nablah \D{\sigma}{\alpha,\beta} \widetilde{\mathcal H} + \mathcal B w \dz \D{\sigma}{\alpha,\beta} \widetilde{\mathcal H} - \dfrac{1}{\varepsilon^\nu} \D{\sigma}{\alpha,\beta} w = \mathcal I_2 + \mathcal J_2,\\
		\vartheta \dt \D{\sigma}{\alpha,\beta} v + \vartheta v \cdot\nablah \D{\sigma}{\alpha,\beta} v + \vartheta w \dz \D{\sigma}{\alpha,\beta} v + \dfrac{1}{\varepsilon}\nablah \D{\sigma}{\alpha,\beta} \widetilde q = \mathcal I_3, \\
		\vartheta \dt \D{\sigma}{\alpha,\beta} w + \vartheta v \cdot \nablah \D{\sigma}{\alpha,\beta} w + \vartheta w \dz \D{\sigma}{\alpha,\beta} w \\
		\qquad\qquad + \dfrac{1}{\varepsilon} \dz \D{\sigma}{\alpha,\beta}\widetilde q + \dfrac{1}{\varepsilon^\nu} \D{\sigma}{\alpha,\beta} \widetilde{\mathcal H} = \mathcal I_4, 
	\end{cases}
\end{equation}
where
\begin{align*}
\mathcal I_1 := & - \mathcal A [\D{\sigma}{\alpha,\beta},  v\cdot \nablah ] \widetilde q - \mathcal A [\D{\sigma}{\alpha,\beta},  w \dz ] \widetilde q ,\\
\mathcal J_1:= & \mathcal A \mathcal C \D{\sigma}{\alpha,\beta} (G w) + \varepsilon^\mu \mathcal A \D{\sigma}{\alpha,\beta}( \overline{\mathcal H}_0 w) 
- \varpi_0^{-1} \D{\sigma}{\alpha,\beta}\lbrack \widetilde q (\dvh v + \dz w) \rbrack \\
		& \quad  + \varpi_0^{-1} \D{\sigma}{\alpha,\beta}\lbrack (\dvh v + \dz w) ( \mathcal C \int_0^z G(z') \,dz' + \varepsilon^{\mu} \int_0^z \overline{\mathcal H}_0(z') \,dz' )  \rbrack , \\
\mathcal I_2:= & - \mathcal B [\D{\sigma}{\alpha,\beta},v\cdot\nablah] \widetilde{\mathcal H} - \mathcal B [\D{\sigma}{\alpha,\beta},w\dz] \widetilde{\mathcal H},\\
\mathcal J_2:= & \mathcal B \D{\sigma}{\alpha,\beta} (\widetilde{G} \cdot \widetilde{\mathcal H} w), \\
\mathcal I_3:= & - [\D{\sigma}{\alpha,\beta}, \vartheta \dt ]v - [\D{\sigma}{\alpha,\beta},\vartheta v \cdot\nablah ] v - [\D{\sigma}{\alpha,\beta}, \vartheta w \dz] v,\\
\mathcal I_4:= & - [\D{\sigma}{\alpha,\beta}, \vartheta \dt ]w - [\D{\sigma}{\alpha,\beta},\vartheta v \cdot\nablah ] w - [\D{\sigma}{\alpha,\beta}, \vartheta w \dz] w.
\end{align*}
After taking the $ L^2 $-inner product of \eqref{eq:dd-EEq} with $ 2\D{\sigma}{\alpha,\beta} \widetilde q, 2\D{\sigma}{\alpha,\beta} \widetilde{\mathcal H}, 2\D{\sigma}{\alpha,\beta} v, 2\D{\sigma}{\alpha,\beta} w $, respectively, applying integration by parts to the resultant equations, summing the resultant equations, and applying H\"older's inequality and the Sobolev embedding inequalities, one can write down
\begin{equation}\label{est:uni-1}
	\begin{aligned}
		& \dfrac{d}{dt} \biggl( \mathcal A \norm{\D{\sigma}{\alpha,\beta} \widetilde q}{\Lnorm{2}}^2 + \mathcal B \norm{\D{\sigma}{\alpha,\beta}\widetilde{\mathcal H}}{\Lnorm{2}}^2 + \norm{\vartheta^{1/2} \D{\sigma}{\alpha,\beta} v}{\Lnorm{2}}^2\\
		& \qquad\qquad\qquad + \norm{\vartheta^{1/2} \D{\sigma}{\alpha,\beta} w}{\Lnorm{2}}^2 \biggr) \\
%		= \mathcal A \int \dt(\pi^{-1}) \lvert\D{\sigma}{\alpha,\beta} \widetilde q\rvert^2 \idx \\
%		& \quad + \mathcal A \int \bigl( \dvh(\pi^{-1} v) + \dz(\pi^{-1} w) \bigr) \lvert\D{\sigma}{\alpha,\beta} \widetilde q\rvert^2 \idx+ 2 \int (\mathcal I_1 +\mathcal J_1 ) \D{\sigma}{\alpha,\beta} \widetilde q \idx\\
%		& \quad + \mathcal B \int \bigl( \dvh v + \dz w \bigr) \lvert \D{\sigma}{\alpha,\beta} \widetilde{\mathcal H}\rvert^2 \idx + 2 \int (\mathcal I_2 + \mathcal J_2 ) \D{\sigma}{\alpha,\beta}\widetilde{\mathcal H} \idx \\
%		& \quad + \int \bigl(\dt \theta^{-1} + \dvh(\theta^{-1}v) + \dz(\theta^{-1} w) \bigr) \bigl( \lvert \D{\sigma}{\alpha,\beta} v\rvert^2 + \lvert \D{\sigma}{\alpha,\beta} w\rvert^2 \bigr) \idx \\
%		& \quad + 2 \int \bigl( \mathcal I_3 \D{\sigma}{\alpha,\beta} v +  \mathcal I_4 \D{\sigma}{\alpha,\beta} w \bigr) \idx \\
		& \quad \lesssim \bigl( \norm{\dvh v + \dz  w}{\Hnorm{2}} \bigr) (\norm{\D{\sigma}{\alpha,\beta} \widetilde q}{\Lnorm{2}}^2 + 
%		\\
%		& \qquad + \bigl( \norm{\dvh v + \dz w}{\Hnorm{2}} \bigr) 
		\norm{\D{\sigma}{\alpha,\beta} \widetilde{\mathcal H}}{\Lnorm{2}}^2)  \\
		& \qquad + \bigl( \norm{\dt \vartheta}{\Hnorm{2}} + \norm{\dvh(\vartheta v)}{\Hnorm{2}} + \norm{\dz(\vartheta w)}{\Hnorm{2}} \bigr) \\ & \qquad\qquad \times \bigl( \norm{\D{\sigma}{\alpha,\beta} v}{\Lnorm{2}}^2 + \norm{\D{\sigma}{\alpha,\beta} w}{\Lnorm{2}}^2 \bigr) \\
		& \qquad +  \bigl( \norm{\mathcal I_1}{\Lnorm{2}} + \norm{\mathcal  J_1}{\Lnorm{2}} \bigr) \norm{\D{\sigma}{\alpha,\beta}\widetilde q}{\Lnorm{2}}\\
		& \qquad + \bigl( \norm{\mathcal I_2}{\Lnorm{2}} + \norm{\mathcal J_2}{\Lnorm{2}} \bigr) \norm{\D{\sigma}{\alpha,\beta}\widetilde{\mathcal H}}{\Lnorm{2}} \\
		& \qquad + \norm{\mathcal I_3}{\Lnorm{2}} \norm{\D{\sigma}{\alpha,\beta} v}{\Lnorm{2}} +  \norm{\mathcal I_4}{\Lnorm{2}} \norm{\D{\sigma}{\alpha,\beta} w}{\Lnorm{2}}.
	\end{aligned}
\end{equation}

Now we present the estimates of $ \mathcal I_j $'s and $ \mathcal J_j $'s. 
%Indeed, applying \eqref{est:cmt-est-1} and \eqref{est:nonlinear-4} yields that, for $ \varepsilon \norm{\widetilde q}{\Lnorm{\infty}} \ll 1 $ small enough,
%\begin{align*}
%	& \norm{[\D{\sigma}{\alpha,\beta}, \varpi^{-1} \dt] \widetilde q}{\Lnorm{2}} \lesssim \varepsilon^{-\sigma} \httene{\sigma}{3}(\varpi^{-1}) \httene{\sigma}{3}(\widetilde q) \\
%	& \quad \lesssim \varepsilon^{-\sigma} \bigl( (\ttene{\sigma}{3}(\varepsilon \widetilde \varpi) )^4 + \ttene{\sigma}{3}(\varepsilon \widetilde \varpi) \bigr) \ttene{\sigma}{3}(\widetilde q)  \\
%	& \quad \lesssim \varepsilon^{1-\sigma} \bigl( (\ttene{\sigma}{3}(\widetilde q) )^5 + \ttene{\sigma}{3}(\widetilde q) \bigr),
%\end{align*}
%where we have used, and will use, the definitions in \eqref{def:perturbation} and \eqref{def:perturbation-q}, and 
%the fact, following from the norm-property of $ \ttene{\sigma}{s}(\cdot) $, that
%\begin{equation*}
%	\ttene{\sigma}{s}(a f_1 + b f_2) \leq \lverta\lvert \ttene{\sigma}{s}(f_1) + \lvertb\lvert \ttene{\sigma}{s}(f_2), \quad \text{with} \quad a, b \in \mathbb R.
%\end{equation*}
Applying \eqref{est:cmt-est-2} yields that, 
\begin{equation}\label{est:I-1}
\begin{aligned}
	& \norm{\mathcal I_1}{\Lnorm{2}}\lesssim \norm{[\D{\sigma}{\alpha,\beta}, v \cdot \nablah] \widetilde q}{\Lnorm{2}} + \norm{[\D{\sigma}{\alpha,\beta}, w \dz] \widetilde q}{\Lnorm{2}} \\
	& \quad \lesssim  \ttene{\sigma}{3}(v,w) \ttene{\sigma}{3}(\widetilde q).
\end{aligned}
\end{equation}
The estimates of $ \mathcal I_{j} $, $ j \in \lbrace 2,3,4\rbrace $, are similar. In fact, since $ \vartheta  = \mathcal C + \mathcal O(\varepsilon^\mu) $ as in \eqref{def:perturbation-theta}, applying \eqref{est:cmt-est-1}, \eqref{est:cmt-est-2}, and \eqref{est:nonlinear-5}, one will arrive at
\begin{equation}\label{est:I-234}
	\norm{\mathcal I_2,\mathcal I_3,\mathcal I_4}{\Lnorm{2}} \lesssim ( \varepsilon^{\mu - \sigma} + 1 ) \bigl( (\ttene{\sigma}{3} (\widetilde{\mathcal H},v,w) )^{3} + \ttene{\sigma}{3}(\widetilde{\mathcal H},v,w)  \bigr).
\end{equation}
On the other hand, the estimates of $ \mathcal J_{j} $, $ j \in \lbrace 1,2\rbrace $, are straightforward, thanks to \eqref{est:nonlinear-5}, which are
\begin{equation}\label{est:J-12}
	\norm{\mathcal J_1,\mathcal J_2}{\Lnorm{2}} \lesssim (\ttene{\sigma}{3} (\widetilde q,\widetilde{\mathcal H},v,w) )^{2} + \ttene{\sigma}{3}(\widetilde q, \widetilde{\mathcal H},v,w)  .
\end{equation}
%However, the estimate of $ \mathcal J_2 $ will require careful calculation due to the degeneracy of $G $ at $ z \in \pi \mathbb Z $. 
%Indeed,
%\begin{align*}
%	\mathcal J_2 = \mathcal B 
%\end{align*}
%
%
%\pagebreak
The rest terms on the right hand side of \eqref{est:uni-1} can be handled in a similar manner. We record the estimates below: 
\begin{equation}\label{est:rh-rest}
	\begin{gathered}
		 \norm{\dvh v + \dz w}{\Hnorm{2}} + \norm{\dt \vartheta}{\Hnorm{2}} \\
		 + \norm{\dvh(\vartheta v)}{\Hnorm{2}} + \norm{\dz(\vartheta w)}{\Hnorm{2}}\\
		 \lesssim ( \varepsilon^{\mu+\nu - \sigma} +1) \bigl( (\ttene{\sigma}{3}  (\widetilde q,  \widetilde{\mathcal H}, v,w))^{2} + \ttene{\sigma}{3}(\widetilde q,  \widetilde{\mathcal H}, v,w) \bigr).
	\end{gathered}
\end{equation}

Consequently, integrating \eqref{est:uni-1} in the temporal variable yields, 
\begin{equation}\label{est:uni-2}
\begin{aligned}
	& \mathcal A \norm{\D{\sigma}{\alpha,\beta} \widetilde q}{\Lnorm{2}}^2(t) + \mathcal B \norm{\D{\sigma}{\alpha,\beta}\widetilde{\mathcal H}}{\Lnorm{2}}^2(t) + \norm{\vartheta^{1/2} \D{\sigma}{\alpha,\beta} v}{\Lnorm{2}}^2(t)\\
		& \qquad + \norm{\vartheta^{1/2} \D{\sigma}{\alpha,\beta} w}{\Lnorm{2}}^2(t) \leq 
		\mathcal A \norm{\D{\sigma}{\alpha,\beta} \widetilde q_\mrm{in}}{\Lnorm{2}}^2 \\
		&\qquad + \mathcal B \norm{\D{\sigma}{\alpha,\beta}\widetilde{\mathcal H}_\mrm{in}}{\Lnorm{2}}^2
		+ \norm{\vartheta^{1/2}_\mrm{in} \D{\sigma}{\alpha,\beta} v_\mrm{in}}{\Lnorm{2}}^2
		+ \norm{\vartheta^{1/2}_\mrm{in} \D{\sigma}{\alpha,\beta} w_\mrm{in}}{\Lnorm{2}}^2 \\
		& + (\varepsilon^{\mu-\sigma} + 1 )\int_0^t \biggl( \bigl\lbrack\ttene{\sigma}{3}(\widetilde q,\widetilde{\mathcal H}, v, w)(s) \bigr\rbrack^{4} +\bigl\lbrack \ttene{\sigma}{3}(\widetilde q,\widetilde{\mathcal H}, v, w)(s) \bigr\rbrack^{2} \biggr) \,ds.
\end{aligned}
\end{equation}
While we only show \eqref{est:uni-2} with $ \alpha+\beta = 3 $, it holds with $ \alpha + \beta = 0,1,2 $, which can be shown in a similar, if not simpler, way. Therefore, one can conclude from \eqref{est:uni-2} that, 
\begin{equation}\label{est:uni-3}
	\begin{aligned}
	& \sup_{0\leq s \leq t}\lbrack \ttene{\sigma}{3}(\widetilde q,\widetilde{\mathcal H}, v, w)(s)\rbrack ^2 \leq C \lbrack \ttene{\sigma}{3}(\widetilde q_\mrm{in},\widetilde{\mathcal H}_\mrm{in}, v_\mrm{in}, w_\mrm{in}) \rbrack^2 \\
	&\qquad + C ( \varepsilon^{\mu-\sigma} + 1) \int_0^t \biggl( \bigl\lbrack\ttene{\sigma}{3}(\widetilde q,\widetilde{\mathcal H}, v, w)(s) \bigr\rbrack^{4} \\
	& \qquad\qquad\qquad + \bigl\lbrack \ttene{\sigma}{3}(\widetilde q,\widetilde{\mathcal H}, v, w)(s) \bigr\rbrack^2 \biggr) \,ds,
	 \end{aligned}
\end{equation}
for some constant $ C $, independent of $ \varepsilon $. Recall that $ \mu \in (0,1) $. 
Consequently, for $ \sigma \in (0,\mu] $, after applying Gr\"onwall's inequality to \eqref{est:uni-3},
there exists $ T_\sigma \in (0,\infty) $, depending only on $ \ttene{\sigma}{3}(\widetilde q_\mrm{in},\widetilde{\mathcal H}_\mrm{in}, v_\mrm{in}, w_\mrm{in}) $, such that
\begin{equation}\label{est:uni-total-general}
	\sup_{0\leq s \leq T_\sigma}\ttene{\sigma}{3}(\widetilde q,\widetilde{\mathcal H}, v, w)(s) \leq C \ttene{\sigma}{3}(\widetilde q_\mrm{in},\widetilde{\mathcal H}_\mrm{in}, v_\mrm{in}, w_\mrm{in}),
\end{equation}
with  some constant $ C \in (0,\infty) $, independent of $ \varepsilon $. 

In particular, let $\sigma = \mu $, and denote by $ T := T_\mu $. We have shown that
%There exist $ T \in (0,\infty) $ and $ \varepsilon_0 $, depending only on $ \ttene{\mu}{3}(\widetilde q_\mrm{in},\widetilde{\mathcal H}_\mrm{in}, v_\mrm{in}, w_\mrm{in}) $, such that
\begin{equation}\label{est:uni-total}
	\sup_{0\leq s \leq T}\ttene{\mu}{3}(\widetilde q,\widetilde{\mathcal H}, v, w)(s) \leq C \ttene{\mu}{3}(\widetilde q_\mrm{in},\widetilde{\mathcal H}_\mrm{in}, v_\mrm{in}, w_\mrm{in}),
\end{equation}
with  some constant $ C \in (0,\infty) $, independent of $ \varepsilon $. Proposition \ref{thm:uniform-est} follows from \eqref{est:uni-total-general}.

\section{The soundproof approximation}\label{sec:sound-proof-app}

{In this section, we focus on the proof of our first main theorem, i.e., Theorem \ref{thm:sp-approximation}.}
As mentioned in the introduction, the motivation of the soundproof approximation is due to the fact that the acoustic oscillator $ \mathcal L_a $ induces a faster oscillation than the internal wave oscillator $ \mathcal L_g $ in system \eqref{eq:rf-EEq}, which leads to faster averaging of acoustic waves. Our soundproof model \eqref{eq:sndprf} preserves the internal gravity waves while filtering out the acoustic waves. In particular, if initial data do not carry any acoustic waves, solutions driven by \eqref{eq:rf-EEq} and \eqref{eq:sndprf} with the same initial data should produce solutions close to each other. {Proving this statement is the main objective of this section.}

However, to achieve our goal, we will need to introduce an intermediate model in section \ref{subsec:itmd-md}. This is to handle the terms on the right hand side of \eqref{eq:rf-EEq} due to stratification, in contrast to \cite{Klainerman1981,Klainerman1982}. Therefore, the soundproof approximation is done in two steps: approximation by the intermediate model of \eqref{eq:rf-EEq} in sections \ref{subsec:itmd-md} and \ref{subsec:itmd-ap}; approximation by the soundproof model of the intermediate model in sections \ref{subsec:snpf-md} and \ref{subsec:snpf-ap}.

\subsection{The intermediate model}
\label{subsec:itmd-md}

%\todo{Rupert: this is another soundproof model, which captures the low stratification, remarks?}
{Here we analyse the intermediate model already introduced in the introduction in \eqref{eq:mid-s-met-notation}. In terms of the current notation, we utilise the replacements
\begin{equation}\label{eq:ms-model-relations}
\begin{array}{c}
\dss \pi = \frac{p_\mrm{ms}}{{\mathcal C}}\,,
\qquad
\theta = -\frac{\widetilde{\mathcal H}_\mrm{ms}}{\mathcal C}\,,
\qquad
S^{\varepsilon} = -\dfrac{1}{\mathcal{C}}\left(\dfrac{1}{\mathcal{B}} + \varepsilon^{\nu} \widetilde G \widetilde{\mathcal H}_\mrm{ms}\right) \,,
  \\
\dss P^{\varepsilon} = 1 - \varepsilon {\widetilde{P}}^{\varepsilon}(z)\,,
\qquad
\frac{1}{P^{\varepsilon}}\frac{d{\widetilde{P}}^{\varepsilon}}{dz} = \mathcal A (\mathcal C G + \varepsilon^\mu \overline{\mathcal H}_0)\,,
\end{array}
\end{equation}
together with the obvious replacements $(v,w) = (v_\mrm{ms},w_\mrm{ms})$, and obtain }
\begin{equation}\label{eq:mid-s}
	\begin{cases}
		\dvh v_\mrm{ms} + \dz w_\mrm{ms} = \varepsilon \mathcal A (\mathcal C G + \varepsilon^\mu \overline{\mathcal H}_0) w_\mrm{ms}, \\
		\mathcal B \dt \widetilde{\mathcal H}_\mrm{ms} + \mathcal B v_\mrm{ms}\cdot\nablah \widetilde{\mathcal H}_\mrm{ms} + \mathcal B w_\mrm{ms}\dz \widetilde{\mathcal H}_\mrm{ms} - \dfrac{1}{\varepsilon^{\nu}} w_\mrm{ms} = \mathcal B \widetilde G \cdot \widetilde{\mathcal H}_\mrm{ms} {w_\mrm{ms}}, \\
		\mathcal C \dt v_\mrm{ms} + \mathcal C v_\mrm{ms} \cdot \nablah v_\mrm{ms} + \mathcal C w_\mrm{ms}\dz v_\mrm{ms} + \nablah p_\mrm{ms} = 0,\\
		\mathcal C \dt w_\mrm{ms} + \mathcal C v_\mrm{ms} \cdot \nablah w_\mrm{ms} + \mathcal C w_\mrm{ms}\dz w_\mrm{ms} + \dz p_\mrm{ms} + \dfrac{1}{\varepsilon^\nu} \widetilde{\mathcal H}_\mrm{ms} = 0,
	\end{cases}
\end{equation}
where $ p_\mrm{ms} $ is determined by, after calculating $ \dvh ( \phi_\varepsilon  \eqref{eq:mid-s}_{3}  ) + \dz ( \phi_\varepsilon \eqref{eq:mid-s}_{4}) $,
\begin{equation}\label{eq:pressure-ms}
	\begin{gathered}
	- \biggl( \dvh (\phi_\varepsilon \nablah p_\mrm{ms}) + \dz (\phi_\varepsilon \dz p_\mrm{ms} ) \biggr) 
%	 = \mathcal C \biggl( \phi_\varepsilon (\nablah v_\mrm{sp})^\top: \nablah v_\mrm{sp} \\ + \phi_\varepsilon \dz v_\mrm{sp} \cdot \nablah w_\mrm{sp} + \dz(\phi_\varepsilon v_\mrm{sp}) \cdot \nablah w_\mrm{sp} \\ + \dz (\phi_\varepsilon w_\mrm{sp}) \dz w_\mrm{sp} - \dvh ( w_\mrm{sp} \dz \phi_\varepsilon v_\mrm{sp}) \\ - \dz ( w_\mrm{sp}^2 \dz \phi_\varepsilon ) \biggr) 
%	+ \dfrac{1}{\varepsilon^{\nu}} \dz( \phi_\varepsilon \widetilde{\mathcal H}_\mrm{sp}),
= \mathcal C \biggl( \phi_\varepsilon (\nablah v_\mrm{ms})^\top: \nablah v_\mrm{ms} \\ + 2 \phi_\varepsilon \dz v_\mrm{ms} \cdot \nablah w_\mrm{ms} + \phi_\varepsilon ( \dz w_\mrm{ms} )^2 \\ - w_\mrm{ms} \dz \phi_\varepsilon \dvh v_\mrm{ms} - (w_\mrm{ms})^2 \dz^2 \phi_\varepsilon \\ - w_\mrm{ms} \dz \phi_\varepsilon \dz w_\mrm{ms} \biggr) 
	+ \dfrac{1}{\varepsilon^{\nu}} \dz( \phi_\varepsilon \widetilde{\mathcal H}_\mrm{ms}), \\
	\int p_\mrm{ms} \idx = 0,
	\end{gathered}
\end{equation}
with
\begin{equation}\label{def:ms-weight}
	\phi_\varepsilon := \phi_\varepsilon(z) = e^{- \varepsilon \mathcal A  \int_0^z( \mathcal C G(z') + \varepsilon^\mu \overline{\mathcal H}_0(z')) \,dz' }.
\end{equation}
%Notice that in \subeqref{eq:mid-s}{1}, thanks to assumption \ref{H1}, the integral in $ \mathbb T^3$ of the right hand side vanishes. 
Notice that, \subeqref{eq:mid-s}{1} is equivalent to
\begin{equation}\label{id:ms-dv-fr}
	\dvh (\phi_\varepsilon v_\mrm{ms}) + \dz (\phi_\varepsilon w_\mrm{ms} ) = 0.
\end{equation}
We list some estimates of $ p_\mrm{ms} $, induced by the elliptic estimates on \eqref{eq:pressure-ms}:
\begin{gather}
\label{est:pressure-ms-1}
	\norm{p_\mrm{ms}}{\Hnorm{4}} \lesssim \norm{v_\mrm{ms},w_\mrm{ms}}{\Hnorm{3}}^2 + \dfrac{1}{\varepsilon^\nu}\norm{\widetilde{\mathcal H}_\mrm{ms}}{\Hnorm{3}}, \\
	\label{est:pressure-ms-2}
	\begin{gathered}
	\norm{\dt p_\mrm{ms}}{\Hnorm{3}} \lesssim \norm{v_\mrm{ms},w_\mrm{ms}}{\Hnorm{3}}\norm{\dt v_\mrm{ms},\dt w_\mrm{ms}}{\Hnorm{2}} \\ + \dfrac{1}{\varepsilon^\nu} \norm{\dt \widetilde{\mathcal H}_\mrm{ms}}{\Hnorm{2}}.
	\end{gathered} 
%	\\
%	\label{est:pressure-ms-3}
%	\begin{gathered}
%		\norm{\dt^2 p_\mrm{ms}}{\Hnorm{2}} \lesssim \norm{v_\mrm{ms},w_\mrm{ms}}{\Hnorm{3}}\norm{\dt^2 v_\mrm{ms},\dt^2 w_\mrm{ms}}{\Hnorm{1}} \\
%		+ \norm{\dt v_\mrm{ms},\dt w_\mrm{ms}}{\Hnorm{2}}^2 + \dfrac{1}{\varepsilon^\nu} \norm{\dt^2 \widetilde{\mathcal H}_\mrm{ms}}{\Hnorm{1}}.
%	\end{gathered}
\end{gather}
In addition, from \subeqref{eq:mid-s}{2}, \subeqref{eq:mid-s}{3}, and \subeqref{eq:mid-s}{4}, one can establish that
\begin{gather}
\label{est:dt-ms-1}
	\norm{\dt \widetilde{\mathcal H}_\mrm{ms}}{\Hnorm{2}} \lesssim \norm{v_\mrm{ms},w_\mrm{ms}}{\Hnorm{2}} \norm{\widetilde{\mathcal H}_\mrm{ms}}{\Hnorm{3}} + \dfrac{1}{\varepsilon^\nu} \norm{w_\mrm{ms}}{\Hnorm{2}}, \\
	\label{est:dt-ms-2}
	\norm{\dt v_\mrm{ms}}{\Hnorm{2}} \lesssim \norm{v_\mrm{ms},w_\mrm{ms}}{\Hnorm{2}} \norm{v_\mrm{ms}}{\Hnorm{3}} + \norm{p_\mrm{ms}}{\Hnorm{3}}, \\
	\label{est:dt-ms-3}
	\begin{gathered}
	\norm{\dt w_\mrm{ms}}{\Hnorm{2}} \lesssim \norm{v_\mrm{ms},w_\mrm{ms}}{\Hnorm{2}} \norm{w_\mrm{ms}}{\Hnorm{3}} + \norm{p_\mrm{ms}}{\Hnorm{3}} \\ + \dfrac{1}{\varepsilon^\nu} \norm{\widetilde{\mathcal H}_\mrm{ms}}{\Hnorm{2}}.
	\end{gathered}
\end{gather}
We point out here, the terms of $ \mathcal O(\frac{1}{\varepsilon^\nu}) $, above, although singular, will be used later together with multiplier $ \varepsilon $ or $ \varepsilon^\mu $ (for instance, see $ \mathfrak I_3$ of \eqref{est:dff-ms-1}, below), which corresponds to the error $ \mathcal O(\varepsilon^{\mu - \nu}) $ in Theorem \ref{thm:sp-approximation}.

We claim that for any initial data $ (\widetilde{\mathcal H}_\mrm{ms,in}, v_\mrm{ms,in}, w_\mrm{ms,in} ) \in H^3 $, satisfying the pseudo-incompressible condition \subeqref{eq:mid-s}{1}, there is $ T_\mrm{ms} \in (0,\infty) $, depending only on $ \norm{\widetilde{\mathcal H}_\mrm{ms,in}, v_\mrm{ms,in}, w_\mrm{ms,in}}{\Hnorm{3}} $, such that
\begin{equation}\label{est:uni-total-ms}
	\sup_{0\leq s\leq T_\mrm{ms}}\norm{\widetilde{\mathcal H}_\mrm{ms}, v_\mrm{ms}, w_\mrm{ms}}{\Hnorm{3}}(s) \leq C \norm{\widetilde{\mathcal H}_\mrm{ms,in}, v_\mrm{ms,in}, w_\mrm{ms,in}}{\Hnorm{3}}
\end{equation}
where $ C \in (0,\infty) $ is independent of $ \varepsilon $.  The proof of \eqref{est:uni-total-ms} follows from standard energy estimates. In fact, applying $ \partial^j $, $ j = 0,1,2,3 $, to \eqref{est:uni-total-ms}, after taking the $ L^2 $-inner product of the resultant equations with $$ 2 \partial^j p_\mrm{ms}, 2\partial^j  \widetilde{\mathcal H}_\mrm{ms}, 2 \partial^j v_\mrm{ms}, 2 \partial^j w_\mrm{ms}, $$ respectively, one can conclude that the summation of the resultant estimates is, thanks to \eqref{est:pressure-ms-1}, 
\begin{equation}\label{est:ms-001}
	\begin{aligned}
		& \dfrac{d}{dt} \biggl( \mathcal B \norm{\widetilde{\mathcal H}_\mrm{ms}}{\Hnorm{3}}^2 + \mathcal C \norm{v_\mrm{ms}, w_\mrm{ms}}{\Hnorm{3}}^2 \biggr) \leq C \norm{\widetilde{\mathcal H}_\mrm{ms},v_\mrm{ms},w_\mrm{ms}}{\Hnorm{3}}^3 \\
		& \qquad\qquad\qquad + 2 \varepsilon \mathcal A \sum_{j = 0}^3 \int \partial^j \bigl( (\mathcal C G + \varepsilon^\mu \overline{\mathcal H}_0) w_\mrm{ms} \bigr) \partial^j p_\mrm{ms} \\
		& \qquad \leq C \norm{\widetilde{\mathcal H}_\mrm{ms},v_\mrm{ms},w_\mrm{ms}}{\Hnorm{3}}^3 + C \norm{\widetilde{\mathcal H}_\mrm{ms},w_\mrm{ms}}{\Hnorm{3}}^2,
	\end{aligned}
\end{equation}
where we have used the fact that $ 0 < \varepsilon < 1 $ and $ 0 < \nu < 1 $. We would like to point out that, while we have omitted the details in \eqref{est:ms-001}, the quadratic terms in \eqref{eq:mid-s} are handled in the same manner as in section \ref{sec:uniform-est}. Namely, we use the following commutator estimate: for $ \beta \geq 3 $
\begin{equation}\label{est:cmt-est-3}
	\norm{[\partial^\beta,f_3\partial] g_3}{\Lnorm{2}} \leq K \norm{f_3}{\Hnorm{\beta}} \norm{g_3}{\Hnorm{\beta}},
\end{equation}
where $ K \in (0,\infty) $. The proof of \eqref{est:cmt-est-3} is similar to that of \eqref{est:cmt-est-1}, and thus is omitted here.
In addition, after the first inequality of \eqref{est:ms-001}, the coefficient $ \varepsilon $ in the second term guarantees that even though $ p_\mrm{ms} = \mathcal O( \frac{1}{\varepsilon^\nu})$ according to \eqref{est:pressure-ms-1}, there is no singular coefficient in the estimates.
Thus  \eqref{est:uni-total-ms} follows after applying Gr\"onwall's inequality. 

\subsection{Intermediate approximation}\label{subsec:itmd-ap}

We would like to compare the solutions to \eqref{eq:rf-EEq} and \eqref{eq:mid-s}. Denote by
\begin{equation}\label{def:ms-dff-vrb}
	\begin{aligned}
		\widetilde q_{\mrm{ms},\delta} := & \widetilde q - \varepsilon p_\mrm{ms},\\
		\widetilde{\mathcal H}_{\mrm{ms},\delta} := & \widetilde{\mathcal H} - \widetilde{\mathcal H}_\mrm{ms}, \\
		v_{\mrm{ms},\delta} := & v - v_\mrm{ms}, \\
		w_{\mrm{ms},\delta} := & w - w_\mrm{ms}.
	\end{aligned}
\end{equation}
Also, we use $ \mathcal K_1 $ to represent the total bound of solutions to \eqref{eq:rf-EEq} and \eqref{eq:mid-s}, i.e., for any fixed $ \sigma \in (0,\mu] $,
\begin{equation}\label{est:uni-total-ee+ms}
	\sup_{0\leq s \leq T}\mathcal E_{\sigma, 3}(\widetilde q, \widetilde{\mathcal H}, v,w)(s) + \sup_{0\leq s \leq T_\mrm{ms}} \norm{\widetilde{\mathcal H}_\mrm{ms}, v_\mrm{ms}, w_\mrm{ms}}{\Hnorm{3}}(s) \leq \mathcal K_1,
\end{equation}
which are obtained in \eqref{est:uni-total-general} and \eqref{est:uni-total-ms}.

In this section, we prove
\begin{equation}\label{est:uni-total-dff-1}
	\begin{aligned}
	& \sup_{0\leq s \leq T_{\mrm{ms},\delta}}\norm{\widetilde q_{\mrm{ms},\delta },\widetilde{\mathcal H}_{\mrm{ms},\delta}, v_{\mrm{ms},\delta }, w_{\mrm{ms},\delta}}{\Lnorm{2}}(s) \\
	 & \quad \leq C_{\mathcal K_1} \bigl(   \norm{\widetilde q_{\mrm{ms},\delta,\mrm{in}},\widetilde{\mathcal H}_{\mrm{ms},\delta,\mrm{in}}, v_{\mrm{ms},\delta,\mrm{in} }, w_{\mrm{ms},\delta,\mrm{in}}}{\Lnorm{2}} + (\varepsilon +  \varepsilon^{\max\lbrace \mu-\nu, \mu -\sigma \rbrace} )  \bigr).
	 \end{aligned}
\end{equation}

\subsubsection*{{Regime 1: $\mu-\nu \geq \mu - \sigma$}}

We first, by multiplying the first equation in \eqref{eq:rf-EEq} with $ \varpi_0 / \varpi $, recalling $ \varpi $ as given in \eqref{def:perturbation} and \eqref{def:perturbation-q} (i.e., reversing the reformulation of the $ \widetilde q $ equation from \eqref{eq:EEq-2-1} to \eqref{eq:EEq-3}), system \eqref{eq:rf-EEq} can be written as
\begin{equation*}\label{eq:rf-EEq-af}\tag{\ref{eq:rf-EEq}'}
	\begin{cases}
	\mathcal A \varpi_0 \varpi^{-1} \dt \widetilde q + \mathcal A \varpi_0\varpi^{-1}v \cdot \nablah \widetilde q + \mathcal A\varpi_0 \varpi^{-1}w \dz \widetilde q  \\
		\qquad \qquad + \dfrac{1}{\varepsilon} (\dvh v + \dz w) = \mathcal A\varpi_0 \mathcal C G \varpi^{-1} w + \varepsilon^{\mu} \mathcal A\varpi_0 \overline{\mathcal H}_0 \varpi^{-1} w, \\
		\mathcal B \dt \widetilde{\mathcal H} + \mathcal B v \cdot \nablah \widetilde{\mathcal H} + \mathcal B w \dz \widetilde{\mathcal H} - \dfrac{1}{\varepsilon^{\nu}} w = \mathcal B  \widetilde G \cdot \widetilde{\mathcal H} w ,\\
		\vartheta  \dt v + \vartheta v\cdot \nablah v + \vartheta w \dz v + \dfrac{1}{\varepsilon} \nablah \widetilde q = 0, \\
		\vartheta \dt w + \vartheta v \cdot \nablah w + \vartheta w \dz w + \dfrac{1}{\varepsilon} \dz \widetilde q + \dfrac{1}{\varepsilon^{\nu}} \widetilde{\mathcal H} = 0,
	\end{cases}
\end{equation*}

After comparing \eqref{eq:rf-EEq-af} and \eqref{eq:mid-s}, one can derive that
$ (\widetilde q_{\mrm{ms},\delta}, \widetilde{\mathcal H}_{\mrm{ms},\delta}, v_{\mrm{ms},\delta}, w_{\mrm{ms},\delta}) $ satisfies
\begin{equation}\label{eq:ms-perturb}
	\begin{cases}
		\mathcal A \varpi_0\varpi^{-1} \dt \widetilde q_{\mrm{ms},\delta} + \mathcal A\varpi_0 \varpi^{-1} v \cdot \nablah \widetilde q_{\mrm{ms},\delta} + \mathcal A \varpi_0\varpi^{-1} w \dz \widetilde q_{\mrm{ms},\delta} \\
		\qquad\qquad + \dfrac{1}{\varepsilon} (\dvh v_{\mrm{ms},\delta} + \dz w_{\mrm{ms},\delta})\\
		\qquad = - \varepsilon \mathcal A \varpi_0\bigl( \varpi^{-1} \dt p_\mrm{ms} + \varpi^{-1} v \cdot \nablah p_\mrm{ms} + \varpi^{-1} w \dz p_\mrm{ms} \bigr) \\
		\qquad\qquad + \mathcal A\varpi_0 \mathcal C G (\varpi^{-1}-\varpi_0^{-1}) w + \mathcal A  \mathcal C G w_{\mrm{ms},\delta} + \varepsilon^{\mu} \mathcal A\varpi_0 \overline{\mathcal H}_0 (\varpi^{-1}-\varpi^{-1}_0) w \\
		\qquad\qquad + \varepsilon^{\mu} \mathcal A \overline{\mathcal H}_0  w_{\mrm{ms},\delta}, \\
	%%%%
		\mathcal B \dt \widetilde{\mathcal H}_{\mrm{ms},\delta} + \mathcal B v \cdot \nablah \widetilde{\mathcal H}_{\mrm{ms},\delta} + \mathcal B w \dz \widetilde{\mathcal H}_{\mrm{ms},\delta}- \dfrac{1}{\varepsilon^\nu} w_{\mrm{ms},\delta} \\
		\qquad = - \mathcal B \bigl( v_{\mrm{ms},\delta} \cdot\nablah \widetilde{\mathcal H}_\mrm{ms} + w_{\mrm{ms},\delta} \dz \widetilde{\mathcal H}_\mrm{ms} \bigr) \\
		\qquad\qquad +  \mathcal B \widetilde G \cdot \bigl( \widetilde{\mathcal H} w_{\mrm{ms},\delta} + \widetilde{\mathcal H}_{\mrm{ms},\delta} w_\mrm{ms} \bigr), \\
		%%%%
		\vartheta \dt v_{\mrm{ms},\delta} + \vartheta v \cdot \nablah v_{\mrm{ms},\delta} + \vartheta w \dz v_{\mrm{ms},\delta} + \dfrac{1}{\varepsilon} \nablah \widetilde q_{\mrm{ms},\delta}\\
		\qquad = \mathcal C^{-1} \bigl( \varepsilon^{\mu} \widetilde G \overline{\mathcal H}_0 + \varepsilon^{\mu+\nu} \widetilde G \widetilde{\mathcal H} \bigr) \nablah p_\mrm{ms} \\
		\qquad\qquad - \vartheta v_{\mrm{ms},\delta} \cdot \nablah v_\mrm{ms} - \vartheta w_{\mrm{ms},\delta} \dz v_\mrm{ms}, \\
		%%%%%
		\vartheta\dt w_{\mrm{ms},\delta} + \vartheta v \cdot \nablah w_{\mrm{ms},\delta} + \vartheta w \dz w_{\mrm{ms},\delta} + \dfrac{1}{\varepsilon} \dz \widetilde q_{\mrm{ms},\delta} + \dfrac{1}{\varepsilon^\nu} \widetilde{\mathcal H}_{\mrm{ms},\delta}\\
		\qquad = \mathcal C^{-1} \bigl( \varepsilon^{\mu} \widetilde G \overline{\mathcal H}_0 + \varepsilon^{\mu+\nu} \widetilde G \widetilde{\mathcal H} \bigr) \bigl( \dz p_\mrm{ms} + \dfrac{1}{\varepsilon^{\nu}} \widetilde{\mathcal H}_\mrm{ms} \bigr) \\
		\qquad\qquad - \vartheta v_{\mrm{ms},\delta} \cdot \nablah w_\mrm{ms} - \vartheta w_{\mrm{ms},\delta} \dz w_\mrm{ms}.
	\end{cases}
\end{equation}
Now, we consider the $ L^2 $-inner product of equations in \eqref{eq:ms-perturb} with $$ 2 \widetilde q_{\mrm{ms},\delta}, 2 \widetilde{\mathcal H}_{\mrm{ms},\delta}, 2 v_{\mrm{ms},\delta}, 2 w_{\mrm{ms},\delta}, $$ respectively. After applying integration by parts and summing up the resultant equations, one can write down that
\begin{equation}\label{est:dff-ms-1}
\begin{aligned}
	& \dfrac{d}{dt} \biggl( \mathcal A \varpi_0 \norm{\varpi^{-1/2} \widetilde q_{\mrm{ms},\delta}}{\Lnorm{2}}^2 + \mathcal B \norm{\widetilde{\mathcal H}_{\mrm{ms},\delta}}{\Lnorm{2}}^2 + \norm{\vartheta^{1/2} v_{\mrm{ms},\delta}}{\Lnorm{2}}^2 \\
	& \qquad + \norm{\vartheta^{1/2} w_{\mrm{ms},\delta}}{\Lnorm{2}}^2 \biggr) = \sum_{j=1}^{4}\mathfrak I_j,
\end{aligned}
\end{equation}
where
\begin{align*}
	& \mathfrak I_1:= \int \biggl( \mathcal A \varpi_0 \dt (\varpi^{-1}) \lvert \widetilde q_{\mrm{ms},\delta}\rvert^2 + \dt  \vartheta  \lvert v_{\mrm{ms},\delta}\rvert^2 + \dt \vartheta \lvert w_{\mrm{ms},\delta} \rvert^2 \biggr) \idx, \\
	& \mathfrak I_2 := \int \biggl( \mathcal A \varpi_0 \bigl(\dvh (\varpi^{-1} v) + \dz (\varpi^{-1} w) \bigr) \lvert \widetilde q_{\mrm{ms},\delta}\rvert^2 + \mathcal B \bigl( \dvh v + \dz w \bigr) \lvert \widetilde{\mathcal H}_{\mrm{ms},\delta} \rvert^2\\
	& \qquad\qquad + \bigl( \dvh (\vartheta v) + \dz (\vartheta w) \bigr) \bigl( \lvert v_{\mrm{ms},\delta}\rvert^2 + \lvert w_{\mrm{ms},\delta} \rvert^2 \bigr) \biggr) \idx,\\
	& \mathfrak I_3 := - 2 \varepsilon \mathcal A \varpi_0 \int \bigl( \varpi^{-1} \dt p_\mrm{ms} + \varpi^{-1} v \cdot \nablah p_\mrm{ms} + \varpi^{-1} w \dz p_\mrm{ms} \bigr) \widetilde q_{\mrm{ms},\delta} \idx \\
	& \qquad + 2 \varepsilon^{\mu} \mathcal C^{-1}  \int \bigl( \widetilde G  \overline{\mathcal H}_0 + \varepsilon^\nu \widetilde G \widetilde{\mathcal H} \bigr) \bigl( \nablah p_\mrm{ms} \cdot v_{\mrm{ms},\delta} + \dz p_\mrm{ms} w_{\mrm{ms},\delta} \\
	&\qquad\qquad + \dfrac{1}{\varepsilon^\nu} \widetilde{\mathcal H}_\mrm{ms} w_{\mrm{ms},\delta} \bigr) \idx, \\
	& \mathfrak I_4: = 2 \mathcal A \varpi_0 \int \biggl( \mathcal C G (\varpi^{-1} - \varpi_0^{-1}) w \widetilde q_{\mrm{ms},\delta} + \varepsilon^\mu \overline{\mathcal H}_0 (\varpi^{-1}-\varpi_0^{-1}) w \widetilde q_{\mrm{ms},\delta} \\
	& \qquad\qquad + \mathcal C G \varpi_{0}^{-1} w_{\mrm{ms},\delta} \widetilde q_{\mrm{ms},\delta} + \varepsilon^\mu \overline{\mathcal H}_0 \varpi^{-1}_0 w_{\mrm{ms},\delta} \widetilde q_{\mrm{ms},\delta} \biggr) \idx \\
	& \qquad + 2 \mathcal B \int \biggl( - ( v_{\mrm{ms},\delta} \cdot\nablah \widetilde{\mathcal H}_\mrm{ms} \widetilde{\mathcal H}_{\mrm{ms},\delta} + w_{\mrm{ms},\delta} \dz \widetilde{\mathcal H}_\mrm{ms} \widetilde{\mathcal H}_{\mrm{ms},\delta} )\\
	& \qquad\qquad + \widetilde G \cdot ( \widetilde{\mathcal H} w_{\mrm{ms},\delta} \widetilde{\mathcal H}_{\mrm{ms},\delta} + \widetilde{\mathcal H}_{\mrm{ms},\delta} w_\mrm{ms} \widetilde{\mathcal H}_{\mrm{ms},\delta} ) \idx \\
	& \qquad - 2 \int \vartheta \biggl( v_{\mrm{ms},\delta} \cdot \nablah v_\mrm{ms} \cdot v_{\mrm{ms},\delta} + w_{\mrm{ms},\delta} \dz v_\mrm{ms} \cdot v_{\mrm{ms},\delta}\\
	& \qquad\qquad + v_{\mrm{ms},\delta} \cdot \nablah w_\mrm{ms} w_{\mrm{ms},\delta} + w_{\mrm{ms},\delta} \dz w_\mrm{ms} w_{\mrm{ms},\delta} \biggr)\idx. 
\end{align*}
Owing to \eqref{def:perturbation}, \eqref{def:perturbation-q}, and \eqref{def:perturbation-theta}, 
$$
\norm{\dt(\varpi^{-1}),\dt \vartheta }{\Lnorm{\infty}} \leq  \varepsilon C_{\mathcal K_1} \norm{\dt \widetilde q}{\Hnorm{2}} + \varepsilon^{\mu+\nu} C_{\mathcal K_1} \norm{\dt \widetilde{\mathcal H}}{\Hnorm{2}}.
$$
Therefore, 
$$
\mathfrak I_1 \leq C_{\mathcal K_1} (\varepsilon^{1-\sigma}  + \varepsilon^{\mu+\nu - \sigma}) \norm{\widetilde q_{\mrm{ms},\delta},v_{\mrm{ms},\delta}, w_{\mrm{ms},\delta}}{\Lnorm{2}}^2.
$$
Similarly,
$$
\mathfrak I_2 \leq C_{\mathcal K_1} \norm{\widetilde q_{\mrm{ms},\delta}, \widetilde{\mathcal H}_{\mrm{ms},\delta}, v_{\mrm{ms},\delta}, w_{\mrm{ms},\delta}}{\Lnorm{2}}^2,
$$
and
$$
\mathfrak I_4 \leq C_{\mathcal K_1} \bigl( \norm{\widetilde q_{\mrm{ms},\delta}, \widetilde{\mathcal H}_{\mrm{ms},\delta}, v_{\mrm{ms},\delta}, w_{\mrm{ms},\delta}}{\Lnorm{2}}^2 + \varepsilon \norm{\widetilde q_{\mrm{ms},\delta}}{\Lnorm{2}} \bigr). 
$$
To estimate $ \mathfrak I_3 $, owing to \eqref{est:pressure-ms-1}, \eqref{est:pressure-ms-2}, \eqref{est:dt-ms-1}, \eqref{est:dt-ms-2}, and \eqref{est:dt-ms-3}, after a tedious but straightforward calculations, one can conclude that, since $ \mu + 2 \nu = 1 $,
\begin{equation}\label{est:I3-ms}
\begin{gathered}
	\mathfrak I_3 \leq C_{\mathcal K_1}\bigl(\varepsilon \norm{\dt p_\mrm{ms}}{\Hnorm{3}} + (\varepsilon + \varepsilon^\mu) \norm{p_\mrm{ms}}{\Hnorm{4}} \bigr) \norm{\widetilde q_{\mrm{ms},\delta}, v_{\mrm{ms},\delta}, w_{\mrm{ms},\delta}}{\Lnorm{2}} \\
	+ \varepsilon^{\mu - \nu}C_{\mathcal K_1} \norm{w_{\mrm{ms},\delta}}{\Lnorm{2}} \leq C_{\mathcal K_1}( \varepsilon + \varepsilon^{\mu -\nu} ) \norm{\widetilde q_{\mrm{ms},\delta}, v_{\mrm{ms},\delta}, w_{\mrm{ms},\delta}}{\Lnorm{2}}.
\end{gathered}
\end{equation}
Therefore, one can conclude from \eqref{est:dff-ms-1} that, provided $ \varepsilon \ll 1 $ small enough, for any $ t \in (0,\min\lbrace T, T_\mrm{ms} \rbrace ]$, since $ \sigma \leq \mu < 1 $, 
\begin{align*}
	& \dfrac{d}{dt} \biggl( \mathcal A \varpi_0 \norm{\varpi^{-1/2} \widetilde q_{\mrm{ms},\delta}}{\Lnorm{2}}^2 + \mathcal B \norm{\widetilde{\mathcal H}_{\mrm{ms},\delta}}{\Lnorm{2}}^2 + \norm{\vartheta^{1/2} v_{\mrm{ms},\delta}}{\Lnorm{2}}^2 \\
	& \qquad + \norm{\vartheta^{1/2} w_{\mrm{ms},\delta}}{\Lnorm{2}}^2 \biggr) \leq C_{\mathcal K_1} \norm{\widetilde q_{\mrm{ms},\delta}, \widetilde{\mathcal H}_{\mrm{ms},\delta}, v_{\mrm{ms},\delta}, w_{\mrm{ms},\delta}}{\Lnorm{2}}^2\\
	& \qquad + C_{\mathcal K_1}( \varepsilon + \varepsilon^{\mu -\nu} ) \norm{\widetilde q_{\mrm{ms},\delta}, v_{\mrm{ms},\delta}, w_{\mrm{ms},\delta}}{\Lnorm{2}}.
\end{align*}
%\begin{equation*}
%	\begin{align*}
%	& (1-  C_{\mathcal K_1} t )\sup_{0\leq s \leq t}\norm{\widetilde q_{\mrm{ms},\delta },\widetilde{\mathcal H}_{\mrm{ms},\delta}, v_{\mrm{ms},\delta }, w_{\mrm{ms},\delta}}{\Lnorm{2}}(s) \\
%	 & \quad \leq C_{\mathcal K_1} \bigl(   \norm{\widetilde q_{\mrm{ms},\delta,\mrm{in}},\widetilde{\mathcal H}_{\mrm{ms},\delta,\mrm{in}}, v_{\mrm{ms},\delta,\mrm{in} }, w_{\mrm{ms},\delta,\mrm{in}}}{\Lnorm{2}} + (\varepsilon + \varepsilon^{\mu-\nu} ) t   \bigr).
%	 \end{align*}
%\end{equation*}
Therefore, after applying Gr\"onwall's inequality, there exists $ T_{\mrm{ms},\delta} \in (0, \min\lbrace T, T_\mrm{ms} \rbrace ] $, depending only on $$ \norm{\widetilde q_{\mrm{ms},\delta,\mrm{in}},\widetilde{\mathcal H}_{\mrm{ms},\delta,\mrm{in}}, v_{\mrm{ms},\delta,\mrm{in} }, w_{\mrm{ms},\delta,\mrm{in}}}{\Lnorm{2}}, $$
%independent of $ \varepsilon $, 
such that
\begin{equation}\label{est:uni-total-dff-1-1}
	\begin{aligned}
	& \sup_{0\leq s \leq T_{\mrm{ms},\delta}}\norm{\widetilde q_{\mrm{ms},\delta },\widetilde{\mathcal H}_{\mrm{ms},\delta}, v_{\mrm{ms},\delta }, w_{\mrm{ms},\delta}}{\Lnorm{2}}(s) \\
	 & \quad \leq C_{\mathcal K_1} \bigl(   \norm{\widetilde q_{\mrm{ms},\delta,\mrm{in}},\widetilde{\mathcal H}_{\mrm{ms},\delta,\mrm{in}}, v_{\mrm{ms},\delta,\mrm{in} }, w_{\mrm{ms},\delta,\mrm{in}}}{\Lnorm{2}} + (\varepsilon + \varepsilon^{\mu-\nu} )  \bigr).
	 \end{aligned}
\end{equation}
Here $ C_{\mathcal K_1} \in (0,\infty) $ is a constant depending only on $ \mathcal K_1 $ given in \eqref{est:uni-total-ee+ms}.

\subsubsection*{{Regime 2: $\mu-\nu < \mu - \sigma$}}

Recalling that $ \vartheta  = \mathcal C + \mathcal O( \varepsilon^\mu )  $ as in \eqref{def:perturbation-theta}, instead of \eqref{eq:ms-perturb}, one can write down the following system by rewriting the $ v $ and $ w $ components:
\begin{equation}\label{eq:ms-perturb-2}
	\begin{cases}
%		\mathcal A \varpi_0\varpi^{-1} \dt \widetilde q_{\mrm{ms},\delta} + \mathcal A\varpi_0 \varpi^{-1} v \cdot \nablah \widetilde q_{\mrm{ms},\delta} + \mathcal A \varpi_0\varpi^{-1} w \dz \widetilde q_{\mrm{ms},\delta} \\
%		\qquad\qquad + \dfrac{1}{\varepsilon} (\dvh v_{\mrm{ms},\delta} + \dz w_{\mrm{ms},\delta})\\
%		\qquad = - \varepsilon \mathcal A \varpi_0\bigl( \varpi^{-1} \dt p_\mrm{ms} + \varpi^{-1} v \cdot \nablah p_\mrm{ms} + \varpi^{-1} w \dz p_\mrm{ms} \bigr) \\
%		\qquad\qquad + \mathcal A\varpi_0 \mathcal C G (\varpi^{-1}-\varpi_0^{-1}) w + \mathcal A  \mathcal C G w_{\mrm{ms},\delta} + \varepsilon^{\mu} \mathcal A\varpi_0 \overline{\mathcal H}_0 (\varpi^{-1}-\varpi^{-1}_0) w \\
%		\qquad\qquad + \varepsilon^{\mu} \mathcal A \overline{\mathcal H}_0  w_{\mrm{ms},\delta}, \\
	%%%%
%		\mathcal B \dt \widetilde{\mathcal H}_{\mrm{ms},\delta} + \mathcal B v \cdot \nablah \widetilde{\mathcal H}_{\mrm{ms},\delta} + \mathcal B w \dz \widetilde{\mathcal H}_{\mrm{ms},\delta}- \dfrac{1}{\varepsilon^\nu} w_{\mrm{ms},\delta} \\
%		\qquad = - \mathcal B \bigl( v_{\mrm{ms},\delta} \cdot\nablah \widetilde{\mathcal H}_\mrm{ms} + w_{\mrm{ms},\delta} \dz \widetilde{\mathcal H}_\mrm{ms} \bigr) \\
%		\qquad\qquad +  \mathcal B \widetilde G \cdot \bigl( \widetilde{\mathcal H} w_{\mrm{ms},\delta} + \widetilde{\mathcal H}_{\mrm{ms},\delta} w_\mrm{ms} \bigr), \\
		\text{The first and second equations as in \eqref{eq:ms-perturb}}\\
		%%%%
		\mathcal C \dt v_{\mrm{ms},\delta} + \mathcal C v \cdot \nablah v_{\mrm{ms},\delta} + \mathcal C w \dz v_{\mrm{ms},\delta} + \dfrac{1}{\varepsilon} \nablah \widetilde q_{\mrm{ms},\delta}\\
		\qquad = \bigl( \varepsilon^{\mu} \widetilde G \overline{\mathcal H}_0 + \varepsilon^{\mu+\nu} \widetilde G \widetilde{\mathcal H} \bigr) (\dt v + v\cdot \nablah v + w \dz v ) \\
		\qquad\qquad - \mathcal C v_{\mrm{ms},\delta} \cdot \nablah v_\mrm{ms} - \mathcal C w_{\mrm{ms},\delta} \dz v_\mrm{ms}, \\
		%%%%%
		\mathcal C \dt w_{\mrm{ms},\delta} + \mathcal C v \cdot \nablah w_{\mrm{ms},\delta} + \mathcal C w \dz w_{\mrm{ms},\delta} + \dfrac{1}{\varepsilon} \dz \widetilde q_{\mrm{ms},\delta} + \dfrac{1}{\varepsilon^\nu} \widetilde{\mathcal H}_{\mrm{ms},\delta}\\
		\qquad = \bigl( \varepsilon^{\mu} \widetilde G \overline{\mathcal H}_0 + \varepsilon^{\mu+\nu} \widetilde G \widetilde{\mathcal H} \bigr) \bigl( \dt w + v\cdot \nablah w + w \dz w \bigr) \\
		\qquad\qquad - \mathcal C v_{\mrm{ms},\delta} \cdot \nablah w_\mrm{ms} - \mathcal C w_{\mrm{ms},\delta} \dz w_\mrm{ms}.
	\end{cases}
\end{equation}
Then similar arguments as before will yield 
\begin{equation}\label{est:uni-total-dff-1-2}
	\begin{aligned}
	& \sup_{0\leq s \leq T_{\mrm{ms},\delta}}\norm{\widetilde q_{\mrm{ms},\delta },\widetilde{\mathcal H}_{\mrm{ms},\delta}, v_{\mrm{ms},\delta }, w_{\mrm{ms},\delta}}{\Lnorm{2}}(s) \\
	 & \quad \leq C_{\mathcal K_1} \bigl(   \norm{\widetilde q_{\mrm{ms},\delta,\mrm{in}},\widetilde{\mathcal H}_{\mrm{ms},\delta,\mrm{in}}, v_{\mrm{ms},\delta,\mrm{in} }, w_{\mrm{ms},\delta,\mrm{in}}}{\Lnorm{2}} + (\varepsilon + \varepsilon^{\mu-\sigma} )  \bigr).
	 \end{aligned}
\end{equation}
Indeed, only the corresponding $ \mathfrak I_3 $ estimate is different, where the control of 
$$
\nablah p_\mrm{ms} \cdot v_{\mrm{ms},\delta} + \dz p_\mrm{ms} w_{\mrm{ms},\delta} + \dfrac{1}{\varepsilon^\nu} \widetilde{\mathcal H}_\mrm{ms} w_{\mrm{ms},\delta} = \mathcal O(\varepsilon^{-\nu})
$$
is replaced by
\begin{equation}\label{rm:error-1}
(\dt v + v\cdot \nablah v + w \dz v ) \cdot v_{\mrm{ms},\delta} + ( \dt w + v\cdot \nablah  w + w \dz w)  w_{\mrm{ms},\delta} = \mathcal O(\varepsilon^{-\sigma}).
\end{equation}
Estimate \eqref{est:uni-total-dff-1} follows from \eqref{est:uni-total-dff-1-1} and \eqref{est:uni-total-dff-1-2}.

\subsection{The soundproof model}\label{subsec:snpf-md}.

{For convenience of the reader, we recall} that
the soundproof model reads%, which is formally the zero Mach number limit of \eqref{eq:rf-EEq}:
\begin{equation*}\tag{\ref{eq:sndprf}}
	\begin{cases}
		\dvh v_\mrm{sp} + \dz w_\mrm{sp} = 0, \\
		\mathcal B \dt \widetilde{\mathcal H}_\mrm{sp} + \mathcal B v_\mrm{sp}\cdot\nablah \widetilde{\mathcal H}_\mrm{sp} + \mathcal B w_\mrm{sp}\dz \widetilde{\mathcal H}_\mrm{sp} - \dfrac{1}{\varepsilon^{\nu}} w_\mrm{sp} = \mathcal B \widetilde G \cdot \widetilde{\mathcal H}_\mrm{sp} w_\mrm{sp}, \\
		\mathcal C \dt v_\mrm{sp} + \mathcal C v_\mrm{sp} \cdot \nablah v_\mrm{sp} + \mathcal C w_\mrm{sp}\dz v_\mrm{sp} + \nablah p_\mrm{sp} = 0,\\
		\mathcal C \dt w_\mrm{sp} + \mathcal C v_\mrm{sp} \cdot \nablah w_\mrm{sp} + \mathcal C w_\mrm{sp}\dz w_\mrm{sp} + \dz p_\mrm{sp} + \dfrac{1}{\varepsilon^\nu} \widetilde{\mathcal H}_\mrm{sp} = 0,
	\end{cases}
\end{equation*}
where $ p_\mrm{sp} $ is determined by
\begin{equation}\label{eq:pressure-sp}
	\begin{gathered}
		- \Delta p_\mrm{sp} %= \mathcal C \dvh (v_\mrm{sp} \cdot\nablah v_\mrm{sp} + w_\mrm{sp}\dz v_\mrm{sp} ) \\
%	 	+ \mathcal C \dz (v_\mrm{sp} \cdot\nablah w_\mrm{sp} + w_\mrm{sp}\dz w_\mrm{sp} ) + \dfrac{1}{\varepsilon^\nu} \widetilde{\mathcal H}_\mrm{sp}\\
	 	 = \mathcal C ( (\nablah v_\mrm{sp})^\top : \nablah v_\mrm{sp} + 2 \dz v_\mrm{sp} \cdot \nablah w_\mrm{sp} + (\dz w_\mrm{sp})^2) \\
	 	  + \dfrac{1}{\varepsilon^\nu}\dz \widetilde{\mathcal H}_\mrm{sp}, \qquad
	 	 \int p_\mrm{sp} \idx = 0.
	 \end{gathered}
\end{equation}
Then, following similar, if not simpler, arguments to those in section \ref{subsec:itmd-md} leads to the conclusion that: there exists $ T_\mrm{sp} \in (0,\infty) $, depending only on $ \norm{\widetilde{\mathcal H}_\mrm{sp,in},v_\mrm{sp,in},w_\mrm{sp,in}}{\Hnorm{3}} $, such that
\begin{equation}\label{est:uni-total-sp}
	\sup_{0\leq s \leq T_\mrm{sp}}\norm{\widetilde{\mathcal H}_\mrm{sp},v_\mrm{sp},w_\mrm{sp}}{\Hnorm{3}}(s) \leq C \norm{\widetilde{\mathcal H}_\mrm{sp,in},v_\mrm{sp,in},w_\mrm{sp,in}}{\Hnorm{3}},
\end{equation}
with some constant $ C \in (0,\infty) $, independent of $ \varepsilon $.

Now we list the estimate of $ p_\mrm{sp} $, induced by the elliptic estimates on \eqref{eq:pressure-sp}:
\begin{gather}
\label{est:pressure-1}
	\norm{p_\mrm{sp}}{\Hnorm{4}} \leq \norm{v_\mrm{sp},w_\mrm{sp}}{\Hnorm{3}}^2 + \dfrac{1}{\varepsilon^\nu}\norm{\widetilde{\mathcal H}_\mrm{sp}}{\Hnorm{3}}.
%	, \\
%	\label{est:pressure-2}
%	\norm{\dt p_\mrm{sp}}{\Hnorm{3}} \leq \norm{v_\mrm{sp},w_\mrm{sp}}{\Hnorm{3}}\norm{\dt v_\mrm{sp},\dt w_\mrm{sp}}{\Hnorm{2}} + \dfrac{1}{\varepsilon^\nu} \norm{\dt \widetilde{\mathcal H}_\mrm{sp}}{\Hnorm{2}}, \\
%	\label{est:pressure-3}
%	\begin{gathered}
%		\norm{\dt^2 p_\mrm{sp}}{\Hnorm{2}} \leq \norm{v_\mrm{sp},w_\mrm{sp}}{\Hnorm{3}}\norm{\dt^2 v_\mrm{sp},\dt^2 w_\mrm{sp}}{\Hnorm{1}} \\
%		+ \norm{\dt v_\mrm{sp},\dt w_\mrm{sp}}{\Hnorm{2}}^2 + \dfrac{1}{\varepsilon^\nu} \norm{\dt^2 \widetilde{\mathcal H}_\mrm{sp}}{\Hnorm{1}}.
%	\end{gathered}
\end{gather}

\subsection{Soundproof approximation}\label{subsec:snpf-ap}

Now we are ready to estimate the difference of solutions to \eqref{eq:mid-s} and \eqref{eq:sndprf}. Denote by
\begin{equation}\label{def:dff-vrb}
	\begin{aligned}
	p_{\mrm{sp},\delta} := & p_\mrm{ms} - p_\mrm{sp},\\
		\widetilde{\mathcal H}_{\mrm{sp},\delta} := & \widetilde{\mathcal H}_\mrm{ms} - \widetilde{\mathcal H}_\mrm{sp}, \\
		v_{\mrm{sp},\delta} := & v_\mrm{ms} - v_\mrm{sp}, \\
		w_{\mrm{sp},\delta} := & w_\mrm{ms} - w_\mrm{sp}.
	\end{aligned}
\end{equation}
Also, we use $ \mathcal K_2 $ to represent the total bound of solutions to \eqref{eq:mid-s} and \eqref{eq:sndprf}, i.e.,
\begin{equation}\label{est:uni-total-ms+sp}
	\sup_{0\leq s \leq T_\mrm{ms}} \norm{\widetilde{\mathcal H}_\mrm{ms}, v_\mrm{ms},w_\mrm{ms}}{\Hnorm{3}} + \sup_{0\leq s \leq T_\mrm{sp}} \norm{\widetilde{\mathcal H}_\mrm{sp}, v_\mrm{sp},w_\mrm{sp}}{\Hnorm{3}} \leq \mathcal K_2,
\end{equation}
which are obtained in \eqref{est:uni-total-ms} and \eqref{est:uni-total-sp}. 

After comparing \eqref{eq:mid-s} and \eqref{eq:sndprf}, one can derive that $ (p_{\mrm{sp},\delta}, \widetilde{\mathcal H}_{\mrm{sp},\delta}, v_{\mrm{sp},\delta}, w_{\mrm{sp},\delta}) $ satisfies
\begin{equation}\label{eq:sp-perturb}
	\begin{cases}
		\dvh v_{\mrm{sp},\delta} + \dz w_{\mrm{sp},\delta} = \varepsilon \mathcal A (\mathcal C G + \varepsilon^\mu \overline{\mathcal H}_0) w_\mrm{ms},\\
		\mathcal B \dt \widetilde{\mathcal H}_{\mrm{sp},\delta} + \mathcal B v_\mrm{ms} \cdot \nablah \widetilde{\mathcal H}_{\mrm{sp},\delta} + \mathcal B w_\mrm{ms} \dz \widetilde{\mathcal H}_{\mrm{sp},\delta} - \dfrac{1}{\varepsilon^\nu} w_{\mrm{sp},\delta}\\
		\qquad = - \mathcal B ( v_{\mrm{sp},\delta}\cdot \nablah \widetilde{\mathcal H}_\mrm{sp} + w_{\mrm{sp},\delta} \dz \widetilde{\mathcal H}_\mrm{sp} ) \\
		\qquad\qquad + \mathcal B \widetilde G \cdot ( \widetilde{\mathcal H}_\mrm{ms} w_{\mrm{sp},\delta} + \widetilde{\mathcal H}_{\mrm{sp},\delta} w_\mrm{sp} ),\\
		\mathcal C \dt v_{\mrm{sp},\delta} + \mathcal C v_\mrm{ms}\cdot\nablah v_{\mrm{sp},\delta} + \mathcal C w_\mrm{ms} \dz v_{\mrm{sp},\delta} + \nablah p_{\mrm{sp},\delta} \\
		\qquad = - \mathcal C ( v_{\mrm{sp},\delta} \cdot \nablah v_\mrm{sp} + w_{\mrm{sp},\delta} \dz v_\mrm{sp} ), \\
		\mathcal C \dt w_{\mrm{sp},\delta} + \mathcal C v_\mrm{ms} \cdot \nablah w_{\mrm{sp},\delta} + \mathcal C w_\mrm{ms}\dz w_{\mrm{sp},\delta} + \dz p_{\mrm{sp},\delta} + \dfrac{1}{\varepsilon^\nu}\widetilde{\mathcal H}_{\mrm{sp},\delta}\\
		\qquad = - \mathcal C ( v_{\mrm{sp},\delta} \cdot \nablah w_\mrm{sp} + w_{\mrm{sp},\delta} \dz w_\mrm{sp} ).
	\end{cases}
\end{equation}
To write down the equation of $ p_{\mrm{sp},\delta} $, instead of using \eqref{eq:pressure-ms} and \eqref{eq:pressure-sp}, we first rewrite
\begin{align*}
	& v_\mrm{ms} \cdot \nablah \biggl( \begin{array}{c} v_{\mrm{sp},\delta}\\w_{\mrm{sp},\delta} \end{array}\biggr) + w_\mrm{ms} \dz \biggl( \begin{array}{c} v_{\mrm{sp},\delta}\\w_{\mrm{sp},\delta} \end{array}\biggr) \\
	& \quad = v_{\mrm{sp},\delta} \cdot \nablah \biggl( \begin{array}{c} v_{\mrm{sp},\delta}\\w_{\mrm{sp},\delta} \end{array}\biggr) + w_{\mrm{sp},\delta} \dz \biggl( \begin{array}{c} v_{\mrm{sp},\delta}\\w_{\mrm{sp},\delta} \end{array}\biggr) \\
	& \qquad + \biggl( \begin{array}{c} \dvh (v_{\mrm{sp},\delta}\otimes v_{\mrm{sp}}) + \dz (w_\mrm{sp} v_{\mrm{sp},\delta}) \\ \dvh ( w_{\mrm{sp},\delta} v_\mrm{sp}) + \dz (w_\mrm{sp} w_{\mrm{sp},\delta} ) \end{array} \biggr),
\end{align*}
and then after applying $$ \biggl( \begin{array}{c}
	\dvh \\ \dz 
\end{array} \biggr) \quad \text{to} \quad \biggl( \begin{array}{c} \eqref{eq:sp-perturb}_{3} \\ \eqref{eq:sp-perturb}_{4} \end{array}\biggr), $$ one can derive that
\begin{equation}\label{eq:pressure-perturb-sp}
	\begin{gathered}
		- \Delta p_{\mrm{sp},\delta} = \varepsilon \mathcal A \mathcal C  ( \mathcal C G + \varepsilon^\mu \overline{\mathcal H}_0 ) \dt w_\mrm{ms} + \mathcal C \biggl( \dvh  \dvh (v_{\mrm{sp},\delta} \otimes v_\mrm{sp}) \\
		+ \dvh \dz (w_\mrm{sp} v_{\mrm{sp},\delta}) + \dz \dvh (w_{\mrm{sp},\delta} v_\mrm{sp}) + \dz^2(w_\mrm{sp} w_{\mrm{sp},\delta}) \\
		+ \dvh ( v_{\mrm{sp},\delta} \cdot \nablah v_{\mrm{sp},\delta}) + \dvh (w_{\mrm{sp},\delta} \dz v_{\mrm{sp},\delta}) + \dz ( v_{\mrm{sp},\delta} \cdot \nablah w_{\mrm{sp},\delta}) \\
		+ \dz (w_{\mrm{sp},\delta} \dz w_{\mrm{sp},\delta} ) \biggr)
		+ \mathcal C \biggl( \dvh (v_{\mrm{sp},\delta} \cdot \nablah v_\mrm{sp}) + \dvh (w_{\mrm{sp},\delta} \dz v_\mrm{sp}) \\
			+ \dz( v_{\mrm{sp},\delta} \cdot \nablah w_\mrm{sp} ) + \dz ( w_{\mrm{sp},\delta} \dz w_\mrm{sp}) \biggr) + \dfrac{1}{\varepsilon^\nu} \dz \widetilde{\mathcal H}_{\mrm{sp},\delta},\\
			\int p_{\mrm{sp},\delta} \idx = 0.
	\end{gathered}
\end{equation}
Consequently, applying the standard elliptic estimate on \eqref{eq:pressure-perturb-sp} yields that
\begin{equation}\label{est:ms-sp-1}
	\begin{aligned}
		& \norm{p_{\mrm{sp},\delta}}{\Lnorm{2}} \lesssim \varepsilon \norm{\dt w_\mrm{ms}}{\Lnorm{2}} + \norm{v_{\mrm{sp},\delta} \otimes v_\mrm{sp},w_\mrm{sp}v_{\mrm{sp},\delta}}{\Lnorm{2}} \\
		& \qquad + \norm{w_{\mrm{sp},\delta} v_\mrm{sp}, w_\mrm{sp} w_{\mrm{sp},\delta}, v_{\mrm{sp},\delta} \cdot \nablah v_{\mrm{sp},\delta}, w_{\mrm{sp},\delta} \dz v_{\mrm{sp},\delta} }{\Lnorm{2}}\\
		& \qquad + \norm{v_{\mrm{sp},\delta} \cdot \nablah w_{\mrm{sp},\delta}, w_{\mrm{sp},\delta} \dz w_{\mrm{sp},\delta}, v_{\mrm{sp},\delta} \cdot\nablah v_\mrm{sp}, w_{\mrm{sp},\delta} \dz v_\mrm{sp}}{\Lnorm{2}}\\
		& \qquad + \norm{v_{\mrm{sp},\delta} \cdot\nablah w_\mrm{sp}, w_{\mrm{sp},\delta} \dz w_\mrm{sp} }{\Lnorm{2}} + \dfrac{1}{\varepsilon^\nu} \norm{\widetilde{\mathcal H}_{\mrm{sp},\delta}}{\Lnorm{2}}\\
		& \lesssim C_{\mathcal K_2} \bigl( 1 + \varepsilon^{-\nu} \bigr) \norm{\widetilde{\mathcal H}_{\mrm{sp},\delta}, v_{\mrm{sp},\delta}, w_{\mrm{sp},\delta}}{\Lnorm{2}} + C_{\mathcal K_2} \bigl(\varepsilon + \varepsilon^{1-\nu}\bigr),
	\end{aligned}
\end{equation}
where we have applied \eqref{est:pressure-ms-1}, \eqref{est:dt-ms-3}, and the Sobolev embedding inequality in the last inequality.

Now we are ready to estimate the $ L^2 $ norm of $ (p_{\mrm{sp},\delta}, \widetilde{\mathcal H}_{\mrm{sp},\delta}, v_{\mrm{sp},\delta}, w_{\mrm{sp},\delta}) $. Indeed, after applying the $ L^2 $-inner product of equations in \eqref{eq:sp-perturb} with $ 2 p_{\mrm{sp},\delta}, 2 \widetilde{\mathcal H}_{\mrm{sp},\delta}, 2 v_{\mrm{sp},\delta}, 2 w_{\mrm{sp},\delta} $, respectively, applying integration by parts, and summing up the resultant equations, one has
\begin{align*}
	& \dfrac{d}{dt} \biggl(\mathcal B\norm{\widetilde{\mathcal H}_{\mrm{sp},\delta}}{\Lnorm{2}}^2 + \mathcal C \norm{v_{\mrm{sp},\delta}, w_{\mrm{sp},\delta} }{\Lnorm{2}}^2 \biggr) = \sum_{j=5}^{7} \mathfrak I_{j}
\end{align*}
where
\begin{align*}
	& \mathfrak I_5 :=  \int (\dvh v_\mrm{ms} + \dz w_\mrm{ms} ) \bigl( \mathfrak B \lvert \widetilde{\mathcal H}_{\mrm{sp},\delta}\rvert^2 + \mathcal C \lvert v_{\mrm{sp},\delta} \rvert^2 + \mathcal C \lvert w_{\mrm{sp},\delta} \rvert^2 \bigr) \idx,\\
	& \mathfrak I_6 := - 2 \int \biggl( \mathfrak B ( v_{\mrm{sp},\delta} \cdot \nablah \widetilde{\mathcal H}_\mrm{sp} \widetilde{\mathcal H}_{\mrm{sp},\delta} + w_{\mrm{sp},\delta} \dz \widetilde{\mathcal H}_\mrm{sp}\widetilde{\mathcal H}_{\mrm{sp},\delta} ) \\
	& \qquad\qquad + \mathcal C ( v_{\mrm{sp},\delta} \cdot \nablah v_\mrm{sp} \cdot v_{\mrm{sp},\delta} + w_{\mrm{sp},\delta} \dz v_\mrm{sp} \cdot v_{\mrm{sp},\delta}) \\
	& \qquad\qquad + \mathcal C ( v_{\mrm{sp},\delta} \cdot \nablah w_\mrm{sp} w_{\mrm{sp},\delta} + w_{\mrm{sp},\delta} \dz w_\mrm{sp} w_{\mrm{sp},\delta} ) \biggr) \idx \\
	& \qquad + 2 \int \mathcal B \widetilde G \cdot ( \widetilde{\mathcal H}_\mrm{ms} w_{\mrm{sp},\delta} + \widetilde{\mathcal H}_{\mrm{sp},\delta} w_\mrm{sp} ) \widetilde{\mathcal H}_{\mrm{sp},\delta} \idx,\\
	& \mathfrak I_{7} := 2 \varepsilon \mathcal A \int (\mathcal C G + \varepsilon^\mu \overline{\mathcal H}_0) w_\mrm{ms} p_{\mrm{sp},\delta} \idx.
\end{align*}
Thanks to \eqref{est:ms-sp-1}, one has
\begin{equation*}
	\mathfrak I_7 \leq C_{\mathcal K_2} (\varepsilon + \varepsilon^{1-\nu}) \norm{\widetilde{\mathcal H}_{\mrm{sp},\delta}, v_{\mrm{sp},\delta},w_{\mrm{sp},\delta}}{\Lnorm{2}} + C_{\mathcal K_2} (\varepsilon^2 + \varepsilon^{2-\nu}),
\end{equation*}
while the estimates of $ \mathfrak I_5 $ and $ \mathfrak I_6 $ are straightforward. Hence, we have shown that 
\begin{gather*}
	\dfrac{d}{dt} \biggl(\mathcal B\norm{\widetilde{\mathcal H}_{\mrm{sp},\delta}}{\Lnorm{2}}^2 + \mathcal C \norm{v_{\mrm{sp},\delta}, w_{\mrm{sp},\delta} }{\Lnorm{2}}^2 \biggr) \\
	\leq C_{\mathcal K_2} \norm{\widetilde{\mathcal H}_{\mrm{sp},\delta}, v_{\mrm{sp},\delta},w_{\mrm{sp},\delta}}{\Lnorm{2}}^2 \\
	 + C_{\mathcal K_2} (\varepsilon + \varepsilon^{1-\nu}) \norm{\widetilde{\mathcal H}_{\mrm{sp},\delta}, v_{\mrm{sp},\delta},w_{\mrm{sp},\delta}}{\Lnorm{2}} + C_{\mathcal K_2} (\varepsilon^2 + \varepsilon^{2-\nu}).
\end{gather*}
Consequently, after applying Gr\"onwall's inequality, one can conclude that, there is $ T_{\mrm{sp},\delta} \in (0,\min\lbrace T_\mrm{ms},T_\mrm{sp} \rbrace ] $, depending only on $ \norm{\widetilde{\mathcal H}_{\mrm{sp},\delta,\mrm{in}}, v_{\mrm{sp},\delta,\mrm{in} }, w_{\mrm{sp},\delta,\mrm{in}}}{\Lnorm{2}} $,
%independent of $ \varepsilon $, 
such that
\begin{equation}\label{est:uni-total-dff-2}
	\begin{aligned}
		& \sup_{0\leq s \leq T_{\mrm{sp},\delta}} \norm{\widetilde{\mathcal H}_{\mrm{sp},\delta},v_{\mrm{sp},\delta},w_{\mrm{sp},\delta}}{\Lnorm{2}}(s) \\
		& \quad \leq C_{\mathcal K_2}\bigl(\norm{\widetilde{\mathcal H}_{\mrm{sp},\delta,\mrm{in}}, v_{\mrm{sp},\delta,\mrm{in} }, w_{\mrm{sp},\delta,\mrm{in}}}{\Lnorm{2}} + ( \varepsilon + \varepsilon^{1-\nu}) \bigr).
	\end{aligned}
\end{equation}
Here $ C_{\mathcal K_2} \in (0,\infty) $ is a constant depending only on $ \mathcal K_2 $ given in \eqref{est:uni-total-ms+sp}.
In particular, \eqref{est:uni-total-dff-1}, \eqref{est:ms-sp-1}, and \eqref{est:uni-total-dff-2} imply that, since $ \mu + 2\nu = 1 $,
\begin{equation}\label{est:uni-total-dff-3}
\begin{gathered}
	\sup_{0\leq s \leq \min\lbrace T_{\mrm{ms},\delta} T_{\mrm{sp},\delta} \rbrace} \norm{\widetilde q - \varepsilon p_\mrm{sp} ,\widetilde{\mathcal H} - \widetilde{\mathcal H}_\mrm{sp}, v - v_\mrm{sp}, w-w_\mrm{sp}}{\Lnorm{2}}\\
	 \leq C_{\mathcal K_1,\mathcal K_2} \bigl( \varepsilon^{\max\lbrace \mu - \nu,  \mu - \sigma \rbrace} 
	  + \norm{\widetilde q_{\mrm{ms},\delta,\mrm{in}},\widetilde{\mathcal H}_{\mrm{ms},\delta,\mrm{in}}, v_{\mrm{ms},\delta,\mrm{in} }, w_{\mrm{ms},\delta,\mrm{in}}}{\Lnorm{2}}\\
	   + \norm{\widetilde{\mathcal H}_{\mrm{sp},\delta,\mrm{in}}, v_{\mrm{sp},\delta,\mrm{in} }, w_{\mrm{sp},\delta,\mrm{in}}}{\Lnorm{2}} \bigr).
\end{gathered}
\end{equation}
Theorem \ref{thm:sp-approximation} follows from \eqref{est:pressure-ms-1}, \eqref{est:uni-total-ms}, \eqref{est:uni-total-sp}, \eqref{est:pressure-1}, and \eqref{est:uni-total-dff-3}

\section{Fast-slow decompositions: the linear theory}\label{sec:fast-slow-linear}

Our goal is to decompose the solution to \eqref{eq:rf-EEq} into  waves with different frequencies. Ideally, due the appearance of two different scales of oscillation, we are expecting at least three waves. 

\begin{enumerate}[label = {\bf H\arabic{enumi})}, label = {\bf H\arabic{enumi})}]
\setcounter{enumi}{\value{hypothesis}}
\item\label{def:AssumptionH4} To simplify our presentation, we will, from now on, assume that 
\begin{equation}\label{amtp:atsym-cfts}
\mathcal A = \mathcal B = \mathcal C = 1.
\end{equation}
\setcounter{hypothesis}{\value{enumi}}
\end{enumerate}

A linear system associated with \eqref{eq:rf-EEq} is introduced in this section, using two oscillation operators, corresponding to the acoustic waves and the internal waves, respectively. %As one will see, the induced acoustic waves are faster than the internal gravity waves. %As a start, we investigate the two oscillations with different frequencies separately. In particular, we introduce the decomposition of vector fields into the acoustic waves, the incompressible internal waves, and the incompressible mean flows. While such a decomposition provides the foundation to our soundproof approximation, but it is not enough to understand the whole picture in the full compressible system. 

In addition, we will investigate an $ \varepsilon $-dependent linear oscillation operator, associated with the linear system, which can be treated as a perturbation of the acoustic wave operator. The eigenvalue-eigenvector pairs associated with such oscillation operator will be investigated. 

%\subsection{Linear oscillations}

To be more precise, we introduce the following linear system:
\begin{equation}\label{eq:linear-osc}
	\dt U + \dfrac{1}{\varepsilon} \mathcal L_a U + \dfrac{1}{\varepsilon^\nu} \mathcal L_g U = 0,
\end{equation}
where $ U, \mathcal L_a,\, \mathcal L_g $ are defined as in \eqref{def:vec-opt}. 
Roughly speaking, $ \frac{1}{\varepsilon} \mathcal L_a U $ and $ \frac{1}{\varepsilon^\nu} \mathcal L_g U $ are the driving forces of acoustic waves and internal waves, respectively. 
One can immediately see from \eqref{eq:linear-osc}, that, as $ \varepsilon \rightarrow 0^+ $, the oscillation induced by operator $ \frac{1}{\varepsilon}\mathcal L_a $ is faster than the one induced by $ \frac{1}{\varepsilon^\nu}\mathcal L_g $, meaning that the acoustic waves will oscillate faster and thus will be averaged out before the internal waves dissipate. This is exactly why we can use the soundproof system \eqref{eq:sndprf} as an approximation to \eqref{eq:rf-EEq}. 

In the following subsections, we will investigate the acoustic waves, internal waves, and mean flows, in the linear system \eqref{eq:linear-osc}.

\subsection{Perturbed acoustic waves}

In this subsection, we consider the following perturbed acoustic wave operator
\begin{equation}\label{def:ptb-aw-opt}
	\mathcal L_\varepsilon := \mathcal L_a + \varepsilon^{1-\nu} \mathcal L_g.
\end{equation}
Then \eqref{eq:linear-osc} is equivalent to
\begin{equation}\label{eq:linear-osc-2}
	\dt U + \dfrac{1}{\varepsilon} \mathcal L_\varepsilon U = 0.
\end{equation}
Notice that $ \mathcal L_\varepsilon $ can be viewed as a perturbation of $ \mathcal L_a $. An ad hoc analysis will be that, the eigenvalues of $ \mathcal L_\varepsilon $ lie within neighborhoods with width $ \mathcal O (\varepsilon^{1-\nu}) $ of the eigenvalues of $ \mathcal L_a $. In particular, the eigenvalues corresponding to the acoustic free vector fields lies in an neighborhood with width $ \mathcal O (\varepsilon^{1-\nu}) $ of the origin. In view of \eqref{eq:linear-osc-2}, one can decompose the eigenvalues of $ \frac{1}{\varepsilon}\mathcal L_\varepsilon $, corresponding to the wave decomposition of solutions to \eqref{eq:linear-osc-2}, into three kinds: the zero eigenvalue; the eigenvalues of $ \mathcal O(\varepsilon^{-\nu}) $ near the origin; the eigenvalues of $ \mathcal O(\varepsilon^{-1})  $ ($ \mathcal O(\varepsilon^{-1}) \pm \mathcal O(\varepsilon^{-\nu}) $ to be more precise). We will refer the waves corresponding to these three kinds of eigenvalues as the {\bf mean flows}, the {\bf perturbed internal waves}, and the {\bf perturbed acoustic waves}, respectively. In the following, we shall make the above ad hoc discussion rigid. 

Let 
\begin{equation}\label{def:vec-field}
	\begin{aligned}
		\mathfrak V := \bigl\lbrace & U = (\widetilde q, \widetilde{\mathcal H}, v = (v_1,v_2)^\top, w)^\top \in C^\infty(\mathbb T^3 ; \mathbb R^5) \vert \\ & \text{Symmetry \eqref{SYM} is satisfied.} \bigr\rbrace.
	\end{aligned}
\end{equation}
We first investigate $ \ker \mathcal L_\varepsilon $, i.e., the space associated with the zero eigenvalue. Let
\begin{equation}\label{def:project-3}
	\mathcal P_{\varepsilon,\mrm{mf}}: \mathfrak V \mapsto \ker \mathcal L_\varepsilon. % \quad \mathcal P_{\varepsilon,\mrm{ac}} := Id - \mathcal P_{\varepsilon, \mrm{mf}}: \mathfrak V \mapsto (\ker \mathcal L_\varepsilon)^\perp. 
\end{equation}
Then
\begin{equation}\label{def:ptb-mf-vec-field}
	\begin{aligned}
		\ker \mathcal L_\varepsilon = & \bigl\lbrace U_{\varepsilon,\mrm{mf}} =  (\widetilde q_{\varepsilon,\mrm{mf}}, \mathcal{\mathcal H}_{\varepsilon,\mrm{mf}}, v_{\varepsilon,\mrm{mf}}, w_{\varepsilon,\mrm{mf}})^\top \in \mathfrak V \vert \\
		& \quad \widetilde q_{\varepsilon,\mrm{mf}} = \widetilde q_{\varepsilon,\mrm{mf}}(z) \in C^\infty(\mathbb T;\mathbb R), \widetilde{\mathcal H}_{\varepsilon,\mrm{mf}} = - \varepsilon^{\nu -1} \dz \widetilde q_{\varepsilon,\mrm{mf}},\\
		& \quad \dvh v_{\varepsilon,\mrm{mf}} = 0, w_{\varepsilon,\mrm{mf}} = 0 \bigr\rbrace.
%		, \\
%		(\ker \mathcal L_\varepsilon)^\perp = & \bigl\lbrace U_{\varepsilon,\mrm{aw}} =  (\widetilde q_{\varepsilon,\mrm{aw}}, \mathcal{\mathcal H}_{\varepsilon,\mrm{aw}}, v_{\varepsilon,\mrm{aw}}, w_{\varepsilon,\mrm{aw}})^\top \in \mathfrak V\lvert\\
%		& \quad \int ( \widetilde q_{\varepsilon, \mrm{aw}} + \varepsilon^{\nu-1} \dz \widetilde{\mathcal H}_{\varepsilon, \mrm{aw}}) \,dxdy(z) = 0,\\
%		& \quad v_{\varepsilon,\mrm{aw}} = \nablah \phi, ~ \text{for some $ \phi \in C^\infty(\mathbb T^3;\mathbb R) $} \bigr\rbrace.
	\end{aligned}
\end{equation}
Denote by
$$
	U_{\varepsilon,\mrm{mf}}= (\widetilde q_{\varepsilon,\mrm{mf}}, \widetilde{\mathcal H}_{\varepsilon,\mrm{mf}}, v_{\varepsilon,\mrm{mf}}, w_{\varepsilon,\mrm{mf}})^\top = \mathcal P_{\varepsilon,\mrm{mf}}(U = (\widetilde q, \widetilde{\mathcal H}, v,w)^\top ).
$$
Then, to look for the representation of $ \mathcal P_{\varepsilon,\mrm{mf}} $, we calculate the following functional: for any $ V = ( a, b ,\xi ,\eta )^\top \in \ker \mathcal L_\varepsilon $
\begin{align*}
	& \norm{ V - U }{\Lnorm{2}}^2 = \int \bigl(\lvert\widetilde q - a \rvert^2 + \lvert\varepsilon^{\nu-1} \dz a + \widetilde{\mathcal H} \rvert^2 \bigr) \idx \\
	& \qquad \qquad + \int \bigl( \lvert\xi - v\rvert^2 + \lvert w \rvert^2 \bigr) \idx.
\end{align*}
Then $ U_{\varepsilon,\mrm{mf}}$ should be the minimizer of the above functional subject to the condition $ U_{ \varepsilon,\mrm{mf}} \in \ker \mathcal L_\varepsilon $. Then calculating the Euler-Lagrangian equations yields that $ U_{\varepsilon, \mrm{mf}} = \mathcal P_{\varepsilon,\mrm{mf}}(U) $ is given by
%
%\pagebreak
%Then applying similar arguments as before yields that 
\begin{align*}
	\widetilde{\mathcal H}_{\varepsilon,\mrm{mf}} \equiv & - \varepsilon^{\nu-1} \dz \widetilde{q}_{\varepsilon,\mrm{mf}},\\
	v_{\varepsilon,\mrm{mf}} \equiv & v - \nablah \psi_v, \\
	w_{\varepsilon,\mrm{mf}} \equiv & 0,
\end{align*}
where $ \widetilde q_{\varepsilon,\mrm{mf}},\, \psi_v $ are solutions to
\begin{equation}\label{eq:ptb-potential-1}
	\begin{gathered}
		- \varepsilon^{2(\nu-1)} \partial_{zz} \widetilde q_{\varepsilon,\mrm{mf}} +  \widetilde q_{\varepsilon, \mrm{mf}} - \varepsilon^{\nu-1}\dz \int \widetilde{\mathcal H} \,dxdy(z) \\
		 -  \int \widetilde q \,dxdy(z) = 0, \qquad \int \widetilde q_{\varepsilon,\mrm{mf}} \,dz = \int \widetilde q \idx, \\
		\text{and} \quad \deltah \psi_v = \dvh v, \quad \int \psi_v \,dxdy = 0. 
	\end{gathered}
\end{equation}
%One can check, formally, as $ \varepsilon \rightarrow 0 $, $ \mathcal P_{\varepsilon,\mrm{mf}} \rightarrow \mathcal P_\mrm{mf}\circ \mathcal P_\mrm{sp} $.

We remind readers that $ \ker \mathcal L_\varepsilon $ is nothing but the space of eigenfunctions corresponding to the zero eigenvalue of $  \mathcal L_\varepsilon $. 
Next we focus on the non-zero eigenvalue problem of $ \mathcal L_\varepsilon $, i.e., the structure of $ (\ker \mathcal L_\varepsilon)^\perp $. Since $ \mathcal L_\varepsilon $ is anti-symmetric, it suffices to investigate the pure imaginary eigenvalues with non-zero imaginary part, i.e., 
\begin{equation}\label{def:eg-pgm-og}
i \omega U_{\omega} = \mathcal L_\varepsilon U_{\omega}, \quad \omega \neq 0, \quad U_\omega = (\widetilde q_\omega, \widetilde{\mathcal H}_\omega, v_\omega, w_\omega )^\top \in \mathfrak V. 
\end{equation}

We will not discuss the representations of the solutions to the eigenvalue problem in this section. Instead, we would like to estimate the value of the eigenvalues, assuming we have found the eigenvalue-eigenvector pairs. The exact quantity calculation will be postponed in the next section using Fourier representation. 

If $ \widetilde q_\varepsilon \equiv 0 $, then the eigenvalue problem \eqref{def:eg-pgm-og} is reduced to
\begin{equation*}
	v_\varepsilon = 0, \quad \dz w_\varepsilon =  0, \quad
	- \eta w_\varepsilon =  i \omega \widetilde{\mathcal H}, \quad
	\eta \widetilde{\mathcal H} =  i \omega w,
\end{equation*}
where, hereafter, $ \eta := \varepsilon^{1-\nu} $,
which yields $ U_\varepsilon = 0 $ due to symmetry \eqref{SYM}. 

In the following, we assume, without loss of generality, $ \widetilde q_\varepsilon \not\equiv 0 $. 
Direct calculation of the eigenvalue problem \eqref{def:eg-pgm-og} shows that
\begin{equation}\label{eq:eg-pbm}
	- (\omega^2 - \eta^2 ) \deltah \widetilde q_\varepsilon - \omega^2 \partial_{zz} \widetilde q_\varepsilon = \omega^2 ( \omega^2 - \eta^2 ) \widetilde q_\varepsilon.
\end{equation}
Notice that when $ \omega \simeq \eta $, \eqref{eq:eg-pbm} admits strong degeneracy. 

In addition, we introduce the following eigenvalue problem:
\begin{equation}\label{eq:eg-pbm-aw}
	- \deltah \widetilde q_\mrm{ac} - \partial_{zz} \widetilde q_\mrm{ac} = \omega^2_\mrm{ac} \widetilde q_\mrm{ac}.
\end{equation}
In fact, \eqref{eq:eg-pbm-aw} can be seen as the counter-part of \eqref{eq:eg-pbm} from \eqref{def:eg-pgm-og} for $ \mathcal L_a $, i.e., the eigenvalue problem of the acoustic operator. Unsurprisingly, \eqref{eq:eg-pbm-aw} is just \eqref{eq:eg-pbm} when $ \eta = 0 $, at least formally. We denote the eigenvalue-eigenfunction pairs of \eqref{eq:eg-pbm-aw} as $ (\omega_{\mrm{ac}, n}^\pm \widetilde q_{\mrm{ac},n} )\vert_{n = 0,1,2,\cdots} $, where $ \omega_{\mrm{ac},0}^\pm = 0 $, $ \lvert\omega_{\mrm{ac},1}^\pm\rvert < \lvert\omega_{\mrm{ac},2}^\pm\rvert < \cdots $, $ \int \lvert\widetilde q_{\mrm{ac},n}\rvert^2 \idx = 1 $. 
Then it is easy to check
\begin{equation}\label{id:eg-phm-aw}
	\begin{gathered}
		\int \widetilde q_{\mrm{ac},m} \widetilde q_{\mrm{ac},n} \idx = \delta_{m,n}, \quad \int \nablah \widetilde q_{\mrm{ac},m} \cdot \nablah \widetilde q_{\mrm{ac},n} \idx = \norm{\nablah \widetilde q_{\mrm{ac},m}}{\Lnorm{2}}^2 \delta_{m,n}, \\
		\int \nablah \widetilde q_{\mrm{ac},m} \cdot \nablah \widetilde q_{\mrm{ac},n} \idx + \int \dz  \widetilde q_{\mrm{ac},m} \dz \widetilde q_{\mrm{ac},n} \idx = \lvert\omega_{\mrm{ac},m}^\pm\rvert^2 \delta_{m,n}, \\
%\norm{\nablah \widetilde q_{\mrm{ac},m}}{\Lnorm{2}}^2 + \norm{\dz \widetilde q_{\mrm{ac},m}}{\Lnorm{2}}^2 = \omega_{\mrm{ac},m}^2, 
		\quad m,n \in  \cup \lbrace 0,1,2,\cdots \rbrace.
	\end{gathered}
\end{equation}
Then, one can represent solution $ \widetilde q_\varepsilon $ to \eqref{eq:eg-pbm} as
\begin{equation}\label{id:eg-ep-fnct}
	\widetilde q_\varepsilon = \sum_{n = 0,1,2, \cdots} Q_n \widetilde q_{\mrm{ac},n}, \quad Q_n \in \mathbb R.
\end{equation}
After taking the $ L^2 $-inner product of \eqref{eq:eg-pbm} with $ \widetilde q_{\mrm{ac},m} $, it follows that, thanks to \eqref{id:eg-phm-aw}, 
\begin{equation*}
	\begin{gathered}
	Q_m \times \lbrack \omega^2 (\omega^2 - \eta^2) - \omega^2 \lvert\omega_{\mrm{ac},m}^\pm\rvert^2 + \eta^2 \norm{ \nablah \widetilde q_{\mrm{ac},m}}{\Lnorm{2}}^2   \rbrack = 0,\\
	\quad \text{for any $m \in \lbrace 0,1,2,\cdots \rbrace$}.
	\end{gathered}
\end{equation*}
Suppose that for some $ m $, $ Q_m \neq 0 $. Then
\begin{equation}\label{eq:algebra}
\omega^2 (\omega^2 - \eta^2) - \omega^2 \lvert \omega_{\mrm{ac},m}^\pm\rvert^2 + \eta^2 \norm{ \nablah \widetilde q_{\mrm{ac},m}}{\Lnorm{2}}^2 = 0. 
\end{equation}
We {\bf claim} that 
\begin{equation}\label{est:eg-vlu-total}
	\lvert\omega_{\mrm{ac},m}^\pm\rvert^2  \leq \lvert\omega\rvert^2 \leq \lvert\omega_{\mrm{ac},m}^\pm\rvert^2 + \eta^2. % \quad \text{or} \quad \lvert\omega\lvert \leq \eta .
\end{equation}
%In particular, there exists at most one $ m\in \lbrace 0 , 1,2,\cdots \rbrace $ such that $ Q_m \neq 0 $, for $ \eta $ small enough. 

The rest of this section is devoted to the proof of \eqref{est:eg-vlu-total}.
Notice that, if $ m = 0 $, we have $ \omega_{\mrm{ac},0}^\pm = 0 $, $\nablah \widetilde q_{\mrm{ac},0} = 0 $, which implies $ \omega = 0 $ or $ \lvert\omega\rvert = \eta $. In particular $ 0 \leq \lvert\omega\rvert \leq \eta $, i.e., \eqref{est:eg-vlu-total} holds. 

Without loss of generality, we assume that $ m \geq 1 $ and $ \lvert\omega\rvert > \eta $, below. 
%
%We consider two cases: $ \lvert\omega\lvert > \lvert\eta\lvert $ and $ \lvert\omega\lvert \leq \lvert\eta\lvert $. 
%{\par\noindent\bf Case $ \lvert\omega\lvert > \lvert\eta\lvert $: } 
Since, from \eqref{id:eg-phm-aw}, $ \norm{ \nablah \widetilde q_{\mrm{ac},m}}{\Lnorm{2}}^2 \leq \lvert\omega_{\mrm{ac},m}^\pm\rvert^2 $, one has, from \eqref{eq:algebra}, that
$$
\omega^2 ( \omega^2 - \eta^2) \geq \lvert\omega_{\mrm{ac},m}^\pm\rvert^2 ( \omega^2 - \eta^2 ),
$$
which implies 
\begin{equation}\label{est:eg-vlu}
	\lvert\omega\rvert \geq \lvert\omega_{\mrm{ac},m}^\pm\rvert.% \geq \lvert \omega_{\mrm{ac},1}\lvert > 0.
\end{equation}
On the other hand, \eqref{eq:algebra} can be written as
\begin{gather*}
	\omega^2 - \lvert\omega_{\mrm{ac},m}^\pm\rvert^2 = \eta^2 ( 1 - \dfrac{\norm{\nablah \widetilde q_{\mrm{ac},m}}{\Lnorm{2}}^2}{\omega^2})  \leq \eta^2.
\end{gather*}
%which implies $ \lvert\omega\lvert \leq \lvert\omega_{\mrm{ac},m}^\pm\lvert + \eta $. 
Together with \eqref{est:eg-vlu},
this proves \eqref{est:eg-vlu-total}. 
%{\par\noindent\bf Case $ \lvert\omega\lvert < \lvert\eta\lvert $: } Applying similar arguments as above shows that
%$$
%	\lvert\omega\lvert < \lvert\omega_{}
%$$

Therefore, 
we have proved the following lemma:
\begin{lemma}\label{lm:est-of-eg}
	Let $ i \omega $ be an eigenvalue of operator $ \mathcal L_\varepsilon $. Then, there exists $ m \in \lbrace 0, 1,2,\cdots \rbrace $, such that
	$$
	\lvert\omega_{\mrm{ac},m}^\pm\rvert^2 \leq \omega^2 \leq \lvert\omega_{\mrm{ac},m}^\pm\rvert^2 + \varepsilon^{2-2\nu},
	$$
	where $ \lbrace i \omega_{\mrm{ac},m}^\pm\rbrace_{m\in \lbrace 0,1,2,\cdots \rbrace } $ are the eigenvalues of $ \mathcal L_a $. 
\end{lemma}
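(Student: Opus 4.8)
The goal is to establish that the double inequality in the statement is precisely \eqref{est:eg-vlu-total} with $\eta := \varepsilon^{1-\nu}$, so it suffices to (i) reduce the vector eigenvalue problem for $\mathcal L_\varepsilon$ to the scalar problem \eqref{eq:eg-pbm}, (ii) convert \eqref{eq:eg-pbm} into the algebraic relation \eqref{eq:algebra} by testing against the acoustic eigenfunctions of \eqref{eq:eg-pbm-aw}, and (iii) read off the bounds from \eqref{eq:algebra}. First, since $\mathcal L_\varepsilon = \mathcal L_a + \varepsilon^{1-\nu}\mathcal L_g$ is anti-symmetric on $L^2(\mathbb T^3)$, every eigenvalue is purely imaginary; write it as $i\omega$, $\omega\in\mathbb R$, with eigenvector $U_\omega = (\widetilde q_\omega, \widetilde{\mathcal H}_\omega, v_\omega, w_\omega)^\top\in\mathfrak V$. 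I would first dispose of the case $\widetilde q_\omega\equiv0$: the eigenrelation then forces $v_\omega=0$, $\dz w_\omega=0$, and a closed relation between $\widetilde{\mathcal H}_\omega$ and $w_\omega$, and invoking the symmetry \eqref{SYM} (which makes $w$ odd, so $\dz w_\omega=0$ gives $w_\omega\equiv0$) this yields $U_\omega=0$; hence no nontrivial eigenvalue arises in this case, and one may assume $\widetilde q_\omega\not\equiv0$.

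Under that assumption, eliminating $v_\omega$, $w_\omega$, $\widetilde{\mathcal H}_\omega$ from the four scalar components of $i\omega U_\omega = \mathcal L_\varepsilon U_\omega$ reduces the problem to \eqref{eq:eg-pbm}. I would then fix, once and for all, an $L^2(\mathbb T^3)$-orthonormal eigenbasis $\{\widetilde q_{\mrm{ac},n}\}$ of the acoustic problem \eqref{eq:eg-pbm-aw} consisting of (real combinations of) Fourier modes, so that each $\widetilde q_{\mrm{ac},n}$ is simultaneously an eigenfunction of $-\deltah$ and $-\partial_{zz}$; this yields the orthogonality identities \eqref{id:eg-phm-aw} and, crucially, $\norm{\nablah\widetilde q_{\mrm{ac},m}}{\Lnorm{2}}^2\le\lvert\omega_{\mrm{ac},m}^\pm\rvert^2$ for every $m$. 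Expanding $\widetilde q_\varepsilon=\sum_n Q_n\widetilde q_{\mrm{ac},n}$, taking the $L^2$-inner product of \eqref{eq:eg-pbm} with $\widetilde q_{\mrm{ac},m}$, and using \eqref{id:eg-phm-aw}, one obtains for each $m$ the identity
\begin{equation*}
Q_m\bigl[\omega^2(\omega^2-\eta^2)-\omega^2\lvert\omega_{\mrm{ac},m}^\pm\rvert^2+\eta^2\norm{\nablah\widetilde q_{\mrm{ac},m}}{\Lnorm{2}}^2\bigr]=0.
\end{equation*}
Since $\widetilde q_\varepsilon\not\equiv0$, some $Q_m\neq0$, and for that $m$ the bracket vanishes, which is \eqref{eq:algebra}.

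It remains to extract the sandwich bound. If $\lvert\omega\rvert\le\eta$, then $\omega^2\le\varepsilon^{2-2\nu}=\lvert\omega_{\mrm{ac},0}^\pm\rvert^2+\varepsilon^{2-2\nu}$ and the lemma holds with $m=0$ (using $\omega_{\mrm{ac},0}^\pm=0$). If $\lvert\omega\rvert>\eta$, then the index $m$ with $Q_m\neq0$ must satisfy $m\ge1$ (for $m=0$, \eqref{eq:algebra} forces $\omega^2(\omega^2-\eta^2)=0$, contradicting $\lvert\omega\rvert>\eta$); using $\norm{\nablah\widetilde q_{\mrm{ac},m}}{\Lnorm{2}}^2\le\lvert\omega_{\mrm{ac},m}^\pm\rvert^2$ in \eqref{eq:algebra} gives $\omega^2(\omega^2-\eta^2)\ge\lvert\omega_{\mrm{ac},m}^\pm\rvert^2(\omega^2-\eta^2)$, hence $\omega^2\ge\lvert\omega_{\mrm{ac},m}^\pm\rvert^2$, while rewriting \eqref{eq:algebra} as $\omega^2-\lvert\omega_{\mrm{ac},m}^\pm\rvert^2=\eta^2\bigl(1-\norm{\nablah\widetilde q_{\mrm{ac},m}}{\Lnorm{2}}^2/\omega^2\bigr)\le\eta^2$ gives the upper bound. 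Substituting $\eta^2=\varepsilon^{2-2\nu}$ completes the proof. I expect the main obstacle to be the clean execution of steps (i)--(ii): carefully eliminating the non-$\widetilde q$ components while tracking the symmetry \eqref{SYM} to avoid spurious eigenmodes, and --- since the acoustic spectrum on $\mathbb T^3$ is degenerate, so that $\norm{\nablah\widetilde q_{\mrm{ac},m}}{\Lnorm{2}}^2$ is basis-dependent --- committing to a joint eigenbasis of $-\deltah$ and $-\partial_{zz}$ so that \eqref{eq:algebra} is a genuine scalar relation; this choice also sidesteps the degeneracy of \eqref{eq:eg-pbm} near $\omega\simeq\eta$ noted right after that equation.
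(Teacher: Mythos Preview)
Your proposal is correct and follows essentially the same approach as the paper: reduce to the scalar equation \eqref{eq:eg-pbm}, test against the acoustic eigenbasis to obtain \eqref{eq:algebra}, and extract \eqref{est:eg-vlu-total} from it. Your case organization (first $\lvert\omega\rvert\le\eta$ with $m=0$, then $\lvert\omega\rvert>\eta$ forcing $m\ge1$) is a cleaner phrasing of what the paper writes as ``without loss of generality $m\ge1$ and $\lvert\omega\rvert>\eta$'', and your explicit insistence on a joint eigenbasis of $-\deltah$ and $-\partial_{zz}$ makes precise what the paper uses implicitly in \eqref{id:eg-phm-aw}.
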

In particular, Lemma \ref{lm:est-of-eg} confirms the ad hoc analysis at the beginning of this section. 

We remark that, \eqref{eq:algebra} can be solved explicitly for $ \omega^2 $. Indeed, there exist exactly two solutions $ (\omega^2)_1 $ and $ (\omega^2)_2 $ satisfying \eqref{est:eg-vlu-total}. We will make it more clear using Fourier representations in the next subsection.

%We remark that, in the previous study [paper by didier rupert etc.], the eigenfunctions corresponding to the eigenvalues near $ 0 $ are referred to the gravity internal waves, while the rest are referred to the (perturbed) acoustic waves.
%In this paper, we have called the ones corresponding to the zero eigenvalue the mean flows, the ones corresponding to the almost-zero eigenvalues the internal waves, and the rest the perturbed acoustic waves, respectively. 

\subsection{Fourier representations}
\label{subsec:FourierRepresentations}

Owing to the symmetry \eqref{SYM}, we consider the follow Fourier expansion of $ U $:
\begin{equation}\label{def:frr-epsn}
	U = \sum_{ k_h \in 2\pi \mathbb Z^2, k_z \in 2\pi \mathbb N^+\cup\lbrace 0 \rbrace} \left(
	\begin{array}{c}
		Q_{({k}_h,k_z)}e^{ i {k}_h \cdot x} \cos( k_z z) \\  H_{({k}_h,k_z)} e^{ i {k}_h \cdot x} \sin( k_z z) \\ 
		V_{({k}_h,k_z)} e^{ i {k}_h \cdot x} \cos( k_z z) \\   W_{({k}_h,k_z)} e^{ i {k}_h \cdot x} \sin( k_z z)
	\end{array}
	\right),
\end{equation}
%\begin{equation}\label{def:frr-epsn}
%	U_\varepsilon = \sum_{ k_h \in \mathbb Z^2, k_z \in \mathbb N} \left(
%	\begin{array}{c}
%		Q_{({k}_h,k_z)}e^{2 \pi i {k}_h \cdot x} \dfrac{e^{2 \pi i  k_z z } +  e^{-2 \pi i  k_z z}}{2} \\ -i H_{({k}_h,k_z)} e^{2 \pi i {k}_h \cdot x} \dfrac{e^{2 \pi i  k_z z } -  e^{-2 \pi i  k_z z}}{2} \\ 
%		V_{({k}_h,k_z)} e^{2 \pi i {k}_h \cdot x} \dfrac{e^{2 \pi i  k_z z } +  e^{-2 \pi i  k_z z}}{2} \\  -i W_{({k}_h,k_z)} e^{2 \pi i {k}_h \cdot x} \dfrac{e^{2 \pi i  k_z z } -  e^{-2 \pi i  k_z z}}{2}
%	\end{array}
%	\right),
%\end{equation}
with
$$
F_{(-k_h,k_z)} = F_{(k_h,k_z)}, \qquad F \in \lbrace Q, H, V, W \rbrace. 
$$

Then, with $ \eta = \varepsilon^{1-\nu} \ll 1 $, the eigenvalue problem \eqref{def:eg-pgm-og} can be written as:
\begin{equation}\label{eq:frr-eg-pgm}
	\begin{aligned}
		\omega_{(k_h,k_z)} Q_{(k_h,k_z)} = & k_h \cdot V_{(k_h,k_z)} - i  k_z W_{(k_h,k_z)}, \\
		\omega_{(k_h,k_z)} H_{(k_h,k_z)} = & i \eta W_{(k_h,k_z)}, \\
		\omega_{(k_h,k_z)} V_{(k_h,k_z)} = &  Q_{(k_h,k_z)} k_h, \\
		\omega_{(k_h,k_z)} W_{(k_h,k_z)} = & i  k_z Q_{(k_h,k_z)} - i \eta H_{(k_h,k_z)}. 
	\end{aligned}
\end{equation}
We investigate the solutions to \eqref{eq:frr-eg-pgm} in the following three cases:

{\par\noindent \bf Case 1: $ \omega_{(k_h,k_z)} = 0 $.} Then if $ k_h \neq (0,0) $, one can easily check that $$ \left(\begin{array}{c}
		Q_{(k_h,k_z)} \\ H_{(k_h,k_z)} \\ V_{(k_h,k_z)} \\ W_{(k_h,k_z)}
	\end{array} \right) = \left(\begin{array}{c}
	0 \\ 0 \\ V_{(k_h,k_z)} \\ 0
\end{array}\right), \quad k_h \cdot V_{(k_h,k_z)} = 0, $$
or, equivalently,
\begin{equation*}
	\left(\begin{array}{c}
		Q_{(k_h,k_z)} \\ H_{(k_h,k_z)} \\ V_{(k_h,k_z)} \\ W_{(k_h,k_z)}
	\end{array} \right) = \pm \lvert V_{(k_h,k_z)}\rvert
	\left(\begin{array}{c}
		0 \\ 0 \\ \dfrac{k_h^\perp}{\lvert k_h\rvert} \\ 0
	\end{array}\right).
\end{equation*}

On the other hand, $ k_h = (0,0)	 $ imply that 
\begin{equation*}
	\left(\begin{array}{c}
		Q_{((0,0),k_z)} \\ H_{((0,0),k_z)} \\ V_{((0,0),k_z)} \\ W_{((0,0),k_z)}
	\end{array} \right) = \left(
	\begin{array}{c}
		Q_{((0,0),k_z)} \\   \dfrac{k_z}{\eta} Q_{((0,0),k_z)} \\ V_{((0,0),k_z)} \\ 0
	\end{array}
	\right) = Q_{((0,0),k_z)} \left( \begin{array}{c}
		1 \\  \dfrac{k_z}{\eta} \\ 0 \\ 0
	\end{array}\right)
	+  \left( \begin{array}{c}
		0 \\ 0 \\ V_{((0,0),k_z)} \\ 0
	\end{array}\right). 
\end{equation*}

{\par\noindent\bf Case 2: $ \lvert \omega_{(k_h,k_z)}\rvert = \lvert\eta\rvert $.} If $ k_z \neq 0 $, it is easy to check that there is no non-trivial solution to \eqref{eq:frr-eg-pgm}. Thus $ k_z = 0 $, and one can find the following solution: $ \lvert\omega_{(k_h,0)}\rvert = \lvert\eta\rvert \ll 1 $, and
$$
\begin{gathered}
\left(\begin{array}{c}
		Q_{(k_h,0)} \\ H_{(k_h,0)} \\ V_{(k_h,0)} \\ W_{(k_h,0)}
	\end{array} \right)
	=
	\left( \begin{array}{c}
		0 \\ H_{(k_h,0)} \\ 0 \\ -i H_{(k_h,0)}\dfrac{\omega_{(k_h,0)}}{\eta}
	\end{array} \right) = % Q_{k_h,0}  \left(\begin{array}{c}
%1 \\ 0 \\ \dfrac{\omega_{k_h,0}}{\lvert\omega_{k_h,0}\lvert} \dfrac{k_h}{\lvert k_h\lvert} \\ 0	
%\end{array}\right) \\
% + 
 H_{(k_h,0)} \left(\begin{array}{c} 0 \\ 1 \\ 0 \\ -i \dfrac{\omega_{(k_h,0)}}{\eta} \end{array} \right),\\
 k_h \neq (0,0).
\end{gathered}
$$
{\par\noindent\bf Case 3: $ \omega_{(k_h,k_z)} \neq 0 $ nor $  \lvert \omega_{(k_h,k_z)}\rvert \neq \lvert \eta\rvert $.}
Then from \eqref{eq:frr-eg-pgm}, one can derive 
\begin{equation}\label{eq:frr-eg-phm-algb}
\begin{gathered}
\bigl(  (\omega^2_{(k_h,k_z)} - \eta^2)  \lvert k_h\rvert^2 + \omega^2_{(k_h,k_z)} \lvert k_z\rvert^2 - (\omega^2_{(k_h,k_z)} - \eta^2)\omega^2_{(k_h,k_z)}  \bigr)\\
\times  Q_{(k_h,k_z)} 
 = 0.
 \end{gathered}
\end{equation}
Notice that \eqref{eq:frr-eg-phm-algb} is just the Fourier representation of \eqref{eq:eg-pbm}. If $ Q_{(k_h, k_z)} = 0 $, one can easily check from \eqref{eq:frr-eg-pgm}, only when $ \lvert\omega_{(k_h,k_z)}\rvert = \lvert \eta \rvert $ or $ 0 $, there will be non-trivial solutions,  which is already covered in the previous case. Therefore, we focus on \eqref{eq:frr-eg-phm-algb} when $ Q_{(k_h, k_z)} \neq 0 $, which leads to the algebraic equation
\begin{equation}\label{eq:frr-algebra}
\omega_{(k_h,k_z)}^4 - ( \lvert k_h\rvert^2 +  \lvert k_z\rvert^2 + \eta^2) \omega_{(k_h,k_z)}^2 + \eta^2 \lvert k_h\rvert^2 = 0.
	%(\omega^2_{k_h,k_z} - \eta^2)  (2\pi)^2 \lvert k_h\rvert^2 + \omega^2_{k_h,k_z} (2\pi)^2 \lvert k_z\rvert^2 - (\omega^2_{k_h,k_z} - \eta^2)\omega^2_{k_h,k_z}
\end{equation}
Notice that \eqref{eq:frr-algebra} is nothing but \eqref{eq:algebra}. Thus, the solutions to \eqref{eq:frr-algebra} are given by
\begin{equation}\label{id:frr-sl-eg-pbm}
	\begin{aligned}
		\omega_{(k_h,k_z)}^2 = & \dfrac{\lvert k_h\rvert^2 +  \lvert k_z\rvert^2 + \eta^2 + \sqrt{A}}{2}, \quad \text{or} \\
		\omega_{(k_h,k_z)}^2 = & \dfrac{\lvert k_h \rvert^2 +  \lvert k_z\rvert^2 + \eta^2 - \sqrt{A}}{2}\\
		= & \dfrac{2\eta^2 \lvert k_h\rvert^2 }{\lvert k_h\rvert^2 + \lvert k_z\rvert^2 + \eta^2 + \sqrt{A}} \in [0,\eta^2],
	\end{aligned}
\end{equation} 
where $A := (\lvert k_h\rvert^2 +  \lvert k_z\rvert^2 + \eta^2 )^2-4\eta^2  \lvert k_h\rvert^2 = ( \lvert k_h\rvert^2 - \eta^2 )^2 + \lvert k_z\rvert^4 + 2\lvert k_h\rvert^2\lvert k_z\rvert^2 + 2 \eta^2 \lvert k_z\rvert^2 \geq ( \lvert k_h\rvert^2 - \eta^2 )^2 \geq 0 $.
%, which correspond to \eqref{est:eg-vlu-total}. 
Then the solution to \eqref{eq:frr-eg-pgm} is given by
\begin{equation*}
	\begin{aligned}
		\left(\begin{array}{c}
		Q_{(k_h,k_z)} \\ H_{(k_h,k_z)} \\ V_{(k_h,k_z)} \\ W_{(k_h,k_z)}
	\end{array} \right) = & \left(\begin{array}{c} Q_{(k_h,k_z)} \\ H_{(k_h,k_z)} \\ \dfrac{1}{\omega_{(k_h,k_z)}} Q_{(k_h,k_z)} k_h \\ - i \dfrac{\omega_{(k_h,k_z)}}{\eta} H_{(k_h,k_z)}
	\end{array}\right),
	\end{aligned}
\end{equation*}
with $ H_{(k_h,k_z)} $ satisfies
\begin{equation}\label{eq:h-q}
\begin{gathered}
 k_z  H_{(k_h,k_z)} = \eta (\dfrac{\lvert k_h\rvert^2 }{\omega_{(k_h,k_z)}^2} - 1  ) Q_{(k_h,k_z)} \qquad \text{and}\\
 \eta(1-\dfrac{\omega^2_{(k_h,k_z)}}{\eta^2})H_{(k_h,k_z)} = k_z Q_{(k_h,k_z)}.
 \end{gathered}
\end{equation}
If $ k_z = 0 $, from \eqref{id:frr-sl-eg-pbm} and \eqref{eq:h-q}, 
$$
\omega_{(k_h,0)}^2 =  \lvert k_h\rvert^2 \quad \bigl( \text{or} \quad \omega_{(k_h,0)}^2 = \eta^2 \quad (\text{discarded}) \bigr)
$$
%However, \eqref{eq:h-q} yields that 
with $ k_h \neq (0,0) $ (otherwise it is covered in previous case), 
and thus
$$
\left(\begin{array}{c}
		Q_{(k_h,0)} \\ H_{(k_h,0)} \\ V_{(k_h,0)} \\ W_{(k_h,0)}
	\end{array} \right)
	=
	\left( \begin{array}{c}
		Q_{(k_h,0)} \\ 0 \\ Q_{(k_h,0)}\dfrac{ k_h}{\omega_{(k_h,0)}} \\ 0
	\end{array} \right).
$$
We remark that for $ \eta $ small enough, in oder to reach the endpoint values of \subeqref{id:frr-sl-eg-pbm}{2}, i.e., $ \lvert \omega_{(k_h,k_z)}\rvert = 0 \, \text{or} \, \eta  $, the necessary condition will be $ k_h = (0,0) $ or $ k_z = 0 $, respectively, while $ k_h = (0,0) $ is also a sufficient condition for $ \omega_{k_h,k_z} = 0 $. 

In summary, we have established the following eigenvalue-eigenvector pairs to \eqref{def:eg-pgm-og}:
\begin{proposition}\label{prop:wave-bases}
There exist three classes of eigenvalue-eigenvector pairs to \eqref{def:eg-pgm-og}: the mean flows, the perturbed internal waves, and the perturbed acoustic waves. They are given as below: with $ k_h \in 2\pi \mathbb Z^2 $ and $ k_z \in 2\pi \mathbb N $,
{\par\noindent\bf Mean flows:} 
$ \omega = 0 $ and the space of mean flows $ \mathfrak E_{0,\varepsilon} $ is given by
\begin{equation}\label{def:mn-flw}
	\begin{aligned}
		& \mathfrak E_{0,\varepsilon}:=  \spn \biggl\lbrace U^\mrm{mf}_{1,(k_h,k_z)} := \left( \begin{array}{c} 0\\ 0 \\ \dfrac{k_h^\perp}{\lvert k_h\rvert } e^{ i k_h\cdot x} \cos (k_z z) \\ 0
		\end{array} \right), k_h \neq (0,0) \biggr\rbrace \\
		& \oplus \spn \biggl\lbrace  U^\mrm{mf}_{2,((0,0),k_z)} := \left( \begin{array}{c}
		\cos( k_z z) \\  \dfrac{k_z}{\eta} \sin( k_z z) \\ 0 \\ 0
		\end{array} \right), \\
		&   U^\mrm{mf}_{\mrm j,((0,0),k_z)} :=\left( \begin{array}{c}
			0 \\ 0 \\ \cos ( k_z z) \vec e_{\mrm j - 2} \\ 0
\end{array} \right), ~~\vec e_1 = \left(\begin{array}{c}
			1 \\ 0
		\end{array}\right), ~ \vec e_2 =  \left(\begin{array}{c}
			0 \\1 
		\end{array}\right), ~ \mrm j = 3,4   \biggr\rbrace.
	\end{aligned}
\end{equation}
%where $ \vec{e}_x = \biggl(\begin{array}{c}
%	1 \\ 0 
%\end{array} \biggr), \, \vec{e}_y = \biggl(\begin{array}{c}
%	0 \\ 1 
%\end{array} \biggr) $.

{\par\noindent\bf Perturbed internal waves:} $ \omega = \pm \omega^{\mrm{gw}}_{(k_h,k_z)} $ where \begin{equation}\label{def:gw-fqcy} \omega^{\mrm{gw}}_{(k_h,k_z)} = \bigl( \dfrac{2\eta^2  \lvert k_h\rvert^2}{  \lvert k_h\rvert^2 + \lvert k_z\rvert^2 + \eta^2 + \sqrt{ A } } \bigr)^{1/2} \end{equation} with $ A = ( \lvert k_h\rvert^2 + \lvert k_z\rvert^2 + \eta^2)^2- 4\eta^2 \lvert k_h\rvert^2 $, $ k_h \neq 0, k_z \neq 0 $, and the space of internal waves $ \mathfrak E_{\pm\omega^{\mrm{gw}}_{(k_h,k_z)},\varepsilon} $ is given by
\begin{equation}\label{def:g-w}
	\begin{aligned}
		& \mathfrak E_{\pm\omega^{\mrm{gw}}_{(k_h,k_z)},\varepsilon} : =  \\
		 &  \spn \biggl\lbrace U^\mrm{gw}_{\pm,(k_h,k_z)} := \left( \begin{array}{c}
			e^{ i k_h \cdot x} \cos( k_z z) \\ \dfrac{\eta}{ k_z} \bigl( \dfrac{ \lvert k_h\rvert^2}{(\omega_{(k_h,k_z)}^\mrm{gw})^2} - 1 \bigr) e^{ i k_h\cdot x} \sin( k_z z) \\ \pm \dfrac{1}{ \omega_{(k_h,k_z)}^\mrm{gw}} k_h e^{ i k_h \cdot x} \cos ( k_z z) \\ \mp i \dfrac{\omega_{(k_h,k_z)}^\mrm{gw}}{ k_z} \bigl( \dfrac{ \lvert k_h\rvert^2}{\lvert\omega_{(k_h,k_z)}^\mrm{gw}\rvert^2} - 1 \bigr) e^{ i k_h \cdot x} \sin( k_z z)
		\end{array} \right),
		\\
		& \qquad\qquad\qquad k_h \neq (0,0), k_z \neq 0
		 \biggr\rbrace.
	\end{aligned}
\end{equation}

{\par\noindent\bf Perturbed acoustic waves:} $ \omega = \pm \omega^\mrm{aw}_{(k_h,k_z)} $ where
\begin{equation}\label{def:aw-fqcy}
\omega^\mrm{aw}_{(k_h,k_z)} = \bigl( \dfrac{ \lvert k_h\rvert^2 + \lvert k_z\rvert^2 + \eta^2 + \sqrt{ A }}{2} \bigr)^{1/2}
\end{equation}
with $ A $ as above, $ (k_h, k_z) \neq ((0,0),0) $, and the space of perturbed acoustic waves $ \mathfrak E_{\pm \omega^\mrm{aw}_{(k_h,k_z)},\varepsilon} $ is given by
\begin{equation}\label{def:prb-aw}
	\begin{aligned}
		& \mathfrak E_{\pm \omega^\mrm{aw}_{(k_h,k_z)},\varepsilon} := \\
		&  \spn\biggl\lbrace U^\mrm{aw}_{\pm,(k_h,k_z)} := \left( \begin{array}{c}
			e^{ i k_h \cdot x} \cos( k_z z) \\ \dfrac{\eta}{ k_z} \bigl( \dfrac{ \lvert k_h\rvert^2}{(\omega_{(k_h,k_z)}^\mrm{aw})^2} - 1 \bigr) e^{ i k_h\cdot x} \sin( k_z z) \\ \pm \dfrac{1}{ \omega_{(k_h,k_z)}^\mrm{aw}} k_h e^{ i k_h \cdot x} \cos ( k_z z) \\ \mp i \dfrac{\omega_{(k_h,k_z)}^\mrm{aw}}{ k_z} \bigl( \dfrac{ \lvert k_h\rvert^2}{\lvert\omega_{(k_h,k_z)}^\mrm{aw}\rvert^2} - 1 \bigr) e^{ i k_h \cdot x} \sin( k_z z)
		\end{array} \right), \\
		& \qquad  k_z \neq 0
		 \biggr\rbrace \oplus
%		  , \qquad \text{or in the case when $ k_z = 0 $,}\\
%		 & \mathfrak E_{\pm \omega^\mrm{aw}_{(k_h,0)},\varepsilon} :=  
		 \spn \biggl\lbrace U^\mrm{aw}_{\pm,(k_h,0)} := \left( \begin{array}{c}
			e^{ i k_h \cdot x}  \\ 0 \\ \pm \dfrac{1}{ \omega_{(k_h,0)}^\mrm{aw}} k_h e^{ i k_h \cdot x} \\ 0
		\end{array} \right), k_h \neq (0,0) \biggr\rbrace.
	\end{aligned}
\end{equation}
Here $ \eta = \varepsilon^{1-\nu} $. 

Moreover, since $ \mathcal L_\varepsilon $ is anti-symmetric, it is easy to check
	\begin{gather*}
		U_{n,(k_h,k_z)}^\mrm{mf}, \quad U_{\pm,(k_h,k_z)}^\mrm{gw}, \quad U_{\pm,(k_h,k_z)}^\mrm{aw}, \quad n \in \lbrace 1, 2, 3, 4 \rbrace, \\
		\quad (k_h,k_z) \in 2\pi \mathbb Z^2 \times 2\pi \mathbb N,
	\end{gather*}
	form orthogonal basis with respect to the complex $ L^2 $-inner product.
%its duality is given by $ \mathcal L_\varepsilon^* : = - \mathcal L_\varepsilon $. Therefore, the corresponding conjugate eigenvectors are given as
%\begin{equation}\label{def:eg-vct-cjg}
%	\begin{gathered}
%		U_n^{\mrm{mf},*} = U_n^\mrm{mf}, \quad n = 1,2,3,4, \\
%		U_\pm^{\mrm{gw},*} = U_\pm^\mrm{gw}, \quad U_{m,\pm}^{\mrm{aw},*} = U_{m,\pm}^\mrm{aw}, \quad m = 1,2.
%	\end{gathered}
%\end{equation}
\end{proposition}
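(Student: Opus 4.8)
Most of the required computations have in fact already been carried out in Cases~1--3 above; the proof of Proposition~\ref{prop:wave-bases} is a matter of assembling them and then checking completeness and orthogonality. The plan is to use that $ \mathcal L_\varepsilon = \mathcal L_a + \varepsilon^{1-\nu}\mathcal L_g $ has constant coefficients, so that in the symmetry--adapted Fourier expansion \eqref{def:frr-epsn} it acts diagonally across modes: the eigenvalue problem \eqref{def:eg-pgm-og} is equivalent, for each fixed $ (k_h,k_z) $, to the finite--dimensional system \eqref{eq:frr-eg-pgm} on the space of admissible Fourier coefficients. First I would pin down the dimension of that coefficient space: since the components $ \widetilde{\mathcal H} $ and $ w $ carry the factor $ \sin(k_z z) $, this space is five--dimensional when $ k_z \neq 0 $ and three--dimensional when $ k_z = 0 $ (in particular when $ (k_h,k_z)=((0,0),0) $). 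This count is the bookkeeping target against which completeness will be verified.

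Next I would diagonalise, mode by mode, the matrix defined by \eqref{eq:frr-eg-pgm}. This is exactly the content of Cases~1--3: its nonzero spectrum $ \{\pm\omega\} $ is governed by the quadratic--in--$ \omega^2 $ relation \eqref{eq:frr-algebra}, whose roots produce $ (\omega^{\mrm{gw}}_{(k_h,k_z)})^2 \in [0,\eta^2] $ and $ (\omega^{\mrm{aw}}_{(k_h,k_z)})^2 $, while $ \omega = 0 $ yields $ \ker\mathcal L_\varepsilon $; the kernel of $ \mathcal L_\varepsilon - i\omega $ in each case is read off directly from \eqref{eq:frr-eg-pgm} and is precisely the span of the vectors listed in \eqref{def:mn-flw}, \eqref{def:g-w}, \eqref{def:prb-aw}. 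Along the way one checks that $ \omega^{\mrm{gw}}_{(k_h,k_z)} $ attains the endpoint $ 0 $ only when $ k_h = (0,0) $ (so that the internal--wave family genuinely requires $ k_h \neq (0,0) $, that frequency being absorbed into the kernel) and the endpoint $ \pm\eta $ only when $ k_z = 0 $ (where it does not correspond to any admissible eigenvector and is discarded), and that $ \omega^{\mrm{gw}}_{(k_h,k_z)} \neq \omega^{\mrm{aw}}_{(k_h,k_z)} $ always, so the three listed families are pairwise disjoint.

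Completeness then follows by counting: for each $ (k_h,k_z) $ the number of exhibited eigenvectors, with multiplicity, matches the dimension of the admissible coefficient space from the first step --- one eigenvalue $ 0 $ together with the pairs $ \pm\omega^{\mrm{gw}} $ and $ \pm\omega^{\mrm{aw}} $ when $ k_h\neq(0,0),\,k_z\neq0 $, and the appropriate degenerate count otherwise (for instance, at $ k_h=(0,0) $ the kernel is three--dimensional, spanned by $ U^\mrm{mf}_2, U^\mrm{mf}_3, U^\mrm{mf}_4 $; at $ k_z=0,\,k_h\neq(0,0) $ the space is three--dimensional, spanned by $ U^\mrm{mf}_1 $ and $ U^\mrm{aw}_{\pm,(k_h,0)} $). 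Summing over $ (k_h,k_z)\in 2\pi\mathbb Z^2\times 2\pi\mathbb N $ and using that the modes in \eqref{def:frr-epsn} already form a complete orthogonal system of the $ L^2 $--closure of $ \mathfrak V $ shows the listed vectors span that space. Orthogonality is then immediate: distinct Fourier modes are $ L^2 $--orthogonal; within a fixed mode the matrix representing $ i\mathcal L_\varepsilon $ is Hermitian, so eigenvectors attached to distinct eigenvalues are orthogonal; and within the genuinely multiple eigenspaces (the kernel at $ k_h=(0,0) $, the planar mean flows at $ k_z=0 $) the exhibited basis is mutually orthogonal by inspection of \eqref{def:mn-flw}.

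The algebra of the second and third steps is routine. The only point that requires real care --- and which I expect to be the main source of friction --- is the degenerate--mode bookkeeping: tracking how the symmetry \eqref{SYM} collapses the coefficient space when $ k_z=0 $ or $ k_h=(0,0) $, confirming that the eigenvalue multiplicities there add up exactly to the reduced dimension, and checking that the endpoint frequencies $ 0 $ and $ \pm\eta $ occur only in those degenerate modes, so that in the final enumeration no eigenvector is double counted and none is missed.
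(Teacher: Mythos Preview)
Your proposal is correct and follows essentially the same approach as the paper: the mode-by-mode Fourier diagonalisation via \eqref{eq:frr-eg-pgm} and the case analysis already carried out in Cases~1--3 is exactly how the paper proceeds, and your reading of the endpoint cases ($\omega^{\mrm{gw}}=0$ only at $k_h=(0,0)$, the $\pm\eta$ branch at $k_z=0$ being spurious since $\sin(0)=0$) matches the paper's. Your explicit dimension count and the observation that the Fourier-block matrix of $i\mathcal L_\varepsilon$ is Hermitian make the completeness and orthogonality statements more transparent than the paper's brief ``it is easy to check'', but the substance is the same.
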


\subsection{Internal waves in the soundproof model \eqref{eq:sndprf}}\label{sec:gw-sp}

We have already known that in the full compressible system \eqref{eq:rf-EEq}, the internal waves bear frequencies of order $ \mathcal O(\varepsilon^{-\nu}) $ from previous sections (see, e.g., Lemma \ref{lm:est-of-eg} and Proposition \ref{prop:wave-bases}). In this subsection, we would like to investigate the internal gravity waves in the soundproof model \eqref{eq:sndprf}, and provide a comparison study with those in system \eqref{eq:rf-EEq}. 

Denote by
\begin{equation}\label{def:vec-opt-sp}
	U_\mrm{sp} := \left( \begin{array}{c}
		\widetilde{\mathcal H}_\mrm{sp} \\ v_\mrm{sp} \\ w_\mrm{sp}
	\end{array} \right), \qquad \text{and} \qquad \mathcal L_\mrm{sp} U_\mrm{sp} := \left( \begin{array}{c}
		- w_\mrm{sp} \\ 0 \\ \widetilde{\mathcal H}_\mrm{sp}
	\end{array} \right).
\end{equation}
Then we introduce the linear system associated with the soundproof model \eqref{eq:sndprf} as follows:
\begin{equation}\label{eq:linear-osc-sp}
	\dt U_\mrm{sp} + \dfrac{1}{\varepsilon^\nu} \mathcal L_\mrm{sp} U_\mrm{sp} + \left( \begin{array}{c}
		0 \\ \nablah p_\mrm{sp} \\ \dz p_\mrm{sp}
	\end{array} \right) = 0, \qquad \dvh v_\mrm{sp} + \dz v_\mrm{sp} = 0, 
\end{equation}
with $ p_\mrm{sp}, \widetilde{\mathcal H}_\mrm{sp}, v_\mrm{sp}, w_\mrm{sp} $ satisfying the same symmetries as $ \widetilde q, \widetilde{\mathcal H}, v, w $, respectively, as in \eqref{SYM}. We consider the following eigenvalue problem:
\begin{equation}\label{def:eg-pgm-og-sp}
	i\omega_\mrm{sp} U_\mrm{sp} = \eta \mathcal L_\mrm{sp} U_\mrm{sp} + \left( \begin{array}{c}
		0 \\ \nablah (\varepsilon p_\mrm{sp}) \\ \dz (\varepsilon p_\mrm{sp})
	\end{array} \right), \qquad \dvh v_\mrm{sp} + \dz v_\mrm{sp} = 0. 
\end{equation}
Recalling $ \eta = \varepsilon^{1-\nu} $, our scale of $ \omega_\mrm{sp} $ in \eqref{def:eg-pgm-og-sp} is the same as $ \omega $ in \eqref{def:eg-pgm-og}, for the sake of convenience for comparison. Direct calculation of \eqref{def:eg-pgm-og-sp} leads to the following differential equation:
\begin{equation}\label{eq:eg-pbm-sp}
	( 1 - \dfrac{\omega_\mrm{sp}^2}{\eta^2} ) \deltah (\varepsilon p_\mrm{sp} ) - \dfrac{\omega_\mrm{sp}^2}{\eta^2} \partial_{zz} (\varepsilon p_\mrm{sp} ) = 0.
\end{equation}
It is obvious that \eqref{eq:eg-pbm-sp} changes types according to $ {\omega_\mrm{sp}^2}/{\eta^2} \in \lbrace 0 \rbrace $, or $ (0,1) $, or $ \lbrace 1 \rbrace $, or $ (1,\infty) $, respectively. In particular, when $ {\omega_\mrm{sp}^2}/{\eta^2} \in (1,\infty) $, \eqref{eq:eg-pbm-sp} is a non-degenerate elliptic equation and has only $ 0 $ as the trivial solution. However, when $ {\omega_\mrm{sp}^2}/{\eta^2} \in [0,1] $, unlike \eqref{eq:eg-pbm-aw}, \eqref{eq:eg-pbm-sp} is a (degenerate) hyperbolic-type equation. 

In the rest of this subsection, we shall use the Fourier representations to persuade further investigation. As in \eqref{def:frr-epsn}, let
\begin{equation}\label{def:frr-epsn-sp}
	\begin{gathered}
	\varepsilon p_\mrm{sp} = \sum_{ k_h \in 2\pi \mathbb Z^2, k_z \in 2\pi \mathbb N} P_{\mrm{sp},({k}_h,k_z)}e^{ i {k}_h \cdot x} \cos( k_z z),
	\\
	U_\mrm{sp} = \sum_{ k_h \in 2\pi \mathbb Z^2, k_z \in 2\pi \mathbb N} \left(
	\begin{array}{c}
		 H_{\mrm{sp},({k}_h,k_z)} e^{ i {k}_h \cdot x} \sin( k_z z) \\ 
		V_{\mrm{sp},({k}_h,k_z)} e^{ i {k}_h \cdot x} \cos( k_z z) \\   W_{\mrm{sp},({k}_h,k_z)} e^{ i {k}_h \cdot x} \sin( k_z z)
	\end{array}
	\right),
	\end{gathered}
\end{equation}
with
$$
F_{\mrm{sp},(-k_h,k_z)} = F_{\mrm{sp},(k_h,k_z)}, \qquad F \in \lbrace P, H, V, W \rbrace. 
$$
Without loss of generality, we also assume that $ P_{\mrm{sp},(0,0)} = 0 $.

Then \eqref{def:eg-pgm-og-sp} is equivalent to
\begin{equation}\label{eq:frr-eq-pgm-sp}
	\begin{aligned}
		i \omega_{\mrm{sp},(k_h,k_z)} H_{\mrm{sp},(k_h,k_z)} & = - \eta W_{\mrm{sp},(k_h,k_z)}, \\
		i \omega_{\mrm{sp},(k_h,k_z)} V_{\mrm{sp},(k_h,k_z)} & = i  P_{\mrm{sp},(k_h,k_z)}k_h, \\
		i \omega_{\mrm{sp},(k_h,k_z)} W_{\mrm{sp},(k_h,k_z)} & = \eta H_{\mrm{sp},(k_h,k_z)} - k_z P_{\mrm{sp},(k_h,k_z)}, \\
		i k_h \cdot V_{\mrm{sp},(k_h,k_z)} + k_z W_{\mrm{sp},(k_h,k_z)} & = 0,
	\end{aligned}
\end{equation} 
and \eqref{eq:eg-pbm-sp} is equivalent to
\begin{equation}\label{eq:frr-eq-pgm-sp-2}
	\biggl( ( 1 - \dfrac{\omega_{\mrm{sp},(k_h,k_z)}^2}{\eta^2} ) \lvert k_h\rvert^2 - \dfrac{\omega_{\mrm{sp},(k_h,k_z)}^2}{\eta^2} \lvert k_z\rvert^2 \biggr) P_{\mrm{sp},(k_h,k_z)} = 0.
\end{equation}

{\noindent \bf Case 1:} $ P_{\mrm{sp},(k_h,k_z)} = 0 $. Then it is easy to verify that, the nontrivial solutions to \eqref{eq:frr-eq-pgm-sp} are given by
\begin{equation*}
	\begin{gathered}
	\lvert\omega_{\mrm{sp},(k_h,0)}\rvert = \eta, \quad
	\left( \begin{array}{c}
	P_{\mrm{sp},(k_h,0)}\\
		H_{\mrm{sp},(k_h,0)} \\ V_{\mrm{sp},(k_h,0)} \\ W_{\mrm{sp},(k_h,0)} 
	\end{array} \right) = \left( \begin{array}{c}
	0 \\
		H_{\mrm{sp},(k_h,0)} \\ 0 \\ - i \dfrac{\omega_{\mrm{sp},(k_h,0)}}{\eta} H_{\mrm{sp},(k_h,0)} 
	\end{array} \right), \\
	\text{or} \qquad
	\omega_{\mrm{sp},(k_h,k_z)} = 0, \quad \left( \begin{array}{c}
	P_{\mrm{sp},(k_h,k_z)} \\
		H_{\mrm{sp},(k_h,k_z)} \\ V_{\mrm{sp},(k_h,k_z)} \\ W_{\mrm{sp},(k_h,k_z)} 
	\end{array} \right) =  \left( \begin{array}{c}
	0 \\
		0 \\ V_{\mrm{sp},(k_h,k_z)} \\ 0 
	\end{array} \right) \\
	\qquad \text{with}\quad   k_h \cdot V_{\mrm{sp},(k_h,k_z)} = 0. 
	\end{gathered}
\end{equation*}
Next, we focus on the cases when $ P_{\mrm{sp},(k_h,k_z)} \neq 0 $. Then it must hold, from \eqref{eq:frr-eq-pgm-sp-2},
\begin{equation}\label{eq:frr-eq-pgm-sp-3}
	( 1 - \dfrac{\omega_{\mrm{sp},(k_h,k_z)}^2}{\eta^2} ) \lvert k_h\rvert^2 - \dfrac{\omega_{\mrm{sp},(k_h,k_z)}^2}{\eta^2} \lvert k_z\rvert^2 = 0.
\end{equation}
{\noindent\bf Case 2:} $ k_h = (0,0) $. Solving \eqref{eq:frr-eq-pgm-sp-3} leads to either $ \omega_{\mrm{sp}, (0,k_z)} = 0 $ or $ k_z = 0 $. Then the nontrivial solutions to \eqref{eq:frr-eq-pgm-sp} are given by
\begin{equation*}
	\begin{gathered}
		\omega_{\mrm{sp}, (0,k_z)} = 0, \quad \left( \begin{array}{c}
	P_{\mrm{sp},(0,k_z)} \\
		H_{\mrm{sp},(0,k_z)} \\ V_{\mrm{sp},(0,k_z)} \\ W_{\mrm{sp},(0,k_z)} 
	\end{array} \right) =  \left( \begin{array}{c}
	P_{\mrm{sp},(0,k_z)} \\
		\dfrac{k_z}{\eta} P_{\mrm{sp},(0,k_z)} \\ V_{\mrm{sp},(0,k_z)} \\ 0 
	\end{array} \right),\\
	\text{or} \qquad \lvert\omega_{\mrm{sp},(0,0)}\rvert = \eta, \quad 
	\left( \begin{array}{c}
	P_{\mrm{sp},(0,0)} \\
		H_{\mrm{sp},(0,0)} \\ V_{\mrm{sp},(0,0)} \\ W_{\mrm{sp},(0,0)} 
	\end{array} \right) = \left( \begin{array}{c}
		P_{\mrm{sp},(0,0)} \\ H_{\mrm{sp},(0,0)}  \\ 0 \\ -i \dfrac{\omega_{\mrm{sp},(0,0)}}{\eta} H_{\mrm{sp},(0,0)}
	\end{array} \right).
	\end{gathered}
\end{equation*}

{\noindent\bf Case 3:} $ k_h \neq (0,0) $. Solving \eqref{eq:frr-eq-pgm-sp-3} leads to
\begin{equation*}
	\dfrac{\omega_{\mrm{sp},(k_h,k_z)}^2}{\eta^2} = \dfrac{\lvert k_h\rvert^2}{\lvert k_h\rvert^2 + \lvert k_z\rvert^2}.
\end{equation*}
Then solving \eqref{eq:frr-eq-pgm-sp} yields
\begin{equation*}
	\begin{gathered}
		\lvert\omega_{\mrm{sp},(k_h,0)}\rvert = \eta, \quad  \left( \begin{array}{c}
	P_{\mrm{sp},(k_h,0)}\\
		H_{\mrm{sp},(k_h,0)} \\ V_{\mrm{sp},(k_h,0)} \\ W_{\mrm{sp},(k_h,0)} 
	\end{array} \right)  =  \left( \begin{array}{c}
	0 \\
		H_{\mrm{sp},(k_h,0)} \\ 0 \\ - i \dfrac{\omega_{\mrm{sp},(k_h,0)}}{\eta} H_{\mrm{sp},(k_h,0)} 
	\end{array} \right) \\
	\quad (\text{discarded}), \\
		\text{or} \quad k_z \neq 0, \quad \lvert \omega_{\mrm{sp},(k_h,k_z)}\rvert = \dfrac{\eta \lvert k_h\rvert}{\sqrt{\lvert k_h\rvert^2 + \lvert k_z\rvert^2}}, \\
		\left( \begin{array}{c}
	P_{\mrm{sp},(k_h,k_z)}\\
		H_{\mrm{sp},(k_h,k_z)} \\ V_{\mrm{sp},(k_h,k_z)} \\ W_{\mrm{sp},(k_h,k_z)} 
	\end{array} \right) = \left( \begin{array}{c}
	P_{\mrm{sp},(k_h,k_z)}  \\
		\dfrac{\eta\lvert k_h\rvert^2}{k_z \omega_{\mrm{sp},(k_h,k_z)}^2} P_{\mrm{sp},(k_h,k_z)} \\ \dfrac{P_{\mrm{sp},(k_h,k_z)}}{\omega_{\mrm{sp},(k_h,k_z)}}k_h \\ - i \dfrac{\lvert k_h\rvert^2}{k_z\omega_{\mrm{sp},(k_h,k_z)}} P_{\mrm{sp},(k_h,k_z)} 
	\end{array} \right).
	\end{gathered}
\end{equation*}

In summary, we have established the following eigenvalue-eigenvector pairs to \eqref{def:eg-pgm-og-sp}:
\begin{proposition}\label{prop:wave-bases-sp}
The mean flows and the internal waves in the eigenvalue problem  \eqref{def:eg-pgm-og-sp} for the soundproof model are given as below: with $ k_h \in 2\pi \mathbb Z^2 $ and $ k_z \in 2\pi \mathbb N $, 
{\par\noindent\bf Mean flows:} $ \omega_\mrm{sp} = 0 $ and the space of mean flows $ \mathfrak E_{\mrm{sp},0,\varepsilon} $ is given by
\begin{equation}\label{def:mn-flw-sp}
	\begin{aligned}
		& \mathfrak E_{\mrm{sp},0,\varepsilon} := \spn\biggl\lbrace \varepsilon p_{\mrm{sp},1,(k_h,k_z)}^\mrm{mf} := 0, \\
		& \qquad\qquad  U_{\mrm{sp},1,(k_h,k_z)}^\mrm{mf}: = \left(\begin{array}{c}
			0 \\ \dfrac{k_h^\perp}{\lvert k_h\rvert} e^{ik_h\cdot x} \cos(k_z z) \\ 0
		\end{array} \right), k_h \neq (0,0) \biggr\rbrace \\
		&\oplus \spn\biggl\lbrace \varepsilon p_{\mrm{sp},2,((0,0),k_z) }^\mrm{mf}: = \cos(k_z z), U_{\mrm{sp},2,((0,0),k_z)}^\mrm{mf} := \left(\begin{array}{c}
		\dfrac{k_z}{\eta} \sin(k_z z) \\ 0 \\ 0
		\end{array}\right)
		\biggr\rbrace \\
		& \oplus \spn \biggl\lbrace \varepsilon p_{\mrm{sp},\mrm j,((0,0),k_z)}^\mrm{mf} := 0, U_{\mrm{sp},\mrm j,((0,0),k_z)}^\mrm{mf} := \left( \begin{array}{c}
			0 \\  \cos(k_z z) \vec{e}_{\mrm j - 2} \\ 0
		\end{array}\right), \\
		& \qquad\qquad \vec e_1 = \left(\begin{array}{c}
			1 \\ 0
		\end{array}\right), \quad \vec e_2 = \left(\begin{array}{c}
			0 \\1 
		\end{array}\right), ~ \mrm j = 3,4 \biggr\rbrace 
		.
	\end{aligned}
\end{equation}

{\par\noindent\bf Internal waves}: $ \omega_\mrm{sp} = \pm \omega_{\mrm{sp},(k_h,k_z)}^\mrm{gw} $ where
\begin{equation}\label{def:gw-fqcy-sp}
	\omega_{\mrm{sp},(k_h,k_z)}^\mrm{gw} := \dfrac{\eta\lvert k_h\rvert}{(\lvert k_h\rvert^2 + \lvert k_z\rvert^2)^{1/2}},
\end{equation}
with $ k_h \neq (0,0) $ and $ k_z \neq 0 $,
and the space of internal waves $ \mathfrak E_{\mrm{sp},\pm\omega^\mrm{gw}_{\mrm{sp},(k_h,k_z)},\varepsilon} $ is given by
\begin{equation}\label{def:g-w-sp}
	\begin{aligned}
		& \mathfrak E_{\mrm{sp},\pm\omega^\mrm{gw}_{\mrm{sp},(k_h,k_z)},\varepsilon} : = \spn \biggl\lbrace \varepsilon p_{\mrm{sp},(k_h,k_z)}^\mrm{gw}: = e^{ik_h\cdot x} \cos (k_z z), \\
		& \qquad\qquad U_{\mrm{sp},\pm,(k_h,k_z)}^\mrm{gw}:=\left(\begin{array}{c}
			\dfrac{\eta\lvert k_h\rvert^2}{k_z \lvert \omega_{\mrm{sp},(k_h,k_z)}^\mrm{gw}\rvert^2} e^{ik_h\cdot x} \sin(k_z z) \\
			\pm \dfrac{1}{\omega_{\mrm{sp},(k_h,k_z)}^\mrm{gw}} k_h e^{ik_h\cdot x} \cos(k_z z) \\
			\mp i \dfrac{\lvert k_h\rvert^2}{k_z\omega_{\mrm{sp},(k_h,k_z)}^\mrm{gw}} e^{ik_h\cdot x} \sin(k_z z) 
		\end{array} \right), \\
		& \qquad \qquad k_h \neq (0,0), k_z \neq 0
		\biggr\rbrace.
	\end{aligned}
\end{equation}
Here $ \eta = \varepsilon^{1-\nu} $. 
\end{proposition}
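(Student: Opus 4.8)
\textit{Proof sketch.} The plan is to prove the proposition by Fourier analysis, paralleling the treatment of $\mathcal L_\varepsilon$ in Proposition \ref{prop:wave-bases}. First I would substitute the symmetry-compatible Fourier expansion \eqref{def:frr-epsn-sp} into the eigenvalue problem \eqref{def:eg-pgm-og-sp}, reducing it, mode by mode, to the finite-dimensional algebraic system \eqref{eq:frr-eq-pgm-sp} together with the divergence constraint. Eliminating $H_{\mrm{sp}}$, $V_{\mrm{sp}}$, and $W_{\mrm{sp}}$ in favour of $P_{\mrm{sp}}$ then yields the scalar relation \eqref{eq:frr-eq-pgm-sp-2}, which is the Fourier transcription of \eqref{eq:eg-pbm-sp}, and the whole problem is thereby localized to solving a quadratic constraint on $\omega_{\mrm{sp},(k_h,k_z)}^2$ for each wavenumber.

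Next I would split according to whether $P_{\mrm{sp},(k_h,k_z)}$ vanishes. When $P_{\mrm{sp},(k_h,k_z)} = 0$, the remaining equations in \eqref{eq:frr-eq-pgm-sp} force either $\omega_{\mrm{sp}} = 0$ with a divergence-free horizontal velocity, i.e.\ a mean-flow mode, or $k_z = 0$ together with $\lvert\omega_{\mrm{sp}}\rvert = \eta$ and a mode supported only in $(H_{\mrm{sp}}, W_{\mrm{sp}})$; I would record these and note that the latter does not contribute a new genuine internal-wave branch. When $P_{\mrm{sp},(k_h,k_z)} \neq 0$, relation \eqref{eq:frr-eq-pgm-sp-2} collapses to \eqref{eq:frr-eq-pgm-sp-3}: the subcase $k_h = (0,0)$ yields only $\omega_{\mrm{sp}} = 0$ (again a mean flow, now with nonzero $P_{\mrm{sp}}$ and $H_{\mrm{sp}}$) or the degenerate $k_z = 0$ situation, while the subcase $k_h \neq (0,0)$ produces the dispersion relation $\omega_{\mrm{sp},(k_h,k_z)}^2/\eta^2 = \lvert k_h\rvert^2/(\lvert k_h\rvert^2 + \lvert k_z\rvert^2)$ and, after back-substitution into \eqref{eq:frr-eq-pgm-sp}, the explicit internal-wave eigenvectors of \eqref{def:g-w-sp}.

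Finally I would assemble the families, verify by direct substitution that each listed pair $(\varepsilon p_{\mrm{sp}}, U_{\mrm{sp}})$ solves \eqref{def:eg-pgm-og-sp} (in particular that the divergence constraint $\dvh v_{\mrm{sp}} + \dz w_{\mrm{sp}} = 0$ holds for each mode), and invoke the anti-symmetry of the linear operator $\eta\mathcal L_{\mrm{sp}} + (0, \nablah(\varepsilon\,\cdot), \dz(\varepsilon\,\cdot))^\top$ on the divergence-free subspace to conclude that eigenvectors for distinct eigenvalues are $L^2$-orthogonal and that the listed modes exhaust the eigenspace decomposition.

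The step I expect to be most delicate is the bookkeeping of the degenerate boundary cases $k_z = 0$, $k_h = (0,0)$, and $\lvert\omega_{\mrm{sp}}\rvert = \eta$, where the rank of the algebraic system \eqref{eq:frr-eq-pgm-sp} drops: there one must take care not to double-count a mode or to miss one, and to decide consistently which boundary solutions are genuine new branches and which coincide with, or are limiting cases of, families already listed. Everything else is routine linear algebra once the Fourier reduction is in place.
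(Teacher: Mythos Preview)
Your proposal is correct and follows essentially the same route as the paper: the paper's argument in section \ref{sec:gw-sp} substitutes the Fourier expansion \eqref{def:frr-epsn-sp} into \eqref{def:eg-pgm-og-sp}, derives the algebraic system \eqref{eq:frr-eq-pgm-sp} and the scalar relation \eqref{eq:frr-eq-pgm-sp-2}, and then splits into exactly the three cases you describe ($P_{\mrm{sp},(k_h,k_z)}=0$; $P_{\mrm{sp},(k_h,k_z)}\neq 0$ with $k_h=(0,0)$; $P_{\mrm{sp},(k_h,k_z)}\neq 0$ with $k_h\neq(0,0)$), discarding the redundant $\lvert\omega_{\mrm{sp}}\rvert=\eta$ boundary solutions just as you anticipate. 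Your final orthogonality remark via anti-symmetry is not spelled out in the paper here but is consistent with how the analogous statement is handled for $\mathcal L_\varepsilon$.
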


\subsection{Comparison with limit cases}

%In this subsection, we will quantitatively analyze the differences between the eigenvalue-eigenvector pairs obtained in Proposition \ref{prop:wave-bases}. \chgd{uncommment changes!}

{In this subsection, we will quantitatively compare the linear dynamics of several reduced systems studied in this work. (i) We compare the internal waves between the full compressible and the pseudo-incompressible models as derived in propositions \ref{prop:wave-bases} and \ref{prop:wave-bases-sp} above. (ii) We compare the acoustic waves generated by the pure acoustic operator in \eqref{def:eg-pgm-ac} alone and by the full fast mode operator of the compressible system.}

{First, we summarize the eigenvalue-eigenvector pair of the pure acoustic system,}
\begin{equation}\label{def:eg-pgm-ac}
	i \omega_\mrm{a} U_\mrm{a} = \mathcal L_a U_\mrm{a}, \qquad U_\mrm{a} = (\widetilde q_\mrm{a}, \widetilde{\mathcal H}_\mrm{a}, v_\mrm{a}, w_\mrm{a} )^\top \in \mathfrak V.
\end{equation}
That is,
\begin{lemma}\label{lm:acoustic-wave-bases}
	There exist two classes of eigenvalue-eigenvector pairs to \eqref{def:eg-pgm-ac}; the incompressible flows and the acoustic waves. They are given as below: with $ k_h \in 2\pi \mathbb Z^2 $ and $ k_z \in 2\pi \mathbb N $,
	{\par\noindent\bf Incompressible flows:} $ \omega_\mrm{a} = 0 $ and the space of incompressible flows $ \mathfrak E_{\mrm{a}, 0} $ is given by
	\begin{equation}\label{def:icf-flw}
		\begin{aligned}
			& \mathfrak E_{\mrm{a},0} := \spn \biggl\lbrace U_{\mrm a, 1,((0,0),0) }^\mrm{icf} := \left( \begin{array}{c}
				 1 \\ 0 \\ 0 \\ 0 
			\end{array} \right),	  ~ U_{\mrm a, 2,(k_h,k_z) }^\mrm{icf} := \left( \begin{array}{c}
				 0 \\ e^{ik_h \cdot x} \sin (k_z z) \\ 0 \\ 0 
			\end{array} \right) \biggr\rbrace \\
			& 
			\oplus \biggl\lbrace U_{\mrm a, 3,(k_h,k_z) }^\mrm{icf} := \left( \begin{array}{c}
				0 \\ 0 \\ \dfrac{k_h^\perp}{\lvert k_h\rvert} e^{ik_h\cdot x} \cos(k_z z) \\ 0 
			\end{array}\right), \\
			& \qquad  U_{\mrm a, 4,(k_h,k_z) }^\mrm{icf} := \left( \begin{array}{c}
				0 \\ 0 \\ k_z \dfrac{k_h}{\lvert k_h\rvert} e^{ik_h\cdot x} \cos(k_z z) \\ - i \lvert k_h\rvert e^{ik_h\cdot x} \sin(k_z z)
			\end{array}\right), ~ k_h \neq (0,0) \biggr\rbrace\\
			& \oplus \biggl\lbrace U_{\mrm a, 5,((0,0),k_z)}^\mrm{icf} := \left(\begin{array}{c}
				0 \\ 0 \\ \cos(k_z z) \vec e_h \\ 0
			\end{array}\right), ~ \vec e_h = \left(\begin{array}{c}
			1 \\ 0
		\end{array}\right) ~ \text{or} ~ \left(\begin{array}{c}
			0 \\1 
		\end{array}\right) 
			\biggr\rbrace .
		\end{aligned}
	\end{equation}
	{\par\noindent\bf Acoustic waves:} $ \omega_\mrm{a} = \pm \omega_{\mrm{a},(k_h,k_z)}^\mrm{aw} $ where
	\begin{equation}\label{def:ac-fqcy}
		\omega_{\mrm{a},(k_h,k_z)}^\mrm{aw} := (\lvert k_h\rvert^2 + \lvert k_z\rvert^2 )^{1/2}, \qquad (k_h,k_z) \neq ((0,0),0),
	\end{equation}
	and the space of acoustic waves $ \mathfrak E_{\mrm a,\pm\omega_{\mrm{a},(k_h,k_z)}^\mrm{aw}} $ is given by
	\begin{equation}\label{def:ac-flw}
		\begin{aligned}
			& \mathfrak E_{\mrm a,\pm\omega_{\mrm{a},(k_h,k_z)}^\mrm{aw}} := \spn \biggl\lbrace U^\mrm{aw}_{\mrm a, \pm, (k_h,k_z)} := \left( \begin{array}{c}
				e^{ik_h\cdot x} \cos(k_z z) \\ 0 \\ \pm \dfrac{k_h}{\omega_{\mrm{a},(k_h,k_z)}^\mrm{aw}}e^{ik_h\cdot x} \cos(k_z z)  \\ \pm \dfrac{ik_z}{\omega_{\mrm{a},(k_h,k_z)}^\mrm{aw}}e^{ik_h\cdot x} \sin(k_z z)
			\end{array} \right), \\
			& \qquad\qquad (k_h,k_z) \neq ((0,0),0) 
			\biggr\rbrace.
		\end{aligned}
	\end{equation}
\end{lemma}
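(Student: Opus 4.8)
\textit{Proof proposal.} The plan is to diagonalise $\mathcal L_a$ mode by mode, exactly as in Section~\ref{subsec:FourierRepresentations}, by inserting the parity-respecting Fourier ansatz \eqref{def:frr-epsn} for $U_\mrm a$ into \eqref{def:eg-pgm-ac}. The resulting algebraic eigenvalue system for the coefficients $(Q,H,V,W)_{(k_h,k_z)}$ is precisely \eqref{eq:frr-eg-pgm} with $\eta = 0$, namely
\begin{equation*}
\omega Q = k_h\cdot V - i k_z W, \qquad \omega H = 0, \qquad \omega V = Q k_h, \qquad \omega W = i k_z Q .
\end{equation*}
The decisive structural observation is that the buoyancy variable is slaved trivially: the second equation forces $\omega_\mrm a = 0$ or $H = 0$. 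Thus the spectrum splits into the kernel ($\omega_\mrm a = 0$) and a non-trivial acoustic branch, and---in contrast with $\mathcal L_\varepsilon$ in Proposition~\ref{prop:wave-bases}, where the $\mathcal O(\eta)$ coupling pushes the $\widetilde{\mathcal H}$-modes out to nonzero ``internal-wave'' frequencies---here those modes fall entirely into the kernel.

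For the kernel one solves $\mathcal L_a U_\mrm a = 0$, i.e.\ $\widetilde q_\mrm a = \mrm{const}$, $\widetilde{\mathcal H}_\mrm a$ arbitrary (subject to being odd in $z$ by \eqref{SYM}, hence spanned by the $e^{i k_h\cdot x}\sin(k_z z)$ modes), and $\dvh v_\mrm a + \dz w_\mrm a = 0$ with $v_\mrm a$ even and $w_\mrm a$ odd in $z$. The constant gives $U^\mrm{icf}_{\mrm a,1}$, the buoyancy modes give $U^\mrm{icf}_{\mrm a,2}$, and the divergence-free velocity part is organised by a toroidal--poloidal decomposition compatible with the parity: for $k_h\neq(0,0)$ the toroidal mode $\propto k_h^\perp/\lvert k_h\rvert$ (with $w=0$) is $U^\mrm{icf}_{\mrm a,3}$, and the poloidal mode coupling $k_z\, k_h/\lvert k_h\rvert$ to $w\propto -i\lvert k_h\rvert$ is $U^\mrm{icf}_{\mrm a,4}$; for $k_h=(0,0)$ the constraint forces $w_\mrm a\equiv 0$ and leaves the horizontally constant fields $U^\mrm{icf}_{\mrm a,5}$. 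This exhausts the kernel and yields \eqref{def:icf-flw}.

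For $\omega_\mrm a\neq 0$ one has $H=0$, and then $V = Q k_h/\omega_\mrm a$, $W = i k_z Q/\omega_\mrm a$; substituting into the first equation gives, in the only non-trivial case $Q\neq 0$, the dispersion relation $\omega_\mrm a^2 = \lvert k_h\rvert^2 + \lvert k_z\rvert^2$, valid for every $(k_h,k_z)\neq((0,0),0)$, with the one-dimensional eigenspace $\mathfrak E_{\mrm a,\pm\omega^\mrm{aw}_{\mrm a,(k_h,k_z)}}$ spanned by $U^\mrm{aw}_{\mrm a,\pm,(k_h,k_z)}$ as in \eqref{def:ac-flw}. Finally, one records that the listed families together exhaust every Fourier mode, so they span the $L^2$-closure of $\mathfrak V$; mutual orthogonality of distinct eigenspaces is automatic from the anti-symmetry of $\mathcal L_a$ noted after \eqref{def:vec-opt}, while orthogonality within the kernel follows from orthogonality of $\{e^{i k_h\cdot x}\cos(k_z z)\}$, $\{e^{i k_h\cdot x}\sin(k_z z)\}$ and of $\{k_h,k_h^\perp\}$. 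The only genuine work is bookkeeping: tracking the degenerate wavenumber cases $k_h=(0,0)$ and $k_z=0$, and verifying that the toroidal--poloidal split of divergence-free fields with the prescribed parity is captured exactly by the five families $U^\mrm{icf}_{\mrm a,j}$. This, rather than any analytic difficulty, is where care is required.
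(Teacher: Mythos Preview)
Your proposal is correct and follows exactly the approach the paper uses: the Fourier-mode diagonalisation of Section~\ref{subsec:FourierRepresentations} specialised to $\eta=0$, with the case-by-case bookkeeping over $k_h=(0,0)$ and $k_z=0$. The paper in fact does not write out a separate proof of this lemma at all---it is stated as a summary, the computation being the $\eta=0$ limit of the one already carried out for $\mathcal L_\varepsilon$ in \eqref{eq:frr-eg-pgm}--\eqref{id:frr-sl-eg-pbm}; your observation that $\omega H=0$ collapses the internal-wave branch into the kernel is precisely the structural difference.
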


In the following, we will compare the eigenvalue-eigenvector pairs obtained in Proposition \ref{prop:wave-bases-sp} and Lemma \ref{lm:acoustic-wave-bases} with those in Proposition \ref{prop:wave-bases}.

{\par\noindent\bf Perturbed acoustic waves v.s. acoustic waves,} i.e., $ (\omega^\mrm{aw}_{(k_h,k_z), \varepsilon}, U^\mrm{aw}_{\pm,(k_h,k_z)} ) $ v.s. $ (\omega^\mrm{aw}_{\mrm a, (k_h,k_z), \varepsilon}, U^\mrm{aw}_{\mrm a,\pm,(k_h,k_z)} ) $: Direct calculation, from \eqref{def:aw-fqcy} and \eqref{def:ac-fqcy}, shows that, for $ (k_h, k_z) \neq ((0,0),0) $, 
\begin{equation}\label{cmp-aw-fqcy}
	\omega^\mrm{aw}_{(k_h,k_z)} = \omega^\mrm{aw}_{\mrm a, (k_h,k_z)} + \eta^2 \cdot \dfrac{\lvert k_z\rvert^2}{2 (\lvert k_h\rvert^2 + \lvert k_z\rvert^2 )^{3/2}} + \mathcal O(\eta^4).
\end{equation}
Meanwhile, owing to \eqref{def:prb-aw} and \eqref{def:ac-flw}, one has, for $ k_z \neq 0 $, 
\begin{equation}\label{cmp-aw-flw-1}
	\begin{aligned}
	& U^\mrm{aw}_{\pm, (k_h,k_z)} - U^\mrm{aw}_{\mrm a, \pm, (k_h,k_z)} \\
	&  = \left(\begin{array}{c}
		0 \\ \dfrac{\eta}{k_z} \dfrac{\lvert k_h \rvert^2 - (\omega^\mrm{aw}_{(k_h,k_z)})^2 }{(\omega^\mrm{aw}_{(k_h,k_z)})^2} e^{ik_h\cdot x} \sin (k_z z) \\ \pm k_h \dfrac{\omega^\mrm{aw}_{\mrm a, (k_h,k_z)}-\omega^\mrm{aw}_{(k_h,k_z)}}{\omega^\mrm{aw}_{(k_h,k_z)}\omega^\mrm{aw}_{\mrm a, (k_h,k_z)}} e^{ik_h\cdot x} \cos (k_z z) \\ \pm i \dfrac{(\omega^\mrm{aw}_{(k_h,k_z)})^2 \omega^\mrm{aw}_{\mrm a, (k_h,k_z)} - \lvert k_h\rvert^2 \omega^\mrm{aw}_{\mrm a, (k_h,k_z)} - \lvert k_z\rvert^2 \omega^\mrm{aw}_{(k_h,k_z)} }{k_z \omega^\mrm{aw}_{(k_h,k_z)}\omega^\mrm{aw}_{\mrm a, (k_h,k_z)} } e^{ik_h\cdot x} \sin(k_z z)
	\end{array} \right) \\
	& =  \left( \begin{array}{c}
		0 \\ - \biggl( \eta \dfrac{k_z}{\lvert k_h\rvert^2 + \lvert k_z\rvert^2} + \eta^3 \dfrac{k_z \lvert k_h\rvert^2 }{(\lvert k_h\rvert^2 + \lvert k_z\rvert^2)^3} \biggr) e^{ik_h\cdot x} \sin (k_z z) + \mathcal O(\eta^5 ) \\ \mp \eta^2 \cdot \dfrac{k_h \lvert k_z\rvert^2 }{2 ( \lvert k_h\rvert^2 + \lvert k_z\rvert^2 )^{5/2}} e^{ik_h\cdot x} \cos (k_z z) + \mathcal O (\eta^4) \\ \pm \eta^2 \cdot i \dfrac{k_z ( 2 \lvert k_h\rvert^2 + \lvert k_z\rvert^2 )}{2 (\lvert k_h\rvert^2 + \lvert k_z\rvert^2 )^{5/2}} e^{ik_h\cdot x} \sin (k_z z) + \mathcal O(\eta^4)
	\end{array} \right); 
	\end{aligned}
\end{equation}
for $ k_z = 0 $, $ k_h \neq (0,0) $,
\begin{equation}\label{cmp-aw-flw-2}
	\begin{aligned}
		& U^\mrm{aw}_{\pm, (k_h,0)} - U^\mrm{aw}_{\mrm a, \pm, (k_h,0)} = \left(\begin{array}{c}
			0 \\0 \\ \mathcal O(\eta^4)) \\ 0
		\end{array} \right).
	\end{aligned}
\end{equation}

{\par\noindent\bf Perturbed internal waves v.s. internal waves,} i.e., $ (\omega^\mrm{gw}_{(k_h,k_z)}, U_{\pm, (k_h,k_z)}^\mrm{gw} ) $ v.s. $ (\omega^\mrm{gw}_{\mrm{sp}, (k_h,k_z)}, \left( \begin{array}{c} \varepsilon p_{\mrm{sp},(k_h,k_z)}^\mrm{gw} \\ U_{\mrm{sp},\pm, (k_h,k_z)}^\mrm{gw} \end{array} \right) ) $: Direct calculation, from \eqref{def:gw-fqcy} and \eqref{def:gw-fqcy-sp}, shows that, for $ k_h \neq (0,0) $, $ k_z \neq 0 $,
\begin{equation}\label{cmp-gw-fqcy}
	\dfrac{\omega_{(k_h,k_z)}^\mrm{gw}}{\eta} = \dfrac{\omega_{\mrm{sp}, (k_h,k_z)}^\mrm{gw}}{\eta} - \eta^2 \cdot \dfrac{\lvert k_h\lvert \lvert k_z\rvert^2 }{2 (\lvert k_h\rvert^2 + \lvert k_z\rvert^2)^{5/2}} + \mathcal O(\eta^4). 
\end{equation}
Meanwhile, owing to \eqref{def:g-w} and \eqref{def:g-w-sp}, one has, 
\begin{equation}\label{cmp-gw-flw}
	\begin{aligned}
		& U_{\pm,(k_h,k_z)}^\mrm{gw} - \left( \begin{array}{c} \varepsilon p_{\mrm{sp},(k_h,k_z)}^\mrm{gw} \\ U_{\mrm{sp},\pm, (k_h,k_z)}^\mrm{gw} \end{array} \right) \\
		& = \left(\begin{array}{c}
			0 \\ \dfrac{\eta}{k_z} \biggl( \dfrac{\lvert k_h\rvert^2}{\lvert\omega^\mrm{gw}_{(k_h,k_z)}\rvert^2} - \dfrac{\lvert k_h\rvert^2}{\lvert\omega^\mrm{gw}_{\mrm{sp}, (k_h,k_z)}\rvert^2}  - 1 \biggr) e^{ik_h\cdot x}\sin(k_z z) \\
			\pm \biggl( \dfrac{k_h}{\omega^\mrm{gw}_{(k_h,k_z)}} - \dfrac{k_h}{\omega^\mrm{gw}_{\mrm{sp}, (k_h,k_z)}} \biggr) e^{ik_h\cdot x} \cos (k_z z) \\
			\mp i \biggl\lbrack \dfrac{\lvert k_h\rvert^2}{k_z} \biggl( \dfrac{1}{\omega^\mrm{gw}_{(k_h,k_z)}} - \dfrac{1}{\omega^\mrm{gw}_{\mrm{sp}, (k_h,k_z)}} \biggr) -  \dfrac{\omega^\mrm{gw}_{(k_h,k_z)}}{k_z}  \biggr\rbrack e^{ik_h\cdot x} \sin(k_z z)
		\end{array} \right)\\
		&  = \left( \begin{array}{c}
			0 \\ %\biggl( 
			- \eta \cdot \dfrac{\lvert k_h\rvert^2}{k_z(\lvert k_h\rvert^2 + \lvert k_z\rvert^2)} % + \eta^3 \dfrac{k_z \lvert k_h\rvert^2}{(\lvert k_h\rvert^2 + \lvert k_z\rvert^2)^3}\biggr)
			 e^{ik_h\cdot x} \sin(k_z z) + \mathcal O(\eta^3)\\
			 \pm  \eta \cdot \dfrac{\lvert k_z\rvert^2 k_h}{2\lvert k_h\lvert(\lvert k_h\rvert^2 + \lvert k_z\rvert^2 )^{3/2} } e^{ik_h\cdot x} \cos (k_z z) + \mathcal O(\eta^3)  \\
			\pm i \eta \cdot \dfrac{\lvert k_h\lvert(2\lvert k_h\rvert^2 + \lvert k_z\rvert^2 )}{2 k_z (\lvert k_h\rvert^2 + \lvert k_z\rvert^2 )^{3/2}} e^{ik_h\cdot x} \sin (k_z z) + \mathcal O(\eta^3) 
		\end{array} \right).
	\end{aligned}
\end{equation}

{\par\noindent\bf Mean flows:} It is obvious that $ U_{j,(k_h,k_z)}^\mrm{mf} = \left( \begin{array}{c}
	\varepsilon p_{\mrm{sp},j,(k_h,k_z)}^\mrm{mf} \\ U_{\mrm{sp},j,(k_h,k_z)}^\mrm{mf}
\end{array} \right) $, $ j = 1,2,3 $. 

In summary, we have proved the following:
\begin{corollary}\label{cor:cmprs-gw}
%	With the same scale as in Propositions \ref{prop:wave-bases} and \ref{prop:wave-bases-sp}, we have:
For $ (k_h,k_z) $ satisfying the corresponding restrictions, one has
	\begin{gather}
		\begin{gathered}
		\mathfrak E_{0,\varepsilon} \equiv \mathfrak E_{\mrm{sp},0,\varepsilon} \quad \text{or equivalently}\\ \quad U_{\mrm{j},(k_h,k_z)}^\mrm{mf} \equiv \biggl(\begin{array}{c}
			\varepsilon p_{\mrm{sp},\mrm j, (k_h,k_z)}^\mrm{mf} \\ U^\mrm{mf}_{\mrm{sp},\mrm j,(k_h,k_z)}
		\end{array} \biggr), \quad \mrm j = 1,2,3,4, 
		\end{gathered}
		\label{sm-mf} \\
		0 <   \omega_{\mrm{sp},(k_h,k_z)}^\mrm{gw} - \omega_{(k_h,k_z)}^\mrm{gw} = \mathcal O(\eta^3), \label{prt-fq-gw} \\
		\biggl\lvert U_{\pm,(k_h,k_z)}^\mrm{gw} - \biggl( \begin{array}{c}
 		\varepsilon p_{\mrm{sp},(k_h,k_z)}^{\mrm{gw}} \\
 		U_{\mrm{sp},\pm,(k_h,k_z)}^\mrm{gw}
		 \end{array} \biggr) \biggr\lvert = \mathcal O(\eta), \label{prt-bs-gw} \\
		 0 < \omega^\mrm{aw}_{(k_h,k_z)} - \omega^\mrm{aw}_{\mrm a, (k_h,k_z)} = \mathcal O(\eta^2), \label{prt-fq-aw} \\
		 \lvert U_{\pm,(k_h,k_z)}^\mrm{aw} - U_{\mrm a, \pm,(k_h,k_z)}^\mrm{aw} \lvert = \mathcal O(\eta), \label{prt-bs-aw}
	\end{gather}
	uniformly in $ (k_h,k_z) $. 
	Here $ \eta = \varepsilon^{1-\nu} $. 
\end{corollary}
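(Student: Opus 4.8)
The plan is to read off each of the five assertions \eqref{sm-mf}--\eqref{prt-bs-aw} directly from the explicit eigendata already assembled in Propositions \ref{prop:wave-bases}, \ref{prop:wave-bases-sp} and Lemma \ref{lm:acoustic-wave-bases} together with the Taylor expansions \eqref{cmp-aw-fqcy}, \eqref{cmp-aw-flw-1}, \eqref{cmp-aw-flw-2}, \eqref{cmp-gw-fqcy}, \eqref{cmp-gw-flw}; the substance of the proof is not the expansions themselves but the verification that the constants are independent of $(k_h,k_z)$. The first claim \eqref{sm-mf} is immediate by term-by-term inspection: the $\widetilde q$--component of each basis vector $U^\mrm{mf}_{\mrm j,(k_h,k_z)}$ in \eqref{def:mn-flw} is exactly the $\varepsilon p_\mrm{sp}$--component $\varepsilon p^\mrm{mf}_{\mrm{sp},\mrm j,(k_h,k_z)}$ in \eqref{def:mn-flw-sp}, the velocity components agree verbatim, and both eigenvalues equal $0$, so $\mathfrak E_{0,\varepsilon}\equiv\mathfrak E_{\mrm{sp},0,\varepsilon}$. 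Everything else then reduces to two tasks: (a) the strict signs in \eqref{prt-fq-gw} and \eqref{prt-fq-aw}, and (b) the uniform $\mathcal O(\eta^j)$ orders in \eqref{prt-fq-gw}--\eqref{prt-bs-aw}.

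For (a) I would use the exact algebraic identity for the discriminant in \eqref{id:frr-sl-eg-pbm}, \eqref{def:aw-fqcy}. Writing $a=\lvert k_h\rvert^2$, $b=\lvert k_z\rvert^2$, $s=a+b$, one has $A=(s-\eta^2)^2+4\eta^2 b$. Since $(k_h,k_z)\neq((0,0),0)$ and the lattice spacing is $2\pi$ we have $s\geq 4\pi^2>\eta^2$ for $\eta$ small, and $b>0$ for internal waves and for the $k_z\neq0$ acoustic family; hence $\sqrt A>s-\eta^2$. Substituting into \eqref{id:frr-sl-eg-pbm} gives $(\omega^\mrm{gw}_{(k_h,k_z)})^2=\tfrac{2\eta^2 a}{s+\eta^2+\sqrt A}<\tfrac{\eta^2 a}{s}=(\omega^\mrm{gw}_{\mrm{sp},(k_h,k_z)})^2$, which is the strict inequality in \eqref{prt-fq-gw}; and $(\omega^\mrm{aw}_{(k_h,k_z)})^2=\tfrac{s+\eta^2+\sqrt A}{2}>s=(\omega^\mrm{aw}_{\mrm a,(k_h,k_z)})^2$, which is \eqref{prt-fq-aw} (for $k_z=0$ the identity collapses to $A=(a-\eta^2)^2$, so $\omega^\mrm{aw}_{(k_h,0)}=\lvert k_h\rvert$ exactly and the two acoustic frequencies coincide there, consistently with \eqref{cmp-aw-flw-2}).

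For (b) the starting point is $\sqrt A=(s-\eta^2)\bigl(1+\tfrac{4\eta^2 b}{(s-\eta^2)^2}\bigr)^{1/2}$. Because $0\leq\tfrac{4\eta^2 b}{(s-\eta^2)^2}\leq\tfrac{16\eta^2}{s}\leq\tfrac{4\eta^2}{\pi^2}$, the binomial series for the square root converges with remainder $\mathcal O(\eta^{2N})$ \emph{uniformly} in $(k_h,k_z)$ at every order $N$. Inserting this into \eqref{def:gw-fqcy} and \eqref{def:aw-fqcy} reproduces \eqref{cmp-gw-fqcy} and \eqref{cmp-aw-fqcy}, and inserting it into the eigenvector formulas \eqref{def:g-w}, \eqref{def:prb-aw} and subtracting \eqref{def:g-w-sp}, \eqref{def:ac-flw} reproduces \eqref{cmp-gw-flw}, \eqref{cmp-aw-flw-1}, \eqref{cmp-aw-flw-2} (the nominally singular $1/\eta$ and $1/\eta^2$ factors cancelling precisely as displayed there). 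Since $\lvert e^{ik_h\cdot x}\cos(k_z z)\rvert$ and $\lvert e^{ik_h\cdot x}\sin(k_z z)\rvert$ are bounded by $1$ pointwise on $\mathbb T^3$, the stated $\mathcal O(\eta^j)$ bounds follow once each scalar prefactor in those expansions is bounded independently of $(k_h,k_z)$.

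That last uniformity check is the step I expect to require the most care. Each prefactor appearing in \eqref{cmp-gw-fqcy}--\eqref{cmp-gw-flw} is a ratio built from $\lvert k_h\rvert$, $\lvert k_z\rvert$, $s$ and the $\omega$'s, and one must verify --- by tracking homogeneity degrees and using $\lvert k_h\rvert\geq2\pi$ (resp.\ $\lvert k_z\rvert\geq2\pi$) wherever $k_z$ (resp.\ $\lvert k_h\rvert$) sits in a denominator --- that every such ratio is homogeneous of non-positive degree in $(k_h,k_z)$ and hence globally bounded; for instance $\tfrac{\lvert k_h\rvert\lvert k_z\rvert^2}{(\lvert k_h\rvert^2+\lvert k_z\rvert^2)^{5/2}}\leq(2\pi)^{-2}$ and $\tfrac{\lvert k_h\rvert^2}{\lvert k_z\rvert(\lvert k_h\rvert^2+\lvert k_z\rvert^2)}\leq(2\pi)^{-1}$, which is also where the $k_z=0$ case of the internal-wave comparison is excluded automatically ($k_z$ enters the denominators of $H^\mrm{gw}$, $W^\mrm{gw}$). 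Granting this, \eqref{prt-fq-gw}--\eqref{prt-bs-aw} hold with constants independent of $(k_h,k_z)$, which is the content of the corollary.
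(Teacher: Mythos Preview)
Your proposal is correct and follows the same route as the paper: the corollary is stated there simply as a summary of the explicit expansions \eqref{cmp-aw-fqcy}--\eqref{cmp-gw-flw} and the mean-flow comparison, with the phrase ``In summary, we have proved the following.'' You supply the two ingredients the paper leaves implicit --- the strict signs via the discriminant identity $A=(s-\eta^2)^2+4\eta^2 b$, and the uniformity in $(k_h,k_z)$ via the homogeneity check on the rational prefactors --- so your argument is a more detailed version of the same proof rather than a different one.
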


\section{Fast-slow waves interactions: Soundproof approximation with ill-prepared initial data}\label{sec:fast-slow-nonlinear}

\subsection{Nonlinear equations}\label{subsec:f-s-eq}

\subsubsection*{The full compressible system}

With the understanding of the linear theory, we will discuss the nonlinear theory of fast-slow waves decompositions in system \eqref{eq:rf-EEq}. Notice that, under assumption \eqref{amtp:atsym-cfts}, \eqref{eq:rf-EEq} can be written as, with $ U = (\widetilde q, \widetilde{\mathcal H}, v,  w)^\top $, $ \mathcal L_\varepsilon $ as in \eqref{def:ptb-aw-opt},
\begin{equation}\label{eq:rf-EEq-dcp-01}
		\dt U + \dfrac{1}{\varepsilon}\mathcal L_\varepsilon U + \mathcal N(U) = \mathcal M(U) + \mathcal K_\varepsilon(U),
\end{equation}
where
\begin{align}
	\label{nonlinearity:full}
	\mathcal N(U) := & v \cdot \nablah U + w \dz U 
	 + \left( \begin{array}{c}
		 \varpi_0^{-1} \widetilde q(\dvh v + \dz w) \\
		- \widetilde G \cdot \widetilde{\mathcal H} w \\
		0 \\ 0
	\end{array} \right), \\
	\mathcal M(U):= & \left( \begin{array}{c}
		 G w + \varpi_0^{-1} \int_0^z G(z') \,dz' (\dvh v + \dz w) \\
		0 \\
		0 \\ 0
	\end{array} \right), \\
	\mathcal K_\varepsilon(U) := & \left( \begin{array}{c}
		\varepsilon^\mu \overline{\mathcal H}_0 w + \varepsilon^\mu \varpi_0^{-1} \int_0^z \overline{\mathcal H}_0(z') \,dz'  (\dvh v + \dz w) \\
		0 \\
		- (\varepsilon^\mu \widetilde G \overline{\mathcal H}_0 + \varepsilon^{\mu+\nu} \widetilde G \widetilde{\mathcal H} ) ( \dt v + v \cdot \nablah v + w \dz v) \\
		- (\varepsilon^\mu \widetilde G \overline{\mathcal H}_0 + \varepsilon^{\mu+\nu} \widetilde G \widetilde{\mathcal H} ) ( \dt w + v \cdot \nablah w + w \dz w)
	\end{array} \right).
\end{align}
%\begin{equation}
%	\begin{cases}
%		\dt \widetilde q + v \cdot \nablah \widetilde q + w \dz \widetilde q + \dfrac{1}{\varepsilon} (\dvh v + \dz w) = - \pi_0^{-1} \widetilde q ( \dvh v + \dz w) \\
%		\qquad + (G+ \varepsilon^\mu \overline{\mathcal H}_0 ) w + \pi_0^{-1}  \int_0^z ( G(z') + \varepsilon^{\mu} \overline{\mathcal H}_0(z') )\,dz' ( \dvh v + \dz w), \\
%		\dt \widetilde{\mathcal H} + v \cdot \nablah \widetilde{\mathcal H} + w \dz \widetilde{\mathcal H} - \dfrac{1}{\varepsilon^\nu} w = \dz \log G \cdot \widetilde{\mathcal H} w, \\
%		\dt v + v \cdot \nablah v + w \dz v + \dfrac{1}{\varepsilon} \nablah \widetilde q \\
%		\qquad = - (\varepsilon^\mu G^{-1} \overline{\mathcal H}_0 + \varepsilon^{\mu+\nu} G^{-1} \widetilde{\mathcal H} ) ( \dt v + v \cdot \nablah v + w \dz v),  \\
%		\dt w + v \cdot \nablah w + w \dz w + \dfrac{1}{\varepsilon} \dz \widetilde q + \dfrac{1}{\varepsilon^\nu} \widetilde{\mathcal H} \\
%		\qquad = - (\varepsilon^\mu G^{-1} \overline{\mathcal H}_0 + \varepsilon^{\mu+\nu} G^{-1} \widetilde{\mathcal H} ) ( \dt w + v \cdot \nablah w + w \dz w).
%	\end{cases}
%\end{equation}
With estimate \eqref{est:uni-total-general} and proper initial data, one can assume that $ \mathcal K_\varepsilon (U) = \mathcal O(\varepsilon^{\mu - \sigma}) $ in suitable Sobolev space ($ H^2 $ for instance). In particular, we choose $ \sigma = \mu/2 $, and thus $ \mathcal K_\varepsilon(U) $ will be considered as an error term. For this reason, we write %will focus on the wave decomposition of
\begin{equation}\label{eq:rf-EEq-dcp-02}
	\dt U + \dfrac{1}{\varepsilon}\mathcal L_\varepsilon U + \mathcal N(U) = \mathcal M(U)  + \mathcal O(\varepsilon^{\mu - \sigma}).
\end{equation}
%instead of \eqref{eq:rf-EEq-dcp-01}. 
%The decomposition of \eqref{eq:rf-EEq-dcp-01} is left to interesting readers. 

\begin{enumerate}[label = {\bf H\arabic{enumi})}, ref = {\bf H\arabic{enumi})}]
\setcounter{enumi}{\value{hypothesis}}
\item \label{def:AssumptionH5} Furthermore, {to simplify the presentation}, we assume
\begin{equation}\label{asmpt:gravity}
	G = \widetilde G = \sin (2\pi z).
\end{equation}
\setcounter{hypothesis}{\value{enumi}}
\end{enumerate}
{We emphasise that with some modification, the following arguments work without assumption \ref{def:AssumptionH5}.}
We will adopt the notation \eqref{def:frr-epsn} for our solutions $ U $.

%, and denote by
%\begin{equation}\label{def:frr-cft}
%	\hat U_{(k_h,k_z)} := ( Q_{(k_h,k_z)}, H_{(k_h,k_z)}, V_{(k_h,k_z)}, W_{(k_h,k_z)} )^\top,
%\end{equation}
%for any $ (k_h,k_z) \in 2\pi \mathbb Z^2 \otimes ( 2\pi \mathbb N ) $. 
%
%We proceed, in the following, to decompose each term of \eqref{eq:rf-EEq-dcp-02} into the spaces of mean flows, internal waves, and perturbed acoustic waves, respectively, using the eigenvalue-eigenvector pairs obtained in Proposition \ref{prop:wave-bases}. 
%
%Denote by $$ \varsigma = \int \cos^2(k_z z) \,dz = \begin{cases}
%	1 & \text{if} \quad k_z = 0, \\
%	\dfrac{1}{2} & \text{if} \quad k_z \neq 0. 
%\end{cases} $$
%

Let
\begin{equation}\label{def:project_op_perturbed}
	\mathcal P^\mrm{mf}_\varepsilon, \qquad \mathcal P^\mrm{gw}_\varepsilon,  \quad \text{and} \quad \mathcal P^\mrm{aw}_\varepsilon,
\end{equation}
be the $ L^2 $-orthogonal projections to the spaces 
\begin{gather*}
	 \mathfrak E_\varepsilon^\mrm{mf} :=\mathfrak E_{0,\varepsilon}, \quad  \mathfrak E_\varepsilon^{\mrm{gw}}:= \oplus_{k_h \neq (0,0), k_z \neq 0} \mathfrak E_{\pm \omega_{k_h,k_z}^\mrm{gw},\varepsilon}, \\ \quad \text{and} \quad  \mathfrak E_\varepsilon^\mrm{aw}:= \oplus_{k_z\neq 0} \mathfrak E_{\pm \omega^\mrm{aw}_{(k_h,k_z)},\varepsilon} \oplus_{k_h \neq (0,0)} \mathfrak E_{\pm \omega^\mrm{aw}_{(k_h,0)},\varepsilon}, 
\end{gather*} 
respectively,  given in Proposition \ref{prop:wave-bases}. 
%Also, denote by
%\begin{equation}\label{def:project_op_perturbed}
%	\mathcal P_\varepsilon^\mrm{mfgw}:= \mathcal P^\mrm{mf}_\varepsilon + \mathcal P^\mrm{gw}_\varepsilon.
%\end{equation}

\subsubsection*{The soundproof system}

Similarly, denote by $ U_\mrm{sp} $ and $ \mathcal L_\mrm{sp} $ as in \eqref{def:vec-opt-sp}. Under assumption \ref{def:AssumptionH4} and {the simplifying but not critical assumption \ref{def:AssumptionH5}}, \eqref{eq:sndprf} can be written as,
\begin{gather}
\dvh v_\mrm{sp} +  \dz w_\mrm{sp} = 0, \\
\dt U_\mrm{sp} + \dfrac{1}{\varepsilon^\nu} \mathcal L_\mrm{sp}U_\mrm{sp} + \left( \begin{array}{c}
	0 \\ \nablah p_\mrm{sp} \\ \dz p_\mrm{sp}
\end{array}\right) + \mathcal N_\mrm{sp}(U_\mrm{sp}) = 0. \label{eq:speq-dcp-01} 	
\end{gather}
Here, thanks to assumption \ref{def:AssumptionH5},
\begin{equation}\label{nonlinearity:sp}
\mathcal N_\mrm{sp}(U_\mrm{sp}) :=  v_\mrm{sp} \cdot \nablah U_\mrm{sp} + w_\mrm{sp} \dz U_\mrm{sp} 
	 + \left( \begin{array}{c}
		- \sin(2\pi z) \cdot \widetilde{\mathcal H}_\mrm{sp} w_\mrm{sp} \\
		0 \\ 0
	\end{array} \right).
\end{equation}

Notice that equations \eqref{eq:rf-EEq-dcp-02} and \eqref{eq:speq-dcp-01} have different dimensions. In particular, \eqref{eq:speq-dcp-01} does not have an evolutionary equation of $ p_\mrm{sp} $, corresponding to the $ \widetilde q $-component of \eqref{eq:rf-EEq-dcp-02}. For this reason, in order to investigate the rigidity of the soundproof approximation, we denote the dimension reduction projection $ \mathcal P_\mrm{rd} $, defined as
\begin{equation}\label{def:dr-prjtn}
	\mathcal P_\mrm{rd}:  \left(\begin{array}{c}
\widetilde q \\ \widetilde{\mathcal H} \\ v \\ w	
\end{array}
\right) \mapsto \left(\begin{array}{c}
\widetilde{\mathcal H} \\ v \\ w	
\end{array}
\right).
\end{equation}
Notice that $ \mathcal P_\mrm{rd} $ is a bounded operator in any Sobolev space. Moreover, from \eqref{nonlinearity:full} and \eqref{nonlinearity:sp}, one can check that
\begin{equation}\label{nonlinearity:prjtn}
	\mathcal P_\mrm{rd}\mathcal N(U) = \mathcal N_\mrm{sp}(\mathcal P_\mrm{rd} U).
\end{equation}

\subsection{Soundproof approximation with ill-prepared initial data}\label{subsec:s-a-i-d}

\subsubsection*{Compactness theory of solutions to \eqref{eq:rf-EEq-dcp-02} and finite dimension truncation}

Denote by 
%\begin{equation}
$	\mathcal S_\varepsilon (t) $
%\end{equation}
the solving operator of $ \dt + \mathcal L_\varepsilon $, $ \mathcal L_\varepsilon $ as in \eqref{def:ptb-aw-opt}; that is
\begin{equation}\label{def:solv_opt}
	\dt \mathcal S_\varepsilon(t) U_0 + \mathcal L_\varepsilon \mathcal S_\varepsilon(t) U_0 = 0. 
\end{equation}
Then Proposition \ref{prop:wave-bases} implies that
$$
\begin{gathered}
\mathcal S_\varepsilon(t) U_\iota = e^{-i\omega_\iota t} U_\iota, \\
(\omega_\iota,U_\iota) \in \big\lbrace (0, U^{\mrm{mf}}_{\mrm j, (k_h,k_z)}), ~ \mrm j = 1,2,3,4,  ~ (\pm \omega^\mrm{gw}_{(k_h,k_z)}, U_{\pm, (k_h,k_z)}^\mrm{gw}),\\
 ~ (\pm \omega^\mrm{aw}_{(k_h,k_z)}, U_{\pm, (k_h,k_z)}^\mrm{aw}) \big\rbrace.
\end{gathered}
$$

Then $ \mathcal S_\varepsilon(t) $ is an isometry from $ H^s(\mathbb T^3) $ to $ H^s(\mathbb T^3) $, $ \forall s $. 
Let 
\begin{equation}\label{def:non-oscillation-variable} 
	V_\varepsilon(t):= \mathcal S_\varepsilon (-\dfrac{t}{\varepsilon})U(t), 
\end{equation} 
where $ U(t) $ is the solution to \eqref{eq:rf-EEq-dcp-02}. Then it follows from \eqref{eq:rf-EEq-dcp-02} and \eqref{def:solv_opt} that
\begin{equation}\label{eq:non-oscillation}
	\dt V_\varepsilon + \mathcal S_\varepsilon( - \dfrac{t}{\varepsilon}) \mathcal N(U) = \mathcal S_\varepsilon( - \dfrac{t}{\varepsilon}) \mathcal M(U) + \mathcal O(\varepsilon^{\mu - \sigma}).
\end{equation}
Owing to Proposition \ref{thm:uniform-est}, it is straightforward to verify that, with the same initial data for \eqref{eq:rf-EEq-dcp-02} as stated in the proposition, 
\begin{equation}\label{unest:non-oscillation}
	\sup_{0\leq t\leq T_\sigma } \bigl(\norm{\dt V_\varepsilon(t)}{\Hnorm{2}}^2 + \norm{V_\varepsilon(t)}{\Hnorm{3}}^2 \bigr) \lesssim \mathcal C \mathcal C_\mrm{in}, \qquad \sigma \in (0,\mu].
\end{equation}
%Notice that, $ \mathcal S_\varepsilon(-\dfrac{t}{\varepsilon}), \mathcal L_\varepsilon,   \mathcal P_\varepsilon^\mrm{mf},\mathcal P_\varepsilon^\mrm{gw}$, and $ \mathcal P_\varepsilon^\mrm{aw} $ commute with each other.
%With such notations, after applying $ \mathcal P_\varepsilon^\mrm{mfgw} $% and $ \mathcal P_\varepsilon^\mrm{aw} $ 
%to \eqref{eq:rf-EEq-dcp-02}, % respectively, 
%it follows that
%\begin{equation}\label{eq:soundproof-perturbed}
%	\dt \mathcal P_\varepsilon^\mrm{mfgw} U + \dfrac{1}{\varepsilon} \mathcal L_\varepsilon \mathcal P_\varepsilon^\mrm{mfgw} U + \mathcal P_\varepsilon^\mrm{mfgw} \mathcal N(U) = \mathcal P_\varepsilon^\mrm{mfgw} \mathcal M(U).
%\end{equation}
%With such notations, after applying $ \mathcal P_\varepsilon^\mrm{mfgw} $
%to \eqref{eq:non-oscillation}, 
%it follows that
%\begin{equation}\label{eq:soundproof-perturbed}
%	\dt \mathcal P_\varepsilon^\mrm{mfgw} V_\varepsilon + \mathcal S_\varepsilon(-\dfrac{t}{\varepsilon}) \mathcal P_\varepsilon^\mrm{mfgw} \mathcal N(U) = \mathcal S_\varepsilon(-\dfrac{t}{\varepsilon}) \mathcal P_\varepsilon^\mrm{mfgw} \mathcal M(U).
%\end{equation}
%
%Our goal in the rest of this subsection is to investigate the asymptotic structure of \eqref{eq:rf-EEq-dcp-02} as $ \varepsilon \rightarrow 0^+ $. 

With Proposition \ref{prop:wave-bases}, we can write $ V_\varepsilon $ as,
\begin{equation}\label{eps:V}
	V_\varepsilon = V^\mrm{mf}_\varepsilon + V^\mrm{gw}_\varepsilon + V^\mrm{aw}_\varepsilon,	
\end{equation}
with
\begin{align}
	&  \label{eps:V-mf} \begin{aligned}
		V^\mrm{mf}_\varepsilon := &  \sum_{k_h \neq (0,0)} \alpha_{1,(k_h,k_z),\varepsilon}^\mrm{mf}(t) U^\mrm{mf}_{1,(k_h,k_z)}  + \sum_{\mrm j = 3,4,~ k_z \in \mathbb Z} \alpha_{\mrm j,((0,0),k_z),\varepsilon}^\mrm{mf}(t)  U_{\mrm j,((0,0),k_z)}^\mrm{mf} \\
		& + \alpha_{2,((0,0),0),\varepsilon}^\mrm{mf}(t) U_{2,((0,0),0)}^\mrm{mf} \\
		& +  \sum_{k_z\neq 0} \alpha_{2,((0,0),k_z),\varepsilon}^\mrm{mf}(t) \varepsilon^{1-\nu} U_{2,((0,0),k_z)}^\mrm{mf},
	 \end{aligned}\\
	 &  \label{eps:V-gw}\begin{aligned}
	 	V^\mrm{gw}_\varepsilon := & \sum_{k_h\neq (0,0), k_z \neq 0} \alpha_{\pm, (k_h,k_z),\varepsilon}^\mrm{gw}(t) \varepsilon^{1-\nu} U_{\pm,(k_h,k_z)}^\mrm{gw},
	 \end{aligned}\\
	 & \label{eps:V-aw} \begin{aligned}
	 	V^\mrm{aw}_\varepsilon := & \sum_{(k_h,k_z) \neq ((0,0),0)} \alpha_{\pm,(k_h,k_z),\varepsilon}^\mrm{aw}(t) U_{\pm,(k_h,k_z)}^\mrm{aw},
	 \end{aligned}
\end{align}
where the factor $ \varepsilon^{1-\nu} $ plays the role of renormalization, such that for fixed $ k = (k_h,k_z) $, $ \varepsilon^{1-\nu} U_{2,((0,0),k_z)}^\mrm{mf}\big\lvert_{k_z\neq 0} $ and $ \varepsilon^{1-\nu} U_{\pm,(k_h,k_z)}^\mrm{gw}\big\lvert_{k_h\neq (0,0), k_z \neq 0} $ are $ \mathcal O(1) $. 
%In particular, $ \mathcal P_\varepsilon^\mrm{mfgw} V_\varepsilon = V^\mrm{mf}_\varepsilon + V^\mrm{gw}_\varepsilon $. 

Notice that the coefficients $ \alpha^\cdot_{\cdot,\cdot,\cdot}(t)$'s in \eqref{eps:V-mf}--\eqref{eps:V-aw} are equicontinuous thanks to \eqref{unest:non-oscillation}. Then, recalling \eqref{def:non-oscillation-variable}, one has
\begin{equation}\label{eps:U}
	U(t) = \mathcal S_\varepsilon (\dfrac{t}{\varepsilon}) V_\varepsilon (t) = U_\varepsilon^\mrm{mf}(t) + U_\varepsilon^\mrm{gw}(t) + U_\varepsilon^\mrm{aw}(t),
\end{equation}
with
\begin{align}
		&  \label{eps:U-mf} 
		U^\mrm{mf}_\varepsilon := V^\mrm{mf}_\varepsilon ,\\
	 &  \label{eps:U-gw}
	 	U^\mrm{gw}_\varepsilon := \sum_{k_h\neq (0,0), k_z \neq 0} e^{\mp i \frac{\omega^\mrm{gw}_{(k_h,k_z)}}{\varepsilon^{1-\nu}} \frac{t}{\varepsilon^{\nu}}}  \alpha_{\pm, (k_h,k_z),\varepsilon}^\mrm{gw}(t) \varepsilon^{1-\nu} U_{\pm,(k_h,k_z)}^\mrm{gw},
	 \\
	 & \label{eps:U-aw} 
	 	U^\mrm{aw}_\varepsilon := \sum_{(k_h,k_z) \neq ((0,0),0)} e^{\mp i \omega^\mrm{aw}_{(k_h,k_z)} \frac{t}{\varepsilon}} \alpha_{\pm,(k_h,k_z),\varepsilon}^\mrm{aw}(t) U_{\pm,(k_h,k_z)}^\mrm{aw}.
\end{align}
Meanwhile, let
\begin{align}
%	& \label{eps:mf} \begin{aligned}
%			U^\mrm{mf} := &  \sum_{k_h \neq (0,0)} \alpha_{1,(k_h,k_z),\varepsilon}^\mrm{mf}(t) \left( \begin{array}{c} 
%		 	\varepsilon p^\mrm{mf}_{\mrm{sp},1,(k_h,k_z)} \\ U^\mrm{mf}_{\mrm{sp},1,(k_h,k_z)} 
%		 \end{array}\right) \\
% 		&  + \sum \alpha_{2,((0,0),k_z),\varepsilon}^\mrm{mf}(t)  \left( \begin{array}{c} 
% 			\varepsilon p^\mrm{mf}_{\mrm{sp},2,((0,0),k_z)} \\ U^\mrm{mf}_{\mrm{sp},2,((0,0),k_z)} 
% 		\end{array}\right) \\
%		& + \alpha_{3,((0,0),0),\varepsilon}^\mrm{mf}(t) \left( \begin{array}{c} 
% 	\varepsilon p^\mrm{mf}_{\mrm{sp},3,((0,0),0)} \\ U^\mrm{mf}_{\mrm{sp},3,((0,0),0)} 
% \end{array}\right) \\
%		& +  \sum_{k_z\neq 0} \alpha_{3,((0,0),k_z),\varepsilon}^\mrm{mf}(t) \varepsilon^{1-\nu} \left( \begin{array}{c} 
% 	\varepsilon p^\mrm{mf}_{\mrm{sp},3,((0,0),k_z)} \\ U^\mrm{mf}_{\mrm{sp},3,((0,0),k_z)} 
% \end{array}\right),
%	\end{aligned}	\\
	&  \label{eps:gw} 
	 	\mathfrak U^\mrm{gw} := \sum_{k_h\neq (0,0), k_z \neq 0} e^{\mp i \frac{\omega^\mrm{gw}_{\mrm{sp},(k_h,k_z)}}{\varepsilon^{1-\nu}} \frac{t}{\varepsilon^{\nu}}}  \alpha_{\pm, (k_h,k_z),\varepsilon}^\mrm{gw}(t) \varepsilon^{1-\nu} \left( \begin{array}{c} 
 	\varepsilon p^\mrm{gw}_{\mrm{sp},(k_h,k_z)} \\ U^\mrm{gw}_{\mrm{sp},\pm,(k_h,k_z)} 
 \end{array}\right),
	 \\
	 & \label{eps:aw} 
	 	\mathfrak U^\mrm{aw} := \sum_{(k_h,k_z) \neq ((0,0),0)} e^{\mp i \omega^\mrm{aw}_{(k_h,k_z)} \frac{t}{\varepsilon}} \alpha_{\pm,(k_h,k_z),\varepsilon}^\mrm{aw}(t) U_{\mrm a, \pm,(k_h,k_z)}^\mrm{aw}.
\end{align}
Notice that $ \mathfrak U^\mrm{gw} $ and $ \mathfrak U^\mrm{aw} $ are obtained by changing the basis corresponding to the perturbed internal and acoustic waves in $ U_\varepsilon^\mrm{gw} $ and $ U^\mrm{aw}_\varepsilon $ to those corresponding to the non-perturbed ones, respectively. We don't need similar representation for $ \mathfrak U^\mrm{mf} $ thanks to \eqref{sm-mf}. However, to simplify the representation later on, we denote 
\begin{equation}\label{eps:mf} 
	\mathfrak U^\mrm{mf} := U_\varepsilon^\mrm{mf}. 
\end{equation}

%Now we are ready to write down the decomposition of $ \mathcal N(U) $. In fact, 
On the other hand, notice that
$ \mathcal N (U) = \mathcal B(U,U) $, with bilinear form $ \mathcal B(\cdot,\cdot) $ defined by
\begin{equation}\label{def:bilinear-form}
%	\begin{aligned}
		\mathcal B(U_1,U_2): = v_1 \cdot \nablah U_2 + w_1 \dz U_2 
	 + \left( \begin{array}{c}
		 \varpi_0^{-1} \widetilde q_1 (\dvh v_2 + \dz w_2) \\
		- \widetilde G \cdot \widetilde{\mathcal H}_1 w_2 \\
		0 \\ 0
	\end{array} \right),
%	\end{aligned}
\end{equation}
where
%\begin{equation*}
$
	U_j = (\widetilde q_j, \widetilde{\mathcal H}_j, v_j,  w_j)^\top, \, j = 1,2. 
$
%\end{equation*}
Then one can write
\begin{align*}
	& \mathcal N(U) = \mathcal N (U^\mrm{mf}_\varepsilon + U_\varepsilon^\mrm{gw}) \\
	& ~~~~  + \mathcal B ( U^\mrm{mf}_\varepsilon + U_\varepsilon^\mrm{gw}, U_\varepsilon^\mrm{aw}) + \mathcal B( U_\varepsilon^\mrm{aw}, U^\mrm{mf}_\varepsilon + U_\varepsilon^\mrm{gw}) \\
	& ~~~~ + \mathcal N ( U^\mrm{aw}_\varepsilon).
\end{align*}

In addition, let $ T_k $, $ k \in \mathbb N^+ $, be a finite dimensional truncation defined as
\begin{equation}\label{def:k-truncation}
	T_k U := \sum_{ \lvert k_h\lvert\leq k , \lvert k_z\lvert \leq k} \left(
	\begin{array}{c}
		Q_{({k}_h,k_z)}e^{ i {k}_h \cdot x} \cos( k_z z) \\  H_{({k}_h,k_z)} e^{ i {k}_h \cdot x} \sin( k_z z) \\ 
		V_{({k}_h,k_z)} e^{ i {k}_h \cdot x} \cos( k_z z) \\   W_{({k}_h,k_z)} e^{ i {k}_h \cdot x} \sin( k_z z)
	\end{array}
	\right)
\end{equation}
for $ U $ in \eqref{def:frr-epsn}. For the sake of clear representation, we assume that $ T_k $ applies to $ U_\mrm{sp} $ in a similar method. 

Then thanks to the uniform estimates obtained in Proposition \ref{thm:uniform-est}, $ \norm{U(t)-T_kU(t)}{\Hnorm{1}} \rightarrow 0 $, as $ k \rightarrow \infty $, and the convergence is uniform-in-$ \varepsilon $. Therefore, to analyze $ \mathcal N(U) $, it suffices to analyze $ \mathcal N(T_kU ) $. 

Let us begin with $ \mathcal N(T_k U_\varepsilon^\mrm{aw}) $. In particular, thanks to \eqref{def:ac-flw} and \eqref{prt-bs-aw}, by denoting $ T_k \mathfrak U^\mrm{aw} = ( Q_k, 0 , \nablah P_k, \partial_z P_k)^\top $, one has
\begin{equation}\label{eps:acoustic-resonance}
\begin{aligned}
	 & \mathcal N ( T_k U_\varepsilon^\mrm{aw} )  =  \mathcal N ( T_k \mathfrak U^\mrm{aw} ) + \mathcal O( \varepsilon^{1-\nu} )\\
	 &  = \mathcal N ( \left(\begin{array}{c}
	 	Q_k \\ 0 \\ \nablah P_k \\ \partial_z P_k
	 \end{array} \right) ) + \mathcal O( \varepsilon^{1-\nu} ) = \left(\begin{array}{c}
	 	 (\nabla P_k\cdot \nabla) Q_k + \varpi_0^{-1} Q_k \Delta P_k \\ 0 \\ \dfrac{1}{2} \nablah \lvert\nabla P_k\rvert^2 \\ \dfrac{1}{2} \partial_z \lvert\nabla P_k\rvert^2
	 \end{array} \right) + \mathcal O(\varepsilon^{1-\nu}).
\end{aligned}
\end{equation}
Moreover, 
\begin{equation}\label{eps:a-r}
\begin{aligned}
	 & \mathcal N ( T_k U_\varepsilon^\mrm{aw} ) =  \sum_{\mathclap{\lvert k_h\lvert, \lvert k_z\lvert, \lvert k_h'\lvert, \lvert k_z'\lvert \leq k}} e^{\mp i (\omega^\mrm{aw}_{\mrm a, (k_h,k_z)} + \omega^\mrm{aw}_{\mrm a, (k_h',k_z')}) \frac{t}{\varepsilon}} \\
	 & \qquad\qquad \times e^{\mp i (\omega^\mrm{aw}_{(k_h,k_z)} - \omega^\mrm{aw}_{\mrm a, (k_h,k_z)} + \omega^\mrm{aw}_{(k_h',k_z')} - \omega^\mrm{aw}_{\mrm a, (k_h',k_z')} ) \frac{t}{\varepsilon}} 
	 \alpha_{\pm,(k_h,k_z),\varepsilon}^\mrm{aw}\alpha_{\pm,(k_h',k_z'),\varepsilon}^\mrm{aw} \\
	 & \qquad\qquad \times \mathcal B ( U_{\pm,(k_h,k_z)}^\mrm{aw},  U_{\pm,(k_h',k_z')}^\mrm{aw}) \\
	 & \qquad + \sum_{\mathclap{\lvert k_h\lvert, \lvert k_z\lvert, \lvert k_h'\lvert, \lvert k_z'\lvert \leq k}} e^{\mp i (\omega^\mrm{aw}_{\mrm a, (k_h,k_z)} - \omega^\mrm{aw}_{\mrm a, (k_h',k_z')}) \frac{t}{\varepsilon}} \\
	 & \qquad\qquad \times e^{\mp i ((\omega^\mrm{aw}_{(k_h,k_z)} - \omega^\mrm{aw}_{\mrm a, (k_h,k_z)}) - ( \omega^\mrm{aw}_{(k_h',k_z')} - \omega^\mrm{aw}_{\mrm a, (k_h',k_z')}) ) \frac{t}{\varepsilon}} 
	 \alpha_{\pm,(k_h,k_z),\varepsilon}^\mrm{aw}\alpha_{\mp,(k_h',k_z'),\varepsilon}^\mrm{aw} \\
	 & \qquad\qquad \times \mathcal B ( U_{\pm,(k_h,k_z)}^\mrm{aw},  U_{\mp,(k_h',k_z')}^\mrm{aw}).
\end{aligned}
\end{equation}
Therefore, the possible resonances are determined by $ (k_h,k_z), (k_h',k_z') $ such that $ \omega^\mrm{aw}_{\mrm a, (k_h,k_z)} - \omega^\mrm{aw}_{\mrm a, (k_h',k_z')} = 0 $, i.e., $ \lvert k_h\rvert^2 + \lvert k_z\rvert^2 = \lvert k_h'\rvert^2 + \lvert k_z'\rvert^2 $, and 

\begin{gather}
		(\omega^\mrm{aw}_{(k_h,k_z)} - \omega^\mrm{aw}_{\mrm a, (k_h,k_z)}) - ( \omega^\mrm{aw}_{(k_h',k_z')} - \omega^\mrm{aw}_{\mrm a, (k_h',k_z')}) = \begin{cases}
			\mathcal O(\varepsilon^{4-4\nu}) & \text{if} ~ k_z = k_z', \\
			\mathcal O(\varepsilon^{2-2\nu}) & \text{if} ~ k_z \neq k_z', 
		\end{cases}\label{eps:a-r-fq}
%		\\
%		\mathcal B( U_{\pm,(k_h,k_z)}^\mrm{aw},  U_{\mp,(k_h',k_z')}^\mrm{aw}) = \mathcal B ( U_{\mrm a, \pm,(k_h,k_z)}^\mrm{aw},  U_{\mrm a, \mp,(k_h',k_z')}^\mrm{aw}) + \mathcal O(\varepsilon^{1-\nu}),
\end{gather}
thanks to \eqref{cmp-aw-fqcy}. We remark that, since $ \nu < 1/2 $, \eqref{eps:a-r-fq} implies that there will be resonances in the second term of \eqref{eps:a-r}. However, according to \eqref{eps:acoustic-resonance}, these resonances will form a gradient in the momentum equations, and therefore will converge to the Lagrangian multiplier $ \nabla p_\mrm{sp} $ in the soundproof model. In fact, as we will see later, these resonances will not affect the dynamic of the soundproof waves. However, the same cannot be said about the $ \widetilde q $ component, which does not exist in the soundproof model. We further remark this in the end of this paper.
%\todo{the possible resonances appeared in the acoustic modes}

On the other hand, thanks to \eqref{sm-mf}, \eqref{prt-bs-gw}, and \eqref{prt-bs-aw}, one has
\begin{equation}\label{eps:nonlinearity}
	\begin{gathered}
		\mathcal N(T_k U_\varepsilon^\mrm{mf} + T_k U_\varepsilon^\mrm{gw}) = \mathcal N(T_k \mathfrak U^\mrm{mf} + T_k \mathfrak U^\mrm{gw}) + \mathcal O(\varepsilon^{2-3\nu}) + \mathcal O(\varepsilon^{2-2\nu}), \\
		\mathcal B ( T_k U^\mrm{mf}_\varepsilon + T_k U_\varepsilon^\mrm{gw}, T_k U_\varepsilon^\mrm{aw}) + \mathcal B( T_k U_\varepsilon^\mrm{aw}, T_k U^\mrm{mf}_\varepsilon + T_k U_\varepsilon^\mrm{gw}) \\
		= \mathcal B ( T_k \mathfrak U^\mrm{mf} + T_k \mathfrak U^\mrm{gw}, T_k \mathfrak U^\mrm{aw}) + \mathcal B( T_k \mathfrak U^\mrm{aw}, T_k \mathfrak U^\mrm{mf} + T_k  \mathfrak U^\mrm{gw}) + \mathcal O(\varepsilon^{1-\nu}). 
	\end{gathered}
\end{equation}
Moreover, 
\begin{equation}\label{eps:a-g-r}
	\begin{aligned}
		& \mathcal B (T_k U_\varepsilon^\mrm{gw}, T_k U_\varepsilon^\mrm{aw}) = \sum_{\mathclap{\substack{k_h \neq (0,0), k_z \neq 0, (k_h',k_z')\neq ((0,0),0),  \\ \lvert k_h\lvert,\lvert k_z\lvert,\lvert k_h'\lvert,\lvert k_z'\lvert \leq k}}} e^{\mp i (\frac{\omega^\mrm{gw}_{(k_h,k_z)}}{\varepsilon^{1-\nu}} \frac{1}{\varepsilon^{\nu}} + \omega^\mrm{aw}_{(k_h',k_z')}\frac{1}{\varepsilon} ) t} \\
		& \qquad\qquad \times \mathcal B( U^\mrm{gw}_{\pm,(k_h,k_z)}, U^\mrm{aw}_{\pm, (k_h',k_z')}) \\
		& \qquad + \sum_{\mathclap{\substack{k_h \neq (0,0), k_z \neq 0, (k_h',k_z')\neq ((0,0),0),  \\ \lvert k_h\lvert,\lvert k_z\lvert,\lvert k_h'\lvert,\lvert k_z'\lvert \leq k}}} e^{\mp i (\frac{\omega^\mrm{gw}_{(k_h,k_z)}}{\varepsilon^{1-\nu}} \frac{1}{\varepsilon^{\nu}} - \omega^\mrm{aw}_{(k_h',k_z')}\frac{1}{\varepsilon} ) t} \\
		& \qquad \qquad  \times \mathcal B ( U^\mrm{gw}_{\pm,(k_h,k_z)}, U^\mrm{aw}_{\mp, (k_h',k_z')}).
	\end{aligned}
\end{equation}
Notice that $ \frac{\omega^\mrm{gw}_{(k_h,k_z)}}{\varepsilon^{1-\nu}} \frac{1}{\varepsilon^{\nu}} - \omega^\mrm{aw}_{(k_h',k_z')}\frac{1}{\varepsilon} =  \mathcal O(\frac{1}{\varepsilon}) $, which implies that $ \mathcal B (T_k U_\varepsilon^\mrm{gw}, T_k U_\varepsilon^\mrm{aw}) $ oscillates in time with a rate of $ \mathcal O(\frac{1}{\varepsilon}) $, and thus weakly converges to zero as $ \varepsilon \rightarrow 0^+ $. 
Similar properties apply to $ \mathcal B ( T_k U^\mrm{mf}_\varepsilon, T_k U_\varepsilon^\mrm{aw}) + \mathcal B( T_k U_\varepsilon^\mrm{aw}, T_k U^\mrm{mf}_\varepsilon + T_k U_\varepsilon^\mrm{gw}) $.

\subsubsection*{Compactness theory of solutions to \eqref{eq:speq-dcp-01} and finite dimension truncation}

We refer to the property of $ U_\mrm{sp} $ such that $ \dvh v_\mrm{sp} + \dz w_\mrm{sp} = 0 $ as the soundproof property. Also, let $ \mathcal P_\sigma $ be the orthogonal projection of vector fields into the space with the soundproof property. 

Denote by 
%\begin{equation}
$	\mathcal S_\mrm{sp}(t) $
%\end{equation}
the solving operator of $$ \dt +  \mathcal L_\mrm{sp} + \left( \begin{array}{c} 0 \\ \nabla_h p \\ \dz p \end{array} \right) $$ 
in the space with the soundproof property,
$ \mathcal L_\mrm{sp} $ as in \eqref{def:vec-opt-sp}; that is
\begin{equation}\label{def:solv_sp}
	\dt \mathcal S_\mrm{sp}(t) U_{\mrm{sp},0} +  \mathcal L_\mrm{sp} \mathcal S_\mrm{sp}(t) U_{\mrm{sp},0} + \left( \begin{array}{c} 0 \\ \nabla_h p \\ \dz p \end{array} \right) = 0
\end{equation}
for some $ p $ (as the Lagrangian multiplier, which might be different from lines to lines, hereafter) 
and $ \dvh (\mathcal S_\mrm{sp}(t) U_{\mrm{sp},0})_{v_\mrm{sp}} + \dz (\mathcal S_\mrm{sp}(t) U_{\mrm{sp},0})_{w_\mrm{sp}}  $. Here $ (\cdot)_{v_\mrm{sp}} $ and $ (\cdot)_{w_\mrm{sp}} $ represent the $ v_\mrm{sp} $ and $ w_\mrm{sp} $ component, respectively. 
Then Proposition \ref{prop:wave-bases-sp} implies that
$$
\begin{gathered}
\mathcal S_\mrm{sp}(t) U_{\mrm{sp},\iota} = e^{-i\omega_{\mrm{sp},\iota} t/\eta } U_{\mrm{sp},\iota}, \\
(\omega_\iota,U_\iota) \in \big\lbrace (0, U^{\mrm{mf}}_{\mrm{sp}, \mrm j, (k_h,k_z)}), ~ \mrm j = 1,2,3,4,  ~ (\pm \omega^\mrm{gw}_{\mrm{sp},(k_h,k_z)}, U_{\mrm{sp},\pm, (k_h,k_z)}^\mrm{gw}) \big\rbrace.
\end{gathered}
$$
We remind readers that our choice of scale in Proposition \ref{prop:wave-bases-sp} implies that $ \omega^\mrm{gw}_{\mrm{sp},(k_h,k_z)} / \eta = \mathcal O(1) $.

Then, it is easy to verify that $ \mathcal S_\mrm{sp}(t) $ is an isometry from $ H^s_\sigma $ to $ H^s_\sigma $, $  \forall s $.
Here $ H^s_\sigma $ represents the $ H^s $ space with the soundproof property. Let
\begin{equation}
	V_\mrm{sp}(t) := \mathcal S_\mrm{sp}(-\dfrac{t}{\varepsilon^\nu})U_\mrm{sp}(t),
\end{equation}
where $ U_\mrm{sp}(t) $ is the solution to \eqref{eq:speq-dcp-01}. Then it follows from \eqref{eq:speq-dcp-01} and \eqref{def:solv_sp} that
\begin{equation}\label{eq:non-oscillation-sp}
	\dt V_\mrm{sp}(t) + \mathcal S_\mrm{sp}(-\dfrac{t}{\varepsilon^\nu})\mathcal P_\sigma \mathcal N_\mrm{sp}(U_\mrm{sp}) = 0.
\end{equation}
Thanks to the estimate \eqref{est:uni-total-sp}, it is straightforward to verify that, with the same initial data as in Theorem \ref{thm:sp-approximation} for \eqref{eq:speq-dcp-01}, one has
\begin{equation}\label{unest:non-oscillation-sp}
	\sup_{0\leq t \leq T_\mrm{app}} \bigl( \norm{\dt V_\mrm{sp}(t)}{H^2(\mathbb T^3)}^2 + \norm{V_\mrm{sp}(t)}{H^3(\mathbb T^3)}^2 \bigr) \leq \mathcal C_\mrm{sp,in},
\end{equation}
for some $ \mathcal C_\mrm{sp,in}\in (0,\infty) $ depending on the initial data.

Thanks to Proposition \ref{prop:wave-bases-sp}, we can write $ V_\mrm{sp} $ as,
\begin{equation}\label{eps:v-sp}
	V_\mrm{sp} = V_\mrm{sp}^\mrm{mf} + V_\mrm{sp}^\mrm{gw},
\end{equation}
with
\begin{align}
& \label{eps:V-mf-sp} \begin{aligned}
		V_\mrm{sp}^\mrm{mf}	:= & \sum_{k_h \neq (0,0)} \alpha_{1,(k_h,k_z),\mrm{sp}}^\mrm{mf}(t) U^\mrm{mf}_{\mrm{sp},1,(k_h,k_z)}  + \sum_{\mrm j = 3,4,~ k_z \in \mathbb Z} \alpha_{\mrm j,((0,0),k_z),\mrm{sp}}^\mrm{mf}(t)  U_{\mrm{sp},\mrm j,((0,0),k_z)}^\mrm{mf} \\
		& + \alpha_{2,((0,0),0),\mrm{sp}}^\mrm{mf}(t) \underbrace{U_{\mrm{sp},2,((0,0),0)}^\mrm{mf}}_{=0} \\
		& +  \sum_{k_z\neq 0} \alpha_{2,((0,0),k_z),\mrm{sp}}^\mrm{mf}(t) \varepsilon^{1-\nu} U_{\mrm{sp},2,((0,0),k_z)}^\mrm{mf},
	\end{aligned}\\
& \label{eps:V-gw-sp} \begin{aligned}
	V_\mrm{sp}^\mrm{gw} := & \sum_{k_h\neq (0,0), k_z \neq 0} \alpha_{\pm, (k_h,k_z),\mrm{sp}}^\mrm{gw}(t) \varepsilon^{1-\nu} U_{\mrm{sp},\pm,(k_h,k_z)}^\mrm{gw},
\end{aligned}
\end{align}
Thanks to \eqref{unest:non-oscillation-sp}, the coefficients $ \alpha^\cdot_{\cdot,\cdot,\cdot}(t)$'s in \eqref{eps:V-mf-sp}--\eqref{eps:V-gw-sp} are equicontinuous. 

Then, one has
\begin{equation}\label{eps:U-sp}
	U_\mrm{sp}(t) = \mathcal S_\mrm{sp}(\dfrac{t}{\varepsilon^\nu}) V_\mrm{sp}(t) = U_\mrm{sp}^\mrm{mf}(t) + U_\mrm{sp}^\mrm{gw}(t),
\end{equation}
with
\begin{align}
	&  \label{eps:U-mf-sp} 
		U^\mrm{mf}_\mrm{sp} := V^\mrm{mf}_\mrm{sp} ,\\
	 &  \label{eps:U-gw-sp}
	 	U^\mrm{gw}_\mrm{sp} := \sum_{k_h\neq (0,0), k_z \neq 0} e^{\mp i \frac{\omega^\mrm{gw}_{\mrm{sp},(k_h,k_z)}}{\varepsilon^{1-\nu}} \frac{t}{\varepsilon^{\nu}}}  \alpha_{\pm, (k_h,k_z),\mrm{sp}}^\mrm{gw}(t) \varepsilon^{1-\nu} U_{\mrm{sp},\pm,(k_h,k_z)}^\mrm{gw}.
\end{align}

On the other hand, similarly as before, $ \mathcal N_\mrm{sp}(U_\mrm{sp}) = \mathcal B_\mrm{sp}(U_\mrm{sp},U_\mrm{sp}) $, with the bilinear form $ \mathcal B_\mrm{sp}(\cdot,\cdot) $ defined by
%\todo{may or maynot need this}
\begin{equation}\label{def:bilinear-form-sp}
%	\begin{aligned}
		\mathcal B_\mrm{sp}(U_{\mrm{sp},1},U_{\mrm{sp},2}): = v_{\mrm{sp},1} \cdot \nablah U_{\mrm{sp},2} + w_{\mrm{sp},1} \dz U_{\mrm{sp},2} 
	 + \left( \begin{array}{c}
		- \sin (2\pi z) \cdot \widetilde{\mathcal H}_{\mrm{sp},1} w_{\mrm{sp},2} \\
		0 \\ 0
	\end{array} \right).
%	\end{aligned}
\end{equation}

Similar to \eqref{nonlinearity:prjtn}, one has, for $ U_j $ as in \eqref{def:bilinear-form},
\begin{equation}\label{nonlinearity:prjtn-2}
\mathcal P_\mrm{rd}\mathcal B(U_1,U_2) = \mathcal B_\mrm{sp} (\mathcal P_\mrm{rd}U_1, \mathcal P_\mrm{rd} U_2).
\end{equation}

\subsubsection*{Estimate of $ \mathcal P_\mrm{rd}(U_\varepsilon^\mrm{mf} + U_\varepsilon^\mrm{gw} )  - U_\mrm{sp} $}

Let $ K \in \mathbb N^+ $ be a fixed positive integer. Then thanks to the uniform estimates obtained in Proposition \ref{thm:uniform-est} and \eqref{est:uni-total-sp}, as mentioned before, \eqref{eq:rf-EEq-dcp-02} and \eqref{eq:speq-dcp-01} can be written as
\begin{gather}
\label{eq:full-101}
	\dt U + \dfrac{1}{\varepsilon} \mathcal L_\varepsilon U + \mathcal N(T_K U) = \mathcal M(T_KU) + \mathcal O(\varepsilon^{\mu - \sigma}) + Err \qquad \text{and} \\
\label{eq:sp-101}
	\dt U_\mrm{sp} + \dfrac{1}{\varepsilon^\nu} \mathcal L_\mrm{sp} U_\mrm{sp} + \left( \begin{array}{c} 0 \\ \nablah p_\mrm{sp} \\ \dz p_\mrm{sp}	
 \end{array}
\right) + \mathcal N_\mrm{sp}(T_KU_\mrm{sp}) = Err,
\end{gather}
respectively, where $ Err $ represents the truncation error, satisfying 
\begin{equation}\label{err:truncation}
	 \norm{Err}{H^1} \rightarrow 0 \qquad \text{uniformly-in-$\varepsilon $ \quad as} \quad  K \rightarrow \infty. 	
 \end{equation}

Recalling \eqref{eps:V-mf}, \eqref{eps:U}, \eqref{eps:U-mf}, and \eqref{eps:U-gw}, one has
\begin{align}
& \label{cal:ev-mf} \begin{aligned}
	& \dt U^\mrm{mf}_\varepsilon + \dfrac{1}{\varepsilon}\mathcal L_\varepsilon U_\varepsilon^\mrm{mf} = \sum_{k_h \neq (0,0)} \dt \alpha_{1,(k_h,k_z),\varepsilon}^\mrm{mf}(t) \left(\begin{array}{c} \varepsilon p^\mrm{mf}_{\mrm{sp},1,(k_h,k_z)} \\ U^\mrm{mf}_{\mrm{sp},1,(k_h,k_z)}\end{array} \right)  \\
		& \qquad + \sum_{\mrm j = 3,4,~ k_z \in \mathbb Z} \dt \alpha_{\mrm j,((0,0),k_z),\varepsilon}^\mrm{mf}(t)  \left(\begin{array}{c} \varepsilon p^\mrm{mf}_{\mrm{sp},\mrm j, ((0,0),k_z)} \\ U_{\mrm{sp}, \mrm j,((0,0),k_z)}^\mrm{mf} \end{array}\right) \\
		& \qquad + \dt \alpha_{2,((0,0),0),\varepsilon}^\mrm{mf}(t) \left( \begin{array}{c} \varepsilon p_{\mrm{sp},2((0,0),0)}^\mrm{mf} \\ U_{\mrm{sp}, 2,((0,0),0)}^\mrm{mf} \end{array}\right) \\
		& \qquad +  \sum_{k_z\neq 0} \dt \alpha_{2,((0,0),k_z),\varepsilon}^\mrm{mf}(t) \varepsilon^{1-\nu} \left( \begin{array}{c} \varepsilon p^\mrm{mf}_{\mrm{sp},2,((0,0),k_z)}\\ U_{\mrm{sp},2,((0,0),k_z)}^\mrm{mf}\end{array}\right),
\end{aligned}\\
& \label{cal:ev-gw}\begin{aligned}
	& \dt U_\varepsilon^\mrm{gw} + \dfrac{1}{\varepsilon}\mathcal L_\varepsilon U_\varepsilon^\mrm{gw} = \sum_{\mathclap{k_h\neq (0,0), k_z \neq 0}} e^{\mp i \frac{\omega^\mrm{gw}_{(k_h,k_z)}}{\varepsilon^{1-\nu}} \frac{t}{\varepsilon^{\nu}}}  \dt \alpha_{\pm, (k_h,k_z),\varepsilon}^\mrm{gw}(t) \varepsilon^{1-\nu} U_{\pm,(k_h,k_z)}^\mrm{gw}\\
	& = \sum_{\mathclap{k_h\neq (0,0), k_z \neq 0}} \biggl\lbrack e^{\mp i \frac{\omega^\mrm{gw}_{\mrm{sp},(k_h,k_z)}}{\varepsilon^{1-\nu}} \frac{t}{\varepsilon^{\nu}}} + \mathcal O(\varepsilon^{2-3\nu}) \biggr\rbrack  \dt \alpha_{\pm, (k_h,k_z),\varepsilon}^\mrm{gw}(t) \\
	& \qquad\qquad \times \varepsilon^{1-\nu} \biggl\lbrack \left(\begin{array}{c} 
 	\varepsilon p_{\mrm{sp}, (k_h,k_z)}^\mrm{gw} \\ U_{\mrm{sp}, \pm,(k_h,k_z)}^\mrm{gw}
 \end{array} \right)  + \mathcal O(\varepsilon^{1-\nu})\biggr\rbrack 
 	,
\end{aligned}\\
& \label{cal:ev-aw}\begin{aligned}
	& \dt U_\varepsilon^\mrm{aw} + \dfrac{1}{\varepsilon}\mathcal L_\varepsilon U_\varepsilon^\mrm{aw} = \sum_{\mathclap{(k_h,k_z) \neq ((0,0),0)}} e^{\mp i \omega^\mrm{aw}_{(k_h,k_z)} \frac{t}{\varepsilon}} \dt \alpha_{\pm,(k_h,k_z),\varepsilon}^\mrm{aw}(t) U_{\pm,(k_h,k_z)}^\mrm{aw}\\
	& = \sum_{\mathclap{(k_h,k_z) \neq ((0,0),0)}} e^{\mp i \omega^\mrm{aw}_{(k_h,k_z)} \frac{t}{\varepsilon}} \dt \alpha_{\pm,(k_h,k_z),\varepsilon}^\mrm{aw}(t) \biggl\lbrack U_{\mrm a,\pm,(k_h,k_z)}^\mrm{aw} + \mathcal O(\varepsilon^{1-\nu})\biggr\rbrack,
\end{aligned}
\end{align}
thanks to \eqref{sm-mf}, \eqref{prt-fq-gw}, and \eqref{prt-bs-gw}. 

%Similarly, recalling \eqref{eps:V-mf-sp}, \eqref{eps:U-sp}, \eqref{eps:U-mf-sp}, and \eqref{eps:U-gw-sp}, one has
%\begin{align}
%& \label{cal:ev-mf-sp}
%\begin{aligned}
%	& \dt U^\mrm{mf}_\mrm{sp} + \dfrac{1}{\varepsilon^\nu}\mathcal L_\mrm{sp} U^\mrm{mf}_\mrm{sp} =\sum_{k_h \neq (0,0)} \dt \alpha_{1,(k_h,k_z),\mrm{sp}}^\mrm{mf}(t) U^\mrm{mf}_{\mrm{sp},1,(k_h,k_z)} \\
%	&\qquad + \sum_{\mrm j = 3,4,~ k_z \in \mathbb Z} \dt \alpha_{\mrm j,((0,0),k_z),\mrm{sp}}^\mrm{mf}(t)  U_{\mrm{sp},\mrm j,((0,0),k_z)}^\mrm{mf} \\
%%		&\qquad + \alpha_{2,((0,0),0),\mrm{sp}}^\mrm{mf}(t) \underbrace{U_{\mrm{sp},2,((0,0),0)}^\mrm{mf}}_{=0} \\
%		&\qquad +  \sum_{k_z\neq 0} \dt \alpha_{2,((0,0),k_z),\mrm{sp}}^\mrm{mf}(t) \varepsilon^{1-\nu} U_{\mrm{sp},2,((0,0),k_z)}^\mrm{mf},
%\end{aligned}\\
%& \label{cal:ev-gw-sp}
%\begin{aligned}
%	& \dt U^\mrm{gw}_\mrm{sp} + \dfrac{1}{\varepsilon^\nu} \mathcal L_\mrm{sp} U^\mrm{gw}_\mrm{sp} = \sum_{\mathclap{k_h\neq (0,0), k_z \neq 0}} e^{\mp i \frac{\omega^\mrm{gw}_{\mrm{sp},(k_h,k_z)}}{\varepsilon^{1-\nu}} \frac{t}{\varepsilon^{\nu}}}  \dt \alpha_{\pm, (k_h,k_z),\mrm{sp}}^\mrm{gw}(t) \varepsilon^{1-\nu} U_{\mrm{sp},\pm,(k_h,k_z)}^\mrm{gw}.
%\end{aligned}
%\end{align}

On the other hand, one can check from Proposition \ref{prop:wave-bases-sp}, $ \bigl\lbrace (U^\mrm{mf}_{\mrm{sp}, \mrm j, (k_h,k_z)})_{\mrm j = 1,2,3,4}, U^\mrm{gw}_{\mrm{sp},\pm, (k_h,k_z)} \bigr\rbrace $ forms a orthogonal basis and satisfies the soundproof property. 
Denote by the projection operators to $\spn\lbrace U^\mrm{mf}_{\mrm{sp},\mrm{j},(k_h,k_z)} \rbrace $ and $\spn\lbrace U^\mrm{gw}_{\mrm{sp},\pm,(k_h,k_z)} \rbrace $,  defined as 
\begin{equation}\label{def:project-sp}
\begin{aligned}
	\mathcal P^\mrm{mf}_{\mrm{sp}	,1,(k_h,k_z)}(\cdot) := & \proj_{\spn\lbrace U_{\mrm{sp},1,(k_h,k_z)}^\mrm{mf} \rbrace}(\cdot),  && k_h \neq (0,0), \\
	\mathcal P^\mrm{mf}_{\mrm{sp}	,2,((0,0),k_z)}(\cdot) := & \proj_{\spn\lbrace U_{\mrm{sp},2,((0,0),k_z)}^\mrm{mf} \rbrace}(\cdot),  && k_z \neq 0, \\
	\mathcal P^\mrm{mf}_{\mrm{sp}	,\mrm j,((0,0),k_z)}(\cdot) := & \proj_{\spn\lbrace U_{\mrm{sp},\mrm j,((0,0),k_z)}^\mrm{mf} \rbrace}(\cdot),  && \mrm j = 3,4,\\
	\mathcal P^\mrm{gw}_{\mrm{sp},\pm,(k_h,k_z)}(\cdot) := & \proj_{\spn\lbrace U^\mrm{gw}_{\mrm{sp},\pm,(k_h,k_z)}\rbrace}(\cdot), && k_h \neq (0,0), k_z \neq 0. 
\end{aligned}
\end{equation}

Now we are ready to filter out the acoustic waves in \eqref{eq:full-101} by projections. In the following, we always assume $ \lvert k_h\lvert, \lvert k_z\lvert \leq K $, and the restrictions on $ (k_h,k_z) $ as in \eqref{def:project-sp} apply.

First, thanks to \eqref{def:ac-flw}, \eqref{nonlinearity:prjtn}, \eqref{eps:acoustic-resonance}, \eqref{eps:nonlinearity}, \eqref{cal:ev-mf}, \eqref{cal:ev-gw}, and \eqref{cal:ev-aw}, one can calculate that, recalling $ 0 \leq 2 \nu \leq 1 $, 
\begin{align*}
 	& \mathcal P_{\mrm{sp}, \mrm j, (k_h,k_z) }^\mrm{mf} \mathcal P_\mrm{rd} \mathcal N(T_K U) =  \mathcal P_{\mrm{sp}, \mrm j, (k_h,k_z) }^\mrm{mf} \mathcal N_\mrm{sp}(T_K \mathcal P_\mrm{rd} (\mathfrak U^\mrm{mf}+\mathfrak U^\mrm{gw}))\\
% 	& \qquad + \underbrace{\mathcal P_{\mrm{sp}, \mrm j, (k_h,k_z) }^\mrm{mf} \mathcal P_\mrm{rd} \mathcal B(T_K U_\varepsilon^\mrm{mf} + T_k U_\varepsilon^\mrm{gw},T_K U_\varepsilon^\mrm{aw}) + \mathcal P_{\mrm{sp}, \mrm j, (k_h,k_z) }^\mrm{mf} \mathcal P_\mrm{rd} \mathcal B(T_K U_\varepsilon^\mrm{aw},T_K U_\varepsilon^\mrm{mf} + T_k U_\varepsilon^\mrm{gw})}_{\text{oscillate in time with rate $ \mathcal O(\frac{1}{\varepsilon}) $}} \\
 	& \qquad + \mathcal O(\varepsilon^{1-\nu}) + \text{oscillation in time with rate $ \mathcal O(\frac{1}{\varepsilon}) $}, \\
 	& \mathcal P_{\mrm{sp}, \pm, (k_h,k_z) }^\mrm{gw} \mathcal P_\mrm{rd} \mathcal N(T_K U) =  \mathcal P_{\mrm{sp}, \pm, (k_h,k_z) }^\mrm{gw} \mathcal N_\mrm{sp}(T_K \mathcal P_\mrm{rd} (\mathfrak U^\mrm{mf}+\mathfrak U^\mrm{gw}))\\
 	& \qquad + \mathcal O(\varepsilon^{1-\nu}) + \text{oscillation in time with rate $ \mathcal O(\frac{1}{\varepsilon}) $}, \\
 	& \mathcal P_\mrm{rd} \mathcal M(T_K U) = 0,\\
 	& \mathcal P_{\mrm{sp}, \mrm j, (k_h,k_z) }^\mrm{mf} \mathcal P_\mrm{rd}(\dt U + \dfrac{1}{\varepsilon}\mathcal L_\varepsilon U) = \dt \alpha^\mrm{mf}_{\mrm{j},(k_h,k_z),\varepsilon} U^\mrm{mf}_{\mrm{sp},\mrm{j},(k_h,k_z)}, \mrm j \neq 2, \\
 	& \mathcal P_{\mrm{sp}, 2, (k_h,k_z) }^\mrm{mf} \mathcal P_\mrm{rd}(\dt U + \dfrac{1}{\varepsilon}\mathcal L_\varepsilon U) = \dt \alpha^\mrm{mf}_{2,(k_h,k_z),\varepsilon} \varepsilon^{1-\nu}U^\mrm{mf}_{\mrm{sp},\mrm{2},(k_h,k_z)}, \\
 	& \mathcal P_{\mrm{sp}, \pm, (k_h,k_z) }^\mrm{gw} \mathcal P_\mrm{rd} (\dt U + \dfrac{1}{\varepsilon}\mathcal L_\varepsilon U) = e^{\mp i \frac{\omega^\mrm{gw}_{\mrm{sp},(k_h,k_z)}}{\varepsilon^{1-\nu}} \frac{t}{\varepsilon^\nu} } \dt \alpha^\mrm{gw}_{\pm,(k_h,k_z),\varepsilon} \varepsilon^{1-\nu} U^\mrm{gw}_{\mrm{sp},\pm,(k_h,k_z)} \\
 	& \qquad + \mathcal O(\varepsilon^{2-3\nu}).
 \end{align*}
 In particular, recalling $ \mathfrak U^\mrm{mf}$ and $ \mathfrak U^\mrm{gw} $ in \eqref{eps:mf} and \eqref{eps:gw}, similar calculation as in \eqref{cal:ev-mf}--\eqref{cal:ev-aw} for $ (\dt + \dfrac{1}{\varepsilon^\nu} \mathcal L_\mrm{sp}) (T_K\mathcal P_\mrm{rd} (\mathfrak U^\mrm{mf} + \mathfrak U^\mrm{gw})) $ yields that
 \begin{equation}\label{proj:mg-101}
 \begin{gathered}
 	\sum_{\mathclap{\substack{\mrm j = 1,2,3,4, \mrm j' = +, -\\\lvert k_h\lvert,\lvert k_z\lvert \leq K}}}(\mathcal P^\mrm{mf}_{\mrm{sp},\mrm j, (k_h,k_z)} + \mathcal P^\mrm{gw}_{\mrm{sp},\mrm j', (k_h,k_z)}) \mathcal P_\mrm{rd}(\dt U + \dfrac{1}{\varepsilon} \mathcal L_\varepsilon U) \\
 	= (\dt + \dfrac{1}{\varepsilon^\nu} \mathcal L_\mrm{sp}) (T_K \mathcal P_\mrm{rd} (\mathfrak U^\mrm{mf} + \mathfrak U^\mrm{gw})) + \mathcal C_K \mathcal O(\varepsilon^{2-3\nu}).
 	\end{gathered}
 \end{equation}
 
 Therefore, denote by
 \begin{equation}
 	\mathcal P_{\mrm{sp},K}^{\mrm{mf+gw}}:=\sum_{\mathclap{\substack{\mrm j = 1,2,3,4, \mrm j' = +, -\\\lvert k_h\lvert,\lvert k_z\lvert \leq K}}} (\mathcal P^\mrm{mf}_{\mrm{sp},\mrm j, (k_h,k_z)} + \mathcal P^\mrm{gw}_{\mrm{sp},\mrm j', (k_h,k_z)}).
 \end{equation}
Applying $$ \mathcal P_{\mrm{sp},K}^{\mrm{mf+gw}} \mathcal P_\mrm{rd} $$
  to \eqref{eq:full-101} yields, since $ 0 < 2\nu < 1 $,
 \begin{equation}\label{eq:proj-mg-101}
 	\begin{gathered}
 	 (\dt + \dfrac{1}{\varepsilon^\nu} \mathcal L_\mrm{sp}) (T_K \mathcal P_\mrm{rd} (\mathfrak U^\mrm{mf} +  \mathfrak U^\mrm{gw})) % + \left( \begin{array}{c}
% 		0 \\ \nabla_h p' \\ \dz p'
% 	\end{array}\right) \\
% 	& \qquad 
 	 + \mathcal P_{\mrm{sp},K}^{\mrm{mf+gw}} \mathcal N_\mrm{sp}( T_K \mathcal P_\mrm{rd} (\mathfrak U^\mrm{mf} +  \mathfrak U^\mrm{gw}) ) \\
 	  \qquad =  \mathcal C_K\mathcal O(\varepsilon^{1-\nu}) +  \mathcal C_K\mathcal O(\varepsilon^{\mu - \sigma}) 
% 	& \qquad \qquad 
 	+ \text{oscillation in time with rate $ \mathcal O(\frac{1}{\varepsilon})$}
% 	& \qquad\qquad 
 	+ Err.
 	\end{gathered} 
 \end{equation}
 On the other hand, with similar calculation as in \eqref{cal:ev-mf} and \eqref{cal:ev-gw}, one can conclude that
 $$
 	T_K(\dt + \dfrac{1}{\varepsilon^\nu}\mathcal L_\mrm{sp}) U_\mrm{sp} = (\dt + \dfrac{1}{\varepsilon^\nu}\mathcal L_\mrm{sp}) T_K U_\mrm{sp}.
 $$
 Consequently, applying $ \mathcal P_{\mrm{sp},K}^{\mrm{mf+gw}} $ to \eqref{eq:sp-101} yields 
 \begin{equation}\label{eq:proj-sp-101}
 	 (\dt + \dfrac{1}{\varepsilon^\nu}\mathcal L_\mrm{sp}) T_K U_\mrm{sp} %+ \left( \begin{array}{c}
% 		0 \\ \nabla_h p'_\mrm{sp} \\ \dz p'_\mrm{sp}
% 	\end{array}\right) 
 	+ \mathcal P_{\mrm{sp},K}^{\mrm{mf+gw}} \mathcal N_\mrm{sp}(T_K U_\mrm{sp}) = Err.
 \end{equation}
% Here, $ p' $ and $ p'_\mrm{sp} $ are the scaler pressures acting as Lagrangian multipliers due to the soundproof property. Indeed, $ \bigl( 0 , \nablah p', \dz p' \bigr)^\top $ and $ \bigl( 0 , \nablah p'_\mrm{sp}, \dz p'_mrm{sp} \bigr)^\top $ cancel the acoustic components of $ \mathcal N_\mrm{sp}(T_K \mathcal P_\mrm{rd} (U^\mrm{mf} +  U^\mrm{gw})) $ and $ \mathcal N_\mrm{sp}(T_K U_\mrm{sp}) $ in the above equations,  respectively. 
Here, although not exactly the same expression as before, $ Err $ satisfies \eqref{err:truncation}.
 
 Then, after subtracting \eqref{eq:proj-mg-101} with \eqref{eq:proj-sp-101}, and taking the $ L^2 $-inner product of the resultant equations with $  2(T_K \mathcal P_\mrm{rd} (\mathfrak U^\mrm{mf} +  \mathfrak U^\mrm{gw}) - T_K U_\mrm{sp})  $, with similar calculation as in section \ref{subsec:snpf-ap}, we arrive at the estimate
 \begin{equation}\label{eq:proj-mg-sp-101}
 	\begin{gathered}
 	\dfrac{d}{dt} \norm{T_K \mathcal P_\mrm{rd} (\mathfrak U^\mrm{mf} +  \mathfrak U^\mrm{gw}) - T_K U_\mrm{sp}}{L^2}^2 \leq \mathcal C \norm{T_K \mathcal P_\mrm{rd} (\mathfrak U^\mrm{mf} +  \mathfrak U^\mrm{gw}) - T_K U_\mrm{sp}}{L^2}^2\\
 	+ \mathcal C_K\mathcal O(\varepsilon^{2-2\nu}) +  \mathcal C_K\mathcal O(\varepsilon^{2\mu - 2\sigma}) \\
 	\qquad \qquad 
 	+ \text{oscillation in time with rate $ \mathcal O(\frac{1}{\varepsilon})$}
% 	& \qquad\qquad 
 	+ Err,
 	\end{gathered}
 \end{equation}
 where we use the fact that
 $$
 \begin{gathered}
 \int \biggl\lbrace \text{oscillation in time with rate $ \mathcal O(\frac{1}{\varepsilon})$}\biggr\rbrace \cdot \underbrace{(T_K \mathcal P_\mrm{rd} (\mathfrak U^\mrm{mf} +  \mathfrak U^\mrm{gw}) - T_K U_\mrm{sp})}_{\text{oscillation at rate $ \mathcal O(\frac{1}{\varepsilon^{\nu}})$}}  \idx \\
  = \text{oscillation in time with rate $ \mathcal O(\frac{1}{\varepsilon})$}.
 \end{gathered}
 $$
 We would like to emphasize that it is important that we get an estimate with coefficient $ \mathcal C $ independent of $ K $ on the right hand side of \eqref{eq:proj-mg-sp-101}. Otherwise when applying Gr\"onwall's inequality, below, it would arrive at an estimate with uncontrollable $ Err $. This is possible thanks to the soundproof property of $ T_K \mathcal P_\mrm{rd} (\mathfrak U^\mrm{mf} +  \mathfrak U^\mrm{gw}) - T_K U_\mrm{sp} $ and cancellation when applying integration by parts, as it is done in section \ref{subsec:snpf-ap}. 
 
 Then integrating \eqref{eq:proj-mg-sp-101} in time yields, since $ 2-2\nu > 1 $, for $ 0< t \leq T_\mrm{\sigma, mg} < \min\lbrace T_\sigma,T_\mrm{sp}\rbrace $ with some $ T_\mrm{\sigma, mg} \in (0,\infty) $,
 \begin{equation}\label{est:proj-mg-sp-102}
 	\begin{gathered}
 		\norm{T_K \mathcal P_\mrm{rd} (\mathfrak U^\mrm{mf} +  \mathfrak U^\mrm{gw})(t) - T_K U_\mrm{sp}(t)}{L^2}^2 \\ \leq \norm{T_K \mathcal P_\mrm{rd} (\mathfrak U^\mrm{mf} +  \mathfrak U^\mrm{gw})(0) - T_K U_\mrm{sp}(0)}{L^2}^2\\
 		+ \int_0^t \mathcal C \norm{T_K \mathcal P_\mrm{rd} (\mathfrak U^\mrm{mf} +  \mathfrak U^\mrm{gw})(s) - T_K U_\mrm{sp}(s)}{L^2}^2 \,ds\\
 		 + C_K (\mathcal O(\varepsilon^{2\mu - 2\sigma}) + \mathcal O(\varepsilon) ) + Err.
 	\end{gathered}
 \end{equation}
 
 We would like to remind readers that $ \mathfrak U^\mrm{gw} $ and $ \mathfrak U^\mrm{mf} $ as in \eqref{eps:gw} and \eqref{eps:mf}, thanks to \eqref{sm-mf}, \eqref{prt-fq-gw}, and \eqref{prt-bs-gw}, satisfy
 \begin{equation}\label{proj:mg-102}
 	T_K \mathcal P_\mrm{rd} \mathfrak U^\mrm{mf} = T_K \mathcal P_\mrm{rd} U^\mrm{mf}_\varepsilon \qquad \text{and} \qquad T_K \mathcal P_\mrm{rd} \mathfrak U^\mrm{gw} = T_K \mathcal P_\mrm{rd} U^\mrm{gw}_\varepsilon + \mathcal C_K \mathcal O(\varepsilon^{2-3\nu}),
 \end{equation}
 and thus, since $ 4-6\nu = 1 + 3(1-2\nu) > 1 $,
 \begin{equation}\label{proj:mg-103}
 	\norm{T_K \mathcal P_\mrm{rd} (\mathfrak U^\mrm{mf} +  \mathfrak U^\mrm{gw}) - T_K \mathcal P_\mrm{rd} (U^\mrm{mf}_\varepsilon +  U^\mrm{gw}_\varepsilon)}{L^2}^2 = \mathcal C_\mrm{K} \mathcal O(\varepsilon^{4-6\nu}) \leq 
 	\mathcal C_\mrm{K} \mathcal O(\varepsilon).
 \end{equation}
 Consequently, after choosing appropriate initial data for $  U_\mrm{sp} $ which carries the initial mean flows and internal waves, one can derive from \eqref{est:proj-mg-sp-102} that 
 \begin{equation}\label{est:proj-mg-sp-103}
 	\norm{T_K \mathcal P_\mrm{rd} (U^\mrm{mf}_\varepsilon +  U^\mrm{gw}_\varepsilon)(t) - T_K U_\mrm{sp}(t)}{L^2}^2 \leq C_K (\mathcal O(\varepsilon^{2\mu - 2\sigma}) + \mathcal O(\varepsilon) ) + Err,
 \end{equation}
 after applying Gr\"onwall's inequality and \eqref{proj:mg-103}.
 We remind readers that $ Err $ satisfies \eqref{err:truncation}. Thus from \eqref{est:proj-mg-sp-103}, one can conclude Theorem \ref{thm:sp-ill-prepared}.

%\pagebreak

\subsection{Remarks}

In section \ref{subsec:f-s-eq}, we introduce the dimension reduction operator $ \mathcal P_\mrm{rd} $ in \eqref{def:dr-prjtn}, which is used in section \ref{subsec:s-a-i-d} to prove Theorem \ref{thm:sp-ill-prepared}; that is, the asymptotic behavior of the $ \widetilde{\mathcal H}(\widetilde{\mathcal H}_\mrm{sp}), v(v_\mrm{sp}), w(w_\mrm{sp}) $ components. However, the asymptotic behavior of the $ \widetilde q $ component is not discussed.

In the case of well-prepared initial data, i.e., in Theorem \ref{thm:sp-approximation}, we choose initially $ \widetilde q $ and $ \varepsilon p_{\mrm{sp}}$ (equivalently $ \widetilde q_\mrm{in} $ and $ \varepsilon p_{\mrm{ms},\mrm{in}} $) close. In particular, since $ \int p _\mrm{sp} \idx = 0 $ in the soundproof system, the well-prepared initial data should satisfy that $ \int \widetilde q_\mrm{in}\idx $ is close to zero, which is {\bf not} the case for the ill-prepared initial data. In particular, $  \int \widetilde q\idx = 0 $ is not a conservative property for the full system \eqref{eq:rf-EEq-dcp-01}. 

That is, the $ \widetilde q $ component is {\bf nontrivial} in both the slow waves and fast waves in the case of ill-prepared initial data (see, for instance, \eqref{eps:acoustic-resonance}). However, these nontrivial waves do not have influence on the $ \widetilde{\mathcal H}(\widetilde{\mathcal H}_\mrm{sp}), v(v_\mrm{sp}), w(w_\mrm{sp}) $ components of the mean flows and internal waves of the solutions to \eqref{eq:rf-EEq-dcp-01} (\eqref{eq:speq-dcp-01}, respectively). In particular, there is no $ \widetilde q $ component in the solutions to \eqref{eq:speq-dcp-01}. This is why our asymptotic analysis works and has to be done after applying the dimension reduction $ \mathcal P_\mrm{rd} $ to system \eqref{eq:rf-EEq-dcp-01}, in the case of ill-prepared initial data. %Nevertheless, the asymptotic behavior of the $ \widetilde q $ component remains interesting and will be discussed in the future. 

\section{Appendix}

Finally, although it is {straightforward}, we would like to record the representation of the waves decomposition of the full compressible system. With the Fourier representations \eqref{def:frr-epsn}, we calculate the mean flow part first. When $ k_h \neq (0,0) $, noticing that $ \lvert U_{1,(k_h,k_z)}^\mrm{mf}\rvert^2  = \varsigma := \int_0^1 \cos^2(k_z z) \,dz = \begin{cases}
\frac 1 2 & \text{if} ~ k_z \neq 0 \\ 1 & \text{if} ~ k_z = 0
\end{cases} $, 
\begin{align*} 
	& \dfrac{1}{\lvert U_{1,(k_h,k_z)}^\mrm{mf}\rvert^2} \int U \cdot \overline{U_{1,(k_h,k_z)}^\mrm{mf}}^c \idx =  \dfrac{V_{(k_h,k_z)} \cdot k_h^\perp}{\lvert k_h\lvert}.
	\end{align*}
	When $ k_h = (0,0) $, noticing that $ \lvert U_{2,((0,0),k_z)}^\mrm{mf}\rvert^2 = \varsigma + \dfrac{ k_z^2}{\eta^2}(1-\varsigma) $, $ \lvert U_{3,((0,0),k_z)}^\mrm{mf}\rvert^2 = \lvert U_{4,((0,0),k_z)}^\mrm{mf}\rvert^2 = \varsigma$, 
	\begin{align*}
		& \dfrac{1}{\lvert U_{2,((0,0),k_z)}^\mrm{mf}\rvert^2} \int U \cdot \overline{U_{2,((0,0),k_z)}^\mrm{mf}}^c \idx = \dfrac{\eta^2 Q_{((0,0),k_z)} \varsigma +  k_z \eta H_{((0,0),k_z)}(1-\varsigma) }{\eta^2 \varsigma +  k_z^2(1-\varsigma) }, \\
		& \dfrac{1}{\lvert U_{3,((0,0),k_z)}^\mrm{mf}\rvert^2} \int U \cdot \overline{U_{3,((0,0),k_z)}^\mrm{mf}}^c \idx = (V_{((0,0),k_z)})_1, \\
		& \dfrac{1}{\lvert U_{4,((0,0),k_z)}^\mrm{mf}\rvert^2} \int U \cdot \overline{U_{4,((0,0),k_z)}^\mrm{mf}}^c \idx = (V_{((0,0),k_z)})_2
	\end{align*}
	Therefore, the mean flow projection of $ U $ is given by
	\begin{equation}\label{def:U-mf}
		\begin{aligned}
			& U^\mrm{mf}_\varepsilon := \mathcal P_\varepsilon^\mrm{mf} U = \sum_{k_h \in 2\pi \mathbb Z^2\backslash \lbrace(0,0)\rbrace, k_z \in 2\pi \mathbb N} \dfrac{V_{(k_h,k_z)} \cdot k_h^\perp}{\lvert k_h\lvert} U_{1,(k_h,k_z)}^\mrm{mf} \\
			& \qquad + \sum_{k_z \in 2\pi \mathbb N} \biggl( \dfrac{\eta^2 Q_{((0,0),k_z)}\varsigma+  k_z \eta H_{((0,0),k_z)}(1-\varsigma) }{\eta^2 \varsigma +  k_z^2 (1-\varsigma) } U_{2,((0,0),k_z)}^\mrm{mf} \\
			& \qquad\qquad + (V_{((0,0),k_z)})_1 U_{3,((0,0),k_z)}^\mrm{mf}
			 + (V_{((0,0),k_z)})_2 U_{4,((0,0),k_z)}^\mrm{mf} \biggr) . 
		\end{aligned}
	\end{equation}

Next, we calculate the internal wave part of $ U $. Notice that for $ k_h \neq (0,0), k_z \neq 0 $, 
\begin{align*} 
	& \lvert U_{\pm,(k_h,k_z)}^\mrm{gw}\rvert^2 =  \dfrac{1}{2} \biggl( 1 + \dfrac{ \lvert k_h\rvert^2}{(\omega_{(k_h,k_z)}^\mrm{gw})^2}\biggr) \\
	& \qquad + \dfrac{1}{2} \biggl( \dfrac{ \lvert k_h\rvert^2}{(\omega_{(k_h,k_z)}^\mrm{gw})^2} - 1 \biggr)^2 \biggl( \dfrac{\eta^2 + (\omega_{(k_h,k_z)}^\mrm{gw})^2}{( k_z)^2} \biggr)\\
	& = 1 + \dfrac{\eta^2}{(k_z)^2}\biggl( \dfrac{ \lvert k_h\rvert^2 }{(\omega_{(k_h,k_z)}^\mrm{gw})^2} - 1 \biggr)^2,
\end{align*}
where $ \omega_{(k_h,k_z)}^\mrm{gw} $ is given by \eqref{def:gw-fqcy}. Here we have used the fact that $ \int U_{+,(k_h,k_z)}^\mrm{gw} \cdot \overline{U_{-,(k_h,k_z)}^\mrm{gw}}^c \idx = 0 $, which yields
\begin{gather*}
%	\dfrac{1}{2} + \dfrac{1}{2} \dfrac{\eta^2}{( k_z)^2}\biggl( \dfrac{ \lvert k_h\rvert^2 }{(\omega_{(k_h,k_z)}^\mrm{gw})^2} - 1 \biggr)^2 - \dfrac{1}{2} \dfrac{ \lvert k_h\rvert^2}{(\omega_{(k_h,k_z)}^\mrm{gw})^2} \\ - \dfrac{1}{2} \dfrac{(\omega_{(k_h,k_z)}^\mrm{gw})^2}{( k_z)^2}\biggl( \dfrac{ \lvert k_h\rvert^2 }{(\omega_{(k_h,k_z)}^\mrm{gw})^2} - 1 \biggr)^2 
	\dfrac{1}{2} \biggl( 1 - \dfrac{ \lvert k_h\rvert^2}{(\omega_{(k_h,k_z)}^\mrm{gw})^2}\biggr) + \dfrac{1}{2} \biggl( \dfrac{ \lvert k_h\rvert^2}{(\omega_{(k_h,k_z)}^\mrm{gw})^2} - 1 \biggr)^2 \biggl( \dfrac{\eta^2 - (\omega_{(k_h,k_z)}^\mrm{gw})^2}{( k_z)^2} \biggr)
	= 0.
\end{gather*}
In addition,
\begin{align*}
	& \dfrac{1}{\lvert U_{\pm,(k_h,k_z)}^\mrm{gw}\rvert^2} \int U \cdot \overline{U_{\pm,(k_h,k_z)}^\mrm{gw}}^c \idx = \dfrac{1}{2 + \dfrac{2\eta^2}{( k_z)^2}\biggl( \dfrac{ \lvert k_h\rvert^2 }{(\omega_{(k_h,k_z)}^\mrm{gw})^2} - 1 \biggr)^2}  \\
	& \qquad \times \biggl\lbrack Q_{(k_h,k_z)} \pm \dfrac{1}{\omega^\mrm{gw}_{(k_h,k_z)}} V_{(k_h,k_z)} \cdot k_h \\
	& \qquad\qquad +  \dfrac{H_{(k_h,k_z)} \eta \mp i W_{(k_h,k_z)} \omega^\mrm{gw}_{(k_h,k_z)}}{ k_z} \biggl( \dfrac{\lvert k_h\rvert^2}{(\omega^\mrm{gw}_{(k_h,k_z)})^2}-1\biggr) \biggr\rbrack.
%	, \\
%	& \dfrac{1}{\lvertU_{-,(k_h,k_z)}^\mrm{gw}\rvert^2} \int U \cdot \overline{U_{-,(k_h,k_z)}^\mrm{gw}}^c \idx = \dfrac{1}{2 + \dfrac{2\eta^2}{(2\pi k_z)^2}\biggl( \dfrac{(2\pi)^2 \lvert k_h\rvert^2 }{(\omega_{(k_h,k_z)}^\mrm{gw})^2} - 1 \biggr)^2} \\
%	& \qquad \times \biggl\lbrack Q_{(k_h,k_z)} - \dfrac{2\pi}{\omega^\mrm{gw}_{(k_h,k_z)}} V_{(k_h,k_z)} \cdot k_h \\
%	& \qquad\qquad + \dfrac{H_{(k_h,k_z)} \eta + W_{(k_h,k_z)} \omega^\mrm{gw}_{(k_h,k_z)}}{2\pi k_z} \biggl( \dfrac{(2\pi)^2\lvert k_h\rvert^2}{(\omega^\mrm{gw}_{(k_h,k_z)})^2}-1\biggr) \biggr\rbrack
\end{align*}
Therefore, the internal wave projection of $ U $ is given by
\begin{equation}\label{def:U-gw}
\begin{aligned}
	& U^\mrm{gw}_\varepsilon := \mathcal P_\varepsilon^\mrm{gw} U= \sum_{k_h \in 2\pi \mathbb Z^2 \backslash \lbrace (0,0)\rbrace , k_z \in 2\pi \mathbb N^+} \dfrac{1}{2 + \dfrac{2\eta^2}{( k_z)^2}\biggl( \dfrac{ \lvert k_h\rvert^2 }{(\omega_{(k_h,k_z)}^\mrm{gw})^2} - 1 \biggr)^2}  \\
	& \qquad \times \biggl\lbrack Q_{(k_h,k_z)} \pm \dfrac{1}{\omega^\mrm{gw}_{(k_h,k_z)}} V_{(k_h,k_z)} \cdot k_h \\
	& \qquad\qquad + \dfrac{H_{(k_h,k_z)} \eta \mp i W_{(k_h,k_z)} \omega^\mrm{gw}_{(k_h,k_z)}}{ k_z} \biggl( \dfrac{\lvert k_h\rvert^2}{(\omega^\mrm{gw}_{(k_h,k_z)})^2}-1\biggr) \biggr\rbrack U_{\pm,(k_h,k_z)}^\mrm{gw}.
	\end{aligned}
\end{equation}

The calculation of the acoustic wave  part of $ U $ is similar for $ k_z \neq 0 $, which is
\begin{align*}
	& \dfrac{1}{\lvert U_{\pm,(k_h,k_z)}^\mrm{aw}\rvert^2} \int U \cdot \overline{U_{\pm,(k_h,k_z)}^\mrm{aw}}^c \idx = \dfrac{1}{2 + \dfrac{2\eta^2}{( k_z)^2}\biggl( \dfrac{( \lvert k_h\rvert^2 }{(\omega_{(k_h,k_z)}^\mrm{aw})^2} - 1 \biggr)^2}  \\
	& \qquad \times \biggl\lbrack Q_{(k_h,k_z)} \pm \dfrac{1}{\omega^\mrm{aw}_{(k_h,k_z)}} V_{(k_h,k_z)} \cdot k_h \\
	& \qquad\qquad + \dfrac{H_{(k_h,k_z)} \eta \mp i W_{(k_h,k_z)} \omega^\mrm{aw}_{(k_h,k_z)}}{ k_z} \biggl( \dfrac{\lvert k_h\rvert^2}{(\omega^\mrm{aw}_{(k_h,k_z)})^2}-1\biggr) \biggr\rbrack.
\end{align*}
On the other hand, when $ k_z = 0 $, $ k_h\neq (0,0) $, we have $ \lvert U_{\pm,(k_h,0)}^\mrm{aw}\rvert^2 =1 + \dfrac{\lvert k_h\rvert^2}{(\omega^\mrm{aw}_{(k_h,0)})^2}  $, and
\begin{align*}
	& \dfrac{1}{\lvert U_{\pm,(k_h,0)}^\mrm{aw}\rvert^2} \int U \cdot \overline{U_{\pm,(k_h,0)}^\mrm{aw}}^c \idx = \dfrac{Q_{(k_h,0)} \pm \dfrac{k_h \cdot V_{(k_h,0)}}{\omega^\mrm{aw}_{(k_h,0)}}}{1 + \dfrac{\lvert k_h\rvert^2}{(\omega^\mrm{aw}_{(k_h,0)})^2}}.
\end{align*}
Consequently, the perturbed acoustic wave projection of $ U $ is given by
\begin{equation}\label{def:U-aw}
	\begin{aligned}
		& U^\mrm{aw}_\varepsilon := \mathcal P_\varepsilon^\mrm{aw} U = \sum_{k_h \in 2\pi \mathbb Z^2, k_z \in 2\pi \mathbb N^+}\dfrac{1}{2 + \dfrac{2\eta^2}{( k_z)^2}\biggl( \dfrac{ \lvert k_h\rvert^2 }{(\omega_{(k_h,k_z)}^\mrm{aw})^2} - 1 \biggr)^2}  \\
	& \qquad \times \biggl\lbrack Q_{(k_h,k_z)} \pm \dfrac{1}{\omega^\mrm{aw}_{(k_h,k_z)}} V_{(k_h,k_z)} \cdot k_h \\
	& \qquad\qquad + \dfrac{H_{(k_h,k_z)} \eta \mp i W_{(k_h,k_z)} \omega^\mrm{aw}_{(k_h,k_z)}}{ k_z} \biggl( \dfrac{\lvert k_h\rvert^2}{(\omega^\mrm{aw}_{(k_h,k_z)})^2}-1\biggr) \biggr\rbrack U_{\pm,(k_h,k_z)}^\mrm{aw} \\
	& \quad + \sum_{k_h \in 2\pi \mathbb Z^2\backslash \lbrace(0,0) \rbrace} \dfrac{Q_{(k_h,0)} \pm \dfrac{ k_h \cdot V_{(k_h,0)}}{\omega^\mrm{aw}_{(k_h,0)}}}{1 + \dfrac{\lvert k_h\rvert^2}{(\omega^\mrm{aw}_{(k_h,0)})^2}}U_{\pm,(k_h,0)}^\mrm{aw}.
	\end{aligned}
\end{equation}
Here $ \omega^\mrm{aw}_{(k_h,k_z)} $ is given as \eqref{def:aw-fqcy}.

\section*{Acknowledgement}

D. Bresch is supported by the SingFlows project grant ANR-18-CE40-0027.
X.L. and R.K. acknowledge support of Deutsche Forschungsgemeinschaft through CRC 1114 "Scaling Cascades in Complex Systems", project number 235221301, Projects A02 "Multiscale data and asymptotic model assimilation for atmospheric flows" and C06 ``Multi-scale structure of atmospheric vortices'', and of Einstein Stiftung Berlin through their Einstein Fellowship Program. X.L. would like to thank the Isaac Newton Institute for Mathematical Sciences, Cambridge, for support and hospitality during the programme``{\it Mathematical aspects of turbulence: where do we stand?}'' where part of the work on this paper was undertaken. This work was supported in part by EPSRC grant no EP/R014604/1. X.L.'s work was partially supported by a grant from the Simons Foundation, during his visit to the Isaac Newton Institute for Mathematical Sciences.

\section*{Declarations}

The authors declare that there is no conflict of interest concerning this work. \\

\noindent
Data sharing is not applicable to this article as no datasets were generated or analysed during the current study.

\bibliographystyle{plain}
%\bibliography{library}

\end{document}